\newtheorem{theorem}{Theorem}
\newtheorem{lemma}[theorem]{Lemma}
\newtheorem{proposition}[theorem]{Proposition}
\newtheorem{remark}[theorem]{Remark}
\def\beq{\begin{equation}}
\def\eeq{\end{equation}}
\def\beqs{\begin{equation*}}
\def\eeqs{\end{equation*}}
\def\bal#1\eal{\begin{align}#1\end{align}}
\def\bals#1\eals{\begin{align*}#1\end{align*}}
\def\bsp#1\esp{\begin{split}#1\end{split}}
\def\d{{\mathrm{d}}}
\let\e=\varepsilon
\numberwithin{equation}{section}
\numberwithin{theorem}{section}
\begin{document}
\date{}
\title[Diffusive Limit of  the Non-cutoff Vlasov--Poisson--Boltzmann system]{
\large{Diffusive Limit of the  Vlasov--Poisson--Boltzmann System without Angular Cutoff
}}

\author{Yuan Xu$^\dagger$, Fujun Zhou$^*$ and Yongsheng Li$^\ddagger$}

\address[Yuan Xu$^\dagger$]{School of Mathematics, South China University of Technology, Guangzhou 510640, China}
\email{yuanxu2019@163.com}

\address[Fujun Zhou$^*$, Corresponding author]{School of Mathematics, South China University of Technology, Guangzhou 510640, China}
\email{fujunht@scut.edu.cn}

\address[Yongsheng Li$^\ddagger$]{School of Mathematics, South China University of Technology, Guangzhou 510640, China}
\email{yshli@scut.edu.cn}

\begin{abstract}
Diffusive limit of the non-cutoff Vlasov--Poisson--Boltzmann system in perturbation framework still remains open.
By employing a new weight function and some novel treatments,  we solve this issue comprehensively for
the full range of potentials $\gamma > -3$ and $0 < s < 1$. This result marks the first comprehensive coverage of the full range
of potentials in diffusive limit of the non-cutoff Boltzmann type equations incorporating electric or electromagnetic field.
Uniform weighted estimate with respect to the Knudsen number $\varepsilon\in (0,1]$ is established globally in time, which
eventually establishes global solutions to the  Vlasov--Poisson--Boltzmann system and hydrodynamic limit to the
two-fluid incompressible Navier--Stokes--Fourier--Poisson system with Ohm's law for the full potential range $\gamma>-3$ and $0<s<1$.
 \\[2mm]
{\em Mathematics Subject Classification (2020)}:  35Q20; 35Q83
\\[1mm]
{\em Keywords}: Vlasov--Poisson--Boltzmann system;  hydrodynamic limit; non-cutoff potentials;  global solutions.
\end{abstract}

%
%
\maketitle

\tableofcontents

\section{Introduction}

\subsection{Description of the Problem}
\hspace*{\fill}

In this paper, we study hydrodynamic limit of the two-species Vlasov--Poisson--Boltzmann (VPB, for short) system without angular cutoff
\begin{equation}\label{GG1}
\left\{
	\begin{array}{ll}
	\displaystyle \varepsilon \partial_t F^\varepsilon_{+}+v\cdot\nabla_xF^\varepsilon_{+}
    - \varepsilon\nabla_x\phi^\varepsilon\cdot\nabla_v F^\varepsilon_{+} =\frac{1}{\varepsilon}Q(F^\varepsilon_{+},F^\varepsilon_{+})+\frac{1}{\varepsilon}Q(F^\varepsilon_{-},F^\varepsilon_{+}),    \\[2mm]
		
	\displaystyle \varepsilon \partial_tF^\varepsilon_{-}+v\cdot\nabla_xF^\varepsilon_{-}
    + \varepsilon\nabla_x\phi^\varepsilon\cdot\nabla_v F^\varepsilon_{-}
    =\frac{1}{\varepsilon}Q(F^\varepsilon_{-},F^\varepsilon_{-})+\frac{1}{\varepsilon}Q(F^\varepsilon_{+},F^\varepsilon_{-}),   \\
		
    \displaystyle -\Delta_x\phi^\varepsilon=\frac{1}{\varepsilon}\int_{\mathbb{R}^3}(F^\varepsilon_{+}-F^\varepsilon_{-})\d v, \quad
		
	\lim_{|x|\rightarrow \infty}\phi^\varepsilon=0, \\ [3mm]
	
    \displaystyle F_{\pm}^\varepsilon(0,x,v)=F^\varepsilon_{\pm, 0}(x,v).
\end{array}\right.
\end{equation}
The VPB system (\ref{GG1}) describes the dynamics of two species charged dilute particles (for example, electrons and ions), under the influence of the interactions with themselves through collisions and their self-consistent electrostatic field. More precisely, $F_{\pm}^{\varepsilon}(t,x,v)\geq 0$ are the density distribution functions for the ions $(+)$ and electrons $(-)$, respectively, at time $t \geq 0$, position $x=(x_1, x_2, x_3)\in \mathbb{R}^{3}$ and velocity $v =(v_1, v_2, v_3) \in \mathbb{R}^{3}$.
The self-consistent electric potential $\phi^\varepsilon(t, x)$ is coupled with the density function $F_{\pm}^{\varepsilon}(t,x,v)$ through the
Poisson equation in (\ref{GG1}). The positive parameter $\varepsilon\in (0,1]$ is the Knudsen number, which equals to the ratio of the mean free path to the macroscopic length scale. The Boltzmann collision operator $Q(F, G)$ is a bilinear operator,
\begin{align*}
Q(F, G)(v) :=
\iint_{\mathbb{R}^{3} \times \mathbb{S}^2} B(v-v_*,\sigma)[F(v_*^{\prime})G(v^{\prime})-F(v_*)G(v)]\d v_* \d \sigma.
\end{align*}
Here, $v$, $v_*$ and $v^{\prime}$, $v_*^{\prime}$ are the velocities of a pair of particles before and after collision, respectively,
and they are connected through the formulas
$$
v^{\prime}=\frac{v+v_*}{2}+\frac{|v-v_*|}{2} \sigma, \quad v_*^{\prime}=\frac{v+v_*}{2}-\frac{|v-v_*|}{2} \sigma, \quad \sigma \in \mathbb{S}^2.
$$

The Boltzmann collision kernel $B(v-v_*, \sigma)$ depends only on the relative velocity $|v-v_*|$ and on the deviation angle $\theta$ given by $\cos \theta=\langle\sigma,(v-v_*) /|v-v_*|\rangle$, where $\langle\cdot, \cdot\rangle$ is the usual dot product in $\mathbb{R}^3$. As in \cite{GS2011}, without loss of generality, we suppose that $B(v-v_*, \sigma)$ is supported on $\cos \theta \geq 0$.
Throughout this paper, the collision kernel $B(v-v_*, \sigma) $ is supposed to satisfy the following assumptions:
\begin{itemize}
\item
$B(v-v_*, \sigma)$ takes the product form in its argument, that is,
$$
B(v-v_*, \sigma)=\Phi(|v-v_*|) b(\cos \theta)
$$
with non-negative functions $\Phi$ and $b$.

\item
The angular function $\sigma \rightarrow b(\langle\sigma,(v-v_*) /|v-v_*|\rangle)$ is not integrable on $\mathbb{S}^2$, i.e.
$$
\int_{\mathbb{S}^2} b(\cos \theta) \d \sigma=2 \pi \int_0^{\pi / 2} \sin \theta b(\cos \theta) \d \theta=\infty,
$$
 and there are two positive constants $c_b>0$ and $0<s<1$ such that
$$
\frac{c_b}{\theta^{1+2 s}} \leq \sin \theta b(\cos \theta) \leq \frac{1}{c_b \theta^{1+2 s}}.
$$

\item
The kinetic function $z \rightarrow \Phi(|z|)$ satisfies
$$
\Phi(|z|)=C_{\Phi}|z|^\gamma
$$
for some positive constant $C_{\Phi}>0$, where the exponent $\gamma>-3$ is determined by the intermolecular interactive mechanism.
\end{itemize}

It is convenient to call ``soft potentials'' when $-3 < \gamma <-2s$ and $0 < s < 1$, and call ``hard potentials'' when $\gamma \geq -2s$ and $0 < s < 1$. The current work includes both cases, namely
$$
\gamma>-3 \; \text{ and }\; 0 < s < 1.
$$
Moreover, $0 < s < \frac{1}{2}$ is usually called weak angular singularity, and $ \frac{1}{2}\leq  s < 1$ is called strong angular singularity \cite{DL2013}.
Recall that when the intermolecular interactive potential takes the inverse power law in the form of $U(|x|)=|x|^{-(\ell-1)}$ with $2 < \ell < \infty$, the collision kernel $B(v-v_*, \sigma)$ in three space dimensions satisfies the above assumptions with $\gamma=\frac{\ell-5}{\ell-1}$ and $s=\frac{1}{\ell-1}$. Note that $\gamma \to -3$ and $s \to 1$ as $\ell \to 2$ in the limiting case, for which the grazing collisions between particles are dominated and the Boltzmann collision term has to be replaced by the classical Landau collision term for the Coulomb potential, cf. \cite{V}.

Much significant progress has been made on global existence of solutions to the VPB system. Firstly, for the cutoff VPB system,
that is, the collision kernel $B(v-v_*, \sigma)$ inside the Boltzmann collision operator $Q(F,G)$ is assumed to be integrable and taken the form
$$
 B(v-v_*, \sigma)=|v-v_*|^{\gamma}b(\cos\theta)\;\;\text{ with } \; 0\leq b(\cos\theta)\leq C|\cos\theta|,
$$
there have been many results.
In the framework of renormalized solutions, Lions \cite{Lions1994-1, Lions1994-2} developed global renormalized solutions for general large initial data,
while Mischler \cite{Mischler2000} investigated global renormalized solutions to the initial and boundary value problem. In the framework of perturbation around
the global Maxwellian, by employing the robust energy method, Guo \cite{guo2002cpam} constructed global existence of classical solutions in periodic box $\mathbb{T}^3$ for the hard sphere case $\gamma=1$. Duan--Strain \cite{DS11} considered global classical solutions and optimal time decay in the whole space $\mathbb{R}^3$ for the hard sphere case $\gamma=1$.
Subsequently, Duan--Yang--Zhao \cite{DYZ2002, DYZ2013} established global existence and optimal time decay of classical solutions in the whole space $\mathbb{R}^3$ for hard potentials $0\leq \gamma<1$ and moderately soft potentials $-2<\gamma<0$, respectively.
Eventually, global existence for the full range of cutoff soft potentials $-3<\gamma<0$ was solved by \cite{XXZ2017}.
Secondly, concerning the non-cutoff VPB system, some research results have also emerged. After the breakthrough of
the non-cutoff Boltzmann equation by Gressman--Strain \cite{GS2011} and Alexandre--Morimoto--Ukai--Xu--Yang
\cite{AMUXY2011AA, AMUXY2012JFA} independently, Duan--Liu \cite{DL2013} constructed global solutions to
the non-cutoff VPB system for soft potentials $-3<\gamma< -2s$ with strong angular singularity $\frac{1}{2} \leq s < 1$.
Later on, Deng \cite{D2021-1} investigated the regularity of the non-cutoff VPB system for hard potentials $\gamma\geq -2s$ and $0< s < 1$.

Hydrodynamic limit of the VPB system has also received extensive research.
In fact, most existing research has focused on the cutoff VPB system. Firstly,  Ars\'{e}nio--Saint-Raymond \cite{AS2019} proved, within the framework of renormalized solutions, that renormalized solutions of the VPB system (\ref{GG1}) converged to dissipative solutions of the incompressible Navier--Stokes--Fourier--Poisson (NSFP, for short) system. Secondly, in the framework of perturbation around the global Maxwellian, Guo--Jiang--Luo \cite{GJL2020} justified the incompressible NSFP limit of classical solutions
for the cutoff hard sphere case $\gamma=1$, Gong--Zhou--Wu \cite{GZW-2021} considered the same limit of strong solutions by Hilbert expansion,
and Li--Yang--Zhong \cite{LYZ2020} investigated the cutoff hard potential case $0\leq \gamma\leq 1$. More recently, Wu--Zhou--Li
\cite{WZL2023ARXIV} justified the incompressible NSFP limit of strong solutions by $H^2\cap W^{2,\infty}$ approach for the full cutoff potentials $-3<\gamma\leq 1$.
In addition to diffusive limit of the VPB system mentioned above, Guo--Jang \cite{guo2010cmp} studied the compressible Euler--Poisson limit of the VPB system in hyperbolic regime by Hilbert expansion \cite{Ca} for the cutoff hard sphere case $\gamma=1$, with further extensions to the cutoff soft potential case $-3<\gamma<0$ by \cite{LW2023}.

In addition to the aforementioned VPB system, the Vlasov--Maxwell--Boltzmann system (VMB, for short), as a more general model compared with
the VPB system, has also attracted much attention. The first breakthrough on global solutions to the VMB system was achieved by \cite{Guo2003} in perturbation framework near the global Maxwellian, where Guo constructed classical solutions to the cutoff VMB system for the hard sphere case $\gamma=1$ in $\mathbb{T}^3$. Subsequently, the analysis on global classical solutions and time decay estimates in this perturbation framework was extended to the whole space $\mathbb{R}^3$ for the hard sphere case $\gamma=1$ \cite{Strain2006CMP, DS2011}, the soft potential case $-3 < \gamma < 0$ \cite{DLYZ2017}, as well as the non-cutoff VMB system \cite{DLYZ2013,FLLZ2018SCM}. We also mention the literature \cite{J2009}, which justified
the incompressible NSFP limit from the cutoff VMB system for the hard sphere case $\gamma=1$ by using a special scaling.
Until recently, global renormalized solutions with or without angular cutoff for large initial data were established by Ars\'{e}nio--Saint-Raymond \cite{AS2019}, where incompressible Navier--Stokes--Fourier--Maxwell (NSFM, for short) limit was also justified. This vital work opened up a series of subsequent progress on hydrodynamic limit of the VMB system, mainly in perturbation framework near the global Maxwellian. Jiang--Luo \cite{JL2019} justified the incompressible NSFM limit of classical solutions in perturbation framework for the cutoff hard sphere case $\gamma=1$, and also considered Hilbert expansion for the cutoff hard potential case $0\leq \gamma\leq 1$ \cite{JLZ2023ARMA}. More recently, Jiang--Lei \cite{JL2023ARXIV} investigated the incompressible NSFM limit for the cutoff VMB system in the range of soft potentials $-1\leq \gamma<0$, and for the non-cutoff VMB system in the range of soft potentials $\max\{-3, -\frac{3}{2}-2s\}<\gamma<-2s$ with strong angular singularity $\frac{1}{2}\leq s<1$. We also note the recent compressible Euler--Maxwell limit of one-species VMB system in hyperbolic regime, given by \cite{DYY2023M3AS} for the cutoff hard-sphere potential $\gamma=1$ and \cite{LLXZ2023ARXIV} for the non-cutoff soft potentials $\max\{-3, -\frac{3}{2}-2s\}<\gamma<-2s$ and $0<s<1$, respectively.

The above mentioned VPB system and VMB system both belong to the Boltzmann type equations with electric or electromagnetic field.
For more related works on global existence of solutions to kinetic equations, we refer to \cite{DY10, LYZ2016, wang2013JDE, Yangyu2011CMP, YYZ-06, YZ-06}. For the study on hydrodynamic limit of kinetic equations, we refer
the readers to \cite{BGL91, BGL93, BU91, CIP, DEL, Esposito2018, GS, guo2006, GX2020, J2009, JXZ2018, LM, Saint} and  the references cited therein.

As shown by the research progress mentioned above, for diffusive limit of the non-cutoff VPB system in perturbation framework,
the existing result is only within the range of soft potentials $\max\{-3, -\frac{3}{2}-2s\}<\gamma<-2s$ with strong angular singularity
$\frac{1}{2}\leq s<1$, given
by \cite{JL2023ARXIV}. The more challenging case of soft potentials $-3 < \gamma <-2s$ with weak angular singularity $0<s<\frac{1}{2}$ still remains open. The purpose of this paper is to fill in this gap and investigate diffusive limit of the non-cutoff VPB system \eqref{GG1} uniformly for the full potential range
$\gamma>-3$ and $0<s<1$. By introducing a new weight function and some novel treatments, we establish the  uniform weighted estimates with respect to the Knudsen number $\varepsilon\in (0,1]$ globally in time, which eventually justifies the convergence to the two-fluid incompressible NSFP system with Ohm's law for the full potential range.
The approach of this paper completely solves diffusive limit of the non-cutoff VPB system for the full potential range $\gamma>-3$ and $0<s<1$,
and is expected to solve the more challenging diffusive limit of the non-cutoff VMB system.
\medskip

\subsection{Notations}
\hspace*{\fill}

Throughout the paper, $C$ denotes a generic positive constant independent of $\varepsilon$.
We use $X \lesssim Y$ to denote $X \leq CY$, where $C$ is a constant independent of $X$, $Y$. We also use the notation $X \approx Y$ to represent $X\lesssim Y$ and $Y\lesssim X$. The notation $X \ll 1$ means that $X$ is a positive constant small enough.

The multi-indexs $\alpha = [\alpha_1, \alpha_2, \alpha_3]\in\mathbb{N}^3$ and $\beta = [\beta_1, \beta_2, \beta_3]\in\mathbb{N}^3$ will be used to record space and velocity derivatives, respectively. We  denote the $\alpha$-th order space partial derivatives by $\partial_x^\alpha=\partial_{x_1}^{\alpha_1}\partial_{x_2}^{\alpha_2}\partial_{x_3}^{\alpha_3}$,
and the $\beta$-th order velocity partial derivatives by $\partial_v^\beta=\partial_{v_1}^{\beta_1}\partial_{v_2}^{\beta_2}\partial_{v_3}^{\beta_3}$.
In addition, $\partial^\alpha_\beta=\partial^\alpha_x\partial^\beta_v
=\partial^{\alpha_1}_{x_1}\partial^{\alpha_2}_{x_2}\partial^{\alpha_3}_{x_3}
\partial^{\beta_1}_{v_1}\partial^{\beta_2}_{v_2}\partial^{\beta_3}_{v_3}$ stands for the mixed space-velocity derivative.
The length of $\alpha$ is denoted by $|\alpha|=\alpha_1+\alpha_2+\alpha_3$.
If each component of $\theta$ is not greater than that of $\bar{\theta}$, we denote by $\theta \leq \bar{\theta}$. $\theta < \bar{\theta}$ means $\theta \leq \bar{\theta}$ and $|\theta| < |\bar{\theta}|$. We shall also denote $D_v=\frac{1}{i}\partial_v$. Moreover, we use $C_b$ to denote the set of continuous and bounded functions.

We use $| \cdot |_{L^p}$ to denote the $L^p$ norm in $\mathbb{R}^3_v$, and use $\|\cdot\|_{L^p}$ to denote the $L^p$ norm in $\mathbb{R}^3_x \times \mathbb{R}^3_v$ or in $\mathbb{R}^3_x$. In particular, if $p=2$, then we use $\|\cdot\|$ to denote $L^2$ norm in $\mathbb{R}^3_x \times \mathbb{R}^3_v$ or in $\mathbb{R}^3_x$.
Besides, $\| \cdot \|_{L^p_x L^q_v}$ denotes $\| | \cdot |_{L^q_v} \|_{L^p_x}$.
$\langle \cdot, \cdot\rangle$ denotes the $L^2$ inner product in $\mathbb{R}_v^3$ and $( \cdot, \cdot ) $ denotes the $L^2$ inner product in $\mathbb{R}_x^3 \times \mathbb{R}_v^3$ or in $\mathbb{R}_x^3$.
The norm of a vector means the sum of the norms for all components
of this vector. Also, the norm of $\nabla^k f$ means the sum of the norms for all functions $\partial^\alpha f$ with $|\alpha| = k$.

We use $\Lambda^{-\varrho}f(x)$ to denote
\begin{align*}
\Lambda^{-\varrho} f(x):= {(2\pi)^{-3/2}} \int_{\mathbb{R}^3} |y|^{-\varrho} \widehat{f}(y) e^{ix \cdot y} \d y,
\end{align*}
where $\widehat{f}(y) :=  {(2\pi)^{-3/2}} \int_{\mathbb{R}^3} f(x) e^{-ix \cdot y}\d x$
represents the Fourier transform of $f(x)$. For later use, we denote $\xi$ the dual variable of $v$. Let $q \in \mathscr{S}^{\prime} (\mathbb{R}_v^3 \times \mathbb{R}_\xi^3 )$ be a tempered distribution and let $t \in \mathbb{R}$, the operator $\operatorname{op}_t q$ is an operator from $\mathscr{S}  (\mathbb{R}_v^3 )$ to $\mathscr{S}^{\prime} (\mathbb{R}_v^3 )$, whose Schwartz kernel $K_t$ is defined by the oscillatory integral
$$
K_t\left(z, z^{\prime}\right):=(2 \pi)^{-3} \int_{\mathbb{R}^3} e^{i\left(z-z^{\prime}\right) \cdot \zeta} q\left((1-t) z+t z^{\prime}, \zeta\right) d \zeta.
$$
In particular we denote $q\left(v, D_v\right)=\mathrm{op}_0 q$ and $q^w=\mathrm{op}_{1 / 2} q$. Here $q^w$ is called the Weyl quantization of symbol $q$.
Now we define
$$
q(v,\xi):=\langle v \rangle^\ell \langle \xi \rangle^\theta
$$
for $\ell, \theta \in \mathbb{R}$. Then $q$ is a $\Gamma$-admissible weight function as well as a symbol in $S(q)$, with $\Gamma=|\d v|^2+ |\d \xi|^2$.
One can refer to \cite{AHL2019, L2010} for more information about pseudo-differential calculus.

We now list series of notations introduced in \cite{GS2011}. Let $\mathscr{S}^{\prime}(\mathbb{R}_v^3)$ be the space of the tempered distribution functions. $N_\gamma^s$ denotes the weighted geometric fractional Sobolev space
\begin{align*}
N_\gamma^s:=\left\{f \in \mathscr{S}^{\prime}(\mathbb{R}_v^3):\;|f|_{N_\gamma^s}<\infty \right\}
\end{align*}
with the anisotropic norm
\begin{align*}
|f|_{N_\gamma^s}^2:=|f|_{L_{ \gamma/2+ s}^2}^2+\iint_{\mathbb{R}^3 \times \mathbb{R}^3}\left(\langle v\rangle \langle v^{\prime}\rangle\right)^{\frac{\gamma+2 s+1}{2}} \frac{\left(f(v)-f(v^{\prime}\right))^2}{d(v, v^{\prime})^{3+2 s}} \chi_{d(v, v^{\prime}) \leq 1} \d v \d v^{\prime},
\end{align*}
where $\langle v \rangle := \sqrt{1+|v|^2}$, $L_{\ell}^2$ ($\ell \in \mathbb{R}$) denotes the weighted space with the norm
\begin{align*}
|f|_{L_{\ell}^2}^2:=\int_{\mathbb{R}^3}\langle v\rangle^{2\ell}|f(v)|^2 \d v,
\end{align*}
the anisotropic metric $d(v, v^{\prime})$ measuring the fractional differentiation effects is given by
$$
d(v, v^{\prime}):=\sqrt{|v-v^{\prime}|^2+\frac{1}{4}\Big(|v|^2-|v^{\prime}|^2\Big)^2},
$$
and $\chi_A$ is the indicator function of a set $A$. Furthermore, in $\mathbb{R}_x^3 \times \mathbb{R}_v^3$, we use $\|\cdot\|_{L^2_\ell}=\big\| |\cdot|_{L^2_\ell}\big\|_{L_x^2}$.
In this paper, the velocity weight function $w=w(v)$ always denotes
\begin{align}\label{w define}
w(v):= \langle v\rangle=\sqrt{1+|v|^2}.
\end{align}
For $\ell \in \mathbb{R}$, the $w^{\ell}$ weighted $|w^{\ell}\cdot|_{N_\gamma^s}$ norm is defined by
\begin{align*}
|w^{\ell} f|_{N_\gamma^s}^2:=|w^{\ell} f|_{L_{\gamma/2+s}^2}^2+\iint_{\mathbb{R}^3 \times \mathbb{R}^3}(\langle v\rangle \langle v^{\prime}\rangle)^{\frac{\gamma+2 s+1}{2}} w^{2 \ell}(v) \frac{(f(v)-f(v^{\prime}))^2}{d(v, v^{\prime})^{3+2 s}} \chi_{d(v, v^{\prime})
\leq 1} \d v \d v^{\prime}.
\end{align*}

For $\ell \in \mathbb{R}$, define the weighted fractional Sobolev norm
\begin{align*}
|w^{\ell} f|_{H_{\gamma/2}^s}^2:=|w^{\ell} f|_{L_{\gamma/2}^2}^2+\iint_{\mathbb{R}^3 \times \mathbb{R}^3} \frac{\big[\langle v\rangle^{\frac{\gamma}{2}} w^{\ell}(v) f(v)-\langle v^{\prime}\rangle^{\frac{\gamma}{2}} w^{\ell}(v^{\prime}) f(v^{\prime})\big]^2}{|v-v^{\prime}|^{3+2 s}} \chi_{|v-v^{\prime}| \leq 1} \d v \d v^{\prime},
\end{align*}
which turns out to be equivalent with
\begin{align*}
|w^{\ell} f|_{H_{\gamma/2}^s}^2:=\int_{\mathbb{R}^3}\langle v\rangle^\gamma \big|(1-\Delta_v)^{\frac{s}{2}}(w^{\ell}(v) f(v))\big|^2 \d v.
\end{align*}
Notice that
\begin{align*}
|w^{\ell} f |_{L_{\gamma/2 + s}^2}^2+|w^{\ell} f |_{H_{\gamma/2}^s}^2 \lesssim |w^{\ell} f |_{N_\gamma^s}^2
\lesssim |w^{\ell} f |_{H_{\gamma/2+  s}^s}^2,
\end{align*}
 cf. \cite{GS2011}.
In $\mathbb{R}_x^3 \times \mathbb{R}_v^3$, we use the notations
$$
  \|w^{\ell} f \|_{H_{\gamma/2}^s}=\big\| |w^{\ell} f |_{H_{\gamma/2}^s}\big\|_{L_x^2},
  \quad
 \|w^{\ell} f \|_{N_\gamma^s}=\big\| |w^{\ell} f |_{N_\gamma^s}\big\|_{L_x^2},
 \quad
 \| w^{\ell} f  \|_{L^p_x N_\gamma^s}=\big\| | w^{\ell} f |_{N_\gamma^s}\big\|_{L_x^p}.
$$

For an integer $N \geq 0$, we define the Sobolev space
\begin{align*}
|f|_{H^N}:=\sum_{|\beta| \leq N} |\partial_\beta f |_{L^2 (\mathbb{R}_v^3 )}, \;\;
\|f\|_{H^N_{x}}:=\sum_{|\alpha| \leq N} \|\partial^\alpha f \|_{L^2 (\mathbb{R}_x^3)}, \;\;
\|f\|_{H^N_{x,v}}:=\sum_{|\alpha|+|\beta| \leq N} \|\partial_\beta^\alpha f \|_{L^2 (\mathbb{R}_x^3 \times \mathbb{R}_v^3 )} .
\end{align*}
For an integer $N \geq 0$ and $\ell \in \mathbb{R}$, we define the weighted Sobolev space
\begin{align*}
|w^{\ell} f |_{H^N}:=&\sum_{|\beta| \leq N} |\partial_\beta(w^{\ell}  f) |_{L^2 (\mathbb{R}_v^3)}
\approx \sum_{|\beta| \leq N} |w^{\ell} \partial_\beta f |_{L^2 (\mathbb{R}_v^3)}, \\
\|w^{\ell} f \|_{H^N_{x}}:=&\sum_{|\alpha| \leq N} \| w^{\ell} \partial^\alpha f \|_{L^2 (\mathbb{R}_x^3 )},\\
\|w^{\ell} f \|_{H^N_{x,v}}:=&\sum_{|\alpha|+|\beta| \leq N} \|\partial_\beta^\alpha (w^{\ell} f) \|_{L^2 (\mathbb{R}_x^3 \times \mathbb{R}_v^3)}
\approx \sum_{|\alpha|+|\beta| \leq N} \|w^{\ell} \partial_\beta^\alpha f \|_{L^2 (\mathbb{R}_x^3 \times \mathbb{R}_v^3)},
\end{align*}
and
\begin{align*}
|w^{\ell} f |_{H^N_{\gamma/2}}:=&\sum_{|\beta| \leq N} | \langle v \rangle ^{\frac{\gamma}{2}}\partial_\beta(w^{\ell}  f) |_{L^2 (\mathbb{R}_v^3)}
\approx \sum_{|\beta| \leq N} |\langle v \rangle ^{\frac{\gamma}{2}} w^{\ell} \partial_\beta f |_{L^2 (\mathbb{R}_v^3)}, \\
\|w^{\ell} f \|_{H^N_{\gamma/2}}:=&\sum_{|\alpha|+|\beta| \leq N} \| \langle v \rangle ^{\frac{\gamma}{2}} \partial_\beta^\alpha (w^{\ell} f) \|_{L^2 (\mathbb{R}_x^3 \times \mathbb{R}_v^3)}
\approx \sum_{|\alpha|+|\beta| \leq N} \|\langle v \rangle ^{\frac{\gamma}{2}} w^{\ell} \partial_\beta^\alpha f \|_{L^2 (\mathbb{R}_x^3 \times \mathbb{R}_v^3)},
\end{align*}
where the equivalent norm holds due to the algebra weight $w^{\ell}$.

Finally, we define $B_C \subset \mathbb{R}^3$ to be the ball with center origin and radius $C$, and use $L^2 (B_C )$ to denote the space $L^2$ over $B_C$ and likewise for other spaces.


\subsection{Main Results}
\hspace*{\fill}

We investigate diffusive limit of the two-species VPB system (\ref{GG1}) in the framework of perturbation around the global Maxwellian.
It is well-known that $[\mu(v),\mu(v)]$ forms the global equilibrium of the two-species VPB system (\ref{GG1}), where
$$
\mu=\mu(v) :=(2 \pi)^{-3 / 2} \mathrm{e}^{-|v|^{2} / 2}
$$
represents the global Maxwellian.
By writing
$$
F_{\pm}^{\varepsilon}(t,x,v)=\mu+\varepsilon \mu^{1/2}f_{\pm}^{\varepsilon}(t,x,v),\quad
F^\varepsilon_{\pm, 0}(x,v)=\mu+\varepsilon \mu^{1/2}f_{\pm,0}^{\varepsilon}(x,v)
$$
with the fluctuations $f_{\pm}^{\varepsilon}$ and $f_{\pm,0}^{\varepsilon}$,
the VPB system
(\ref{GG1}) is transformed into the following equivalent perturbed VPB system
\begin{align}\label{rVPB}
	\left\{\begin{array}{l}
\displaystyle \partial_{t} f^{\varepsilon}+\frac{1}{\varepsilon}v \cdot \nabla_{x} {f}^{\varepsilon}+\frac{1}{\varepsilon} \nabla_{x} \phi^{\varepsilon} \cdot v \mu^{1/2}q_1+\frac{1}{\varepsilon^{2}} L {f}^{\varepsilon}
\\[2mm]
\displaystyle\;\;\;\;\;\;\;\;\;\;\;\;\;\;\;\;\;\;\;\;\;\;\;\;\;\;\;\; =q_0\nabla_{x} \phi^{\varepsilon} \cdot \nabla_{v} {f}^{\varepsilon}-\frac{1}{2}q_0 \nabla_{x} \phi^{\varepsilon} \cdot v {f}^{\varepsilon}+\frac{1}{\varepsilon} \Gamma\left({f}^{\varepsilon}, {f}^{\varepsilon}\right),  \\ [2mm]
\displaystyle -\Delta_{x} \phi^{\varepsilon}=\int_{\mathbb{R}^3}f^{\varepsilon}\cdot q_1 \mu^{1/2}\d v ,
\quad \lim_{|x|\rightarrow \infty}\phi^\varepsilon=0,  \\ [3mm]
\displaystyle f^\varepsilon(0,x,v)=f^\varepsilon_{0}(x,v),
	\end{array}\right.
\end{align}
where $f^{\varepsilon}=[f_{+}^{\varepsilon}, f_{-}^{\varepsilon}]$ represents the vector in $\mathbb{R}^2$ with the components $f_{\pm}^{\varepsilon}$, $q_0=\mathrm{diag}(1, -1)$ represents the $2 \times 2$ diagonal matrix, $q_1=[1, -1]$, the linearized Boltzmann collision operator $Lf^{\varepsilon}$ and the nonlinear collision term $\Gamma(f^{\varepsilon}, f^{\varepsilon})$ are respectively defined by
$$Lf^{\varepsilon}:=[L_{+}f^{\varepsilon}, L_{-}f^{\varepsilon}],\qquad
\Gamma(f^{\varepsilon}, f^{\varepsilon}):=\left[\Gamma_{+}(f^{\varepsilon}, f^{\varepsilon}), \Gamma_{-}(f^{\varepsilon}, f^{\varepsilon})\right].
$$
For any given $f=\left[f_{+}, f_{-}\right]$ and $g=\left[g_{+}, g_{-}\right]$,
define
\begin{align}
\begin{split}
L_{\pm}f&:=-\mu^{-1/2}\left\{2Q\left(\mu, \mu^{1/2}f_{\pm}\right)+Q\left(\mu^{1/2}(f_{\pm}+f_{\mp}),\mu\right)\right\},  \\
\Gamma_{\pm}(f,g)&:=\mu^{-1/2}\left\{Q\left(\mu^{1/2}f_{\pm},\mu^{1/2}g_{\pm}\right)+Q\left(\mu^{1/2}f_{\mp},\mu^{1/2}g_{\pm}\right)\right\}.
\end{split}
\end{align}

It is well-known that the operator $L$ is non-negative with null space
$$
\mathcal{N}(L):=\mathrm{span}\left\{[1,0]\mu^{1/2},[0,1]\mu^{1/2},[v_{i},v_{i}]\mu^{1/2}(1\leq i\leq 3),\big[|v|^2, |v|^2\big]\mu^{1/2}\right\},
$$
cf. \cite{Guo2003}.
For given vector valued function $f(t,x,v)$, the following macro-micro decomposition  was introduced in \cite{Guo2004}
\begin{equation}\label{f decomposition}
f=\mathbf{P}f+\{\mathbf{I}-\mathbf{P}\}f.
\end{equation}
Here, $\mathbf{P}$ denotes the orthogonal projection from $L^2(\mathbb{R}_{v}^3) \times L^2(\mathbb{R}_{v}^3)$ to $\mathcal{N}(L)$, defined by
\begin{equation}\label{Pf define}
\mathbf{P}f:=\left\{a_{+}(t,x)[1,0]+a_{-}(t,x)[0,1]+v \cdot b(t,x)[1,1]+(|v|^2-3)c(t,x)[1,1]\right\}\mu^{1/2},
\end{equation}
or equivalently $\mathbf{P}=[\mathbf{P}_{+},\mathbf{P}_{-}]$ with
$$
\mathbf{P}_{\pm}f:=\left\{a_{\pm}(t,x)+v \cdot b(t,x)+ (|v|^2-3)c(t,x)\right\}\mu^{1/2},
$$
where
\begin{align}
a_{\pm}(t,x):=\;&\langle \mu^{1/2}, f_{\pm} \rangle =\langle \mu^{1/2}, \mathbf{P}_{\pm}f \rangle, \nonumber \\
b(t,x):=\;&\frac{1}{2}\langle v\mu^{1/2},f_{+}+f_{-}\rangle =\langle v\mu^{1/2}, \mathbf{P}_{\pm}f\rangle, \nonumber \\
c(t,x):=\;&\frac{1}{12} \big \langle (|v|^2-3)\mu^{1/2},f_{+}+f_{-} \big \rangle= \frac{1}{6} \big\langle(|v|^2-3)\mu^{1/2},\mathbf{P}_{\pm}f \big\rangle. \nonumber
\end{align}

To state the main results of this paper, we introduce the following fundamental instant energy functional and the corresponding dissipation rate functional
\begin{align}
\label{without weight energy functional}
{\mathcal{E}}_{N} (t) \sim\;
&\sum_{|\alpha|\leq N} \left\|\partial^{\alpha}f^{\varepsilon}\right\|^2
+\sum_{|\alpha|\leq N+1} \left\|\partial^{\alpha}\nabla_x{\phi}^{\varepsilon}\right\|^2,  \\
\label{without weight dissipation functional}
{\mathcal{D}}_{N} (t)\sim \;
&\sum_{1 \leq |\alpha| \leq N} \left \| \partial^{\alpha} \mathbf{P}f^\varepsilon \right \|^2
+ \frac{1}{\varepsilon^2} \sum_{|\alpha| \leq N}\left\|\partial^{\alpha}
\{\mathbf{I}-\mathbf{P}\}f^\varepsilon\right\|^2_{N^s_{\gamma}}
+\sum_{|\alpha| \leq N+1} \left\|\partial^{\alpha}\nabla_x{\phi}^{\varepsilon}\right\|^2,
\end{align}
respectively, where the integer $N\in\mathbb{Z}$ will be determined later.

Due to the weaker dissipation of the linearized Boltzmann operator $L$ for soft potentials rather than the hard sphere case, in order to deal with the external force term brought by the self-consistent electric potential, we need to introduce the following velocity weight function for hard and soft potentials
\begin{align}\label{weight function}
w_{l}(\alpha,\beta):=
\begin{cases}
\langle v \rangle^{l-|\alpha|-|\beta|}, & \quad \text{ for } \gamma \geq -2s, \\
\langle v \rangle^{l-(-\frac{3\gamma}{s}+\gamma)|\alpha|-(-\frac{3\gamma}{s})|\beta|}, &  \quad \text{ for } -3 < \gamma < -2s,
\end{cases}
\end{align}
where the constant $l \geq 0$ will be determined later. For convenience, sometimes we write $w_{l}(|\alpha|,|\beta|)$ to denote $w_{l}(\alpha, \beta)$.
Then the weighted instant energy functional ${\mathcal{E}}_{N,l} (t)$, the weighted instant high-order energy functional ${\mathcal{E}}^h_{N,l} (t)$ and the corresponding dissipation rate functional ${\mathcal{D}} _{N,l} (t)$ are defined as
\begin{align}\label{energy functional}
{\mathcal{E}}_{N,l} (t) \sim\;
& {\mathcal{E}}_{N} (t) + \sum_{\substack{|\alpha|+|\beta| \leq N \\ |\alpha| \leq N-1}}\left\|w_l (\alpha, \beta) \partial_{\beta}^{\alpha} \{\mathbf{I}-\mathbf{P}\}f^{\varepsilon}\right\|^2
+\varepsilon\sum_{|\alpha|=N}\left\|w_l (\alpha, 0) \partial^{\alpha} f^{\varepsilon}\right\|^2 , \\
\label{high energy functional}
{\mathcal{E}}^h_{N,l} (t) \sim\;
& \sum_{1 \leq |\alpha|\leq N} \left\|\partial^{\alpha} \mathbf{P}f^{\varepsilon}\right\|^2
+ \sum_{ |\alpha|\leq N} \left\|\partial^{\alpha} \{\mathbf{I}-\mathbf{P}\}f^{\varepsilon}\right\|^2
+\sum_{|\alpha|\leq N+1} \left\|\partial^{\alpha}\nabla_x{\phi}^{\varepsilon}\right\|^2
 \nonumber\\
&+ \sum_{\substack{|\alpha|+|\beta| \leq N \\ |\alpha| \leq N-1}}\left\|w_l (\alpha, \beta) \partial_{\beta}^{\alpha} \{\mathbf{I}-\mathbf{P}\}f^{\varepsilon}\right\|^2
+\varepsilon\sum_{|\alpha|=N}\left\|w_l (\alpha, 0) \partial^{\alpha} f^{\varepsilon}\right\|^2 , \\
\label{dissipation functional}
{\mathcal{D}} _{N,l} (t)\sim \;& {\mathcal{D}}_{N} (t)
+\frac{1}{\varepsilon^{2}} \sum_{\substack{|\alpha|+|\beta| \leq N \\
|\alpha| \leq N-1}}\left\|w_l (\alpha, \beta) \partial_{\beta}^{\alpha}
\{\mathbf{I}-\mathbf{P}\} f^{\varepsilon}\right\|^2_{N^s_{\gamma}} \nonumber\\
&+\frac{1}{\varepsilon}\sum_{|\alpha|=N}\!\! \left\|w_l (\alpha, 0) \partial^{\alpha}
\{\mathbf{I}-\mathbf{P}\} f^{\varepsilon}\right\|^2_{N^s_{\gamma}},
\end{align}
respectively.

Besides, to seek for the desired time decay rate to close the energy estimate, we introduce the following energy functional with the lowest $k$-order space derivative $\mathcal{E}^k_{N}(t)$ and the corresponding dissipation rate functional with the lowest $k$-order space derivative ${\mathcal{D}}^k_{N} (t)$
\begin{align}
\label{low k energy}
\mathcal{E}^k_{N}(t)\sim\;
& \sum_{k \leq |\alpha| \leq N} \left\|\partial^{\alpha}f^{\varepsilon}\right\|^2
+\sum_{k \leq |\alpha| \leq N+1} \left\|\partial^{\alpha}\nabla_x{\phi}^{\varepsilon}\right\|^2,  \\
\label{low k dissipation}
{\mathcal{D}}^k_{N} (t)\sim \;
& \sum_{k+1 \leq |\alpha| \leq N} \left \| \partial^{\alpha} \mathbf{P}f^\varepsilon \right \|^2
+ \frac{1}{\varepsilon^2} \sum_{k \leq |\alpha| \leq N} \left\|\partial^{\alpha}
\{\mathbf{I}-\mathbf{P}\}f^\varepsilon\right\|^2_{N^s_{\gamma}}
+\sum_{k \leq |\alpha| \leq N+1} \left\|\partial^{\alpha}\nabla_x{\phi}^{\varepsilon}\right\|^2,
\end{align}
respectively.

Moreover, due to the singularity brought by the Knudsen number $\varepsilon$, the time decay estimate of the nonlinear problem can not be obtained by the Duhamel principle and semigroup estimate of the linearized problem, so that we have to resort the Sobolev space with negative index. For this, we define the following instant energy functional $\widetilde{\mathcal{E}}_{N,l}(t)$ and the corresponding dissipation rate functional $\widetilde{\mathcal{D}}_{N,l} (t)$
\begin{align}
\label{negative sobolev energy}
\widetilde{\mathcal{E}}_{N,l} (t) \sim\;
& \mathcal{E}_{N,l} (t) +\left\| \Lambda^{-\varrho}f^\varepsilon\right\|^2
+\left\| \Lambda^{-\varrho}\nabla_x \phi^\varepsilon\right\|^2_{H^1},\\
\label{negative sobolev dissipation}
\widetilde{\mathcal{D}}_{N,l} (t) \sim\;& \mathcal{D}_{N,l} (t)
+\left\| \Lambda^{1-\varrho} \mathbf{P}f^\varepsilon\right\|^2
+\frac{1}{\varepsilon^2}\left\| \Lambda^{-\varrho} \{\mathbf{I}-\mathbf{P}\}f^\varepsilon\right\|_{N^s_\gamma}^2
+ \left\| \Lambda^{-\varrho}\nabla_x \phi^\varepsilon\right\|_{H^2}^2,
\end{align}
respectively, where the positive constant $\varrho>0$ will be determined later.
\medskip

Our first main result gives the uniform estimate with respect to $\varepsilon\in (0,1]$ of global solutions to the VPB system (\ref{rVPB}) for soft potentials $-3<\gamma<-2s$ and $0<s<1$.

\begin{theorem}[Soft potentials]  \label{mainth1}
Let $-3<\gamma<-2s$, $0 < s <1$ and $0<\varepsilon \leq 1$. Introduce the following constants in sequence
\begin{equation}
\begin{split}\label{soft assumption}
 &N\geq 5\; (N\in \mathbb{Z}), \quad  l_0 \geq -\frac{3\gamma}{s}N+1,\\
 & 1 < \varrho < 3/2\;\text{ and }\; 1/2 < p <1 \; \text{ with }  \; \varrho+p>2,\;
 \; l_1 = -\frac{\varrho+p}{2(1-p)}\left(\gamma+2s\right).
\end{split}
\end{equation}
If there exists a small constant $\delta_1>0$ independent of $\varepsilon$ such that the initial data
\begin{equation}\nonumber
\widetilde{\mathcal{E}}_{N,l_0+l_1}(0) \leq \delta_1,
\end{equation}
  then the VPB system \eqref{rVPB} admits a unique global solution
$(f^{\varepsilon},\nabla_{x}\phi^\varepsilon)$  satisfying
\begin{align}
&\mathcal{E}_{N,l_0}(t) \leq C\widetilde{\mathcal{E}}_{N,l_0+l_1}(0) (1+t)^{-\varrho}, \label{thm1 N l0 decay}\\
&\mathcal{E}^h_{N,l_0}(t) \leq C\widetilde{\mathcal{E}}_{N,l_0+l_1}(0)(1+t)^{-(\varrho+p)}, \label{thm1 N h l0 decay}\\
&\widetilde{\mathcal{E}}_{N,l_0+l_1}(t)+\int_{0}^t \widetilde{\mathcal{D}}_{N,l_0+l_1}(\tau) \d \tau \leq C\widetilde{\mathcal{E}}_{N,l_0+l_1}(0) \label{thm1 widetilde N l0+l1 decay}
\end{align}
for any $t \geq 0$ and some positive constant $C>0$ independent of $\varepsilon$.
\end{theorem}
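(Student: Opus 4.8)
The proof runs on the usual continuation scheme: one combines a local-in-time existence result for \eqref{rVPB} (whose lifespan may a priori depend on $\varepsilon$) with a closed family of a priori estimates that are \emph{uniform} in $\varepsilon\in(0,1]$, so that the lifespan is in fact $+\infty$. Thus everything reduces to proving, under the a priori smallness assumption $\sup_{[0,T]}\widetilde{\mathcal{E}}_{N,l_0+l_1}(t)\ll 1$, the differential inequalities behind \eqref{thm1 N l0 decay}--\eqref{thm1 widetilde N l0+l1 decay}. I would organize the estimates in three tiers: (i) the unweighted energy estimate controlling $\mathcal{E}_N$ and producing the dissipation $\mathcal{D}_N$; (ii) the velocity-weighted estimates controlling the $w_l(\alpha,\beta)$-weighted pieces of $\mathcal{E}_{N,l}$ and $\mathcal{E}^h_{N,l}$; and (iii) the negative-Sobolev estimate for $\|\Lambda^{-\varrho}f^\varepsilon\|$ and $\|\Lambda^{-\varrho}\nabla_x\phi^\varepsilon\|_{H^1}$. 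The three tiers are then glued together by a time-weighted interpolation argument to extract the algebraic decay.

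\textbf{Macroscopic and microscopic estimates.} For tier (i) I would apply $\partial^\alpha$ to \eqref{rVPB}, pair with $\partial^\alpha f^\varepsilon$, and use the coercivity $\langle Lg,g\rangle\gtrsim|\{\mathbf{I}-\mathbf{P}\}g|_{N^s_\gamma}^2$ together with the skew-symmetry of the streaming term $\tfrac1\varepsilon v\cdot\nabla_x$, so that the dissipation $\varepsilon^{-2}\|\partial^\alpha\{\mathbf{I}-\mathbf{P}\}f^\varepsilon\|_{N^s_\gamma}^2$ appears with the correct sign and, crucially, with no inverse power of $\varepsilon$ lost. The dissipation of the fluid variables $(a_\pm,b,c)$ and of $\nabla_x\phi^\varepsilon$ is recovered, as in Guo's energy method and its successors, from the local conservation laws obeyed by $\mathbf{P}f^\varepsilon$ via the macro--micro decomposition \eqref{f decomposition}: the evolution equations for $\partial_t a_\pm$, $\partial_t b$, $\partial_t c$ and the Poisson equation are combined into an interactive functional whose time derivative controls $\sum_{1\le|\alpha|\le N}\|\partial^\alpha\mathbf{P}f^\varepsilon\|^2+\sum_{|\alpha|\le N+1}\|\partial^\alpha\nabla_x\phi^\varepsilon\|^2$ after absorbing the microscopic remainder. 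The nonlinear term $\tfrac1\varepsilon\Gamma(f^\varepsilon,f^\varepsilon)$ and the force terms $q_0\nabla_x\phi^\varepsilon\cdot\nabla_v f^\varepsilon$ and $\tfrac1\varepsilon\nabla_x\phi^\varepsilon\cdot v\mu^{1/2}q_1$ must be bounded by $\sqrt{\mathcal{E}_{N,l_0+l_1}}\,\widetilde{\mathcal{D}}_{N,l_0+l_1}$ with no surviving $\varepsilon^{-1}$: this uses the non-cutoff trilinear estimate for $\Gamma$ with the gain measured in the $N^s_\gamma$ norm, so that one of the two $\varepsilon^{-1}$ factors in $\tfrac1\varepsilon\Gamma(f^\varepsilon,f^\varepsilon)$ is paired with the $\tfrac1\varepsilon\{\mathbf{I}-\mathbf{P}\}f^\varepsilon$ sitting inside $\widetilde{\mathcal{D}}$.

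\textbf{Weighted estimates and the new weight; the main obstacle.} The soft-potential difficulty is that $L$ dissipates only the degenerate norm with weight $\langle v\rangle^{\gamma/2+s}$, while differentiating $\tfrac1\varepsilon v\cdot\nabla_x$ and the force term costs powers of $\langle v\rangle$. For tier (ii) I would apply $w_l(\alpha,\beta)^2\partial^\alpha_\beta$ to the equation for $\{\mathbf{I}-\mathbf{P}\}f^\varepsilon$ and pair with $w_l(\alpha,\beta)^2\partial^\alpha_\beta\{\mathbf{I}-\mathbf{P}\}f^\varepsilon$; the exponent $l-(-\tfrac{3\gamma}{s}+\gamma)|\alpha|-(-\tfrac{3\gamma}{s})|\beta|$ in \eqref{weight function} is calibrated precisely so that the commutators $[v\cdot\nabla_x,w_l]$ and $[\nabla_v,w_l]$ arising from the streaming and force terms are absorbed by the weighted $N^s_\gamma$-dissipation at the cost of exactly one order of derivative, which lets the induction on $|\alpha|+|\beta|$ close (decreasing $|\beta|$, increasing $|\alpha|$) down to the pure-spatial estimates already handled in tier (i). At the top order $|\alpha|=N$ the spatial weight loss is unaffordable, so I would keep the $\varepsilon$-prefactor $\varepsilon\sum_{|\alpha|=N}\|w_l(\alpha,0)\partial^\alpha f^\varepsilon\|^2$ in the energy and only $\varepsilon^{-1}\sum_{|\alpha|=N}\|w_l(\alpha,0)\partial^\alpha\{\mathbf{I}-\mathbf{P}\}f^\varepsilon\|_{N^s_\gamma}^2$ in the dissipation — the unique balance that survives $\varepsilon\to0$ — with $l_0\ge-\tfrac{3\gamma}{s}N+1$ ensuring the weight is large enough at every $|\alpha|+|\beta|\le N$. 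The technical crux of the whole argument lives here: making the weighted trilinear and commutator estimates for $\Gamma$ and $L$ in the $N^s_\gamma$ norm work with no residual inverse power of $\varepsilon$ \emph{and} for the full range $0<s<1$, including the weak-singularity regime $0<s<1/2$ where the known soft-potential machinery was unavailable, is exactly the obstacle that the new weight and the $\varepsilon$-weighted top-order term are designed to overcome.

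\textbf{Decay and closing.} Because of the $\varepsilon^{-1}$ singularities, the linear semigroup/Duhamel route to time decay fails, so for tier (iii) I would use the negative-Sobolev interpolation method: differentiate $\|\Lambda^{-\varrho}f^\varepsilon\|^2+\|\Lambda^{-\varrho}\nabla_x\phi^\varepsilon\|^2_{H^1}$ in time and show it stays bounded by $C\widetilde{\mathcal{E}}_{N,l_0+l_1}(0)$ uniformly in $\varepsilon$, the restriction $1<\varrho<3/2$ keeping the low-frequency integrals convergent and the nonlinear contributions integrable. Then the interpolation $\mathcal{E}_{N,l_0}\lesssim\big(\|\Lambda^{-\varrho}f^\varepsilon\|^2+\cdots\big)^{\theta}\,\mathcal{D}_{N,l_0+l_1}^{1-\theta}$, fed into the weighted energy inequality $\tfrac{d}{dt}\mathcal{E}_{N,l_0}+\mathcal{D}_{N,l_0}\lesssim(\text{small})\cdot\widetilde{\mathcal{D}}_{N,l_0+l_1}$, yields a closed differential inequality whose integration gives $\mathcal{E}_{N,l_0}(t)\lesssim\widetilde{\mathcal{E}}_{N,l_0+l_1}(0)(1+t)^{-\varrho}$, i.e. \eqref{thm1 N l0 decay}; running the same scheme on the high-order functional $\mathcal{E}^h_{N,l_0}$ with one extra order of spatial derivative (and the extra decay this buys through $\|\Lambda^{1-\varrho}\mathbf{P}f^\varepsilon\|^2$ in the dissipation) gives the faster rate \eqref{thm1 N h l0 decay}. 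The choice $l_1=-\tfrac{\varrho+p}{2(1-p)}(\gamma+2s)$ is precisely the amount of velocity weight one must sacrifice to convert the degenerate soft-potential dissipation into the decay exponents $\varrho$ and $\varrho+p$, and once \eqref{thm1 N l0 decay}--\eqref{thm1 N h l0 decay} are in hand the uniform bound \eqref{thm1 widetilde N l0+l1 decay} follows by integrating the full weighted-plus-negative-Sobolev energy inequality in time.
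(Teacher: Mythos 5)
Your overall architecture (uniform-in-$\varepsilon$ weighted energy estimates, a negative-Sobolev functional, and a bootstrap on a time-weighted norm) matches the paper's, but the step where you actually extract the decay rates has a genuine gap. For soft potentials the weighted dissipation $\mathcal{D}_{N,l}$ is degenerate in velocity: the $N^s_\gamma$ norm carries the factor $\langle v\rangle^{(\gamma+2s)/2}$ with $\gamma+2s<0$, so $\mathcal{D}_{N,l}$ does not control $\mathcal{E}_{N,l}$, and the mismatch is in the \emph{velocity weight}, not in the number of spatial derivatives. Your proposed closure, interpolating $\mathcal{E}_{N,l_0}$ between $\|\Lambda^{-\varrho}f^\varepsilon\|$ and $\mathcal{D}_{N,l_0+l_1}$ and feeding this into the weighted energy inequality, is an interpolation in spatial regularity only; in the paper this device (Lemmas \ref{0-N 1-N lemma}--\ref{k-N decay lemma}) is used solely to get decay of the unweighted pure-spatial quantities $\|\mathbf{P}f^\varepsilon\|$ and $\|\nabla_x\mathbf{P}f^\varepsilon\|$, and it cannot bridge the $\langle v\rangle^{(\gamma+2s)/2}$ gap between $\mathcal{E}_{N,l_0}$ and $\mathcal{D}_{N,l_0}$, so the ``closed differential inequality'' you invoke for $\mathcal{E}_{N,l_0}$ does not exist as stated. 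The paper closes this gap by the Strain--Guo time--velocity splitting (Lemma \ref{soft close priori estimate}, Step 2): at time $t$ one splits $\mathbb{R}^3_v$ into $\mathbb{E}(t)=\{\langle v\rangle^{-(\gamma+2s)}\le t^{1-p}\}$ and its complement; on $\mathbb{E}(t)$ the dissipation dominates $t^{p-1}\mathcal{E}^h_{N,l_0}$, producing the factor $e^{-\lambda(t^p-\tau^p)}$ upon solving the ODE inequality, while on $\mathbb{E}^c(t)$ one uses $t^{\varrho+p}\le\langle v\rangle^{-\frac{\varrho+p}{1-p}(\gamma+2s)}=\langle v\rangle^{2l_1}$, which is exactly where the extra weight $l_1$ and the uniform bound on $\widetilde{\mathcal{E}}_{N,l_0+l_1}$ are spent. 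This is the precise mechanism by which $l_1$ ``buys'' the rates $\varrho$ and $\varrho+p$; your heuristic sentence about $l_1$ does not supply it.

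Two further inaccuracies. First, $\mathcal{E}^h_{N,l_0}$ is not ``one extra order of spatial derivative'': it is $\mathcal{E}_{N,l_0}$ with the zeroth-order macroscopic piece $\|\mathbf{P}f^\varepsilon\|^2$ removed, and its faster rate \eqref{thm1 N h l0 decay} comes from the splitting argument combined with the $(1+t)^{-(1+\varrho)}$ decay of $\|\nabla_x\mathbf{P}f^\varepsilon\|^2$ furnished by the negative-Sobolev interpolation, not from $\|\Lambda^{1-\varrho}\mathbf{P}f^\varepsilon\|^2$ alone. Second, the uniform bound \eqref{thm1 widetilde N l0+l1 decay} is obtained (inside the bootstrap on $X(t)$) by a Gronwall argument that requires $\varepsilon\|\partial_t\phi^\varepsilon\|_{L^\infty}+\|\nabla_x\phi^\varepsilon\|_{L^\infty}\lesssim X^{1/2}(t)(1+t)^{-(\varrho+p)/2}$ to be integrable in time; this is the actual reason for the hypothesis $\varrho+p>2$, which your argument never uses, and without which the $\chi_\gamma\|\nabla_x\phi^\varepsilon\|_{L^\infty}\mathcal{E}_{N,l}$ term in Proposition \ref{negative sobolev energy estimates total} cannot be absorbed.
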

\medskip

Then we state our second main result on the uniform estimate with respect to $\varepsilon \in (0,1]$  of global solutions to the VPB system (\ref{rVPB}) for hard potentials $\gamma+2s \geq 0$ and $0<s<1$.
\begin{theorem}[Hard potentials]   \label{mainth2} \
Let $\gamma\geq -2s$, $0 < s < 1$ and $0<\varepsilon \leq 1$.
Define the following constants
\begin{equation}
\begin{split}\label{hard assumption}
 &N\geq 2\; (N\in \mathbb{Z}), \quad  l_2 \geq N+\max\left\{1, \gamma+2s\right \} \; \text{ and } \; 1 < \varrho < 3/2.
\end{split}
\end{equation}
If there exists a small constant $\delta_2>0$ independent of $\varepsilon$ such that the initial data
\begin{equation}\nonumber
\widetilde{\mathcal{E}}_{N,l_2}(0)\leq \delta_2,
\end{equation}
then the VPB system \eqref{rVPB} admits a unique global solution
$\left(f^{\varepsilon},\nabla_{x}\phi^\varepsilon\right)$  satisfying
\begin{align}
&\mathcal{E}_{N,l_2}(t) \leq C\widetilde{\mathcal{E}}_{N,l_2}(0)(1+t)^{-\varrho}, \\
&\widetilde{\mathcal{E}}_{N,l_2}(t) + \int_0^t \widetilde{\mathcal{D}}_{N,l_2}(\tau) \d \tau \leq C\widetilde{\mathcal{E}}_{N,l_2}(0)
\end{align}
for any $t \geq 0$ and some positive constant $C>0$ independent of $\varepsilon$.
\end{theorem}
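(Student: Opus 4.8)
The plan is to prove Theorem~\ref{mainth2} by closing a uniform-in-$\varepsilon$ a priori energy estimate through a continuity argument, exploiting that the hard-potential dissipation $\|\cdot\|_{N^s_\gamma}$ controls a weighted $L^2$ norm with no loss, so the weight exponents in \eqref{weight function} degrade only linearly in $|\alpha|+|\beta|$. First I would set up the \emph{non-weighted} energy estimate: apply $\partial^\alpha$ for $|\alpha|\le N$ to \eqref{rVPB}, pair with $\partial^\alpha f^\varepsilon$ in $L^2_{x,v}$, and use the coercivity of $L$ on $\{\mathbf{I}-\mathbf{P}\}f^\varepsilon$ together with the macroscopic estimates (the standard local-conservation-law / Poincar\'e-type argument \`a la Guo) to recover $\|\partial^\alpha\mathbf{P}f^\varepsilon\|^2$ for $1\le|\alpha|\le N$. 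The Poisson coupling $-\Delta_x\phi^\varepsilon=\int f^\varepsilon\cdot q_1\mu^{1/2}\,\d v$ must be handled so that the field terms $\tfrac1\varepsilon\nabla_x\phi^\varepsilon\cdot v\mu^{1/2}q_1$ and $q_0\nabla_x\phi^\varepsilon\cdot\nabla_v f^\varepsilon$ contribute either to the energy time-derivative or to harmless nonlinear remainders; the key cancellation is the usual one between the linear field term and the Poisson equation, producing $\|\partial^\alpha\nabla_x\phi^\varepsilon\|^2$ control, and the factor $\tfrac1\varepsilon$ is absorbed because the resulting term has the form $\tfrac1\varepsilon(\text{something})(b\text{ or }a_+-a_-)$ which pairs against the $\tfrac1\varepsilon$ from streaming. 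The $\tfrac1{\varepsilon^2}Lf^\varepsilon$ term gives $\tfrac1{\varepsilon^2}\|\partial^\alpha\{\mathbf{I}-\mathbf{P}\}f^\varepsilon\|^2_{N^s_\gamma}$ by the non-cutoff coercivity estimate of Gressman--Strain, and the nonlinear term $\tfrac1\varepsilon\Gamma(f^\varepsilon,f^\varepsilon)$ is estimated by the trilinear non-cutoff estimate $|\langle\Gamma(f,g),h\rangle|\lesssim |f|_{L^2}|g|_{N^s_\gamma}|h|_{N^s_\gamma}$ (plus its weighted/derivative versions), the $\tfrac1\varepsilon$ being killed by one of the two $\tfrac1\varepsilon$'s hidden in $\widetilde{\mathcal D}_{N,l_2}$ after using smallness of $\widetilde{\mathcal E}_{N,l_2}$.

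Next I would run the \emph{weighted} estimate. Apply $w_{l_2}(\alpha,\beta)^2\partial^\alpha_\beta$ to the micro equation for $\{\mathbf{I}-\mathbf{P}\}f^\varepsilon$ and pair in $L^2_{x,v}$. The commutator $[\partial^\alpha_\beta, v\cdot\nabla_x]$ lowers a $v$-derivative and produces a term with one more $x$-derivative but the weight drops by one power of $\langle v\rangle$ relative to $w_{l_2}(\alpha,\beta)$ (that is exactly why $w_{l_2}(\alpha,\beta)=\langle v\rangle^{l_2-|\alpha|-|\beta|}$ is chosen in the hard case), so it is absorbed into the dissipation from lower-order weighted norms; similarly $\partial_\beta$ hitting the weight or the $v\mu^{1/2}q_1$ term, and $\partial_\beta$ of the transport operator, are controlled since in the hard case $\langle v\rangle^\gamma\gtrsim\langle v\rangle^{-2s}$ and the fractional-Sobolev dissipation absorbs a full extra power of $\langle v\rangle$. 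The field nonlinearities $q_0\nabla_x\phi^\varepsilon\cdot\nabla_v$ acting under $\partial^\alpha_\beta$ are the reason the weight must decrease in $|\beta|$; they are closed by splitting $\nabla_v$ onto $\{\mathbf I-\mathbf P\}f^\varepsilon$ versus onto $\mathbf P f^\varepsilon$ (the latter being a polynomial times $\mu^{1/2}$, hence harmless) and using $\|\nabla_x\phi^\varepsilon\|_{H^{N+1}}$ from the Poisson bound plus Sobolev embedding. The top-order piece $|\alpha|=N$, $\beta=0$ is treated with the extra $\varepsilon$-weight in \eqref{energy functional}: pairing produces $\tfrac1\varepsilon\|w_{l_2}(\alpha,0)\partial^\alpha\{\mathbf I-\mathbf P\}f^\varepsilon\|^2_{N^s_\gamma}$ on the left after multiplying that estimate by $\varepsilon$, while the dangerous $\tfrac1\varepsilon$-streaming commutator at this level is handled because $\partial^\alpha\mathbf P f^\varepsilon$ at $|\alpha|=N$ sits in $\mathcal D_N$. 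Summing all weighted estimates with suitable small constants yields $\tfrac{d}{dt}\mathcal E_{N,l_2}(t)+\mathcal D_{N,l_2}(t)\lesssim\sqrt{\mathcal E_{N,l_2}(t)}\,\mathcal D_{N,l_2}(t)$.

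For the decay rates I would use the negative-Sobolev framework: derive the $\Lambda^{-\varrho}$-weighted energy inequality $\tfrac{d}{dt}\|\Lambda^{-\varrho}f^\varepsilon\|^2+\|\Lambda^{-\varrho}\nabla_x\phi^\varepsilon\|^2_{H^1}+\cdots\lesssim(\text{nonlinear})$, where the nonlinear terms involve $\Lambda^{-\varrho}\Gamma$ estimated via the Hardy--Littlewood--Sobolev inequality in $x$ together with the trilinear non-cutoff bound in $v$, and the transport term $\tfrac1\varepsilon\Lambda^{-\varrho}v\cdot\nabla_x$ is again matched by the $\tfrac1\varepsilon$-hierarchy or handled by the $\mathbf P/\{\mathbf I-\mathbf P\}$ split. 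Combining $\widetilde{\mathcal E}_{N,l_2}\lesssim\text{const}$ with an interpolation inequality $\mathcal E_{N,l_2}(t)\lesssim\big(\widetilde{\mathcal E}_{N,l_2}(t)\big)^{\varrho/(\varrho+?)}\big(\mathcal D_{N,l_2}(t)\big)^{?/(\varrho+?)}$ and a Gronwall-type lemma for the ODE differential inequality then produces $\mathcal E_{N,l_2}(t)\lesssim(1+t)^{-\varrho}$. Finally the continuity/bootstrap argument upgrades the a priori bound to a genuine global solution, using the local existence theory for \eqref{rVPB} (which I assume is available as in the non-cutoff literature) to start the continuation.

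The main obstacle, I expect, is the simultaneous bookkeeping of the two singular factors $\tfrac1\varepsilon$ and $\tfrac1{\varepsilon^2}$ through \emph{weighted and high-derivative} estimates while keeping every constant independent of $\varepsilon$ --- in particular making sure that every term carrying an uncompensated $\tfrac1\varepsilon$ is either (i) a time derivative of something in $\mathcal E_{N,l_2}$, (ii) paired against $\{\mathbf I-\mathbf P\}f^\varepsilon$ so it lands in the $\tfrac1{\varepsilon^2}$-dissipation with a Young-inequality splitting $\tfrac1\varepsilon=\varepsilon\cdot\tfrac1{\varepsilon^2}$, or (iii) genuinely nonlinear and hence carries an extra small factor $\sqrt{\mathcal E_{N,l_2}}$. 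A secondary difficulty is verifying that the hard-potential weight shift by exactly one power of $\langle v\rangle$ per derivative is tight enough to absorb all commutators involving the external force $\nabla_x\phi^\varepsilon\cdot\nabla_v$, which is where the hypothesis $l_2\ge N+\max\{1,\gamma+2s\}$ is consumed.
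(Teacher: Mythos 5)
Your overall architecture is the same as the paper's (macro--micro decomposition, $w_{l_2}$-weighted estimates with the $\varepsilon$-weighted top order $|\alpha|=N$, negative Sobolev norm $\Lambda^{-\varrho}$ plus interpolation and an ODE/Gronwall argument for the $(1+t)^{-\varrho}$ decay, then a continuity argument), and the decay step, though left with unspecified exponents, can be made precise exactly as in the paper: for hard potentials the weighted dissipation controls the weighted energy except for $\|\mathbf{P}f^\varepsilon\|^2$, i.e.\ $\mathcal{E}_{N,l_2}\lesssim\|\mathbf{P}f^\varepsilon\|^2+\mathcal{D}_{N,l_2}$, and $\|\mathbf{P}f^\varepsilon\|^2$ decays like $(1+t)^{-\varrho}$ by the $\Lambda^{-\varrho}$ interpolation.

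However, there is a genuine gap in the core energy estimate: you never address the velocity-growth electric force term $\tfrac{1}{2}q_0\,\nabla_x\phi^{\varepsilon}\cdot v\,f^{\varepsilon}_{\pm}$ in \eqref{rVPB} when all derivatives fall on $f^{\varepsilon}_{\pm}$, and your claim that ``the fractional-Sobolev dissipation absorbs a full extra power of $\langle v\rangle$'' is false in the hard range $0\le\gamma+2s<1$: the dissipation norm $N^s_\gamma$ only carries the weight $\langle v\rangle^{\gamma/2+s}$, so pairing this term with $w_{l_2}^2(\alpha,\beta)\partial^\alpha_\beta f^{\varepsilon}$ produces $\|\nabla_x\phi^{\varepsilon}\|_{L^\infty}\,\|\langle v\rangle^{1/2}w_{l_2}(\alpha,\beta)\partial^\alpha_\beta f^{\varepsilon}\|^2$, which is controlled neither by $\mathcal{E}_{N,l_2}$ (excess weight $\langle v\rangle^{1/2}$) nor by $\mathcal{D}_{N,l_2}$ (unless $\gamma+2s\ge 1$), and $\|\nabla_x\phi^{\varepsilon}\|_{L^\infty}\lesssim(1+t)^{-\varrho/2}$ with $\varrho<3/2$ is not time-integrable; enlarging $l_2$ (the hypothesis $l_2\ge N+\max\{1,\gamma+2s\}$) does not help, since the excess is relative to the top weight itself. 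The paper closes this by multiplying the test function with the factor $e^{\pm\varepsilon\phi^{\varepsilon}}$ (Guo's trick), which makes this term cancel exactly against the transport term $\tfrac{1}{\varepsilon}v\cdot\nabla_x f^{\varepsilon}_{\pm}$ via the identity \eqref{e phi reason}; the price is a term $\varepsilon\,\partial_t\phi^{\varepsilon}$ from differentiating the exponential, which in the hard case is bounded by $\mathcal{D}^{1/2}_{N,l_2}$ (through the microscopic flux $G^{\varepsilon}$) and absorbed by Cauchy--Schwarz. Without this cancellation (or an equivalent device), your weighted estimate does not close for $\gamma+2s\in[0,1)$, so you need to incorporate the exponential weight into every level of the estimate, including the $\varepsilon$-weighted top-order one.
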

\medskip

The third main result is on hydrodynamic limit from the VPB system \eqref{rVPB}
to the two-fluid incompressible NSFP system with Ohm's law.
\begin{theorem}[Hydrodynamic limit]\label{mainth3}
Let $(f^\varepsilon, \nabla_{x}\phi^\varepsilon)$  be the
global solutions to the VPB system \eqref{rVPB} constructed in Theorem
\ref{mainth1} or Theorem \ref{mainth2} with initial data $f_0^\e={f}_0^\e(x,v)$. Suppose that there exist scalar functions $(\rho_0, n_0, \theta_0, \omega_0)=(\rho_0(x), n_0(x), \theta_0(x), \omega_0(x))$ and vector-valued functions $(u_0, \nabla_x \phi_0, j_0)=(u_0(x),\nabla_x \phi_0(x), j_0(x))$ such that
\begin{align}
\begin{split}\label{theorem1.3 1}
&{f}_0^\varepsilon \to {f}_0\;\text{ strongly in } H^{N}_x L^2_v,
\;\; \nabla_{x}\phi_0^\varepsilon \to \nabla_{x}\phi_0 \; \text{ strongly in } H^{N}_x,\\
&\frac{1}{\varepsilon}\big\langle f_0^{\varepsilon}, q_1v\mu^{1/2} \big\rangle \to j_0, \;\;
\frac{1}{\varepsilon}\big\langle f_0^{\varepsilon}, q_1\big(\frac{|v|^2}{3}-1\big)\mu^{1/2} \big\rangle \to \omega_0\; \text{ strongly in } H^{N}_x
\end{split}
\end{align}
as $\varepsilon \to 0$ and ${f}_0={f}_0(x, v)$ is of the form
\begin{align}\label{theorem1.3 2}
\begin{split}
\!\!\!f_0=\;&\big(\rho_0+\frac{1}{2}n_0\big)\frac{q_1+q_2}{2}\mu^{1/2} +\! \big(\rho_0-\frac{1}{2}n_0\big)\frac{q_2-q_1}{2}\mu^{1/2}+ u_0 \cdot v q_2 \mu^{1/2} + \theta_0 \big(\frac{|v|^2}{2}-\frac{3}{2}\big)q_2 \mu^{1/2},
\end{split}
\end{align}
where $q_{1}=[1,-1], q_{2}=[1,1]$.

Then there hold
\begin{equation}\label{theorem1.3 4}
\begin{split}
&{f}^\varepsilon \to {f}
\; \text{ weakly}\!-\!* \text{ in }  L^\infty(\mathbb{R}^+; H^N_x L^2_v)
 \text{ and strongly in } C(\mathbb{R}^+; H^{N-1}_x L^2_v), \\
&\nabla_x\phi^\varepsilon \to \nabla_x\phi \;
\text{ weakly}\!-\!*  \text{ in }   L^\infty(\mathbb{R}^+;H^{N+1}_{x})
\text{ and strongly in } C(\mathbb{R}^+; H^{N}_x),\\
&\frac{1}{\varepsilon}\big\langle f^{\varepsilon}, q_1v\mu^{1/2} \big\rangle \to j\;
\text{ weakly}\!-\!* \text{ in } L^\infty(\mathbb{R}^+; H^N_{x}) \text{ and strongly in } C(\mathbb{R}^+; H^{N-1}_x), \\
&\frac{1}{\varepsilon}\big\langle f^{\varepsilon}, q_1\big(\frac{|v|^2}{3}-1\big)\mu^{1/2} \big\rangle \to \omega \;
\text{ weakly}\!-\!*  \text{ in } L^\infty(\mathbb{R}^+; H^N_{x}) \text{ and strongly in } C(\mathbb{R}^+; H^{N-1}_x)
\end{split}
\end{equation}
as $\varepsilon \to 0$, where $f={f}(t, x, v)$ has the form
\begin{align}
\begin{split}\label{theorem1.3 5}
f=\;&\big(\rho+\frac{1}{2}n\big)\frac{q_1+q_2}{2}\mu^{1/2} + \big(\rho-\frac{1}{2}n\big)\frac{q_2-q_1}{2}\mu^{1/2} + u \cdot v q_2 \mu^{1/2} + \theta\big(\frac{|v|^2}{2}-\frac{3}{2}\big)q_2 \mu^{1/2}.
\end{split}
\end{align}
Moreover, the above mass density $\rho=\rho(t,x)$, the bulk velocity $u=u(t,x)$, the temperature $\theta=\theta(t,x)$, the electric charge $n=n(t,x)$, the internal electric energy $\omega=\omega(t,x)$, the electric field $\nabla_x \phi=\nabla_x \phi(t,x)$ and the electric current $j=j(t,x)$ satisfy
 \begin{align}
\begin{split}\label{jixian solution space}
&\left(\rho, u, \theta, n,\omega\right) \in  C(\mathbb{R}^+; H^{N}_{x}),\;\; \nabla_x \phi\in C(\mathbb{R}^+; H^{N+1}_{x}),\;\; j \in C(\mathbb{R}^+; H^{N-1}_{x})
\end{split}
\end{align}
 and the following two-fluid incompressible NSFP system with Ohm's law
\begin{equation}\label{INSFP limit}
\left\{
\begin{array}{ll}
\displaystyle \partial_{t} u+u \cdot \nabla_{x} u-\nu \Delta_{x} u+\nabla_{x}P=-\frac{1}{2} n \nabla_{x} \phi,  &\nabla_{x} \cdot u=0,\\[2mm]
\displaystyle \partial_{t} \theta+u \cdot \nabla_{x} \theta-\kappa \Delta_{x} \theta=0, &\rho+\theta=0,\\[2mm]
\displaystyle \partial_{t} {n}+  u \cdot \nabla_x n - \frac{\sigma}{2} \Delta_x n + \sigma n=0, &-\Delta_{x} \phi=n,\\ [2mm]
\displaystyle j=n u-\sigma\Big(\nabla_{x} \phi+\frac{1}{2} \nabla_{x} n\Big), &\omega=n \theta,\\[2mm]
\displaystyle u(0)=\mathcal{P}u_0,\; \theta (0)=\frac{3}{5}\theta_0-\frac{2}{5}\rho_0,\;  n(0)=n_0,&
\end{array} \right.
\end{equation}
where $\mathcal{P}$ is the Leray projection, and the viscosity coefficient $\nu$, the heat conductivity coefficient $\kappa$ and the electrical conductivity coefficient $\sigma$ are defined in \eqref{nu define}, \eqref{kappa define}, \eqref{sigma define} respectively.
\end{theorem}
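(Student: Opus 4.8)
\emph{Proof strategy.} The plan is to run the moment-method (Bardos--Golse--Levermore type) program for the diffusive limit in the whole space $\mathbb{R}^3$, anchored on the $\varepsilon$-uniform estimates already supplied by Theorem~\ref{mainth1} and Theorem~\ref{mainth2}. First I would invoke the uniform bound $\widetilde{\mathcal{E}}_{N,l}(t)+\int_0^t\widetilde{\mathcal{D}}_{N,l}(\tau)\,\d\tau\le C\widetilde{\mathcal{E}}_{N,l}(0)$ with $C$ independent of $\varepsilon$, extracting along a subsequence weak-$*$ limits $f^\varepsilon\rightharpoonup f$ in $L^\infty(\mathbb{R}^+;H^N_xL^2_v)$ and $\nabla_x\phi^\varepsilon\rightharpoonup\nabla_x\phi$ in $L^\infty(\mathbb{R}^+;H^{N+1}_x)$. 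The dissipation part controls $\varepsilon^{-2}\int_0^\infty\|\{\mathbf{I}-\mathbf{P}\}f^\varepsilon\|_{N^s_\gamma}^2$, so $\{\mathbf{I}-\mathbf{P}\}f^\varepsilon\to0$; solving the microscopic part of \eqref{rVPB} yields the representation $\{\mathbf{I}-\mathbf{P}\}f^\varepsilon=-\varepsilon L^{-1}\{\mathbf{I}-\mathbf{P}\}\big(v\cdot\nabla_x\mathbf{P}f^\varepsilon+\nabla_x\phi^\varepsilon\cdot v\,q_1\mu^{1/2}\big)+O(\varepsilon^2)$ in the relevant weighted norms, which makes $\varepsilon^{-1}\{\mathbf{I}-\mathbf{P}\}f^\varepsilon$ converge as well. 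Hence the limit $f=\mathbf{P}f$ is an infinitesimal Maxwellian of the form \eqref{theorem1.3 5}, with $\rho,u,\theta$ the limits of the macroscopic fields built from $a^\varepsilon_\pm,b^\varepsilon,c^\varepsilon$ and $n^\varepsilon=a_+^\varepsilon-a_-^\varepsilon\to n$; passing to the limit in the Poisson equation gives $-\Delta_x\phi=n$, and since $\mathbf{P}f^\varepsilon$ is $L^2_v$-orthogonal to $q_1v\mu^{1/2}$ and to $q_1(\tfrac{|v|^2}{3}-1)\mu^{1/2}$, the currents $j^\varepsilon$, $\omega^\varepsilon$ equal $\varepsilon^{-1}\langle\{\mathbf{I}-\mathbf{P}\}f^\varepsilon,\cdot\rangle$, are uniformly bounded by the expansion above, and converge weakly-$*$ to $j$, $\omega$.

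Next I would derive the local conservation laws by pairing \eqref{rVPB} with the collision invariants $[1,0]\mu^{1/2}$, $[0,1]\mu^{1/2}$, $v\,q_2\mu^{1/2}$, $(|v|^2-3)q_2\mu^{1/2}$ and with $q_1v\mu^{1/2}$, $q_1(|v|^2-3)\mu^{1/2}$, obtaining a closed system for $(a^\varepsilon_\pm,b^\varepsilon,c^\varepsilon,j^\varepsilon,\omega^\varepsilon)$. The $1/\varepsilon$-singular terms enter only through moments of $v\cdot\nabla_x\{\mathbf{I}-\mathbf{P}\}f^\varepsilon$ and the field contributions; substituting the representation of $\{\mathbf{I}-\mathbf{P}\}f^\varepsilon$ turns them into $O(1)$ quantities of the type $\langle L^{-1}\{\mathbf{I}-\mathbf{P}\}(v\cdot\nabla_x\mathbf{P}f^\varepsilon),\,\text{moment}\rangle$, which in the limit produce the viscous, heat-conduction and Ohmic dissipations with coefficients $\nu$, $\kappa$, $\sigma$ built from $L^{-1}$ acting on the Burnett functions $A=(v\otimes v-\tfrac{|v|^2}{3}\mathrm{Id})\mu^{1/2}$, $B=\tfrac12 v(|v|^2-5)\mu^{1/2}$ and their $q_1$-weighted analogue, as in \eqref{nu define}--\eqref{sigma define}. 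The remaining $1/\varepsilon$ parts of the momentum and energy balances, which must stay bounded, force in the limit the incompressibility $\nabla_x\cdot u=0$ and $\nabla_x(\rho+\theta)=0$; together with decay at spatial infinity this gives $\rho+\theta=0$.

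Applying the Leray projection $\mathcal{P}$ to remove the pressure and residual acoustic gradients then delivers the equations for $\mathcal{P}u$, $\theta$, $n$ in \eqref{INSFP limit}, while the limits of the current and internal-energy moments give $j=nu-\sigma(\nabla_x\phi+\tfrac12\nabla_x n)$ and $\omega=n\theta$. To pass to the limit in the quadratic terms $u\cdot\nabla_x u$, $u\cdot\nabla_x\theta$, $u\cdot\nabla_x n$, $n\nabla_x\phi$ I would establish strong compactness of the slow variables $\mathcal{P}u^\varepsilon$, $\theta^\varepsilon$, $n^\varepsilon$, $\nabla_x\phi^\varepsilon$ by an Aubin--Lions argument: the $L^\infty_tH^N_x$ bound gives spatial regularity, and the Leray projection plus the Boussinesq-adjusted combinations remove the singular acoustic terms so that the time derivatives stay uniformly bounded in $H^{N-2}_x$. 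The fast acoustic component of the macroscopic fluctuation is uniformly bounded, converges weakly-$*$ to zero, and, using dispersive estimates for the linearized acoustic--Poisson system in $\mathbb{R}^3$ together with the negative-index bounds encoded in $\widetilde{\mathcal{E}}_{N,l}$, converges to zero strongly in $C(\mathbb{R}^+;H^{N-1}_x)$; this upgrades all convergences to the strong statements in \eqref{theorem1.3 4} and to \eqref{jixian solution space}. For the initial data, $u(0)=\mathcal{P}u_0$ because $\mathcal{P}$ annihilates the acoustic gradient part of $b_0=u_0$; $\theta(0)=\tfrac35\theta_0-\tfrac25\rho_0$ because $\tfrac35\theta-\tfrac25\rho$ is conserved by the leading-order acoustic flow ($\partial_t\rho=-\nabla_x\cdot u$, $\partial_t\theta=-\tfrac23\nabla_x\cdot u$) and hence passes to the limit from the datum \eqref{theorem1.3 2}, where $\rho+\theta=0$ makes it equal to $\theta(0)$; and $n(0)=n_0$ since $n$ evolves only on the slow time scale. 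Uniqueness for the limiting NSFP system then identifies the full (not merely subsequential) limit.

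I expect the main obstacle to be the strong compactness step combined with the uniform handling of the degenerate non-cutoff dissipation. Since $L$ for soft potentials is only coercive in the anisotropic norm $|\cdot|_{N^s_\gamma}$ with a $\langle v\rangle^{\gamma/2}$ loss, one must carefully propagate the $\langle v\rangle$-weights through the representation of $\{\mathbf{I}-\mathbf{P}\}f^\varepsilon$ and through the action of $L^{-1}$ on the Burnett functions---this is precisely where the weight \eqref{weight function} is used---and one must control the acoustic waves in the whole space uniformly in $\varepsilon$, which requires the dispersion of the linearized system coupled with the negative Sobolev functionals \eqref{negative sobolev energy}--\eqref{negative sobolev dissipation}. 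By contrast, the purely algebraic part of the Bardos--Golse--Levermore derivation of \eqref{INSFP limit} is routine once the above compactness and weighted $L^{-1}$-calculus are in hand.
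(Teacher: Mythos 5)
Your overall skeleton does match the paper's proof: uniform bounds from Theorems \ref{mainth1}--\ref{mainth2} giving weak-$*$ limits, the dissipation bound forcing $\{\mathbf{I}-\mathbf{P}\}f^\varepsilon\to 0$ so the limit is an infinitesimal Maxwellian, local conservation laws, identification of the viscous/heat/Ohmic fluxes through the Burnett-type functions $\widehat A,\widehat B,\widetilde\Phi,\widetilde\Psi$, Aubin--Lions compactness, the Leray projection, and the recovery of the initial data $u(0)=\mathcal{P}u_0$, $\theta(0)=\tfrac35\theta_0-\tfrac25\rho_0$, $n(0)=n_0$. However, two of your key steps do not survive scrutiny as stated. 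First, the pointwise-in-time representation $\{\mathbf{I}-\mathbf{P}\}f^\varepsilon=-\varepsilon L^{-1}\{\mathbf{I}-\mathbf{P}\}(v\cdot\nabla_x\mathbf{P}f^\varepsilon+\nabla_x\phi^\varepsilon\cdot v\mu^{1/2}q_1)+O(\varepsilon^2)$ ``in the relevant weighted norms'' is not available from the uniform estimates: those control $\varepsilon^{-1}\{\mathbf{I}-\mathbf{P}\}f^\varepsilon$ only after time integration (through $\widetilde{\mathcal{D}}_{N,l}$), and the remainder contains $\varepsilon\,\partial_t\{\mathbf{I}-\mathbf{P}\}f^\varepsilon$ and quadratic terms that are not $O(\varepsilon^2)$ uniformly in $t$. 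Hence your asserted $L^\infty_t$ boundedness and weak-$*$ convergence of $j^\varepsilon,\omega^\varepsilon$ ``by the expansion above'' is unjustified. The paper proceeds differently: it bounds $j^\varepsilon,\omega^\varepsilon$ in $L^2(\mathbb{R}^+;H^N_x)$ directly from the dissipation bound, extracts weak $L^2$ limits, and keeps all expansion errors as explicit remainders $R^\varepsilon_A,R^\varepsilon_B,R^\varepsilon_u,R^\varepsilon_\theta,R^\varepsilon_j,R^\varepsilon_\omega$ that are shown to vanish only in the sense of distributions.

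Second, the dispersive-estimate treatment of the fast acoustic component is not the paper's route and, as sketched, would not deliver what you need. The paper never uses dispersion: it applies Aubin--Lions--Simon only to the slow quantities $\tfrac35\theta^\varepsilon-\tfrac25\rho^\varepsilon$, $\mathcal{P}u^\varepsilon$, $n^\varepsilon$, $\nabla_x\phi^\varepsilon$, whose time derivatives are bounded in $L^2(0,T;H^{N-1}_x)$ because the $\tfrac{1}{\varepsilon}$ terms either cancel under $\mathcal{P}$ and the Boussinesq combination or are microscopic fluxes controlled by $\varepsilon^{-1}\|\{\mathbf{I}-\mathbf{P}\}f^\varepsilon\|_{L^2_tH^N_xN^s_\gamma}$; the strong statements in \eqref{theorem1.3 4} are then obtained from this slow-variable compactness together with the uniform bounds and an exhaustion of $\mathbb{R}^3_x$ by bounded domains. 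Your claim that the acoustic part tends to zero strongly in $C(\mathbb{R}^+;H^{N-1}_x)$ is moreover problematic for the ill-prepared data permitted in \eqref{theorem1.3 2} (neither $\nabla_x\cdot u_0=0$ nor $\rho_0+\theta_0=0$ is imposed, so the fast component does not vanish at $t=0$), and Strichartz-type estimates yield space-time averaged smallness rather than uniform-in-time Sobolev convergence; so this step would need to be replaced, e.g.\ by the paper's argument, rather than merely elaborated. The remaining ingredients you list (conservation laws, flux identification, Ohm's law via the one-species operator, passage to the limit in the quadratic terms) are in line with the paper's proof.
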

\medskip

\begin{remark}\label{remark-0}
The results given in Theorems \ref{mainth1}, \ref{mainth2} and \ref{mainth3} indicate that
\begin{equation*}
  \begin{split}
    &\sup_{0\leq t\leq \infty}   \left\|  \left[ F_{+}^{\varepsilon},  F_{-}^{\varepsilon} \right](t,x,v)- \left[ \mu(v),  \mu(v) \right]-\varepsilon \mu^{1/2}(v)f(t,x,v)    \right\|_{H^{N-1}_xL^2_v} =o(\varepsilon), \\
    & \sup_{0\leq t\leq \infty} \left\| \nabla_x \phi^\varepsilon(t,x) - \nabla_x \phi(t,x)  \right\|_{H^{N}_x}  =o(1),
  \end{split}
\end{equation*}
that is, the two-fluid incompressible NSFP system with Ohm's law (\ref{INSFP limit}) is the first-order approximation of
the original VPB system (\ref{GG1}).
\end{remark}
\medskip

\begin{remark}\label{remark-1.4}
This paper solves the open problem of diffusive limit of the non-cutoff VPB system
in perturbation framework for the full range of collision potentials $\gamma> -3$ and $0<s<1$, by establishing weighted uniform estimate
with respect to $\varepsilon\in (0,1]$ globally in time.

As a byproduct, this uniform estimate with respect to $\varepsilon\in (0,1]$ also extends the global existence result of the non-cutff VPB system
for soft potentials from strong angular singularity $\frac{1}{2} \leq s<1$ \cite{DL2013} to the full range $0<s<1$. Besides, weaker initial conditions are required in this work. In fact, for the soft potential case $-3<\gamma<-2s$, only $N \geq 5$ is required in Theorem \ref{mainth1}, rather than the requirement $N \geq 8$ in \cite{DL2013}; for the hard potential case  $\gamma+2s\geq 0$, $N \geq 2$ is enough in Theorem \ref{mainth2}, instead of $N \geq i+1$ $(i=1,2 ~determined~by~ s)$ needed in \cite{D2021-1}.
\end{remark}
\medskip

\begin{remark}\label{remark-1.5}

In fact, by combining the uniform estimate with respect to $\varepsilon\in (0,1]$, semigroup estimate of the linearized VPB system  and the Duhamel principle (cf. \cite{DYZ2013,WZL2023ARXIV}), we can obtain the same optimal $L^2$ time decay rate of the VPB system \eqref{rVPB} for all $\varepsilon\in (0,1]$ and full range of collision potentials $\gamma> -3$ and $0<s<1$, namely,
\begin{align*}
\|f^\varepsilon(t)\|+\|\nabla_x \phi^\varepsilon(t)\| \lesssim (1+t)^{-\frac{3}{4}},\quad \forall \;t\geq 0.
\end{align*}
 But this approach falls to obtain the time decay of the first-order and higher-order derivatives for the soft potential case (still applicable to the hard potential case), due to the singularity $\frac{1}{\varepsilon}$ when deriving the time decay of the corresponding nonlinear problem. This is also the reason that we have to resort negative Sobolev space \cite{GW2012CPDE} to obtain sufficient time decay
 of the first-order derivative, which is essential to close the weighted energy estimate, cf. Section \ref{Global Existence}.
\end{remark}
\medskip

\subsection{Difficulties  and Innovations}
\hspace*{\fill}

In this subsection, we outline the difficulties and innovations proposed in this paper.

Our analysis is based on the uniform weighted energy estimate with respect to $\varepsilon\in (0,1]$ for the VPB system \eqref{rVPB} globally in time, which eventually leads to the incompressible NSFP limit.
To deal with the full soft range $-3<\gamma<-2s$ and $0<s<1$,  we are faced with considerable difficulties induced by the weak dissipation of the
soft linearized Boltzmann operator and the singularity $\frac{1}{\varepsilon}$.
In what follows, we point out the critical technical points in our treatment.

\subsubsection {Control of the velocity growth in the transport $\frac{1}{\varepsilon} v \cdot \nabla_x f^\varepsilon$ and electric force  $\nabla_{x} \phi^{\varepsilon} \cdot v {f}^{\varepsilon}$}
\hspace*{\fill}

 As is well known, the weighted energy estimate is necessary for the Boltzmann equation with soft potentials, due to the weak dispassion of the linearized Boltzmann operator.
 To handle the velocity growth in the transport term $\frac{1}{\varepsilon} v \cdot \nabla_x f^\varepsilon$ as well as the singular factor $\frac{1}{\varepsilon}$, we design a new velocity weight function
\begin{align}\label{weight-original}
w_l(\alpha,\beta)=\langle v \rangle^{l-(m+\gamma)|\alpha|-m|\beta|},
\end{align}
where the constants $l$ and $m$ will be determined later. With the help of this weight function $w_l(\alpha,\beta)$, we estimate the transport term
$\frac{1}{\varepsilon} \partial^\alpha_{\beta}( v \cdot \nabla_x f^\varepsilon)$
for $|\alpha|+|\beta| \leq N$ and $|\beta|\geq 1$ as
\begin{align*}
&\frac{1}{\varepsilon} \left(\partial^{\alpha+e_i}_{\beta-e_i} \{\mathbf{I}-\mathbf{P}\}f^\varepsilon, w_l^2(\alpha,\beta)\partial^\alpha_{\beta} \{\mathbf{I}-\mathbf{P}\}f^\varepsilon\right)\\
\lesssim\;&\frac{1}{\varepsilon}\left\| w_l(\alpha,\beta) \left\langle v \right\rangle^\frac{\gamma}{2} \partial^\alpha_{\beta} \{\mathbf{I}-\mathbf{P}\}f^\varepsilon \right\|
\left\| w_l(\alpha+e_i,\beta-e_i) \left\langle v \right\rangle^\frac{\gamma}{2} \partial^{\alpha+e_i}_{\beta-e_i} \{\mathbf{I}-\mathbf{P}\}f^\varepsilon \right\| \nonumber\\
\lesssim\;&\frac{\eta}{\varepsilon^2}\left\| w_l(\alpha,\beta)  \partial^\alpha_{\beta} \{\mathbf{I}-\mathbf{P}\}f^\varepsilon \right\|^2_{N^s_\gamma}
+C_\eta\sum_{\substack{{|\alpha^\prime|=|\alpha|+1}\\{|\beta^\prime|=|\beta|-1}}}\left\| w_l(\alpha^\prime,\beta^\prime) \partial^{\alpha^\prime}_{\beta^\prime} \{\mathbf{I}-\mathbf{P}\}f^\varepsilon \right\|^2_{N^s_\gamma},
\end{align*}
where we have used the notation $e_i$ to denote the multi-index with the $i$-th element unit and the rest zero, as well as the fact
\begin{align*}
w_l(\alpha,\beta)=w_l(\alpha+e_i,\beta-e_i)\langle v \rangle^\gamma.
\end{align*}
Therefore, by proper linear combination of the weighted energy estimates for each order, the transport term can be treated with the help of this weight.
Note that this design of weight function $w_l(\alpha,\beta)$ can eliminate the limitation of strong angular singularity, i.e. $\frac{1}{2} \leq s < 1$ required in \cite{DL2013}.

To overcome the velocity growth of the electric potential term $\frac{1}{2}\partial^\alpha_\beta(\nabla_x \phi^\varepsilon \cdot v f_{\pm}^\varepsilon)$, inspired by \cite{Guo2012JAMS},
we introduce an $\varepsilon$ dependent exponential factor $e^{\pm \varepsilon \phi^\varepsilon}$ to cancel this term together with the transport term $\frac{1}{\varepsilon} v \cdot \nabla_x f_{\pm}^\varepsilon$, namely
\begin{align*}
\left( \frac{1}{\varepsilon} v_i \partial^{\alpha+e_i}_\beta f_{\pm}^\varepsilon \pm \frac{1}{2} v_i \partial^{e_i} \phi^\varepsilon\partial^\alpha_\beta f_{\pm}^\varepsilon, e^{\pm \varepsilon \phi^\varepsilon } w^2_l(\alpha,\beta)\partial^\alpha_\beta f_{\pm}^\varepsilon\right)=0
\end{align*}
when all derivatives $\partial^\alpha_\beta$ act on $f_{\pm}^\varepsilon$.
Besides, we also need to deal with the remaining terms when not all derivatives act on $f^\varepsilon_{\pm}$, such as the term $ v_i \partial^{\alpha_1+e_i}\phi^\varepsilon \partial^{\alpha-\alpha_1} \{\mathbf{I}_{\pm}-\mathbf{P}_{\pm}\}f^\varepsilon$ for $1\leq |\alpha_1| \leq |\alpha|$.
By setting $m \geq -2\gamma+1$ in the definition of $w_l(\alpha,\beta)$, we get
$$
  \langle v \rangle w_l(\alpha,0) \leq \langle v \rangle^{\gamma} w_l(|\alpha|-1,0),
$$
 which eventually leads to the following estimate
\begin{align*}
&\sum_{1 \leq | \alpha_1 | \leq |\alpha|} \left( v_i \partial^{\alpha_1+e_i}\phi^\varepsilon \partial^{\alpha-\alpha_1} \{\mathbf{I}_{\pm}-\mathbf{P}_{\pm}\} f^{\varepsilon}, e^{\pm \varepsilon \phi^{\varepsilon}} w^2_l(\alpha, 0) \partial^\alpha  \{\mathbf{I}_{\pm}-\mathbf{P}_{\pm}\} f^{\varepsilon} \right) \\
\lesssim\;& \sum_{|\alpha_1|=1}\left\|\partial^{\alpha_1}\nabla_x \phi^\varepsilon \right\|_{L^\infty}
\left\| \langle v \rangle^{\frac{\gamma}{2}}w_l(|\alpha|-1, 0) \partial^{\alpha-\alpha_1}  \{\mathbf{I}-\mathbf{P}\} f^{\varepsilon} \right\|
\left\| \langle v \rangle^{\frac{\gamma}{2}}w_l(\alpha, 0) \partial^\alpha  \{\mathbf{I}-\mathbf{P}\} f^{\varepsilon} \right\| \\
&+\!\sum_{2 \leq |\alpha_1| \leq N}\!\left\|\partial^{\alpha_1}\nabla_x \phi^\varepsilon \right\|_{L^3}
\left\| \langle v \rangle^{\frac{\gamma}{2}}w_l(|\alpha|-1, 0) \partial^{\alpha-\alpha_1}  \{\mathbf{I}-\mathbf{P}\} f^{\varepsilon} \right\|_{L^6_xL^2_v}
\left\| \langle v \rangle^{\frac{\gamma}{2}}w_l(\alpha, 0) \partial^\alpha  \{\mathbf{I}-\mathbf{P}\} f^{\varepsilon} \right\| \\
\lesssim\;& \mathcal{E}^{1/2}_{N,l}(t) \mathcal{D}_{N,l}(t),
\end{align*}
 cf. the proof of Lemma \ref{softnonlinearterm1}. It is also worth noting that $N \geq 2$ is sufficient in this proof.

\subsubsection{Treatment of mismatch between derivatives and
weight functions}
\hspace*{\fill}

Noticing that higher order derivatives are associated with weaker velocity weights, we employ an interpolation inequality to balance the mismatch between derivatives and weight functions.

Let us take the electric force term $\partial^\alpha(\nabla_x \phi^\varepsilon \cdot \nabla_v f^\varepsilon_{\pm})$ (for $|\alpha|\leq N-1$) as an example to illustrate our novel processing trick. On one hand, if all derivatives act on $f_{\pm}^\varepsilon$, then we use the integration by parts with respect to velocity, namely,
\begin{align*}
\left|\left( \partial^{e_i} \phi^\varepsilon \partial^{\alpha}_{e_i} f_{\pm}^{\varepsilon},
e^{\pm \varepsilon \phi^\varepsilon } w^2_l(\alpha,0)\partial^\alpha f_{\pm}^\varepsilon\right)\right|
\lesssim \;
& \left|\left( \partial^{e_i} \phi^\varepsilon (\partial^{\alpha} f_{\pm}^{\varepsilon})^2,
e^{\pm \varepsilon \phi^\varepsilon }\partial_{e_i}w^2_l(\alpha,0)\right)\right|.
\end{align*}
In this case, the microscopic part of the term on the right-hand side can be controlled by $\|\nabla_x \phi^\varepsilon\|_{L^\infty}\mathcal{E}_{N,l}(t)$ or $\|\nabla_x \phi^\varepsilon\|_{L^\infty} \mathcal{E}^h_{N,l}(t)$, different with \cite{DL2013} where $\mathcal{E}^{1/2}_{N,l}(t)\mathcal{D}_{N,l}(t)$ was used to control the above estimate.
On the other hand, if not all derivatives act on $f^\varepsilon_{\pm}$, then we have
\begin{align*}
&\left( \partial^{\alpha_1+e_i} \phi^\varepsilon \partial^{\alpha-\alpha_1}_{e_i} f_{\pm}^{\varepsilon},
e^{\pm \varepsilon \phi^\varepsilon } w^2_l(\alpha,0)\partial^\alpha f_{\pm}^\varepsilon\right)\\
\lesssim
&\;\int_{\mathbb{R}^3}|\partial^{\alpha_1}\nabla_x\phi^\varepsilon | \underbrace{\left|\langle v\rangle^{-\frac{\gamma}{2}} w_l(\alpha,0)\partial^{\alpha-\alpha_1}_{e_i} f^{\varepsilon}\right|_{L^2} } \left|\langle v\rangle^{\frac{\gamma}{2}} w_l(\alpha,0)\partial^{\alpha} f^{\varepsilon}\right|_{L^2} \d x,
\end{align*}
for $1\leq |\alpha_1| \leq |\alpha|$. Here, the weight function $\langle v\rangle^{-\frac{\gamma}{2}} w_l(\alpha,0)$ before the term $\partial^{\alpha-\alpha_1}_{e_i} f^{\varepsilon}$ mismatches with the desired weight $w_l(\alpha-\alpha_1,e_i)$, since the inequality $\langle v \rangle ^{-\frac{\gamma}{2}}w_l(\alpha,0) \leq \langle v \rangle ^{\frac{\gamma}{2}}w_l(\alpha-\alpha_1,e_i)$ for $|\alpha_1| =1$ does not hold.
To control the underbraced term, inspired by \cite{CDL2024SIAM}, we employ the following interpolation inequality
\begin{align*}
\left| f \right|_{H^1} \lesssim \left| \langle v \rangle ^\ell f\right|_{H^s} + \left| \langle v \rangle ^{-\frac{\ell s}{1-s}} f\right|_{H^{1+s}}.
\end{align*}
By setting
\begin{align*}
\langle v \rangle ^\ell =\{w_l(|\alpha|-1,0)\}^{1-s}\{w_l(|\alpha|-1,1)\}^{-(1-s)},
\end{align*}
 one has
\begin{align*}
\langle v \rangle ^{-\frac{\ell s}{1-s}} =\{w_l(|\alpha|-1,0)\}^{-s}\{w_l(|\alpha|-1,1)\}^s.
\end{align*}
Substituting these two terms into the above inequality, the microscopic part of the underbraced term
can be controlled as
\begin{align*}
&\left|\langle v\rangle^{-\frac{\gamma}{2}} w_l(\alpha, 0) \partial^{\alpha-\alpha_1}_{e_i}\{\mathbf{I}-\mathbf{P}\} f^{\varepsilon}\right|_{L^2} \\
\lesssim\;&\left|\langle v\rangle^{-\frac{\gamma}{2}} w_l(\alpha, 0) \partial^{\alpha-\alpha_1}\{\mathbf{I}-\mathbf{P}\} f^{\varepsilon}\right|_{H^1}\\
\lesssim\;&\left|\langle v \rangle ^\ell\left(\langle v\rangle^{-\frac{\gamma}{2}} w_l(\alpha, 0) \partial^{\alpha-\alpha_1}\{\mathbf{I}-\mathbf{P}\} f^{\varepsilon}\right)\right|_{H^{s}}
+\left|\langle v \rangle ^{-\frac{\ell s}{1-s}} \left(\langle v\rangle^{-\frac{\gamma}{2}} w_l(\alpha, 0) \partial^{\alpha-\alpha_1}\{\mathbf{I}-\mathbf{P}\} f^{\varepsilon}\right)\right|_{H^{1+s}}
 \\
\lesssim \;& \left| w_l(|\alpha|-1,0) \partial^{\alpha-\alpha_1}\{\mathbf{I}-\mathbf{P}\} f^\varepsilon \right|_{H^s_{\gamma/2}}
+\left| w_l(|\alpha|-1,1) \partial^{\alpha-\alpha_1}\{\mathbf{I}-\mathbf{P}\} f^{\varepsilon} \right|_{H^{1+s}_{\gamma/2}},
\end{align*}
where in the third inequality above we used the decomposition
\begin{align*}
w_l(\alpha,0)\leq \{w_l(|\alpha|-1,0)\}^s \{w_l(|\alpha|-1,1)\}^{1-s}\langle v \rangle^{\gamma},
\end{align*}
 by further setting $m \geq -\frac{2\gamma}{s}$ in the definition of $w_l(\alpha, \beta)$. Then derivatives and weight functions can be matched well with above treatment, which further leads to the control of the electric force term $\partial^\alpha(\nabla_x \phi^\varepsilon \cdot \nabla_v f^\varepsilon_{\pm})$, cf. the proof of Lemma \ref{softnonlinearterm2} for more details.

In the estimate of $\frac{1}{\varepsilon}\Gamma(f^\varepsilon, f^\varepsilon)$, there is an additional two order velocity derivative, cf. Lemma \ref{Gamma1}. In fact, for $|\alpha^\prime|+|\beta^\prime| = |\alpha|+|\beta| $ and $|\beta^\prime| > |\beta|$, the inequality $w_{l}(\alpha,\beta) \leq w_l(\alpha^\prime,\beta^\prime)$ does not hold due to the structure of the weight function $w_l(\alpha,\beta)$. In other words, there
  are also mismatches between derivatives and weight functions. To briefly illustrate such mismatches, we use the following term as an example,
\begin{align*}
&\frac{1}{\varepsilon} \int_{\mathbb{R}^3}
\min \Big\{ \sum_{|\beta^{\prime}| \leq 2}\left|  \partial^{\alpha_1}_{\beta_1+\beta^{\prime}} \{\mathbf{I}-\mathbf{P}\}f^{\varepsilon}
\right|_{L^2} \left| w_l(\alpha,\beta) \partial^{\alpha_2}_{\beta_2} \{\mathbf{I}-\mathbf{P}\} f^{\varepsilon}\right|_{N^s_\gamma}, \nonumber \\
&\;\;\;\;\;\;\;\;\;\;\;\;\;\;\;\;\;\;\;\;\;\;\;\;\;\;\;\;\;\;\;\;\;\;\;\;\;\;\;\;\;\sum_{|\beta^{\prime}| \leq 2} \left|  \partial^{\alpha_1}_{\beta_1} \{\mathbf{I}-\mathbf{P}\}f^{\varepsilon} \right|_{L^2}
\left| w_l(\alpha,\beta) \partial^{\alpha_2}_{\beta_2+\beta^{\prime}} \{\mathbf{I}-\mathbf{P}\} f^{\varepsilon}\right|_{N^s_\gamma} \Big\}\nonumber \\
&\;\;\;\;\;\;\;\;\;\;\;\;\;\;\;\;\;\;\;\;\;\times \left| w_l(\alpha,\beta) \partial^{\alpha}_{\beta} \{\mathbf{I}-\mathbf{P}\} f^{\varepsilon}\right|_{N^s_\gamma} \d x
\end{align*}
for $|\alpha_1|+|\beta_1|=N$ and $|\alpha_2|+|\beta_2|=0$. For this situation, the only possible way is to select the second term inside the minimum function and take $L^2$--$L^\infty$--$L^2$ norm. To match the derivatives with corresponding weight functions, it is necessary to use the inequality $w_l(\alpha,\beta) \leq w_l(\alpha^\prime,\beta^\prime)$ for $1\leq |\alpha^\prime|\leq 2$ and $|\beta^\prime|\leq 2$. However, for $N=4$, $|\alpha_1|=|\alpha|=N$ and $|\beta_1|=|\beta|=0$, this inequality does not hold. This is the main reason why
$N \geq 5$ is required in the soft potential case. For $N\geq 5$, in order to make the inequality $w_{l}(\alpha,\beta) \leq w_l(\alpha^\prime,\beta^\prime)$  hold for $1 \leq |\alpha^\prime| \leq 2$ and $ |\beta^\prime|\leq 2$, it is necessary to require $m \geq -3\gamma$. For more details, please see the proof of Lemma \ref{soft Gamma}.

Combining the three ranges of $m$ mentioned above, it can be inferred that $m \geq -\frac{3\gamma}{s}$ is sufficient. Therefore we let $m=-\frac{3\gamma}{s}$
in the original definition \eqref{weight-original} and eventually design the weight function \eqref{weight function}.

\subsubsection{Treatment of the singularities induced by $\frac{1}{\varepsilon}$}
\hspace*{\fill}

The usage of the weight function $w_l(\alpha, \beta)$ also generates a severe singularity when we estimate the $N$-th order space derivative of the linearized Boltzmann operator $L$, that is
\begin{align*}
\frac{1}{\varepsilon^2} \left(L \partial^\alpha f^\varepsilon,
w^2_l(\alpha,0)\partial^\alpha f^\varepsilon\right)
\geq\;&\frac{\lambda}{\varepsilon^2} \left\| w_l(\alpha,0) \partial^\alpha f^\varepsilon\right\|^2_{N^s_\gamma}
-\frac{1}{\varepsilon^2}\left\|\partial^\alpha f^\varepsilon\right\|^2_{L^2(B_C)},
\quad |\alpha|=N,
\end{align*}
where the last term includes the singular macroscopic quantity $\frac{1}{\varepsilon^2}\left\|\partial^\alpha \mathbf{P} f^{\varepsilon}\right\|^2$ and is out of control.
We solve this difficulty by first multiplying the equation with $\varepsilon$ and then making $w_l(\alpha,0)$-weighted energy estimate, namely
\begin{align*}
&\frac{\varepsilon}{2}\frac{\d}{\d t}\sum_{\pm} \Big\|e^{\pm \frac{\varepsilon \phi^\varepsilon}{2}} w_l(\alpha,0) \partial^\alpha f_{\pm}^\varepsilon\Big\|^2 + \frac{1}{\varepsilon}\left( L \partial^\alpha f^{\varepsilon}, w_l^2(\alpha,0) \partial^\alpha f^{\varepsilon}\right)\\
=\;&\frac{\varepsilon}{2}\frac{\d}{\d t}\sum_{\pm} \Big\|e^{\pm \frac{\varepsilon \phi^\varepsilon}{2}} w_l(\alpha,0) \partial^\alpha f_{\pm}^\varepsilon\Big\|^2
+\frac{1}{\varepsilon}\left( L \partial^\alpha \{\mathbf{I}-\mathbf{P}\}f^{\varepsilon}, w_l^2(\alpha,0) \partial^\alpha \{\mathbf{I}-\mathbf{P}\}f^{\varepsilon}\right) \\
&+\frac{1}{\varepsilon}\left( L \partial^\alpha \{\mathbf{I}-\mathbf{P}\}f^{\varepsilon}, w_l^2(\alpha,0) \partial^\alpha \mathbf{P}f^{\varepsilon}\right)\\
\gtrsim\;&\frac{\varepsilon}{2}\frac{\d}{\d t}\sum_{\pm} \Big\|e^{\pm \frac{\varepsilon \phi^\varepsilon}{2}} w_l(\alpha,0) \partial^\alpha f_{\pm}^\varepsilon\Big\|^2
+ \frac{\lambda}{\varepsilon} \left\| w_l(\alpha,0) \partial^\alpha \{\mathbf{I}-\mathbf{P}\}f^\varepsilon\right\|^2_{N^s_\gamma}
-\frac{C}{\varepsilon}\left\|\partial^\alpha \{\mathbf{I}-\mathbf{P}\}f^\varepsilon\right\|^2_{L^2(B_C)}\\
&-\frac{1}{\varepsilon^2} \left\| \partial^\alpha \{\mathbf{I}-\mathbf{P}\}f^\varepsilon\right\|^2_{N^s_\gamma}
-\left\|\partial^\alpha \mathbf{P} f^{\varepsilon}\right\|^2,
\end{align*}
where the last three terms on the right-hand side can be controlled by the dissipation.

 Finally, in order to close the a priori estimates, we need sufficient time decay of $\|\mathbf{P}f^\varepsilon\|^2$ and $\|\nabla_x \mathbf{P}f^\varepsilon\|^2$. Due to the singularity $\frac{1}{\varepsilon}$ in front of the nonlinear terms, the method of semigroup estimate for linearized equation combined with the Duhamel principle would result in $O(\frac{1}{\varepsilon})$ singularity in the final time decay rate
 of the nonlinear problem. To overcome this difficulty, inspired by \cite{GW2012CPDE}, we employ the interpolation inequality and energy estimate in negative Sobolev space $\|\Lambda^{-\varrho} \big(f^\varepsilon, \nabla_x \phi^\varepsilon\big) \|$ to obtain the estimate
\begin{align*}
\mathcal{E}^k_{N}(t) \lesssim  (1+t)^{-(k+\varrho)}\sup_{0 \leq \tau \leq t}\widetilde{\mathcal{E}}_{N,l}(\tau)  \;\; \text{ for } k=0,1.
\end{align*}
 By combining the above strategies, we eventually close the global a priori estimates successfully.
\medskip

The rest of this paper is organized as follows. In Section \ref{Nonlinear Estimates}, we list basic lemmas concerning the properties of $L$ and $\Gamma$ in the framework of \cite{GS2011} and present the $w_l(\alpha,\beta)$-weighted estimates for all the nonlinear terms. In Section \ref{The a Priori Estimate}, we establish a series of a priori estimates by the weighted energy method. In Section \ref{Global Existence}, we first obtain the time decay rate and close the a priori estimates, and then we give the proof of Theorem \ref{mainth1} and Theorem \ref{mainth2}. In section \ref{Limit section}, based on the uniform energy estimate with respect to $\e \in (0,1]$ globally in time, we justify
the limit of the VPB system \eqref{rVPB} to the two-fluid incompressible NSFP system with Ohm's law \eqref{INSFP limit}, that is, give the proof of Theorem \ref{mainth3}.
\medskip

\section{Nonlinear Estimates}\label{Nonlinear Estimates}
\hspace*{\fill}

In this section, we list some basic lemmas concerning the properties of the linearized Boltzmann operator $L$ and the nonlinear collision operator $\Gamma$ in the functional framework of \cite{GS2011}, and also present the $w_l(\alpha,\beta)$-weighted estimates for all the nonlinear terms  in \eqref{rVPB}.

\subsection{Preliminary Lemmas}
\hspace*{\fill}

In this subsection, we list some basic results to be used in the weighted energy estimates of the nonlinear terms in \eqref{rVPB}.

Firstly, we introduce the bilinear operation $\mathcal{T}$ by
\begin{align}
\begin{split}\label{T define}
\mathcal{T}(g_1, g_2) :=\;& \mu^{-1/2} Q\left(\mu^{1/2}g_1, \mu^{1/2}g_2 \right) \\
=\;& \iint_{\mathbb{R}^3 \times \mathbb{S}^2} B(v-v_{*}, \sigma) \mu^{1/2}(v_*) [g_1(v_*^{\prime}) g_2(v^{\prime})-g_1(v_{*}) g_2(v)] \d v_{*} \d \sigma
\end{split}
\end{align}
for two scalar functions $g_1, g_2$. Hence, it follows that
\begin{align}
 L_{ \pm} f =\;&-\left\{2 \mathcal{T}\left(\mu^{1 / 2}, f_{ \pm}\right)+\mathcal{T}\left(f_{ \pm}+f_{\mp}, \mu^{1 / 2}\right)\right\}, \nonumber\\
 \Gamma_{ \pm}(f, g) =\;&\mathcal{T}\left(f_{ \pm}, g_{ \pm}\right)+\mathcal{T}\left(f_{\mp}, g_{ \pm}\right). \nonumber
\end{align}

Recall that $w \equiv w(v)=\langle v \rangle$ in \eqref{w define}.
The first lemma concerns the coercivity estimate on the linearized operator $L$, proved by \cite{GS2011}.
\begin{lemma}
Let $\gamma > -3$ and $0 < s < 1$.
\begin{itemize}
\setlength{\leftskip}{-5mm}
\item[(1)]  There holds
\begin{align}\label{L coercive1}
\langle Lf, f \rangle &\gtrsim \left| \{\mathbf{I}-\mathbf{P}\}f\right|^2_{N_\gamma^s}.
\end{align}
\item[(2)] There exists a constant $C\geq 0$ such that the uniform coercive lower bound estimate
\begin{equation}\label{L coercive2}
\langle w^{2\ell}Lf,f \rangle \gtrsim \left| w^{\ell}f\right|^2_{N_\gamma^s}-C\left| f \right|^2_{L^2(B_C)}
\end{equation}
holds for any $\ell \in \mathbb{R}$ (if $\ell = 0$, we may take $C = 0$).
\item[(3)]  For $|\beta| \geq 1$, we have
\begin{equation}\label{L coercive3}
\langle w^{2\ell}\partial_{\beta}Lf, \partial_{\beta}f \rangle \gtrsim \left| w^{\ell}\partial_{\beta}f \right|^2_{N_\gamma^s}
-\eta \sum_{|\beta^{\prime}| < |\beta|} \left| w^{\ell}\partial_{\beta^{\prime}}f\right|^2_{N_\gamma^s}-C_\eta\left| f \right|^2_{L^2(B_C)}
\end{equation}
for any $\ell \in \mathbb{R}$ and small $\eta>0$.
\end{itemize}
\end{lemma}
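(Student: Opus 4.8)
The plan is to establish the three coercivity bounds \eqref{L coercive1}, \eqref{L coercive2}, \eqref{L coercive3} by reducing everything to known spectral and commutator estimates for the single-species non-cutoff linearized operator established in \cite{GS2011}, and then to package the two-species structure. First I would recall that the two-species operator $L$ acts diagonally-plus-coupling: writing $L_\pm f = -\{2\mathcal{T}(\mu^{1/2},f_\pm)+\mathcal{T}(f_\pm+f_\mp,\mu^{1/2})\}$, one checks that $\langle Lf,f\rangle = \langle L_0 f_+,f_+\rangle + \langle L_0 f_-,f_-\rangle + \text{(coupling terms)}$, where $L_0$ is the scalar single-species non-cutoff linearized Boltzmann operator. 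A short computation shows the coupling part is nonnegative and vanishes exactly on the degrees of freedom generated by $q_1\mu^{1/2}$; hence $\langle Lf,f\rangle \gtrsim \langle L_0 f_+,f_+\rangle + \langle L_0 f_-,f_-\rangle \gtrsim |\{\mathbf I-\mathbf P\}f|^2_{N^s_\gamma}$, using the scalar coercivity of \cite{GS2011} together with the fact that $\mathcal N(L)$ is spanned by the five listed collision invariants. This proves (1).

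For (2), the strategy is the standard weighted splitting. Write $\langle w^{2\ell}Lf,f\rangle = \langle L(w^\ell f), w^\ell f\rangle + \langle (w^{2\ell}L - w^\ell L w^\ell)f, f\rangle$; the first term is controlled below by $|w^\ell\{\mathbf I-\mathbf P\}(w^\ell\cdot\text{-conjugated})|^2_{N^s_\gamma}$ via part (1) after noting $\mathbf P$ is bounded on the relevant space, and the second (commutator) term is handled by the weighted cancellation lemmas of \cite{GS2011}: the commutator $[w^\ell, L]$ gains enough decay to be absorbed modulo a compact error supported on a ball $B_C$, i.e.\ bounded by $\eta|w^\ell f|^2_{N^s_\gamma} + C_\eta|f|^2_{L^2(B_C)}$. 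Absorbing the $\eta$-term and renaming constants gives $\langle w^{2\ell}Lf,f\rangle \gtrsim |w^\ell f|^2_{N^s_\gamma} - C|f|^2_{L^2(B_C)}$; when $\ell=0$ there is no commutator and no error, so $C=0$. Part (3) follows the same template after applying $\partial_\beta$: by the Leibniz rule $\partial_\beta(Lf) = L(\partial_\beta f) + \sum_{\beta'<\beta}(\text{lower-order commutator terms})$, and each commutator term, when paired against $w^{2\ell}\partial_\beta f$, is estimated by the non-cutoff trilinear/weighted estimates of \cite{GS2011} as $\eta\sum_{|\beta'|<|\beta|}|w^\ell\partial_{\beta'}f|^2_{N^s_\gamma} + C_\eta|f|^2_{L^2(B_C)}$ plus an absorbable $\eta|w^\ell\partial_\beta f|^2_{N^s_\gamma}$; combined with part (2) applied to $\partial_\beta f$ this yields the claimed bound.

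I expect the main obstacle to be the careful bookkeeping of the velocity-derivative commutators in part (3): one must verify that differentiating the kernel $B(v-v_*,\sigma)\mu^{1/2}(v_*)$ in $v$ produces terms whose singularity in $\theta$ is no worse than the original (so the $N^s_\gamma$ norm still controls them) and whose polynomial weight behaves correctly against $w^{2\ell}$, so that all error terms genuinely localize to a bounded set in $v$. This is exactly where the non-cutoff structure is delicate, since naive integration by parts in $v$ interacts with the angular singularity; the resolution is to invoke the cancellation lemmas and weighted upper bounds already proved in \cite{GS2011} rather than to differentiate by hand. Since the statement explicitly attributes the result to \cite{GS2011}, the proof in the paper is presumably a brief reduction citing those lemmas, and I would follow the same route: state the scalar results, note the two-species coupling is harmless, and invoke the weighted commutator estimates verbatim.
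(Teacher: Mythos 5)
The paper does not actually prove this lemma: it is quoted as a known result, with the scalar estimates in \cite{GS2011} and the weighted, two-species forms available in \cite{DL2013} and \cite{Strain2012}. So your plan of reducing everything to the cited lemmas is in line with what the paper does, and the overall architecture you describe (scalar coercivity, weight-commutator estimates absorbed up to a small multiple of the weighted $N^s_\gamma$ norm plus an error localized to $B_C$, Leibniz rule plus absorption for the velocity derivatives in part (3)) is the standard and correct route.

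Two specific steps of your sketch would not survive as written. First, for part (1) the claim that the cross-species coupling is ``nonnegative and vanishes exactly on the degrees of freedom generated by $q_1\mu^{1/2}$'' is unjustified: the individual cross terms $\langle \mathcal{T}(f_\mp,\mu^{1/2}),f_\pm\rangle$ carry no sign. The clean reduction is to pass to $g_2=f_++f_-$ and $g_1=f_+-f_-$, for which (using the self-adjointness of $h\mapsto -\mathcal{T}(\mu^{1/2},h)$) one has the exact identity $\langle Lf,f\rangle=\langle\mathcal{L}g_2,g_2\rangle+\tfrac12\langle\mathscr{L}g_1,g_1\rangle$ with the scalar operators $\mathcal{L},\mathscr{L}$ of Section 5; applying the scalar coercivity of \cite{GS2011} to each summand and matching the $6$-dimensional null space $\mathcal{N}(L)$ with the macroscopic parts of $g_2$ and $g_1$ gives \eqref{L coercive1} (this is essentially the argument in \cite{DL2013}). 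Second, your justification of the parenthetical in (2), ``no commutator when $\ell=0$, hence $C=0$,'' does not follow from your own scheme: with $\ell=0$ your first term only yields $|\{\mathbf I-\mathbf P\}f|^2_{N^s_\gamma}$ via part (1), not $|f|^2_{N^s_\gamma}$, and indeed a bound $\langle Lf,f\rangle\gtrsim|f|^2_{N^s_\gamma}$ with no localized error is impossible on $\mathcal{N}(L)$; whatever reading one gives that parenthetical, it cannot be produced by the argument you outline. Relatedly, in part (2) the localized error $|f|^2_{L^2(B_C)}$ does not come only from the commutator: after applying part (1) to $w^\ell f$ you must recover the macroscopic piece $|\mathbf P(w^\ell f)|_{N^s_\gamma}$, which is controlled by $|f|_{L^2(B_C)}$ plus a small multiple of $|w^\ell f|_{L^2_{\gamma/2+s}}$ because the projection coefficients are moments against Gaussian-decaying functions; ``$\mathbf P$ is bounded on the relevant space'' is not enough and this step deserves an explicit line.
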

\medskip

The following two lemmas focus on  the estimates of the nonlinear collision operator $\Gamma$, which can be found in \cite{D2021-1}, \cite{DL2013} and \cite{Strain2012}.
\begin{lemma}\label{Gamma1}
Let $0 < s < 1$ and $\ell \geq 0$.
\begin{itemize}
\setlength{\leftskip}{-6mm}
\item[(1)]
For $\gamma+2s \geq 0$, there holds
\begin{align}\label{hard gamma1}
\!\!\!\left| \left\langle w^{2\ell} \partial_{\beta}^{\alpha} \Gamma_{\pm}(f, g), \partial_{\beta}^{\alpha} h_{\pm}\right\rangle \right|
\lesssim  \sum \left\{ \left|\partial_{\beta_1}^{\alpha_1} f\right|_{L^2} \left|w^{\ell} \partial_{\beta_2}^{\alpha_2} g\right|_{N^s_\gamma}
+\left|w^{\ell} \partial_{\beta_1}^{\alpha_1} f\right|_{L^2}\left|\partial_{\beta_2}^{\alpha_2} g\right|_{N^s_\gamma}\right\}
\left|w^{\ell} \partial_\beta^\alpha h\right|_{N^s_\gamma}.
\end{align}
For $-3 < \gamma <-2s$, there holds
\begin{align}\label{soft gamma1}
&\left| \left\langle w^{2\ell} \partial_{\beta}^{\alpha} \Gamma_{\pm}(f, g), \partial_{\beta}^{\alpha} h_{\pm}\right\rangle \right| \nonumber\\
\lesssim\; & \sum \left\{ \left|\partial_{\beta_1}^{\alpha_1} f\right|_{L^2}
\left|w^{\ell} \partial_{\beta_2}^{\alpha_2} g\right|_{N^s_\gamma}
+\left|w^{\ell} \partial_{\beta_1}^{\alpha_1} f\right|_{L^2} \left|\partial_{\beta_2}^{\alpha_2} g\right|_{N^s_\gamma}\right\}
\left|w^{\ell} \partial_\beta^\alpha h\right|_{N^s_\gamma} \\
&\!\!\!+\sum \min \Big\{\sum_{\left|\beta^{\prime}\right| \leq 2}\left|w^{-m} \partial_{\beta_1+\beta^{\prime}}^{\alpha_1} f \right|_{L^2} \left|w^{\ell} \partial_{\beta_2}^{\alpha_2} g\right|_{N^s_\gamma},
\left|w^{-m} \partial_{\beta_1}^{\alpha_1} f\right|_{L^2} \sum_{\left|\beta^{\prime}\right| \leq 2}\left|w^{\ell} \partial_{\beta_2+\beta^{\prime}}^{\alpha_2} g\right|_{N^s_\gamma}\Big\}\left|w^{\ell} \partial_\beta^\alpha h\right|_{N^s_\gamma} \nonumber
\end{align}
for any $m \geq 0$, where the summation $\sum$ is taken over $\alpha_1+\alpha_2=\alpha$ and $\beta_1+\beta_2 \leq \beta$.
\item[(2)]
For $\gamma+2s \geq 0$, there holds
\begin{equation}\label{hard gamma2}
\left| w^{-\ell}  \Gamma (f, g) \right|_{L^2} \lesssim \left| w^{-\ell} f\right|_{L^2_{\gamma/2+s}}
\left| g\right|_{H^i_{\gamma+2s}}.
\end{equation}
For $-3 < \gamma < -2s$, there holds
\begin{equation}\label{soft gamma2}
\left| w^{\ell}  \Gamma (f, g) \right|_{L^2} \lesssim \min\left\{ \left| w^{\ell}f \right|_{H^2_{\gamma/2+s}}
\left| w^{\ell} g \right|_{H^i_{\gamma/2+s}}, \left| w^{\ell}f \right|_{L^2_{\gamma/2+s}}
\left| w^{\ell} g \right|_{H^{i+2}_{\gamma/2+s}} \right\}.
\end{equation}
Here $i=1$ if $s \in (0, 1/2)$, and $i=2$ if $s \in [1/2, 1)$.
\end{itemize}
\end{lemma}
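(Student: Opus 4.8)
\emph{Proof strategy.} As the statement indicates, both parts are essentially contained in \cite{DL2013, D2021-1, Strain2012}; here is the route I would follow. Everything reduces to the non-cutoff trilinear estimate for the operator $\mathcal{T}$ of \eqref{T define} in the Gressman--Strain framework \cite{GS2011} (see also \cite{AMUXY2012JFA}), which controls $|\langle\mathcal{T}(f,g),h\rangle|$ by a product of an $L^2$-type norm of one slot and the anisotropic $N^s_\gamma$ norms of the other two, and in which velocity weights $\langle v\rangle^\ell$ may be shuffled among the three slots by means of the pre-/post-collisional inequalities $\langle v\rangle\lesssim\langle v'\rangle\langle v'_*\rangle$, $\langle v'\rangle\langle v'_*\rangle\approx\langle v\rangle\langle v_*\rangle$ (valid on $\cos\theta\ge 0$) and the energy identity $|v|^2+|v_*|^2=|v'|^2+|v'_*|^2$, the spare Gaussian factor $\mu^{1/2}(v_*)$ in $\mathcal{T}$ absorbing any polynomial loss.

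\emph{Step 1: distribute the derivatives.} Since the kernel of $\mathcal{T}$ is translation invariant in $(v,v_*)$, the $x$-derivatives pass straight through, giving $\partial^\alpha\mathcal{T}(f,g)=\sum_{\alpha_1+\alpha_2=\alpha}\binom{\alpha}{\alpha_1}\mathcal{T}(\partial^{\alpha_1}f,\partial^{\alpha_2}g)$. For $\partial_\beta$ one first changes variables $v_*\mapsto v_*+v$ so that the $v$-dependence of the collision geometry is explicit, then differentiates; this rewrites $\partial^\alpha_\beta\mathcal{T}(f,g)$ as a finite sum of terms structurally identical to $\mathcal{T}(\partial^{\alpha_1}_{\beta_1}f,\partial^{\alpha_2}_{\beta_2}g)$ with $\beta_1+\beta_2\le\beta$, up to harmless Gaussian-weighted lower-order pieces (the computation of \cite{AMUXY2012JFA,Strain2012}). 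Inserting $w^{2\ell}$ and splitting it as $w^\ell\cdot w^\ell$ onto the two surviving slots via the weight-transfer inequalities reduces the left sides of \eqref{hard gamma1}--\eqref{soft gamma1} to a sum of terms $|\langle\mathcal{T}(w^\ell\partial^{\alpha_1}_{\beta_1}f,w^\ell\partial^{\alpha_2}_{\beta_2}g),w^\ell\partial^\alpha_\beta h\rangle|$, with the weight attached to only one of $f,g$ in each term.

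\emph{Step 2: hard potentials.} When $\gamma+2s\ge 0$ the kernel already carries nonnegative weight, so applying the basic trilinear estimate to each term of Step 1 --- placing the $L^2$(-type) norm on whichever of $\partial^{\alpha_1}_{\beta_1}f$, $\partial^{\alpha_2}_{\beta_2}g$ does \emph{not} carry $w^\ell$, and the $N^s_\gamma$ norm on the other two slots --- gives precisely the symmetric right side of \eqref{hard gamma1}. The $L^2$ bound \eqref{hard gamma2} is the non-coercive ``upper bound'' companion: by duality $|w^{-\ell}\Gamma(f,g)|_{L^2}=\sup_{|h|_{L^2}\le1}\langle\Gamma(f,g),w^{-\ell}h\rangle$ one needs a trilinear bound with $h$ merely in $L^2$; splitting $b=b\chi_{\theta\ge\theta_0}+b\chi_{\theta<\theta_0}$, one treats the cutoff part by a Grad-type pointwise estimate and, on the singular remainder, Taylor-expands $g(v')-g(v)$ to gain the two powers of $\theta$ that beat $\theta^{-1-2s}$, at the cost of the $i$ velocity derivatives on $g$ (one if $s<1/2$, two if $s\ge1/2$) recorded in \eqref{hard gamma2}.

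\emph{Step 3: soft potentials --- the main obstacle.} For $-3<\gamma<-2s$ the kernel carries the negative potential $|v-v_*|^\gamma$, so the naive trilinear estimate cannot keep $w^\ell$ on all three slots; this is the only genuinely delicate point. Following \cite{DL2013,Strain2012}, one splits the $v_*$-integration at $|v-v_*|=1$. On $|v-v_*|\ge1$ one has $\Phi\approx|v-v_*|^\gamma\lesssim1$ and the first, regular, sum in \eqref{soft gamma1} results just as in Step 2. On $|v-v_*|<1$ the velocities $v,v_*,v',v'_*$ are all comparable, so $w^\ell$ can be relocated entirely onto $g$ and $h$; what remains is to dominate $\partial^{\alpha_1}_{\beta_1+\beta'}f$ near $v$, and since $\gamma>-3$ the factor $|v-v_*|^\gamma$ is locally integrable in $\mathbb{R}^3_{v_*}$ once one spends up to two velocity derivatives through a local Sobolev embedding in $v$ (only one when $s<1/2$). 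This produces exactly the $\min\{\cdots\}$ terms of \eqref{soft gamma1} with the $w^{-m}\partial^{\alpha_1}_{\beta_1+\beta'}f$ factors, the negative weight $w^{-m}$ with $m$ arbitrary being affordable because in all applications $f$ sits in a strong weighted space. The same dichotomy, combined with the Grad-type argument of Step 2 on the far piece, yields \eqref{soft gamma2}, the two entries of the $\min$ recording which of $f,g$ absorbs the extra regularity. The crux throughout is Step 3: reconciling the negative potential weight with the need to keep $w^\ell$ on every slot forces the loss of up to two velocity derivatives and, later, dictates the structure of the weight function \eqref{weight function}.
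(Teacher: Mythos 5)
The paper gives no proof of this lemma at all: it is quoted verbatim from \cite{DL2013}, \cite{D2021-1} and \cite{Strain2012}, exactly as your opening sentence anticipates. Your sketch — Leibniz/translation distribution of the $x$- and $v$-derivatives, weight transfer using the Gaussian factor $\mu^{1/2}(v_*)$ and the pre-/post-collisional inequalities, the Gressman--Strain weighted trilinear estimate for the hard case, and the splitting at $|v-v_*|\le 1$ versus $|v-v_*|\ge 1$ with an $H^2_v\hookrightarrow L^\infty_v$ embedding generating the $\min\{\cdots\}$ terms with $|\beta'|\le 2$ in the soft case — is precisely the route followed in those cited works, so it agrees with the paper's treatment and there is nothing finer in the paper to compare it against.
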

\medskip

\begin{lemma}\label{Gamma2}
Let $\zeta(v)$ be a function satisfying
\begin{equation}\label{zeta}
|\zeta(v)| \approx e^{-c_0|v|^2}
\end{equation}
for some $c_0>0$. Let $g_1$ and $g_2$ be two scalar functions. For any $\ell \in \mathbb{R}$ and $m \geq 0$, there holds
\begin{align}
&\left| \left\langle w^{2\ell}\mathcal{T}(g_1,g_2),\zeta \right\rangle \right|
\lesssim |g_1|_{L^2_{-m}} |g_2|_{L^2_{-m}}, \label{Gamma zeta1}\\
&\left| \left\langle  w^{2\ell}\mathcal{T}(\zeta,g_1),g_2 \right\rangle \right|
\lesssim |w^\ell g_1|_{N^s_\gamma}|w^\ell g_2|_{N^s_\gamma}, \label{Gamma zeta2}\\
&\left| \left\langle  w^{2\ell}\mathcal{T}(g_1,\zeta),g_2 \right\rangle\right|
\lesssim |w^\ell g_1|_{N^s_\gamma}|w^\ell g_2|_{N^s_\gamma},\label{Gamma zeta3}
\end{align}
where $\mathcal{T}$ is defined in \eqref{T define}.
\end{lemma}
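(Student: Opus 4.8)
The statement to prove is Lemma~\ref{Gamma2}, the collection of three trilinear bounds \eqref{Gamma zeta1}--\eqref{Gamma zeta3} for the operator $\mathcal{T}$ tested against a Gaussian-type weight $\zeta$. These are ``easy'' estimates in the sense that $\zeta$ is rapidly decaying, so the weight $w^{2\ell}$ and the velocity growth $|v-v_*|^\gamma$ cause no trouble; nevertheless the angular singularity $b(\cos\theta)\sim\theta^{-1-2s}$ must still be absorbed by a cancellation. The plan is to prove each inequality by a Carleman-type (or ``$\sigma$-representation'') decomposition of the collision integral, exploiting that one of the three slots is occupied by the Schwartz function $\zeta$ or $\mu^{1/2}$.

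\textbf{Proof of \eqref{Gamma zeta1}.} Here $\zeta$ sits in the test slot. I would write, using the definition \eqref{T define} and the pre-post collisional change of variables,
\[
\bigl\langle w^{2\ell}\mathcal{T}(g_1,g_2),\zeta\bigr\rangle
=\iiint B(v-v_*,\sigma)\mu^{1/2}(v_*)g_1(v_*)g_2(v)\bigl[w^{2\ell}(v')\zeta(v')-w^{2\ell}(v)\zeta(v)\bigr]\,\d v_*\,\d v\,\d\sigma,
\]
after moving the primes from $(g_1,g_2)$ onto $w^{2\ell}\zeta$. Since $\zeta$ and $\mu^{1/2}$ are Schwartz, the bracket $w^{2\ell}\zeta(v')-w^{2\ell}\zeta(v)$ is $O(\theta\,|v-v_*|)\times(\text{Schwartz in }v,v_*)$ by the mean value theorem (recall $|v'-v|\le |v-v_*|\sin(\theta/2)$), which kills one power of $\theta$ and makes the $\sigma$-integral $\int_0^{\pi/2}\theta\cdot\theta^{-1-2s}\sin\theta\,b\,\d\theta$ converge since $s<1$. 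What remains is $\iint \langle v-v_*\rangle^{\gamma+1}\mu^{1/2}(v_*)(\text{Schwartz})|g_1(v_*)||g_2(v)|\,\d v_*\,\d v$, and since $\mu^{1/2}$ and the Schwartz factor dominate any polynomial, in particular $\langle v\rangle^{2m}\langle v_*\rangle^{2m}$, Cauchy--Schwarz in $(v,v_*)$ gives $|g_1|_{L^2_{-m}}|g_2|_{L^2_{-m}}$ for any $m\ge 0$. For the regime $|v-v_*|$ small where $\gamma<0$ could be singular, the extra $\langle v-v_*\rangle^{\gamma+1}$ with $\gamma>-3$ is locally integrable in three dimensions, so no difficulty arises.

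\textbf{Proof of \eqref{Gamma zeta2}--\eqref{Gamma zeta3}.} Now the Gaussian sits in one of the two bilinear slots. For \eqref{Gamma zeta2} I would use the known upper bound for $\mathcal{T}$ with a regular first argument: the standard non-cutoff trilinear estimate (as in \cite{GS2011,AMUXY2012JFA}) gives, when the first entry is Schwartz,
\[
\bigl|\langle w^{2\ell}\mathcal{T}(\zeta,g_1),g_2\rangle\bigr|
\lesssim |w^\ell g_1|_{N^s_\gamma}\,|w^\ell g_2|_{N^s_\gamma},
\]
the point being that $|\zeta|\lesssim e^{-c_0|v|^2}$ provides the localization usually supplied by $\mu^{1/2}$ in the estimate for $\Gamma$ itself; one simply replays the proof of the trilinear estimate for $\Gamma$ (Lemma~\ref{Gamma1} with $\alpha=\beta=0$) with $\mu^{1/2}$ replaced by $\zeta$ in the first slot, and the Gaussian decay handles the weight $w^{2\ell}$ by splitting $w^{2\ell}(v)=w^{2\ell}(v)\zeta(v)^{1/2}\cdot\zeta(v)^{-1/2}$ and noticing $w^{2\ell}\zeta^{1/2}$ is still bounded. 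For \eqref{Gamma zeta3} the Gaussian is in the second slot; by the symmetry $\langle \mathcal{T}(g_1,\zeta),g_2\rangle = \langle \mathcal{T}(g_1,g_2),\zeta\rangle$-type manipulations combined with the pre-post change of variables (or directly by the analogous trilinear bound with a Schwartz second argument), the same $N^s_\gamma\times N^s_\gamma$ control follows. Throughout, $w^{2\ell}$ for $\ell\in\mathbb R$, positive or negative, is harmless because it is swamped by the exponential weight of $\zeta$.

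\textbf{Main obstacle.} The only genuinely delicate point is the angular singularity in \eqref{Gamma zeta1}: one must not estimate $w^{2\ell}(v')\zeta(v')$ and $w^{2\ell}(v)\zeta(v)$ separately (that integral diverges like $\int\theta^{-1-2s}\,\d\theta$ near $\theta=0$), but keep them together and extract the gain of one power of $\theta$ from the difference, which requires a little care near $\theta\approx\pi/2$ where the cancellation is not present but the integrand is bounded anyway. For \eqref{Gamma zeta2}--\eqref{Gamma zeta3} the obstacle is merely bookkeeping: checking that the proof of the trilinear estimate for $\Gamma$ in \cite{GS2011} goes through verbatim when the fixed Gaussian is $\zeta$ rather than $\mu^{1/2}$, and that the polynomial weight $w^{2\ell}$ with possibly negative $\ell$ is absorbed; no new idea is needed. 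I would therefore spend the bulk of the written proof on \eqref{Gamma zeta1} and dispatch the other two by reduction to the known non-cutoff trilinear bound.
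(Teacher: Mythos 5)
The paper itself gives no proof of this lemma: it is quoted from \cite{DL2013}, \cite{D2021-1} and \cite{Strain2012}, so your sketch must be measured against the standard arguments there, and in outline it does follow the same route (cancellation in the Gaussian-occupied slot for \eqref{Gamma zeta1}, and a rerun of the weighted trilinear estimate of \cite{GS2011} with $\mu^{1/2}$ replaced by $\zeta$ for \eqref{Gamma zeta2}--\eqref{Gamma zeta3}). There is, however, a genuine gap in your proof of \eqref{Gamma zeta1}: gaining a single power of $\theta$ from the mean value theorem does \emph{not} make the angular integral converge on the full range $0<s<1$. Since $\sin\theta\, b(\cos\theta)\approx\theta^{-1-2s}$, the integral you obtain is $\int_0^{\pi/2}\theta^{-2s}\,\d\theta$, which diverges for $s\ge 1/2$ — exactly the strong-singularity regime this paper is designed to cover — so your assertion that it "converges since $s<1$" is false. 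The standard repair, and what the cited proofs do, is a second-order expansion combined with the symmetry of the $\sigma$-integration: writing $v'-v=\tfrac{|v-v_*|}{2}(\sigma-\hat k)$ with $\hat k=(v-v_*)/|v-v_*|$, the component of $v'-v$ orthogonal to $\hat k$ is $O(\theta|v-v_*|)$ but integrates to zero over the azimuthal angle, while the component along $\hat k$ and the quadratic remainder are $O(\theta^2\langle v\rangle^2\langle v_*\rangle^2)$ up to rapidly decaying factors; this yields a gain of $\theta^{2}$ and $\int_0^{\pi/2}\theta^{1-2s}\,\d\theta<\infty$ for all $s<1$. Without this cancellation-lemma step, \eqref{Gamma zeta1} is not proved for $s\in[1/2,1)$.

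A secondary slip in the same display: under the pre–post collisional change of variables the factor $\mu^{1/2}(v_*)$ in the gain term becomes $\mu^{1/2}(v_*')$, so the correct difference is $\mu^{1/2}(v_*')w^{2\ell}(v')\zeta(v')-\mu^{1/2}(v_*)w^{2\ell}(v)\zeta(v)$ rather than $\mu^{1/2}(v_*)\bigl[w^{2\ell}(v')\zeta(v')-w^{2\ell}(v)\zeta(v)\bigr]$; this is harmless provided the Taylor argument is applied to the full product and one uses $|v'|^2+|v_*'|^2=|v|^2+|v_*|^2$ to convert the Gaussian decay back into decay in $(v,v_*)$, but the identity as written is wrong. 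For \eqref{Gamma zeta2}--\eqref{Gamma zeta3}, the reduction to the trilinear estimate with $\zeta$ in place of $\mu^{1/2}$ is indeed the intended argument; just note that the "symmetry" $\langle\mathcal{T}(g_1,\zeta),g_2\rangle=\langle\mathcal{T}(g_1,g_2),\zeta\rangle$ you invoke is not an identity (go through the trilinear bound directly), and that for negative $\ell$ the weight $w^{2\ell}$ must be absorbed by $\zeta$ inside the proof, as you indicate, since $|w^\ell g_1|_{N^s_\gamma}$ no longer dominates unweighted norms.
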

\medskip

In what follows, we collect some basic inequalities to be used throughout this paper. Firstly, the following Sobolev interpolation inequalities have been shown in \cite{GW2012CPDE}.

\begin{lemma}\label{sobolev interpolation}
 Let $2 \leq p<\infty$ and $k, \ell, m \in \mathbb{R}$. Then for any $f \in C_0^\infty(\mathbb{R}^3)$, we have
\begin{align}\label{sobolev interpolation1}
\left\|\nabla^k f\right\|_{L^p} \lesssim\left\|\nabla^{\ell} f\right\|^\theta\left\|\nabla^m f\right\|^{1-\theta},
\end{align}
where $0 \leq \theta \leq 1$ and $\ell$ satisfies
\begin{align}\nonumber
\frac{1}{p}-\frac{k}{3}=\left(\frac{1}{2}-\frac{\ell}{3}\right) \theta+\left(\frac{1}{2}-\frac{m}{3}\right)(1-\theta).
\end{align}
For the case $p=+\infty$, we have
\begin{align}\label{sobolev interpolation2}
\left\|\nabla^k f\right\|_{L^{\infty}} \lesssim\left\|\nabla^{\ell} f\right\|^\theta\left\|\nabla^m f\right\|^{1-\theta},
\end{align}
where $\ell \leq k+1$, $m \geq k+2$, $0 \leq \theta \leq 1$ and $\ell$ satisfies
\begin{align}\nonumber
-\frac{k}{3}=\left(\frac{1}{2}-\frac{\ell}{3}\right) \theta+\left(\frac{1}{2}-\frac{m}{3}\right)(1-\theta).
\end{align}
\end{lemma}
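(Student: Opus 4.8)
\textbf{Proof proposal for Lemma \ref{sobolev interpolation}.}

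The plan is to treat the two regimes ($p$ finite and $p=+\infty$) separately, but in both cases reduce to the homogeneous Gagliardo--Nirenberg--Sobolev inequality on $\mathbb{R}^3$ combined with a Littlewood--Paley (or frequency-splitting) interpolation argument. First I would recall the Hardy--Littlewood--Sobolev / Sobolev embedding in homogeneous form: for $0<\sigma<3/2$ one has $\|g\|_{L^{r}}\lesssim \|\nabla^{\sigma}g\|_{L^2}$ with $\tfrac1r=\tfrac12-\tfrac\sigma3$, where $\nabla^\sigma$ is the fractional derivative defined via the Fourier multiplier $|y|^\sigma$. The key algebraic identity is that the exponent constraint in the statement, namely $\tfrac1p-\tfrac k3=\theta(\tfrac12-\tfrac\ell3)+(1-\theta)(\tfrac12-\tfrac m3)$, is exactly the scaling relation that makes the product $\|\nabla^\ell f\|^\theta\|\nabla^m f\|^{1-\theta}$ have the same homogeneity (under $f(x)\mapsto f(\lambda x)$) as $\|\nabla^k f\|_{L^p}$; this dimensional analysis is what forces the inequality to be at least plausible, and it is the first thing I would verify.

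For the finite-$p$ case, I would argue as follows. Write $\widehat{\nabla^k f}(y)=|y|^k\widehat f(y)$ and split into low and high frequencies at a radius $R>0$ to be optimized. On $|y|\le R$ bound $|y|^k\le |y|^{k-\ell}\cdot|y|^\ell$ when $k\ge\ell$ (and symmetrically), controlling the low-frequency part of $\nabla^k f$ in $L^p$ by $R^{a}\|\nabla^\ell f\|$ and the high-frequency part by $R^{-b}\|\nabla^m f\|$ for suitable positive exponents $a,b$ determined by the Sobolev embedding and the gap between $k$ and $\ell,m$; then optimize over $R$. This is the standard proof of Gagliardo--Nirenberg via dyadic decomposition, and the exponents $a,b,\theta$ come out matching the stated relation precisely because of the scaling identity above. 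The case distinction $\ell\le k+1$, $m\ge k+2$ in the $p=+\infty$ part is needed to ensure that after the frequency splitting the relevant pieces land in $L^1$ on the Fourier side (so that $\|\cdot\|_{L^\infty}$ is controlled by $\|\widehat{\cdot}\|_{L^1}$) — concretely, $\int_{|y|\le R}|y|^{k-\ell}\,dy<\infty$ requires $k-\ell>-3$, and I would carry out the analogous integrability check for the high-frequency tail using $m\ge k+2$.

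The main obstacle — or rather the only genuinely delicate point — is handling the endpoint/borderline exponents and the non-integer derivatives cleanly: one must make sure $\theta\in[0,1]$ genuinely holds under the hypotheses (this is where $\ell,m$ being on opposite sides of $k$ enters), and one must justify the interpolation for fractional $\nabla^k$, which is cleanest via Littlewood--Paley projections $P_j$ and the bound $\|P_j\nabla^k f\|_{L^p}\lesssim 2^{j(k-\ell)}\|P_j\nabla^\ell f\|_{L^p}$ together with Bernstein's inequality to convert between $L^2$ and $L^p$. Summing the geometric series in $j$ (convergent precisely because of the sign conditions on $k-\ell$ and $m-k$) and applying Hölder in $j$ with weights $\theta,1-\theta$ yields the claim. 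Since this lemma is quoted verbatim from \cite{GW2012CPDE}, I would in practice simply cite that reference for the full details and only indicate the scaling heuristic and the frequency-splitting skeleton as above.
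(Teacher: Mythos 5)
The paper gives no proof of this lemma at all: it is quoted directly from \cite{GW2012CPDE}, so there is nothing to compare beyond the citation you yourself fall back on. Your scaling check plus frequency-splitting/Bernstein sketch is the standard (and correct) argument for these inequalities — the only small inaccuracy is that the $L^\infty$ case uses Cauchy--Schwarz on the Fourier side, so the integrability conditions are $2(k-\ell)>-3$ and $2(k-m)<-3$ (guaranteed by $\ell\leq k+1$, $m\geq k+2$), not $k-\ell>-3$ as written.
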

\medskip

If $\varrho \in (0,3)$, then $\Lambda^{-\varrho}$ is the Riesz potential operator. The
Hardy--Littlewood--Sobolev theorem implies the following inequality for the Riesz potential operator $\Lambda^{-\varrho}$.

\begin{lemma}\label{negative embedding theorem}
Let $0<\varrho<3$, $1< p < q < \infty$, $1/q + \varrho/3=1/p$, there holds
\begin{align}\label{negative embed 1}
\left\|\Lambda^{-\varrho} f\right\|_{L^q} \lesssim\|f\|_{L^p}.
\end{align}
\end{lemma}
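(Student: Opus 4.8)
\textbf{Proof proposal for Lemma \ref{negative embedding theorem} (Hardy--Littlewood--Sobolev inequality).}

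The plan is to reduce the inequality \eqref{negative embed 1} to the classical Hardy--Littlewood--Sobolev (HLS) theorem on fractional integration. First I would recall that, by the definition given in the excerpt, for $0<\varrho<3$ the operator $\Lambda^{-\varrho}$ acts on the Fourier side as multiplication by $|y|^{-\varrho}$, and that the inverse Fourier transform of $|y|^{-\varrho}$ on $\mathbb{R}^3$ is (up to an explicit dimensional constant depending only on $\varrho$) the Riesz kernel $c_{\varrho}|x|^{-(3-\varrho)}$; this identity holds in the sense of tempered distributions precisely in the range $0<\varrho<3$, which is exactly the hypothesis. Consequently
\beqs
\Lambda^{-\varrho} f(x) = c_{\varrho}\int_{\mathbb{R}^3} \frac{f(z)}{|x-z|^{3-\varrho}}\,\d z = c_{\varrho}\,\big(|\cdot|^{-(3-\varrho)} \ast f\big)(x),
\eeqs
i.e. $\Lambda^{-\varrho}$ coincides with the Riesz potential $I_{\varrho}$ of order $\varrho$ up to a harmless constant. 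One should first verify this convolution representation for $f$ in a dense class, say $f\in\mathscr{S}(\mathbb{R}^3)$ or $C_0^\infty(\mathbb{R}^3)$, by a direct Fourier-transform computation, and then extend by density once the $L^p\to L^q$ bound is in hand.

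Next I would invoke the HLS theorem in its standard form: for $0<\lambda<n$, $1<p<q<\infty$ with $\tfrac1q = \tfrac1p - \tfrac{n-\lambda}{n}$, one has $\big\| |\cdot|^{-\lambda}\ast f \big\|_{L^q(\mathbb{R}^n)} \lesssim \|f\|_{L^p(\mathbb{R}^n)}$. Here I take $n=3$ and $\lambda = 3-\varrho$, so that $n-\lambda = \varrho$ and the scaling relation becomes $\tfrac1q = \tfrac1p - \tfrac{\varrho}{3}$, which is precisely the hypothesis $1/q + \varrho/3 = 1/p$; the constraints $0<\lambda<3$ and $1<p<q<\infty$ match $0<\varrho<3$ and $1<p<q<\infty$ in the statement. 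Applying this with the constant $c_\varrho$ absorbed into the implied constant yields $\|\Lambda^{-\varrho}f\|_{L^q} = c_\varrho\big\| |\cdot|^{-(3-\varrho)}\ast f\big\|_{L^q} \lesssim \|f\|_{L^p}$, which is \eqref{negative embed 1}.

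Since this is a classical result, the only genuine content is the bookkeeping: checking that the operator $\Lambda^{-\varrho}$ as defined via the Fourier multiplier $|y|^{-\varrho}$ really is the Riesz potential, and that the exponent conditions line up. The main (minor) obstacle is the distributional identification of the kernel: one must be a little careful that $|y|^{-\varrho}$ is locally integrable near the origin and has at most polynomial growth at infinity (true for $0<\varrho<3$), so that it defines a tempered distribution whose inverse Fourier transform is the stated Riesz kernel; this is where the restriction $\varrho<3$ is essential. Alternatively, if one prefers to avoid the distributional computation entirely, one may simply cite the Hardy--Littlewood--Sobolev theorem together with the standard fact (e.g. from Stein's book) that the Riesz potential $I_\varrho$ has Fourier multiplier $|y|^{-\varrho}$, and conclude immediately. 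Either route gives the claim with a constant depending only on $\varrho$, $p$ (equivalently on $\varrho$ alone once $q$ is determined).
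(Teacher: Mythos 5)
Your proposal is correct and follows exactly the route the paper takes: the paper simply observes that for $\varrho\in(0,3)$ the operator $\Lambda^{-\varrho}$ is the Riesz potential and then cites the Hardy--Littlewood--Sobolev theorem, which is precisely your argument, with your kernel identification supplying the (standard) details the paper leaves implicit.
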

\medskip

Based on Lemma \ref{sobolev interpolation}, we have the following lemma to be used frequently in later analysis.

\begin{lemma}\label{negative embedding theorem2}
Let $0<\varrho<3/2$. Then we have
\begin{align}\label{negative embed 2}
\|f\|_{L^{\frac{12}{3+2 \varrho}}} \lesssim\left\|\Lambda^{\frac{3}{4}-\frac{\varrho}{2}} f\right\|, \quad\|f\|_{L^{\frac{3}{\varrho}}} \lesssim \left\|\Lambda^{\frac{3}{2}-\varrho} f\right\|.
\end{align}
\end{lemma}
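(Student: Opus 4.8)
\textbf{Proof proposal for Lemma \ref{negative embedding theorem2}.}

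The plan is to deduce both inequalities directly from the Hardy--Littlewood--Sobolev estimate of Lemma \ref{negative embedding theorem} by writing the desired $L^p$ norm as the $L^2$ norm of a fractional derivative of $f$, and then matching exponents. The key identity is that for any $0<\varrho'<3$ one has $\|g\|_{L^q}\lesssim\|\Lambda^{-\varrho'}g\|_{L^2}^{-1}$... more precisely, applying \eqref{negative embed 1} with $p=2$ gives $\|\Lambda^{-\varrho'}g\|_{L^q}\lesssim\|g\|$ whenever $\tfrac1q=\tfrac12-\tfrac{\varrho'}{3}$, i.e. $q=\tfrac{6}{3-2\varrho'}$. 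Substituting $g=\Lambda^{\varrho'}f$ (legitimate for $f$ in a suitable dense class, then extended by density) yields
\[
\|f\|_{L^{\frac{6}{3-2\varrho'}}}\lesssim \left\|\Lambda^{\varrho'}f\right\|,\qquad 0<\varrho'<3/2.
\]
This is the single workhorse estimate; both claimed inequalities are instances of it.

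For the first inequality of \eqref{negative embed 2}, I would set $q=\tfrac{12}{3+2\varrho}$ and solve $\tfrac{3+2\varrho}{12}=\tfrac12-\tfrac{\varrho'}{3}$ for $\varrho'$, which gives $\varrho'=\tfrac34-\tfrac{\varrho}{2}$; since $0<\varrho<3/2$ we indeed have $0<\varrho'<3/4<3/2$, so the workhorse estimate applies and delivers $\|f\|_{L^{12/(3+2\varrho)}}\lesssim\|\Lambda^{3/4-\varrho/2}f\|$. For the second inequality, set $q=\tfrac3\varrho$ and solve $\tfrac\varrho3=\tfrac12-\tfrac{\varrho'}{3}$, giving $\varrho'=\tfrac32-\varrho$; again $0<\varrho<3/2$ forces $0<\varrho'<3/2$, so the workhorse estimate gives $\|f\|_{L^{3/\varrho}}\lesssim\|\Lambda^{3/2-\varrho}f\|$, as desired.

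The only genuinely delicate point is the density/approximation step: Lemma \ref{negative embedding theorem} is stated for the Riesz potential $\Lambda^{-\varrho}$ acting on $L^p$ functions, and here I am applying it to $g=\Lambda^{\varrho'}f$, so one must first justify that $\Lambda^{\varrho'}f$ makes sense and lies in $L^2$ — this is automatic for $f\in C_0^\infty$ (or more generally for $f$ with $\|\Lambda^{\varrho'}f\|<\infty$) via the Fourier transform — and then pass to the limit by a standard density argument, noting that both sides of each inequality are continuous in the relevant norms. The exponent bookkeeping is completely routine once the substitution $g=\Lambda^{\varrho'}f$ is set up, and the constraint $0<\varrho<3/2$ is exactly what keeps $\varrho'$ in the admissible range $(0,3/2)\subset(0,3)$ required by Lemma \ref{negative embedding theorem}. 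Hence no further obstacle arises, and the lemma follows.
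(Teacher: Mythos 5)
Your proof is correct: the exponent bookkeeping checks out ($\varrho'=\tfrac34-\tfrac{\varrho}{2}$ and $\varrho'=\tfrac32-\varrho$ both lie in $(0,3/2)$, and the resulting $q$'s satisfy $2<q<\infty$ as required by Lemma \ref{negative embedding theorem}), and the substitution $g=\Lambda^{\varrho'}f$ together with $\Lambda^{-\varrho'}\Lambda^{\varrho'}f=f$ on a dense class plus continuity of both sides is a legitimate way to close the argument. The route, however, differs from the paper's: the paper introduces the lemma as a consequence of the fractional Gagliardo--Nirenberg interpolation inequality of Lemma \ref{sobolev interpolation} (take $k=0$, $\ell=m=\tfrac34-\tfrac{\varrho}{2}$, resp. $\ell=m=\tfrac32-\varrho$, in \eqref{sobolev interpolation1}), which yields \eqref{negative embed 2} in one line with no need to invert the Riesz potential or run a density argument, whereas you derive the same homogeneous Sobolev embedding $\dot H^{\varrho'}\hookrightarrow L^{6/(3-2\varrho')}$ from the Hardy--Littlewood--Sobolev bound of Lemma \ref{negative embedding theorem}. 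Your approach buys self-containedness from the HLS estimate alone (and makes the duality between the two lemmas explicit), at the cost of the extra justification that $\Lambda^{\varrho'}f$ is an admissible input and that the identity $\Lambda^{-\varrho'}\Lambda^{\varrho'}f=f$ passes to the limit; the paper's choice avoids exactly that technical step by citing an interpolation inequality already stated for arbitrary real orders. Both arguments are standard and equally rigorous once that step is spelled out, as you did.
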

\medskip

In many places, we will use the following Minkowski inequality to interchange the order of a multiple integral, cf. \cite{GW2012CPDE}.
\begin{lemma}\label{minkowski theorem}
For $1 \leq p \leq q \leq \infty$, there holds
\begin{align}\label{minkowski}
\|f\|_{L_x^q L_v^p} \leq\|f\|_{L_v^p L_x^q}.
\end{align}
\end{lemma}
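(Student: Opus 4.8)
\textbf{Proof proposal for Lemma~\ref{minkowski theorem}.}

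The plan is to reduce the claim to the classical integral Minkowski inequality, i.e. that for a (nonnegative, measurable) function $g(x,v)$ on a product measure space and for $1 \le r \le \infty$ one has
\beqs
\left( \int \Big( \int g(x,v)\,\d x \Big)^{r}\d v \right)^{1/r}
\le \int \left( \int g(x,v)^{r}\,\d x \right)^{1/r}\d v,
\eeqs
with the usual modification when $r=\infty$. The statement to be proved is the ``functional'' (non-integrated) version of this: taking the $L^p_v$ norm of a function and then the $L^q_x$ norm of the result can only be smaller than first taking the $L^q_x$ norm and then the $L^p_v$ norm, provided the inner exponent $p$ is the smaller one, $1 \le p \le q \le \infty$. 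So the first step is simply to record the two trivial endpoint cases: if $p=q$ the two sides are literally equal by Fubini--Tonelli, and if $p=\infty$ (forcing $q=\infty$) both sides equal the essential supremum of $|f|$ over $\mathbb{R}^3_x\times\mathbb{R}^3_v$. Hence we may assume $1\le p < q < \infty$ (or $p<q=\infty$, handled analogously by an $L^\infty$ limiting argument).

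For the main case, set $r = q/p \ge 1$ and apply the integral Minkowski inequality above to the function $g(x,v) := |f(x,v)|^{p}$ with this exponent $r$. The left-hand side becomes
\beqs
\left( \int_{\mathbb{R}^3_v} \Big( \int_{\mathbb{R}^3_x} |f(x,v)|^{p}\,\d x \Big)^{q/p}\d v \right)^{p/q}
= \Big\| \, \|f\|_{L^p_x} \, \Big\|_{L^q_v}^{\,p} \quad\text{(raised to the }p\text{-th power)},
\eeqs
wait --- more carefully: raising both sides of the integral Minkowski inequality to the power $1/p$ and unwinding the definitions turns the left-hand side into $\big\| \|f(x,\cdot)\|_{?}\big\|_{?}$; the cleanest bookkeeping is to note that the inner $L^p_x$-norm of $f$, call it $G(v) = \|f(\cdot,v)\|_{L^p_x}$, satisfies $G(v)^p = \int_x g(x,v)\,\d x$, so
\beqs
\|G\|_{L^q_v}^{p} = \left(\int_v G(v)^q \,\d v\right)^{p/q}
= \left(\int_v \Big(\int_x g(x,v)\,\d x\Big)^{q/p}\d v\right)^{p/q}
\le \int_x \left(\int_v g(x,v)^{q/p}\,\d v\right)^{p/q}\d x,
\eeqs
and the right-hand side here is exactly $\int_x \|f(x,\cdot)\|_{L^q_v}^{p}\,\d x$, i.e. $\big\| \|f(x,\cdot)\|_{L^q_v} \big\|_{L^p_x}^{p}$. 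Taking $p$-th roots gives precisely $\|f\|_{L^q_x L^p_v} \le \|f\|_{L^p_v L^q_x}$, which is \eqref{minkowski}. For the remaining case $p < q = \infty$ one replaces the outer $L^q_v$ norm by a supremum and argues directly, or passes to the limit $q\to\infty$ in the estimate just obtained together with monotone convergence.

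The only genuine content is the integral Minkowski inequality itself, which is classical (it follows from duality: pair $\int_x g(x,v)\,\d x$ against a test function $h(v)$ with $\|h\|_{L^{r'}_v}\le 1$, use Fubini to move the $x$-integral outside, apply H\"older in $v$ inside, and take the supremum over $h$). Since the paper cites \cite{GW2012CPDE} for this lemma, I would simply invoke that reference for the integral form and present the reduction above; there is no real obstacle, the ``hard part'' being merely the careful bookkeeping of exponents $r = q/p$ and the $p$-th powers so that the two displayed norms line up correctly. No genuinely new estimate is needed.
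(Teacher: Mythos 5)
Your proof is correct: the paper itself gives no argument for Lemma \ref{minkowski theorem}, simply citing \cite{GW2012CPDE}, and your reduction to the classical integral Minkowski inequality with exponent $r=q/p$ applied to $g=|f|^p$ is exactly the standard argument behind that reference, with the trivial cases $p=q$ and $q=\infty$ handled as you indicate. One bookkeeping remark: as written, with $G(v)=\|f(\cdot,v)\|_{L^p_x}$, your display proves $\|f\|_{L^q_v L^p_x}\le\|f\|_{L^p_x L^q_v}$, which is the statement with the roles of $x$ and $v$ interchanged; since \eqref{minkowski} has the inner $p$-norm in $v$ and the outer $q$-norm in $x$, you should run the identical computation with $G(x)=\|f(x,\cdot)\|_{L^p_v}$ (or simply observe that the two statements coincide after relabelling the variables), after which the argument yields the lemma exactly.
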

\medskip

The following lemma concerns the equivalence of weight function and differential operator up to commutation, whose proof can be found in \cite{AMSY2023} and \cite{HMUY2008}.
\begin{lemma}\label{equivalent norm}
Let $1 \leq p \leq \infty$ and $\ell, \theta \in \mathbb{R}$. Then there exists a generic constant $C$ independent of $f$ such that
$$
\frac{1}{C}\left|\langle v\rangle^{\ell}\left\langle D_v\right\rangle^\theta f\right|_{L^p} \leq\left|\left\langle D_v\right\rangle^\theta\langle v\rangle^{\ell} f\right|_{L^p} \leq C\left|\langle v\rangle^{\ell}\left\langle D_v\right\rangle^\theta f\right|_{L^p},
$$
that is, these two norms are equivalent. Here $\left\langle D_v\right\rangle$ denotes the Fourier multiplier with symbol $\langle\xi\rangle$.
\end{lemma}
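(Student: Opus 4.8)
The plan is to reduce the two inequalities to the $L^p$-boundedness, for every $1\le p\le\infty$, of an explicit zeroth-order pseudodifferential operator and then to read that boundedness off from a kernel estimate. Substituting $g:=\langle v\rangle^{\ell}\langle D_v\rangle^{\theta}f$ (so that $f=\langle D_v\rangle^{-\theta}\langle v\rangle^{-\ell}g$) turns the right-hand inequality of the lemma into $\|Tg\|_{L^p}\lesssim\|g\|_{L^p}$ for
\[
T:=\langle D_v\rangle^{\theta}\langle v\rangle^{\ell}\langle D_v\rangle^{-\theta}\langle v\rangle^{-\ell},
\]
while the left-hand inequality is the same assertion for $T^{-1}=\langle v\rangle^{\ell}\langle D_v\rangle^{\theta}\langle v\rangle^{-\ell}\langle D_v\rangle^{-\theta}$, an operator of identical structure; so it suffices to analyse $T$, the treatment of $T^{-1}$ being word for word the same.

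I would then run the Weyl--H\"{o}rmander symbolic calculus of \cite{AHL2019,L2010} for the flat metric $\Gamma=|\d v|^{2}+|\d\xi|^{2}$, for which $\langle v\rangle^{\ell}$ and $\langle\xi\rangle^{\theta}$ are admissible symbols. Composing Kohn--Nirenberg symbols and tracking the powers of $\langle v\rangle$ and $\langle\xi\rangle$ carefully, one gets $\langle v\rangle^{\ell}\langle D_v\rangle^{-\theta}\langle v\rangle^{-\ell}=\langle D_v\rangle^{-\theta}+R_{1}$, the remainder $R_{1}$ stemming from the first commutator term $\tfrac{1}{i}\nabla_\xi\langle\xi\rangle^{-\theta}\cdot\nabla_v\langle v\rangle^{-\ell}$ and its iterates and therefore lying in $\mathrm{op}\big(S(\langle v\rangle^{-1}\langle\xi\rangle^{-\theta-1},\Gamma)\big)$; composing once more with $\langle D_v\rangle^{\theta}$ yields
\[
T=I+R,\qquad R:=\langle D_v\rangle^{\theta}R_{1}\in\mathrm{op}\big(S(\langle v\rangle^{-1}\langle\xi\rangle^{-1},\Gamma)\big),
\]
the relevant asymptotic expansions converging because each successive term gains an extra factor $\langle v\rangle^{-1}\langle\xi\rangle^{-1}$. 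The same computation gives $T^{-1}=I+R'$ with $R'$ in the same class.

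It remains to bound $R$ (and $R'$) on $L^p$ for all $1\le p\le\infty$. For a symbol $r\in S(\langle v\rangle^{-1}\langle\xi\rangle^{-1},\Gamma)$, integrating by parts in $\xi$ in $K_R(v,v')=(2\pi)^{-3}\!\int e^{i(v-v')\cdot\xi}r(v,\xi)\,\d\xi$ gives $|K_R(v,v')|\lesssim\langle v\rangle^{-1}|v-v'|^{-2}$ near the diagonal with rapid decay off it, so in $\mathbb{R}^{3}$ both $\sup_v\int|K_R(v,v')|\,\d v'$ and $\sup_{v'}\int|K_R(v,v')|\,\d v$ are finite; Schur's lemma then gives boundedness on $L^1$ and $L^\infty$, $L^2$-boundedness follows (or one may invoke Calder\'{o}n--Vaillancourt directly), and interpolation covers $1<p<\infty$. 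Since $I$ is trivially bounded, $T$ and $T^{-1}$ are bounded on every $L^p$, which is precisely the claimed norm equivalence. \emph{The main obstacle} is not conceptual but the careful bookkeeping in the Weyl--H\"{o}rmander framework --- verifying the symbol-class memberships, the convergence of the asymptotic series, and (so as to reach the endpoints $p=1,\infty$ rather than only $p=2$ plus interpolation) the kernel bound above; alternatively one may simply transcribe the arguments of \cite{AMSY2023,HMUY2008}, which carry out exactly this.
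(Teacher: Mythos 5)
The paper gives no proof of this lemma at all, merely citing \cite{AMSY2023} and \cite{HMUY2008}, and your reduction to the $L^p$-boundedness of $T=\langle D_v\rangle^{\theta}\langle v\rangle^{\ell}\langle D_v\rangle^{-\theta}\langle v\rangle^{-\ell}=I+R$ with $R\in\mathrm{op}\big(S(\langle v\rangle^{-1}\langle\xi\rangle^{-1},\Gamma)\big)$, followed by a kernel estimate and the Schur test (covering $p=1,\infty$ and hence all $p$), is precisely the standard commutator argument those references carry out, so your proposal is correct and matches the intended proof. The only glossed point is the near-diagonal bound $|K_R(v,v')|\lesssim\langle v\rangle^{-1}|v-v'|^{-2}$: a bare double integration by parts leaves the divergent factor $\int_{\mathbb{R}^3}\langle\xi\rangle^{-3}\,\d\xi$, and one must split the $\xi$-integral at $|\xi|\sim|v-v'|^{-1}$ (low frequencies estimated directly, high frequencies after three integrations by parts) — a routine fix that does not affect the argument.
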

\medskip

Finally, we give the proof of an interpolation formula stated in \cite{CDL2024SIAM}.
\begin{lemma}\label{interpolation1}
Let $0 < s < 1$. For any $\ell \in \mathbb{R}$, there holds
\begin{equation}\label{interpolation}
| f |_{H^1_v} \lesssim \big| \langle v \rangle ^\ell f\big|_{H^s_v} + \big| \langle v \rangle ^{-\frac{\ell s}{1-s}} f\big|_{H^{1+s}_v}.
\end{equation}
\end{lemma}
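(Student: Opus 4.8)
\textbf{Proof proposal for Lemma \ref{interpolation1}.}

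The plan is to prove the inequality by a Littlewood--Paley (or, more elementarily, a Fourier frequency) decomposition in the velocity variable $v$, splitting the frequency $\xi$ into a low-frequency region $|\xi| \lesssim 1$ and a high-frequency region $|\xi| \gtrsim 1$, and then balancing the weight $\langle v\rangle^\ell$ against the loss of one derivative via an interpolation of the form $\|g\|_{H^1} \lesssim \|g\|_{H^s}^{?}\|g\|_{H^{1+s}}^{?}$ with exponents chosen so that the powers of $\langle v\rangle$ cancel. Concretely, write $1 = \frac{1}{1+s}\cdot(1+s-1)/(\cdots)$; more usefully, note that $H^1$ sits between $H^s$ and $H^{1+s}$ with $1 = s\cdot\theta + (1+s)(1-\theta)$, giving $\theta = s$, so that by the standard interpolation inequality $|g|_{H^1_v} \lesssim |g|_{H^s_v}^{s}\,|g|_{H^{1+s}_v}^{1-s}$ for a single function $g$. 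The subtlety is that we need the two norms on the right of \eqref{interpolation} to carry \emph{different} weights, $\langle v\rangle^\ell$ and $\langle v\rangle^{-\ell s/(1-s)}$, so a direct application to $g = f$ does not suffice; instead one applies Young's inequality $ab \lesssim a^{1/s}+b^{1/(1-s)}$ after the interpolation, i.e.
\begin{align*}
|f|_{H^1_v} \lesssim |f|_{H^s_v}^{s}\,|f|_{H^{1+s}_v}^{1-s}
= \big(A^{-(1-s)}|f|_{H^s_v}\big)^{s}\big(A^{s}|f|_{H^{1+s}_v}\big)^{1-s}
\lesssim A^{-(1-s)}|f|_{H^s_v} + A^{s}|f|_{H^{1+s}_v}
\end{align*}
for any constant $A>0$; choosing $A = \langle v\rangle$ \emph{inside} the integrals (not as a scalar) is exactly what produces the two weighted norms. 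To make this rigorous, I would carry the weight through the Fourier side carefully.

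First I would reduce to the case $\ell \ge 0$: if $\ell < 0$, replace $\ell$ by $-\ell s/(1-s)$, which has the opposite sign, and observe that $\big(-\ell s/(1-s)\big)$ plays the role of "$\ell$" and $\ell$ plays the role of "$-\ell s/(1-s)$" in the symmetric-looking statement — one checks $-\frac{(-\ell s/(1-s))s}{1-s}$ is not literally $\ell$, so instead I would simply run the argument directly for arbitrary $\ell\in\mathbb{R}$, treating the two weights symmetrically. Next, I decompose $f = f^{\flat} + f^{\sharp}$ where $\widehat{f^{\flat}} = \widehat{f}\,\chi_{|\xi|\le 1}$ and $\widehat{f^{\sharp}} = \widehat{f}\,\chi_{|\xi|>1}$ in the $v\to\xi$ Fourier transform. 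On the low-frequency piece, $|f^{\flat}|_{H^1_v} \approx |\langle\xi\rangle \widehat{f^{\flat}}|_{L^2_\xi} \lesssim |\langle \xi\rangle^s \widehat{f^{\flat}}|_{L^2_\xi} \approx |f^{\flat}|_{H^s_v}$ since $\langle\xi\rangle\le \sqrt2$ there; on the high-frequency piece, $|f^{\sharp}|_{H^1_v}\lesssim |f^{\sharp}|_{H^{1+s}_v}$ since $\langle\xi\rangle \le \langle\xi\rangle^{1+s}$ when $|\xi|>1$. This already gives $|f|_{H^1_v}\lesssim |f|_{H^s_v} + |f|_{H^{1+s}_v}$; the weighted refinement comes from inserting $\langle v\rangle^{\ell}$ and $\langle v\rangle^{-\ell s/(1-s)}$ and using Lemma \ref{equivalent norm} to commute the weight past the Fourier multipliers $\langle D_v\rangle^s$ and $\langle D_v\rangle^{1+s}$ at the cost of harmless constants. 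The cleanest route avoiding the frequency split is: apply the scalar interpolation $|h|_{H^1_v}\lesssim |h|_{H^s_v}^{s}|h|_{H^{1+s}_v}^{1-s}$ with $h=f$, then estimate $|f|_{H^s_v} = |\langle v\rangle^{-\ell}\langle v\rangle^{\ell} f|_{H^s_v} \lesssim |\langle v\rangle^{\ell} f|_{H^s_v}$ (valid for $\ell\ge 0$, using that multiplication by $\langle v\rangle^{-\ell}$ is bounded on $H^s_v$) and similarly $|f|_{H^{1+s}_v}\lesssim |\langle v\rangle^{-\ell s/(1-s)} f|_{H^{1+s}_v}$ when $-\ell s/(1-s)\le 0$; then conclude by $a^s b^{1-s}\lesssim a+b$ — but this only handles one sign of $\ell$, so the frequency-localized argument with weights kept inside the integral is the robust one I would actually write down.

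The main obstacle I anticipate is the non-commutativity of the velocity weight $\langle v\rangle^{\ell}$ with the fractional derivative operators: one cannot naively pull $\langle v\rangle^{\pm\ell}$ in and out of $H^s_v$ or $H^{1+s}_v$ norms when $\ell$ has the "wrong" sign relative to the smoothing, and the commutator $[\langle D_v\rangle^s, \langle v\rangle^\ell]$ must be controlled. This is precisely where Lemma \ref{equivalent norm} is indispensable — it gives $|\langle v\rangle^\ell\langle D_v\rangle^\theta f|_{L^2}\approx |\langle D_v\rangle^\theta\langle v\rangle^\ell f|_{L^2}$ for all real $\ell,\theta$, so the weighted Sobolev norms $|\langle v\rangle^\ell f|_{H^\theta_v}$ can be read interchangeably with the symbol $\langle v\rangle^\ell\langle\xi\rangle^\theta$ in the $\Gamma$-admissible pseudodifferential calculus of \cite{AHL2019, L2010} referenced earlier. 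Using that calculus, the interpolation inequality \eqref{interpolation} becomes an instance of the interpolation between the symbol classes $S(\langle v\rangle^{\ell}\langle\xi\rangle^s)$ and $S(\langle v\rangle^{-\ell s/(1-s)}\langle\xi\rangle^{1+s})$ whose "geometric mean" with weights $s$ and $1-s$ dominates $S(\langle\xi\rangle)$ — and I would make the exponent bookkeeping explicit: $\langle v\rangle^{\ell\cdot s}\cdot\langle v\rangle^{(-\ell s/(1-s))(1-s)} = \langle v\rangle^{\ell s - \ell s} = \langle v\rangle^0$ and $\langle\xi\rangle^{s\cdot s}\langle\xi\rangle^{(1+s)(1-s)} = \langle\xi\rangle^{s^2 + 1 - s^2} = \langle\xi\rangle$, which is exactly what makes the weights cancel and the derivative order come out to $1$. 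This confirms the exponents in the statement are the unique correct ones, and the proof is essentially this computation dressed in the language of weighted interpolation plus Young's inequality.
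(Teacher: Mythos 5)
You have correctly isolated the symbol-level computation that drives the lemma -- the Young-inequality bound $\langle\xi\rangle=(\langle v\rangle^{\ell}\langle\xi\rangle^{s})^{s}(\langle v\rangle^{-\frac{\ell s}{1-s}}\langle\xi\rangle^{1+s})^{1-s}\lesssim \langle v\rangle^{\ell}\langle\xi\rangle^{s}+\langle v\rangle^{-\frac{\ell s}{1-s}}\langle\xi\rangle^{1+s}$ -- and this is exactly the first line of the paper's proof. But the concrete arguments you say you would actually write down do not close the gap between this pointwise inequality on symbols and the norm inequality \eqref{interpolation}. The frequency split at $|\xi|=1$ gives only $|f|_{H^1_v}\lesssim|f^{\flat}|_{H^s_v}+|f^{\sharp}|_{H^{1+s}_v}$; to ``insert'' the weights you then need $|f^{\flat}|_{H^s_v}\lesssim|\langle v\rangle^{\ell}f|_{H^s_v}$ and $|f^{\sharp}|_{H^{1+s}_v}\lesssim|\langle v\rangle^{-\frac{\ell s}{1-s}}f|_{H^{1+s}_v}$, and (even using Lemma \ref{equivalent norm} to commute weight and $\langle D_v\rangle^{\theta}$) the first requires $\ell\geq 0$ while the second requires $-\frac{\ell s}{1-s}\geq 0$, i.e.\ $\ell\leq 0$; so for any $\ell\neq 0$ one of the two fails, and Lemma \ref{equivalent norm} cannot rescue it because it never lets you dominate an unweighted norm by a norm with a negative-power weight. (Your ``cleanest route'' has the same defect, and the stated sign condition ``$-\ell s/(1-s)\leq 0$'' is backwards; with the correct signs that route works only at $\ell=0$, not for ``one sign of $\ell$''.) The point is that the correct splitting is $v$-dependent -- the two symbols exchange dominance at $\langle\xi\rangle\approx\langle v\rangle^{\ell/(1-s)}$ -- so no cutoff in $\xi$ alone, and no choice of a ``constant'' $A=\langle v\rangle$ inside fractional Sobolev norms, can realize the cancellation; that step is precisely the nontrivial content.

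What is missing is the mechanism the paper uses to upgrade the symbol inequality to an operator inequality: one quantizes the full symbol $a(v,\xi)=\langle v\rangle^{\ell}\langle\xi\rangle^{s}+\langle v\rangle^{-\frac{\ell s}{1-s}}\langle\xi\rangle^{1+s}+K(\langle v\rangle^{\ell}+\langle v\rangle^{-\frac{\ell s}{1-s}})$, which is a $\Gamma$-admissible weight, and invokes the results of Deng (Lemma 2.1, Lemma 2.3 and Corollary 2.5 of \cite{Deng2020}) to get, for $K$ large, the invertibility of $a^{w}$ from the associated weighted space onto $L^2$ and the two-sided bounds $|\langle D_v\rangle f|_{L^2_v}\lesssim|a^{w}(v,D_v)f|_{L^2_v}\lesssim|\langle v\rangle^{\ell}\langle D_v\rangle^{s}f|_{L^2_v}+|\langle v\rangle^{-\frac{\ell s}{1-s}}\langle D_v\rangle^{1+s}f|_{L^2_v}+\text{(lower order)}$, after which Lemma \ref{equivalent norm} converts the right-hand side into the weighted Sobolev norms of \eqref{interpolation}. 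Your last paragraph gestures at this (``interpolation between the symbol classes'') and the exponent bookkeeping there is correct, but without citing or reproving the quantization/invertibility facts the argument is not complete: pointwise domination of symbols does not by itself imply domination of the corresponding operators, and that implication is exactly what the paper's proof supplies.
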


\begin{proof}
By Young's inequality, one has
$$
\langle \xi \rangle =\big(\langle v \rangle^{\ell} \langle \xi \rangle^{s}\big)^s \big(\langle v \rangle^{-\frac{\ell s}{1-s}} \langle \xi \rangle^{1+s}\big)^{1-s} \lesssim \langle v \rangle^{\ell} \langle \xi \rangle^{s}+ \langle v \rangle^{-\frac{\ell s}{1-s}} \langle \xi \rangle^{1+s}.
$$
Define
$$
a(v, \xi)= m(v, \xi)= \langle v \rangle^{\ell} \langle \xi \rangle^{s}+ \langle v \rangle^{-\frac{\ell s}{1-s}}\langle \xi \rangle^{1+s}
+K\big( \langle v \rangle^{\ell}+ \langle v \rangle^{-\frac{\ell s}{1-s}}\big),
$$
where the constant $K$ is to be determined later. Obviously, $m$ is a $\Gamma$-admissible weight.
By using Lemma 2.1 in \cite{Deng2020}, there exists $K>1$ such that $a^w: H\big(\langle v \rangle^{\ell} \langle \xi \rangle^{s}+ \langle v \rangle^{-\frac{\ell s}{1-s}}\langle \xi \rangle^{1+s}\big) \to L^2$ is invertible. Hence, it follows from Lemma 2.3 and Corollary 2.5 in \cite{Deng2020} that
\begin{align*}
|\langle D_v \rangle f |_{L^2_v} &\lesssim\; |a^w(v,D_v)f|_{L^2_v} \\
&\lesssim\;\big|\langle v \rangle^\ell \langle D_v \rangle^s f\big|_{L^2_v}
+\big|\langle v \rangle^{-\frac{\ell s}{1-s}} \langle D_v \rangle^{1+s} f\big|_{L^2_v}
+\big|\langle v \rangle^\ell f\big|_{L^2_v}
+\big|\langle v \rangle^{-\frac{\ell s}{1-s}}  f\big|_{L^2_v}\\
&\lesssim\; \big| \langle v \rangle ^\ell f\big|_{H^s_v} + \big| \langle v \rangle ^{-\frac{\ell s}{1-s}} f\big|_{H^{1+s}_v}.
\end{align*}
This completes the proof.
\end{proof}
\medskip

\subsection{Estimates on the Nonlinear Terms}
\hspace*{\fill}

The goal of this subsection is to make the weighted energy estimates for all the nonlinear terms in \eqref{rVPB}.

In this subsection, we always assume
\begin{align*}
&N \geq 2,\; l \geq N, \qquad\quad\;\, \text{~if~} \gamma+2s \geq 0;\\
&N \geq 5,\; l \geq -\frac{3\gamma}{s} N, \quad \,~~ \text{~~if~} -3 < \gamma < -2s.
\end{align*}
Recall the definition of $w_l(\alpha,\beta)$ in \eqref{weight function}, the definitions of $\mathcal{E}_{N,l}(t)$ and $\mathcal{D}_{N,l}(t)$ given in \eqref{energy functional} and \eqref{dissipation functional}, respectively. Besides, the decomposition
\begin{equation}\label{gamma decomposition}
\Gamma(f^\varepsilon, f^\varepsilon)=  \Gamma(\mathbf{P} f^\varepsilon, \mathbf{P} f^\varepsilon)+\Gamma(\mathbf{P} f^\varepsilon,\{\mathbf{I}-\mathbf{P}\} f^\varepsilon)+\Gamma(\{\mathbf{I}-\mathbf{P}\} f^\varepsilon, \mathbf{P} f^\varepsilon)
+\Gamma(\{\mathbf{I}-\mathbf{P}\} f^\varepsilon,\{\mathbf{I}-\mathbf{P}\} f^\varepsilon)
\end{equation}
 will be frequently used in later context.

\medskip

The following two lemmas are about the estimates on the nonlinear collision term $\Gamma(f^\varepsilon,f^\varepsilon)$.

\begin{lemma}\label{soft Gamma}
Let $-3 < \gamma <-2s $. Assume that $\mathcal{E}^{1/2}_{N,l}(t) \leq \delta$ for some positive constant $\delta >0$.
\begin{itemize}
\setlength{\leftskip}{-6mm}
\item[(1)] For $| \alpha | \leq N$, there holds
\begin{align}\label{soft gamma3}
\bigg|\frac{1}{\varepsilon}\sum_{\pm}\left(\partial^\alpha \Gamma_{\pm}( f^{\varepsilon}, f^{\varepsilon} ),
e^{\pm \varepsilon \phi^{\varepsilon}} \partial^\alpha f_{\pm}^{\varepsilon}\right)\bigg|
\lesssim \left\{\mathcal{E}^{1/2}_{N,l}(t)+\mathcal{E}_{N,l}(t)\right\} \mathcal{D}_{N,l}(t).
\end{align}
\item[(2)] For $| \alpha | \leq N-1$, there holds
\begin{align}\label{soft gamma4}
\bigg|\frac{1}{\varepsilon} \left( \partial^\alpha \Gamma_{\pm}( f^{\varepsilon}, f^{\varepsilon} ),
e^{\pm \varepsilon \phi^{\varepsilon}}
w^2_{l}(\alpha, 0) \partial^\alpha \{\mathbf{I}_{\pm}-\mathbf{P}_{\pm}\} f^{\varepsilon}\right)\bigg|
\lesssim \mathcal{E}^{1/2}_{N,l}(t) \mathcal{D}_{N,l}(t).
\end{align}
\item[(3)] For $|\alpha|=N$, there holds
\begin{align}\label{soft gamma5}
\Big|\left( \partial^\alpha \Gamma_{\pm}( f^{\varepsilon}, f^{\varepsilon} ), e^{\pm \varepsilon \phi^{\varepsilon}}
w^2_{l}(\alpha, 0) \partial^\alpha  f_{\pm}^{\varepsilon}\right)\Big|
\lesssim \mathcal{E}^{1/2}_{N,l}(t) \mathcal{D}_{N,l}(t).
\end{align}
\item[(4)] For $| \alpha |+ | \beta | \leq N$, $| \beta | \geq 1$ and $| \alpha | \leq N-1$, there holds
\begin{align}\label{soft gamma6}
\bigg| \frac{1}{\varepsilon}\left( \partial^\alpha_\beta \Gamma_{\pm}( f^{\varepsilon}, f^{\varepsilon} ),
e^{\pm \varepsilon \phi^{\varepsilon}} w^2_{l}(\alpha, \beta) \partial^\alpha_\beta \{\mathbf{I}_{\pm}-\mathbf{P}_{\pm}\} f^{\varepsilon}\right) \bigg|
\lesssim \mathcal{E}^{1/2}_{N,l}(t) \mathcal{D}_{N,l}(t).
\end{align}
\end{itemize}
\end{lemma}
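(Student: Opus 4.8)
\textbf{Proof proposal for Lemma \ref{soft Gamma}.}

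The plan is to reduce all four parts to the bilinear estimates \eqref{soft gamma1}--\eqref{soft gamma2} on $\Gamma$ together with the macro-micro decomposition \eqref{gamma decomposition}, absorbing the exponential factor $e^{\pm\varepsilon\phi^\varepsilon}$ at the outset. Since $\mathcal{E}^{1/2}_{N,l}(t)\le\delta$ controls $\|\nabla_x\phi^\varepsilon\|_{L^\infty}$ through Sobolev embedding, we have $|e^{\pm\varepsilon\phi^\varepsilon}|\lesssim 1$ and its derivatives cost only factors that are already bounded, so throughout we may treat $e^{\pm\varepsilon\phi^\varepsilon}$ as a harmless bounded weight (any derivative landing on it produces $\partial^{\alpha'}\nabla_x\phi^\varepsilon$ factors estimated by $\mathcal{E}^{1/2}_{N,l}$). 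For part (1), applying \eqref{soft gamma1} with $\ell=0$ (so the second min-term is absent, $w^0=1$) after taking the $L^2_{x}$ inner product and Cauchy--Schwarz in $x$: one splits $\Gamma(f^\varepsilon,f^\varepsilon)$ via \eqref{gamma decomposition} into four pieces, distributes the derivatives $\partial^{\alpha_1},\partial^{\alpha_2}$ with $\alpha_1+\alpha_2=\alpha$, and for each term places the factor with more spatial derivatives in $L^2_xL^2_v$ and the other in $L^\infty_xL^2_v$ (or $L^3_x$--$L^6_x$ when $|\alpha_i|$ is intermediate), using $\|\partial^\alpha f^\varepsilon\|\lesssim\mathcal{E}^{1/2}_{N,l}$ for the low-derivative factor and bounding the high-derivative $N^s_\gamma$-factor and the test-function $N^s_\gamma$-factor by $\mathcal{D}^{1/2}_{N,l}$; the $\mathbf{P}f^\varepsilon$ contributions land in the $\sum_{1\le|\alpha|\le N}\|\partial^\alpha\mathbf{P}f^\varepsilon\|^2$ and $\|f^\varepsilon\|^2$ parts of $\mathcal{D}_{N,l}$ and $\mathcal{E}_{N,l}$ respectively, yielding $\{\mathcal{E}^{1/2}_{N,l}+\mathcal{E}_{N,l}\}\mathcal{D}_{N,l}$.

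For parts (2)--(4) the weight $w_l(\alpha,\cdot)$ is present, so the full estimate \eqref{soft gamma1} (including the min-term with the two extra velocity derivatives) is needed, and the test function sits in $\|w_l(\alpha,\cdot)\partial^\alpha_{(\beta)}\{\mathbf{I}-\mathbf{P}\}f^\varepsilon\|_{N^s_\gamma}\lesssim\varepsilon\,\mathcal{D}^{1/2}_{N,l}$, which is exactly what cancels the prefactor $\frac{1}{\varepsilon}$. After Cauchy--Schwarz in $x$ one again decomposes via \eqref{gamma decomposition} and, in each of the resulting bilinear terms, compares the weight $w_l(\alpha,\beta)$ appearing on $g$-factor with the weight one would want on that derivative of $g$. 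The crucial algebraic input is the monotonicity relations $w_l(\alpha,\beta)\le w_l(\alpha',\beta')$ for $|\alpha'|\le|\alpha|$, $|\beta'|\le|\beta|$ whenever $|\alpha'|+|\beta'|\le|\alpha|+|\beta|$, together with the sharper relations $w_l(\alpha,\beta)\le w_l(\alpha',\beta')$ for $1\le|\alpha'|\le 2$, $|\beta'|\le 2$ that hold because $m=-3\gamma/s\ge -3\gamma$ and $N\ge5$; these let the velocity-loss min-term be absorbed into $\|w_l(\alpha_2,\beta_2+\beta')\partial^{\alpha_2}_{\beta_2+\beta'}\{\mathbf{I}-\mathbf{P}\}f^\varepsilon\|_{N^s_\gamma}$-type norms already sitting in $\mathcal{D}_{N,l}$. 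When $|\alpha_1|+|\beta_1|$ is maximal (so $|\alpha_2|+|\beta_2|=0$) one is forced to choose the second branch of the min and take the $L^2$--$L^\infty$--$L^2$ arrangement with the low factor $|w^{-m}\partial^{\alpha_1}_{\beta_1}\{\mathbf{I}-\mathbf{P}\}f^\varepsilon|_{L^2}$; here $w^{-m}=w_l(\alpha_1,\beta_1)\langle v\rangle^{-l+\cdots}$ is dominated by an admissible weight in $\mathcal{E}_{N,l}$, and the two extra velocity derivatives on the test factor are controlled by the sharper monotonicity above — this is precisely where $N\ge5$ and $l\ge-\tfrac{3\gamma}{s}N$ enter. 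In parts (2) and (4), spreading the derivatives so that the factor bearing most of them enters in $L^2_xL^2_v$ and the other in $L^\infty_x$ or via the $L^3_x$--$L^6_x$ Sobolev split (using $N\ge5$ for enough room) gives $\mathcal{E}^{1/2}_{N,l}\mathcal{D}_{N,l}$; in part (3), $|\alpha|=N$ so the test function $w_l(\alpha,0)\partial^\alpha f^\varepsilon$ carries the extra $\sqrt\varepsilon$-weighted energy term in $\mathcal{E}_{N,l}$ and its $N^s_\gamma$-norm the $\frac1\varepsilon$-weighted dissipation term in $\mathcal{D}_{N,l}$, so the same bookkeeping closes.

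The main obstacle is the mismatch between derivative order and weight strength in the top-order terms of parts (3) and (4) — specifically the min-term of \eqref{soft gamma1} when $|\alpha_1|+|\beta_1|$ equals the total order, where one cannot freely redistribute and must instead exploit the precise form of $w_l(\alpha,\beta)=\langle v\rangle^{l-(m+\gamma)|\alpha|-m|\beta|}$ with $m=-3\gamma/s$ to verify $w_l(\alpha,\beta)\le w_l(\alpha',\beta')\langle v\rangle^{\gamma}$-type inequalities that make the two-velocity-derivative loss affordable. Once those weight inequalities are checked (they are elementary but require $N\ge 5$ and the stated lower bound on $l$), everything else is routine Cauchy--Schwarz, Sobolev embedding $H^2_x\hookrightarrow L^\infty_x$, and collecting terms into $\mathcal{E}_{N,l}$ and $\mathcal{D}_{N,l}$.
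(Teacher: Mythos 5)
Your treatment of parts (2)--(4) follows essentially the paper's route (decompose via \eqref{gamma decomposition}, apply \eqref{soft gamma1}, do a case analysis on $|\alpha_1|+|\beta_1|$ choosing the appropriate branch of the minimum, and use the weight monotonicity that needs $N\ge 5$ and $m=-3\gamma/s$), and the observation that the microscopic test function supplies the factor of $\varepsilon$ that cancels the prefactor $\frac1\varepsilon$ is exactly the right bookkeeping there. The genuine gap is in part (1). There the test function is the full $\partial^\alpha f^\varepsilon_\pm$, not its microscopic part, and your plan of bounding ``the test-function $N^s_\gamma$-factor by $\mathcal{D}^{1/2}_{N,l}$'' does not produce any $\varepsilon$: for contributions such as $\frac1\varepsilon\big(\partial^\alpha\Gamma(\mathbf{P}f^\varepsilon,\mathbf{P}f^\varepsilon),\,\partial^\alpha\mathbf{P}f^\varepsilon\big)$ every factor is macroscopic, so your scheme yields at best a bound of size $\varepsilon^{-1}\mathcal{E}^{1/2}_{N,l}\mathcal{D}_{N,l}$ (and for $\alpha=0$ the macroscopic piece $\|\mathbf{P}f^\varepsilon\|$ is not even in $\mathcal{D}_{N,l}$), which is not uniform in $\varepsilon$. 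The missing idea is the collision-invariance cancellation used in \eqref{soft gamma3 decomposition}: one must split $e^{\pm\varepsilon\phi^\varepsilon}=(e^{\pm\varepsilon\phi^\varepsilon}-1)+1$, gain a factor $\varepsilon\|\nabla_x\phi^\varepsilon\|_{H^1}$ from \eqref{phi estimate1} on the first piece, and on the second piece use $\sum_\pm\big(\partial^\alpha\Gamma_\pm(f^\varepsilon,f^\varepsilon),\partial^\alpha f^\varepsilon_\pm\big)=\big(\partial^\alpha\Gamma(f^\varepsilon,f^\varepsilon),\partial^\alpha\{\mathbf{I}-\mathbf{P}\}f^\varepsilon\big)$, so that the test function becomes microscopic and carries the needed $\varepsilon$. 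Note also that your blanket reduction ``treat $e^{\pm\varepsilon\phi^\varepsilon}$ as a harmless bounded weight'' is precisely what destroys this cancellation, since the momentum and energy invariances require summing over $\pm$ with the \emph{same} weight.

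Two smaller corrections: the second (minimum) term in \eqref{soft gamma1} is \emph{not} absent when $\ell=0$; it is a structural feature of the soft-potential estimate independent of the weight, so even in part (1) you must run the same case analysis (with the $N\ge 5$ weight comparisons) to control the two extra velocity derivatives. And in part (3) the weighted top-order energy only gives $\|w_l(\alpha,0)\partial^\alpha f^\varepsilon\|\lesssim\varepsilon^{-1/2}\mathcal{E}^{1/2}_{N,l}$ while the weighted dissipation gives $\varepsilon^{1/2}\mathcal{D}^{1/2}_{N,l}$, so you should verify that the $\varepsilon$-powers indeed balance there (they do, because \eqref{soft gamma5} has no $\frac1\varepsilon$ prefactor), rather than asserting ``the same bookkeeping closes.''
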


\begin{proof}
For brevity, we only give the proof of \eqref{soft gamma6}. The proof of \eqref{soft gamma4} and \eqref{soft gamma5} can be obtained similarly.
Besides, by the collision invariant property, we have
\begin{align}
\begin{split}\label{soft gamma3 decomposition}
&\frac{1}{\varepsilon}\sum_{\pm}\left( \partial^\alpha \Gamma_{\pm}( f^{\varepsilon}, f^{\varepsilon} ), e^{\pm \varepsilon \phi^{\varepsilon}} \partial^\alpha f_{\pm}^{\varepsilon}\right) \\
=\;&\frac{1}{\varepsilon}\sum_{\pm}\left( \partial^\alpha \Gamma_{\pm}( f^{\varepsilon}, f^{\varepsilon} ),
(e^{\pm \varepsilon \phi^{\varepsilon}}-1) \partial^\alpha f_{\pm}^{\varepsilon}\right)
+\frac{1}{\varepsilon}\sum_{\pm}\left( \partial^\alpha \Gamma_{\pm}( f^{\varepsilon}, f^{\varepsilon} ), \partial^\alpha f_{\pm}^{\varepsilon}\right) \\
=\;&\frac{1}{\varepsilon}\sum_{\pm}\left( \partial^\alpha \Gamma_{\pm}( f^{\varepsilon}, f^{\varepsilon} ),
(e^{\pm \varepsilon \phi^{\varepsilon}}-1) \partial^\alpha f_{\pm}^{\varepsilon}\right)
+\frac{1}{\varepsilon}\left( \partial^\alpha \Gamma( f^{\varepsilon}, f^{\varepsilon} ), \partial^\alpha \{\mathbf{I}-\mathbf{P}\} f^{\varepsilon}\right).
\end{split}
\end{align}
Noticing that the inequality
\begin{equation}\label{phi estimate1}
\left|e^{\pm \varepsilon \phi^{\varepsilon}}-1\right| \lesssim \varepsilon \left\| \phi^{\varepsilon} \right\|_{L^\infty} \lesssim \varepsilon \left\| \nabla_x \phi^{\varepsilon} \right\|_{H^1}
\end{equation}
 can absorb the singularity $\frac{1}{\varepsilon}$ in the first term on the right-hand side of \eqref{soft gamma3 decomposition} and $\| \nabla_x \phi^{\varepsilon} \|_{H^1}$ can be contained in $\mathcal{E}^{1/2}_{N,l}(t)$ or $\mathcal{D}^{1/2}_{N,l}(t)$,
 then the estimate \eqref{soft gamma3} can also be proved similar to \eqref{soft gamma6}.

To prove \eqref{soft gamma6}, we set
$$
I_1\equiv  \frac{1}{\varepsilon}\left( \partial^\alpha_\beta \Gamma_{\pm}( f^{\varepsilon}, f^{\varepsilon} ),
e^{\pm \varepsilon \phi^{\varepsilon}}
w^2_{l}(\alpha, \beta) \partial^\alpha_\beta \{\mathbf{I}_{\pm}-\mathbf{P}_{\pm}\} f^{\varepsilon}\right) :=I_{1,1}+ I_{1,2}+ I_{1,3}+ I_{1,4},
$$
where $I_{1,1}$, $I_{1,2}$, $I_{1,3}$ and $I_{1,4}$ are the terms corresponding to the decomposition \eqref{gamma decomposition}, respectively.
In the following, we estimate these four terms one by one.

Firstly, for $I_{1,1}$, recalling \eqref{Pf define} and applying \eqref{soft gamma1}, one has \begin{align}
\begin{split}\nonumber
I_{1,1} \lesssim\;& \frac{1}{\varepsilon}\sum_{|\alpha_1| \leq |\alpha|}
\int_{\mathbb{R}^3} \left| \partial^{\alpha_1} (a^{\varepsilon}_{\pm},b^{\varepsilon},c^{\varepsilon})\right| \left| \partial^{\alpha-\alpha_1} (a^{\varepsilon}_{\pm},b^{\varepsilon},c^{\varepsilon})\right|
\left| w_{l}(\alpha, \beta) \partial^\alpha_\beta \{\mathbf{I}-\mathbf{P}\} f^{\varepsilon}\right|_{N^s_\gamma} \d x \\
\lesssim \;& \frac{1}{\varepsilon}\left\|(a^{\varepsilon}_{\pm},b^{\varepsilon},c^{\varepsilon})\right\|_{L^\infty}
\left\|\partial^{\alpha} (a^{\varepsilon}_{\pm},b^{\varepsilon},c^{\varepsilon})\right\|_{L^2}
\left\| w_{l}(\alpha, \beta) \partial^\alpha_\beta \{\mathbf{I}-\mathbf{P}\} f^{\varepsilon}\right\|_{N^s_\gamma}\\
&+\frac{1}{\varepsilon} \sum_{1 \leq |\alpha_1| \leq N-1} \left\| \partial^{\alpha_1} (a^{\varepsilon}_{\pm},b^{\varepsilon},c^{\varepsilon})\right\|_{L^6} \left\| \partial^{\alpha-\alpha_1} (a^{\varepsilon}_{\pm},b^{\varepsilon},c^{\varepsilon})\right\|_{L^3}
\left\| w_{l}(\alpha, \beta) \partial^\alpha_\beta \{\mathbf{I}-\mathbf{P}\} f^{\varepsilon}\right\|_{N^s_\gamma}\\
\lesssim \;& \mathcal{E}^{1/2}_{N,l}(t) \mathcal{D}_{N,l}(t),
\end{split}
\end{align}
where we made use of the fact $|e^{\pm \varepsilon \phi^\varepsilon}| \approx 1$ and the Sobolev inequalities
\begin{align}
\label{Sobolev-ineq}
\left\|f\right\|_{L^3_x}\lesssim \left\|f\right\|_{H^1_x}, \quad \left\|f\right\|_{L^6_x}\lesssim \left\| \nabla_x f \right\|_{L^2_x}, \quad \left\|f\right\|_{L^\infty_x}\lesssim \left\| \nabla_x f\right\|_{H^1_x}.
\end{align}

Secondly, for $I_{1,2}$, by using \eqref{soft gamma1} and selecting the first one inside the minimum function in \eqref{soft gamma1}, we have
\begin{align}
I_{1,2}\lesssim\;&\frac{1}{\varepsilon}\!\sum_{\substack{{|\alpha_1|\leq |\alpha|} \\ {|\beta_2|  \leq |\beta|}}}
\int_{\mathbb{R}^3}
\left| \partial^{\alpha_1} (a^{\varepsilon}_{\pm},b^{\varepsilon},c^{\varepsilon})\right| \left| w_{l}(\alpha,\beta) \partial^{\alpha-\alpha_1}_{\beta_2} \{\mathbf{I}-\mathbf{P}\} f^{\varepsilon}\right|_{N^s_\gamma} \left| w_{l}(\alpha,\beta) \partial^{\alpha}_{\beta} \{\mathbf{I}-\mathbf{P}\} f^{\varepsilon}\right|_{N^s_\gamma} \d x \nonumber \\
\lesssim\;&\frac{1}{\varepsilon}\sum_{|\beta_2|  \leq |\beta|} \left\| (a^{\varepsilon}_{\pm},b^{\varepsilon},c^{\varepsilon})\right\|_{L^\infty}
\left\|w_{l}(\alpha,\beta) \partial^{\alpha}_{\beta_2} \{\mathbf{I}-\mathbf{P}\} f^{\varepsilon}\right\|_{N^s_\gamma}
\left\|w_{l}(\alpha,\beta) \partial^{\alpha}_\beta \{\mathbf{I}-\mathbf{P}\} f^{\varepsilon}\right\|_{N^s_\gamma} \nonumber \\
+&\frac{1}{\varepsilon}\!\sum_{\substack{{1 \leq |\alpha_1| \leq N-1} \\ {|\beta_2|  \leq |\beta|}}}\!\!\!\!
\left\| \partial^{\alpha_1} (a^{\varepsilon}_{\pm},b^{\varepsilon},c^{\varepsilon})\right\|_{L^6}
\left\|w_{l}(\alpha,\beta) \partial^{\alpha-\alpha_1}_{\beta_2} \{\mathbf{I}-\mathbf{P}\} f^{\varepsilon}\right\|_{L^3_x N^s_\gamma}
\left\|w_{l}(\alpha,\beta) \partial^{\alpha}_\beta \{\mathbf{I}-\mathbf{P}\} f^{\varepsilon}\right\|_{N^s_\gamma} \nonumber\\
\lesssim\;&\mathcal{E}^{1/2}_{N,l}(t) \mathcal{D}_{N,l}(t), \nonumber
\end{align}
where we have used the Sobolev inequalities (\ref{Sobolev-ineq}), the Minkowski inequality \eqref{minkowski}, $w_{l}(\alpha,\beta)\geq 1$ and the fact $w_l(\alpha,\beta) \leq w_l(\alpha-\alpha_1+\alpha^\prime,\beta_2)$ for $|\alpha-\alpha_1+\alpha^\prime| \leq |\alpha|$, $ |\beta_2| \leq |\beta|$, $|\alpha_1| \geq 1$ and $|\alpha^\prime| \leq 1$. Here and hereafter, the spatial derivative $\alpha^\prime$ is generated by the Sobolev inequalities \eqref{Sobolev-ineq}.
In the same way, by selecting the second one inside the minimum function in \eqref{soft gamma1}, $I_{1,3}$ has the same upper bound as $I_{1,2}$.

Finally, applying \eqref{soft gamma1} again, $I_{1,4}$ is decomposed as
\begin{align*}
&\frac{1}{\varepsilon}\sum_{\substack{{\alpha_1+\alpha_2=\alpha}\\{\beta_1+\beta_2  \leq \beta}}}\int_{\mathbb{R}^3}
\left| \partial^{\alpha_1}_{\beta_1} \{\mathbf{I}-\mathbf{P}\}f^{\varepsilon} \right|_{L^2} \left| w_{l}(\alpha,\beta) \partial^{\alpha_2}_{\beta_2} \{\mathbf{I}-\mathbf{P}\} f^{\varepsilon}\right|_{N^s_\gamma} \left| w_{l}(\alpha,\beta) \partial^{\alpha}_{\beta} \{\mathbf{I}-\mathbf{P}\} f^{\varepsilon}\right|_{N^s_\gamma} \d x  \\
+&\frac{1}{\varepsilon} \sum_{\substack{{\alpha_1+\alpha_2=\alpha}\\{\beta_1+\beta_2  \leq \beta}}} \int_{\mathbb{R}^3}
\left| w_{l}(\alpha,\beta) \partial^{\alpha_1}_{\beta_1} \{\mathbf{I}-\mathbf{P}\}f^{\varepsilon} \right|_{L^2}
\left| \partial^{\alpha_2}_{\beta_2} \{\mathbf{I}-\mathbf{P}\} f^{\varepsilon}\right|_{N^s_\gamma} \left| w_{l}(\alpha,\beta) \partial^{\alpha}_{\beta} \{\mathbf{I}-\mathbf{P}\} f^{\varepsilon}\right|_{N^s_\gamma} \d x \\
+&\frac{1}{\varepsilon} \sum_{\substack{{\alpha_1+\alpha_2=\alpha}\\{\beta_1+\beta_2  \leq \beta}}}\int_{\mathbb{R}^3}
\min \Big\{ \sum_{|\beta^{\prime}| \leq 2}\left|  \partial^{\alpha_1}_{\beta_1+\beta^{\prime}} \{\mathbf{I}-\mathbf{P}\}f^{\varepsilon}
\right|_{L^2} \left| w_l(\alpha,\beta) \partial^{\alpha_2}_{\beta_2} \{\mathbf{I}-\mathbf{P}\} f^{\varepsilon}\right|_{N^s_\gamma},  \\
&\;\;\;\;\;\;\;\;\;\;\;\;\;\;\;\;\;\;\;\;\;\;\;\;\;\;\;\;\;\;\;\;\sum_{|\beta^{\prime}| \leq 2} \left|  \partial^{\alpha_1}_{\beta_1} \{\mathbf{I}-\mathbf{P}\}f^{\varepsilon} \right|_{L^2}
\left| w_l(\alpha,\beta) \partial^{\alpha_2}_{\beta_2+\beta^{\prime}} \{\mathbf{I}-\mathbf{P}\} f^{\varepsilon}\right|_{N^s_\gamma} \Big\} \\
&\;\;\;\;\;\;\;\;\;\;\;\;\;\;\;\;\;\;\;\;\;\;\;\;\times \left| w_l(\alpha, \beta) \partial^\alpha_\beta \{\mathbf{I}-\mathbf{P}\} f^\varepsilon \right|_{N^s_\gamma} \d x \\
:=& \;I_{1,4}^\prime + I_{1,4}^{\prime\prime} +  I_{1,4}^{\prime\prime\prime}.
\end{align*}
Then we estimate $I_{1,4}^\prime, I_{1,4}^{\prime\prime}$ and $I_{1,4}^{\prime\prime\prime}$ term by term.

Firstly, for $I_{1,4}^\prime$, when $|\alpha_1|+|\beta_1|=0$, we take $L^{\infty}$--$L^2$--$L^2$ for the space integral. When $|\alpha_1|+|\beta_1|=1$, we take $L^6$--$L^3$--$L^2$. When $2 \leq |\alpha_1|+|\beta_1| \leq N$, we take $L^2$--$L^{\infty}$--$L^2$. Thus, $I_{1,4}^\prime \lesssim \mathcal{E}^{1/2}_{N,l}(t) \mathcal{D}_{N,l}(t)$. Secondly, in the same way we can get the estimate $I_{1,4}^{\prime\prime} \lesssim \mathcal{E}^{1/2}_{N,l}(t) \mathcal{D}_{N,l}(t)$.
Finally, $I_{1,4}^{\prime\prime\prime}$ should be estimated very carefully
due to the change of the weight function $w_{l}(\alpha,\beta)$. For this, we split $I_{1,4}^{\prime\prime\prime}$ into the following four cases.

{\emph {Case 1.}}
$|\alpha_1|+|\beta_1|=0$ $\mathrm{or}$ $|\alpha_1|+|\beta_1|=1$. \; By selecting
the first term inside the minimum function in $I_{1,4}^{\prime\prime\prime}$ and taking $L^{\infty}$--$L^2$--$L^2$, one has
\begin{align}
I_{1,4}^{\prime\prime\prime}\lesssim\;& \frac{1}{\varepsilon} \sum_{|\beta^{\prime}| \leq 2}\left\| \partial^{\alpha_1}_{
\beta_1+\beta^{\prime}} \{\mathbf{I}-\mathbf{P}\}f^{\varepsilon}\right\|_{L^\infty_x L^2_v}
\left\| w_l(\alpha,\beta) \partial^{\alpha_2}_{\beta_2} \{\mathbf{I}-\mathbf{P}\} f^{\varepsilon}\right\|_{N^s_\gamma}
\left\| w_l(\alpha,\beta) \partial^{\alpha}_{\beta} \{\mathbf{I}-\mathbf{P}\} f^{\varepsilon}\right\|_{N^s_\gamma} \nonumber \\
\lesssim \;&\mathcal{E}^{1/2}_{N,l}(t) \mathcal{D}_{N,l}(t), \nonumber
\end{align}
where we have used the Sobolev inequalities \eqref{Sobolev-ineq}, the Minkowski inequality \eqref{minkowski}, $w_l(\alpha, \beta) \geq 1$ and the fact $w_l(\alpha,\beta) \leq w_l(\alpha_2,\beta_2)$ for $|\alpha_2| \leq |\alpha|$ and $ |\beta_2| \leq |\beta|$.

{\emph {Case 2.}}
$|\alpha_1|+|\beta_1|=2$ $\mathrm{or}$ $|\alpha_1|+|\beta_1|=3$. \; By selecting the first term inside the minimum function in $I_{1,4}^{\prime\prime\prime}$ and taking $L^2$--$L^{\infty}$--$L^2$, we obtain
\begin{align}
I_{1,4}^{\prime\prime\prime}\lesssim\;&\frac{1}{\varepsilon} \sum_{|\beta^{\prime}| \leq 2} \left\| \partial^{\alpha_1}_{
\beta_1+\beta^{\prime}} \{\mathbf{I}-\mathbf{P}\}f^{\varepsilon}\right\|
\left\|  w_l(\alpha,\beta) \partial^{\alpha_2}_{\beta_2} \{\mathbf{I}-\mathbf{P}\} f^{\varepsilon}\right\|_{L^\infty_x N^s_\gamma}
\left\| w_l(\alpha,\beta) \partial^{\alpha}_{\beta} \{\mathbf{I}-\mathbf{P}\} f^{\varepsilon}\right\|_{N^s_\gamma} \nonumber \\
\lesssim\;& \mathcal{E}^{1/2}_{N,l}(t) \mathcal{D}_{N,l}(t), \nonumber
\end{align}
where we have used the Sobolev inequalities \eqref{Sobolev-ineq}, the Minkowski inequality \eqref{minkowski}, $w_l(\alpha, \beta) \geq 1$  and the fact $w_l(\alpha,\beta) \leq w_l(\alpha_2+\alpha^\prime,\beta_2)$ for $|\alpha_2+\alpha^{\prime}|+ |\beta_2|  \leq |\alpha| + |\beta|$, $ |\beta_2| \leq |\beta|$ and $1 \leq |\alpha^\prime| \leq 2$.

{\emph {Case 3.}} $4 \leq |\alpha_1|+|\beta_1| \leq N-1$. \; By choosing the second term inside the minimum function in $I_{1,4}^{\prime\prime\prime}$ and taking $L^6$--$L^3$--$L^2$, we have
$$
I_{1,4}^{\prime\prime\prime}\lesssim\;\frac{1}{\varepsilon} \left\| \partial^{\alpha_1}_{\beta_1} \{\mathbf{I}-\mathbf{P}\}f^{\varepsilon}\right\|_{L^6_x L^2_v}
\sum_{|\beta^{\prime}| \leq 2}\left\| w_l(\alpha,\beta) \partial^{\alpha_2}_{\beta_2+\beta^{\prime}} \{\mathbf{I}-\mathbf{P}\} f^{\varepsilon}\right\|_{L^3_x N^s_\gamma}
\left\| w_l(\alpha,\beta) \partial^{\alpha}_{\beta} \{\mathbf{I}-\mathbf{P}\} f^{\varepsilon}\right\|_{N^s_\gamma}. \nonumber
$$
Owing to $4 \leq |\alpha_1|+|\beta_1| \leq N-1$, one has $|\alpha_2|+|\beta_2| \leq |\alpha|+|\beta|-4$. For $| \alpha^\prime | \leq 1$ and
$| \beta^\prime | \leq 2$, we have $|\alpha_2 + \alpha^\prime |+|\beta_2 + \beta^\prime | \leq |\alpha|+|\beta|-1$, and then
$w_l(\alpha,\beta) \leq w_l(\alpha_2 + \alpha^\prime, \beta_2 + \beta^\prime)$. Thus,
there holds $I_{1,4}^{\prime\prime\prime} \lesssim \mathcal{E}^{1/2}_{N,l}(t) \mathcal{D}_{N,l}(t)$.
In fact, in the case when $|\alpha^\prime|+|\beta^\prime| = |\alpha|+|\beta| $ and $|\beta^\prime| > |\beta|$, $w_{l}(\alpha,\beta) \leq w_l(\alpha^\prime,\beta^\prime)$ does not necessarily hold. This is the main reason for the requirement $N \geq 5$ in the soft potential case.

{\emph {Case 4.}} $|\alpha_1|+|\beta_1| = N$. \; By choosing the second term inside the minimum function in $I_{1,4}^{\prime\prime\prime}$ and taking $L^2$--$L^{\infty}$--$L^2$, we have
$$
I_{1,4}^{\prime\prime\prime}\lesssim\; \frac{1}{\varepsilon} \left\| \partial^{\alpha}_{\beta} \{\mathbf{I}-\mathbf{P}\}f^{\varepsilon}\right\|
\sum_{|\beta^{\prime}| \leq 2}\left\| w_l(\alpha,\beta) \partial_{\beta^{\prime}} \{\mathbf{I}-\mathbf{P}\} f^{\varepsilon}\right\|_{L^\infty_x N^s_\gamma}
\left\| w_l(\alpha,\beta) \partial^{\alpha}_{\beta} \{\mathbf{I}-\mathbf{P}\} f^{\varepsilon}\right\|_{N^s_\gamma}. \nonumber
$$
By $|\alpha_1|+|\beta_1| = N \geq 5$, we have $|\alpha^\prime |+ | \beta^\prime | \leq |\alpha|+|\beta|-1$, and then
$w_l(\alpha,\beta) \leq w_l( \alpha^\prime, \beta^\prime)$ for $1 \leq | \alpha^\prime | \leq 2$ and
$| \beta^\prime | \leq 2$. Therefore, there holds
$I_{1,4}^{\prime\prime\prime} \lesssim \mathcal{E}^{1/2}_{N,l}(t) \mathcal{D}_{N,l}(t)$.

Finally, combining all the above estimates of $I_{1,1}$, $I_{1,2}$, $I_{1,3}$ and $I_{1,4}$, the desired estimate \eqref{soft gamma6} is thus proved. This completes the proof of Lemma \ref{soft Gamma}.
\end{proof}
\medskip

\begin{lemma}\label{hard Gamma}
Let $ \gamma+2s \geq 0$. Assume that $\mathcal{E}^{1/2}_{N,l}(t) \leq \delta$ for some positive constant $\delta >0$.
\begin{itemize}
\setlength{\leftskip}{-6mm}
\item[(1)] For $| \alpha | \leq N$, there holds
\begin{align}\label{hard gamma3}
\bigg|\frac{1}{\varepsilon}\sum_{\pm}\left( \partial^\alpha \Gamma_{\pm}( f^{\varepsilon}, f^{\varepsilon} ),
e^{\pm \varepsilon \phi^{\varepsilon}} \partial^\alpha f_{\pm}^{\varepsilon}\right)\bigg|
\lesssim \left\{\mathcal{E}^{1/2}_{N,l}(t)+\mathcal{E}_{N,l}(t)\right\} \mathcal{D}_{N,l}(t).
\end{align}
\item[(2)] For $| \alpha | \leq N-1$, there holds
\begin{align}\label{hard gamma4}
\bigg|\frac{1}{\varepsilon} \left( \partial^\alpha \Gamma_{\pm}( f^{\varepsilon}, f^{\varepsilon} ),
e^{\pm \varepsilon \phi^{\varepsilon}}
w^2_{l}(\alpha, 0) \partial^\alpha \{\mathbf{I}_{\pm}-\mathbf{P}_{\pm}\} f^{\varepsilon}\right)\bigg|
\lesssim \mathcal{E}^{1/2}_{N,l}(t) \mathcal{D}_{N,l}(t).
\end{align}
\item[(3)] For $|\alpha|=N$, there holds
\begin{align}\label{hard gamma5}
\Big|\left( \partial^\alpha \Gamma_{\pm}( f^{\varepsilon}, f^{\varepsilon} ), e^{\pm \varepsilon \phi^{\varepsilon}}
w^2_{l}(\alpha, 0) \partial^\alpha  f_{\pm}^{\varepsilon}\right)\Big|
\lesssim \mathcal{E}^{1/2}_{N,l}(t) \mathcal{D}_{N,l}(t).
\end{align}
\item[(4)] For $| \alpha |+ | \beta | \leq N$, $| \beta | \geq 1$ and $| \alpha | \leq N-1$, there holds
\begin{align}\label{hard gamma6}
\bigg| \frac{1}{\varepsilon}\left( \partial^\alpha_\beta \Gamma_{\pm}( f^{\varepsilon}, f^{\varepsilon} ),
e^{\pm \varepsilon \phi^{\varepsilon}} w^2_{l}(\alpha, \beta) \partial^\alpha_\beta \{\mathbf{I}_{\pm}-\mathbf{P}_{\pm}\} f^{\varepsilon}\right) \bigg|
\lesssim \mathcal{E}^{1/2}_{N,l}(t) \mathcal{D}_{N,l}(t).
\end{align}
\end{itemize}
\end{lemma}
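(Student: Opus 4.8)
\textbf{Proof strategy for Lemma \ref{hard Gamma}.}
The plan is to follow the same structural decomposition used in the proof of Lemma \ref{soft Gamma}, but to replace every appeal to the soft estimate \eqref{soft gamma1} by the simpler hard estimate \eqref{hard gamma1}, which does not involve the minimum of two competing terms and carries no extra two-order velocity derivative. As in the soft case, \eqref{hard gamma3} reduces to \eqref{hard gamma6}: one writes $e^{\pm\varepsilon\phi^\varepsilon}=(e^{\pm\varepsilon\phi^\varepsilon}-1)+1$, uses the collision invariant property to turn the ``$1$'' part into an inner product against $\partial^\alpha\{\mathbf{I}-\mathbf{P}\}f^\varepsilon$, and absorbs the singular $\frac{1}{\varepsilon}$ in the $(e^{\pm\varepsilon\phi^\varepsilon}-1)$ part using $|e^{\pm\varepsilon\phi^\varepsilon}-1|\lesssim\varepsilon\|\nabla_x\phi^\varepsilon\|_{H^1}$, cf. \eqref{phi estimate1}, with $\|\nabla_x\phi^\varepsilon\|_{H^1}$ absorbed into $\mathcal{E}_{N,l}^{1/2}(t)$ or $\mathcal{D}_{N,l}^{1/2}(t)$. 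Likewise \eqref{hard gamma4} and \eqref{hard gamma5} follow the same lines as \eqref{hard gamma6}, so it suffices to prove \eqref{hard gamma6}.

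\textbf{Main steps.} First I would set
\[
I_1\equiv\frac{1}{\varepsilon}\left(\partial^\alpha_\beta\Gamma_\pm(f^\varepsilon,f^\varepsilon),e^{\pm\varepsilon\phi^\varepsilon}w_l^2(\alpha,\beta)\partial^\alpha_\beta\{\mathbf{I}_\pm-\mathbf{P}_\pm\}f^\varepsilon\right)
\]
and split it via the decomposition \eqref{gamma decomposition} into $I_{1,1}+I_{1,2}+I_{1,3}+I_{1,4}$, corresponding to $\Gamma(\mathbf{P}f^\varepsilon,\mathbf{P}f^\varepsilon)$, $\Gamma(\mathbf{P}f^\varepsilon,\{\mathbf{I}-\mathbf{P}\}f^\varepsilon)$, $\Gamma(\{\mathbf{I}-\mathbf{P}\}f^\varepsilon,\mathbf{P}f^\varepsilon)$ and $\Gamma(\{\mathbf{I}-\mathbf{P}\}f^\varepsilon,\{\mathbf{I}-\mathbf{P}\}f^\varepsilon)$. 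For $I_{1,1}$, after inserting $\mathbf{P}f^\varepsilon$ from \eqref{Pf define} and applying \eqref{hard gamma1}, the macroscopic factors $\partial^{\alpha_1}(a^\varepsilon_\pm,b^\varepsilon,c^\varepsilon)$ and $\partial^{\alpha-\alpha_1}(a^\varepsilon_\pm,b^\varepsilon,c^\varepsilon)$ are estimated by splitting the $x$-integral into $L^\infty_x$--$L^2_x$--$L^2_x$ when $\alpha_1=0$ and $L^6_x$--$L^3_x$--$L^2_x$when $1\le|\alpha_1|\le N-1$, using the Sobolev embeddings \eqref{Sobolev-ineq} and $|e^{\pm\varepsilon\phi^\varepsilon}|\approx1$, yielding $I_{1,1}\lesssim\mathcal{E}_{N,l}^{1/2}(t)\mathcal{D}_{N,l}(t)$. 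For $I_{1,2}$ and $I_{1,3}$ one uses \eqref{hard gamma1} again; because in the hard case the velocity weight is $w_l(\alpha,\beta)=\langle v\rangle^{l-|\alpha|-|\beta|}$, one always has $w_l(\alpha,\beta)\le w_l(\alpha',\beta')$ whenever $|\alpha'|+|\beta'|\le|\alpha|+|\beta|$, so the weight can be freely distributed onto any lower-order factor; then the $x$-integral is handled by $L^\infty_x$--$L^2_x$--$L^2_x$ (for the zeroth-order macroscopic factor) or $L^6_x$--$L^3_x$--$L^2_x$ (for derivatives), together with the Minkowski inequality \eqref{minkowski} to interchange $|\cdot|_{N^s_\gamma}$ with the $L^3_x$ norm, giving the bound $\mathcal{E}_{N,l}^{1/2}(t)\mathcal{D}_{N,l}(t)$. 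For $I_{1,4}$, apply the first line of \eqref{hard gamma1}: for each split $\alpha_1+\alpha_2=\alpha$, $\beta_1+\beta_2\le\beta$, take $L^\infty_x$--$L^2_x$--$L^2_x$ when $|\alpha_1|+|\beta_1|=0$, $L^6_x$--$L^3_x$--$L^2_x$ when $|\alpha_1|+|\beta_1|=1$, and $L^2_x$--$L^\infty_x$--$L^2_x$ when $2\le|\alpha_1|+|\beta_1|\le N$, always placing $w_l$ on the factor where it matches (again automatic in the hard case), so that each piece is dominated by $\mathcal{E}_{N,l}^{1/2}(t)\mathcal{D}_{N,l}(t)$. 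Summing the estimates of $I_{1,1},\dots,I_{1,4}$ gives \eqref{hard gamma6}.

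\textbf{Expected main obstacle.} The principal point to verify carefully is not an inequality between weights (which, unlike the soft case, holds trivially here and is precisely why $N\ge2$ suffices), but rather the bookkeeping that every term produced by \eqref{hard gamma1}, after distributing at most one extra spatial derivative $\alpha'$ coming from the Sobolev embeddings \eqref{Sobolev-ineq} and $|e^{\pm\varepsilon\phi^\varepsilon}|\approx1$, still has all derivative orders $\le N$ so that it is genuinely controlled by $\mathcal{E}_{N,l}(t)$ and $\mathcal{D}_{N,l}(t)$; one must also check that the factor $\langle v\rangle^{\gamma/2+s}$ implicit in $|\cdot|_{N^s_\gamma}$ matches the $\langle v\rangle^{\gamma}$-type loss in the weight hierarchy, which is exactly the content of the condition $\gamma+2s\ge0$. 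This is routine but must be done term by term; since no min-structure and no interpolation (Lemma \ref{interpolation1}) are needed, the hard case is strictly simpler than Lemma \ref{soft Gamma}, and the requirement $l\ge N$ is enough to close every estimate.
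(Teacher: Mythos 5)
Your proposal is correct and follows essentially the same route as the paper, which proves the hard-potential case by rerunning the argument of Lemma \ref{soft Gamma} with \eqref{hard gamma1} in place of \eqref{soft gamma1}, noting that the absence of the minimum/extra-velocity-derivative terms and the monotonicity $w_l(\alpha,\beta)\leq w_l(\alpha',\beta')$ for $|\alpha'|+|\beta'|\leq|\alpha|+|\beta|$ make $N\geq 2$ sufficient. The only minor imprecision is your phrase ``at most one extra spatial derivative'' from the Sobolev embeddings: the $L^\infty_x$ embedding can cost up to two extra $x$-derivatives, but this case only arises when the other factor has order at most $N-2$, so the bookkeeping closes exactly as in the soft-case proof.
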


\begin{proof}
In fact, comparing \eqref{hard gamma1} and \eqref{soft gamma1}, we find that \eqref{hard gamma1} can be a part of \eqref{soft gamma1} to a certain extent. In other words, \eqref{hard gamma1} and the first two terms on the right-hand side of \eqref{soft gamma1} are equivalent. Therefore, the estimates on $\Gamma(f^\varepsilon,f^\varepsilon)$ for the hard potential case can be proved by using the same approach as that for the soft potential case in Lemma \ref{soft Gamma}. For the estimates \eqref{hard gamma3}--\eqref{hard gamma6}, we can see that $N \geq 2$ is enough by Lemma \ref{soft Gamma}. The details are omitted for brevity.
\end{proof}

The following two lemmas concern the estimates on the nonlinear term $v \cdot \nabla_x \phi^{\varepsilon} f^{\varepsilon}_{\pm}$.
\begin{lemma}\label{softnonlinearterm1}
Let $ -3 < \gamma < -2s$. Assume that $\mathcal{E}^{1/2}_{N,l}(t) \leq \delta$ for some positive constant $\delta >0$.
\begin{itemize}
\setlength{\leftskip}{-6mm}
\item[(1)] For $1 \leq | \alpha | \leq N$, there holds
\begin{align}\label{softnonlinearterm1-1}
\sum_{1 \leq | \alpha_1 | \leq |\alpha|}\left(v_i \partial^{\alpha_1+e_i}\phi^\varepsilon \partial^{\alpha-\alpha_1}f_{\pm}^{\varepsilon}, e^{\pm \varepsilon \phi^{\varepsilon}} \partial^\alpha f_{\pm}^{\varepsilon} \right)
\lesssim \mathcal{E}^{1/2}_{N,l}(t) \mathcal{D}_{N,l}(t).
\end{align}
\item[(2)] For $1 \leq | \alpha | \leq N-1$,  there holds
\begin{align}\label{softnonlinearterm1-2}
\!\!\!\! \sum_{1 \leq | \alpha_1 | \leq | \alpha |}\!\!\! \left(v_i \partial^{\alpha_1+e_i}\phi^\varepsilon \partial^{\alpha-\alpha_1}\{\mathbf{I}_{\pm}-\mathbf{P}_{\pm}\} f^{\varepsilon}, e^{\pm \varepsilon \phi^{\varepsilon}}
w^2_{l}(\alpha, 0) \partial^\alpha \{\mathbf{I}_{\pm}-\mathbf{P}_{\pm}\} f^{\varepsilon} \right)
\lesssim \mathcal{E}^{1/2}_{N,l}(t) \mathcal{D}_{N,l}(t).
\end{align}
\item[(3)] For $ | \alpha | = N$,  there holds
\begin{align}\label{softnonlinearterm1-3}
\sum_{1 \leq | \alpha_1 | \leq | \alpha |}\left(v_i \partial^{\alpha_1+e_i}\phi^\varepsilon \partial^{\alpha-\alpha_1}f_{\pm}^{\varepsilon}, e^{\pm \varepsilon \phi^{\varepsilon}} w^2_l(\alpha,0) \partial^\alpha f_{\pm}^{\varepsilon} \right)
\lesssim \mathcal{E}^{1/2}_{N,l}(t) \mathcal{D}_{N,l}(t).
\end{align}
\item[(4)] For $1 \leq | \alpha | + | \beta | \leq N$, $| \beta | \geq 1$ and
$| \alpha | \leq N-1$,  there holds
\begin{align}\label{softnonlinearterm1-4}
 \sum_{\substack{{ | \alpha_1 | + | \beta_1 | \geq 1}\\ {| \beta_1 | \leq 1}}} \left(\partial_{\beta_1} v_i \partial^{\alpha_1+e_i}\phi^\varepsilon \partial^{\alpha-\alpha_1}_{\beta-\beta_1} \{\mathbf{I}_{\pm}-\mathbf{P}_{\pm}\} f^{\varepsilon}, e^{\pm \varepsilon \phi^{\varepsilon}} w^2_l(\alpha,\beta) \partial^\alpha_\beta \{\mathbf{I}_{\pm}-\mathbf{P}_{\pm}\} f^{\varepsilon} \right)
\lesssim \mathcal{E}^{1/2}_{N,l}(t) \mathcal{D}_{N,l}(t).
\end{align}
\end{itemize}
\end{lemma}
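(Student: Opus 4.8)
\emph{Proof strategy.} All four estimates follow from a single scheme, and I would carry them out together. First note that the exponential factor $e^{\pm\varepsilon\phi^\varepsilon}$ is a harmless bounded multiplier: by \eqref{phi estimate1} and $\mathcal{E}^{1/2}_{N,l}(t)\le\delta$ one has $|e^{\pm\varepsilon\phi^\varepsilon}-1|\lesssim\varepsilon\|\nabla_x\phi^\varepsilon\|_{H^1}\lesssim\delta$, hence $|e^{\pm\varepsilon\phi^\varepsilon}|\approx1$ uniformly in $\varepsilon\in(0,1]$, so it may simply be bounded by a constant. Since the sums always run over $|\alpha_1|\ge1$, I would Hölder-split the three-factor inner product by putting the electric-field factor in $L^\infty$ when $|\alpha_1|=1$ — using $\|\partial^{\alpha_1}\nabla_x\phi^\varepsilon\|_{L^\infty}\lesssim\|\partial^{\alpha_1}\nabla_x\phi^\varepsilon\|_{H^2}$, which involves only $\partial^\alpha\nabla_x\phi^\varepsilon$ with $|\alpha|\le3\le N+1$ and hence is $\lesssim\mathcal{E}^{1/2}_{N,l}(t)$ — and in $L^3$ when $2\le|\alpha_1|\le N$, using $\|\partial^{\alpha_1}\nabla_x\phi^\varepsilon\|_{L^3}\lesssim\|\partial^{\alpha_1}\nabla_x\phi^\varepsilon\|_{H^1}\lesssim\mathcal{E}^{1/2}_{N,l}(t)$. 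The remaining two factors are then paired in $L^2$--$L^2$, or in $L^6_xL^2_v$--$L^2$ when $|\alpha_1|\ge2$, with velocity weights to be chosen.

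The core of the argument is absorbing the velocity growth $|v_i|\lesssim\langle v\rangle$. For parts (1) and (3), where the full $f^\varepsilon$ occurs, I would split $f^\varepsilon=\mathbf{P}f^\varepsilon+\{\mathbf{I}-\mathbf{P}\}f^\varepsilon$. On $\mathbf{P}f^\varepsilon$ the Gaussian $\mu^{1/2}$ in \eqref{Pf define} absorbs every power of $\langle v\rangle$, and the estimate reduces to products of the macroscopic coefficients $(a^\varepsilon_\pm,b^\varepsilon,c^\varepsilon)$ and their derivatives, controlled by \eqref{Sobolev-ineq} and the structure of $\mathcal{E}_N,\mathcal{D}_N$ as in the cutoff theory. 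On $\{\mathbf{I}-\mathbf{P}\}f^\varepsilon$ I would use the weight-shift inequality $\langle v\rangle\,w_l(\alpha,\beta)\le\langle v\rangle^{\gamma}w_l(|\alpha|-1,|\beta|)$, valid because $m=-\tfrac{3\gamma}{s}\ge-2\gamma+1$ for $-3<\gamma<-2s$: this trades the factor $v_i$ for one unit of spatial order and produces a spare $\langle v\rangle^{\gamma}=\langle v\rangle^{\gamma/2}\langle v\rangle^{\gamma/2}$. Distributing this $\langle v\rangle^{\gamma/2}$ symmetrically between the two microscopic factors, the top-order one carries the weight $\langle v\rangle^{\gamma/2}w_l(\alpha,\beta)$ and is therefore $\lesssim\|w_l(\alpha,\beta)\partial^\alpha_\beta\{\mathbf{I}-\mathbf{P}\}f^\varepsilon\|_{N^s_\gamma}\lesssim\mathcal{D}^{1/2}_{N,l}(t)$ (using $0<\varepsilon\le1$ to drop the $\varepsilon^{-2}$ in $\mathcal{D}_{N,l}$), while the lower-order microscopic factor, now at spatial order $|\alpha|-1$ with matching weight $w_l(|\alpha|-1,0)$, lands in $\mathcal{E}^{1/2}_{N,l}(t)$ — or, after a further Sobolev step $\|\cdot\|_{L^6_x}\lesssim\|\nabla_x\cdot\|_{L^2_x}$, in $\mathcal{D}^{1/2}_{N,l}(t)$. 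Alternatively, when the low-order factor already sits in the weighted energy, one can instead absorb a fixed power $\langle v\rangle^{1-\gamma/2}$ directly into $w_l(|\alpha-\alpha_1|,0)$, which is legitimate since $l\ge-\tfrac{3\gamma}{s}N$. Multiplying the three factors yields $\mathcal{E}^{1/2}_{N,l}(t)\mathcal{D}_{N,l}(t)$ in every configuration.

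For part (4) the one new feature is the velocity derivative hitting $v_i$: $\partial_{\beta_1}v_i=0$ unless $|\beta_1|\le1$, so only $|\beta_1|\in\{0,1\}$ contribute. When $|\beta_1|=1$ the factor $v_i$ disappears and I would instead use the identity $w_l(\alpha,\beta)=\langle v\rangle^{\gamma}w_l(\alpha+e_i,\beta-e_i)$ to move $\partial^{\alpha-\alpha_1}_{\beta-\beta_1}\{\mathbf{I}-\mathbf{P}\}f^\varepsilon$ onto its matching weight $w_l(\alpha-\alpha_1,\beta-\beta_1)$; when $|\beta_1|=0$ one is back to the $\alpha$-shift of the previous paragraph. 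The main obstacle, as with all these weighted estimates, is the bookkeeping between derivative counts and weights: after the single Sobolev embedding one is left carrying the weight $w_l(|\alpha|-1,\cdot)$ on a factor of spatial order $|\alpha|-|\alpha_1|+|\alpha'|$ with $|\alpha'|\le1$, and one needs $w_l(|\alpha|-1,\cdot)\le w_l(|\alpha|-|\alpha_1|+|\alpha'|,\cdot)$, i.e. monotone decrease of $w_l$ in its indices (true since $m+\gamma>0$ and $m>0$ here) together with $|\alpha_1|\ge|\alpha'|+1$, which holds because no $L^6$-step is needed when $|\alpha_1|=1$. Crucially, in this lemma the mismatch never forces the velocity index to exceed $|\beta|$ — in contrast to the $\Gamma$-estimate in Lemma \ref{soft Gamma} — which is precisely why $N\ge2$, rather than $N\ge5$, is enough. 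Collecting the cases completes the proof.
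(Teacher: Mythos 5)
Your proposal is correct and follows essentially the same route as the paper's proof: the same macro--micro splitting of $f^\varepsilon$, the same weight-shift inequalities $\langle v\rangle\, w_l(\alpha,\cdot)\le\langle v\rangle^{\gamma}w_l(|\alpha|-1,\cdot)$ (valid since $-\tfrac{3\gamma}{s}\ge -2\gamma+1$) and, for part (4) with $|\beta_1|=1$, $w_l(\alpha,\beta)\le\langle v\rangle^{\gamma}w_l(\alpha,\beta-e_i)$ — your identity $w_l(\alpha,\beta)=\langle v\rangle^{\gamma}w_l(\alpha+e_i,\beta-e_i)$ plus monotonicity of the weight is equivalent — together with the same H\"older/Sobolev case analysis in $|\alpha_1|$, the splitting $\langle v\rangle^{\gamma}=\langle v\rangle^{\gamma/2}\langle v\rangle^{\gamma/2}$, and the observation that $e^{\pm\varepsilon\phi^\varepsilon}$ is a bounded multiplier. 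Two minor bookkeeping remarks: in part (4) the sum also contains $|\alpha_1|=0,\ |\beta_1|=1$ (so the claim that $|\alpha_1|\ge1$ always is not literally true there, though this case is the easiest, with $\nabla_x\phi^\varepsilon$ in $L^\infty$), and to land exactly on $\mathcal{E}^{1/2}_{N,l}(t)\mathcal{D}_{N,l}(t)$ you should assign at most one factor to $\mathcal{E}^{1/2}_{N,l}$ — e.g.\ put the electric-field factor, or the weighted low-order microscopic factor via $L^2_{\gamma/2}\lesssim N^s_\gamma$, into $\mathcal{D}^{1/2}_{N,l}$, which is available because $\sum_{|\alpha|\le N+1}\|\partial^\alpha\nabla_x\phi^\varepsilon\|^2\lesssim\mathcal{D}_{N,l}(t)$ — since taking two factors in $\mathcal{E}^{1/2}_{N,l}$ would only give $\mathcal{E}_{N,l}\mathcal{D}^{1/2}_{N,l}$.
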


\begin{proof}
For brevity, we only prove \eqref{softnonlinearterm1-4} and
the estimate
\begin{align}\label{softnonlinearterm1-5}
\sum_{1 \leq | \alpha_1 | \leq | \alpha |}\left(v_i \partial^{\alpha_1+e_i}\phi^\varepsilon \partial^{\alpha-\alpha_1}f_{\pm}^{\varepsilon}, e^{\pm \varepsilon \phi^{\varepsilon}} w^2_l(\alpha,0) \partial^\alpha f_{\pm}^{\varepsilon} \right)
\lesssim \mathcal{E}^{1/2}_{N,l}(t) \mathcal{D}_{N,l}(t)\;
\text{ for } 1 \leq | \alpha | \leq N,
\end{align}
 because the estimates \eqref{softnonlinearterm1-1}--\eqref{softnonlinearterm1-3} can be obtained from \eqref{softnonlinearterm1-5}.

 Firstly, we estimate \eqref{softnonlinearterm1-5}. In terms of $f_{\pm}^{\varepsilon}=\{\mathbf{I}_{\pm}-\mathbf{P}_{\pm}\} f^{\varepsilon} + \mathbf{P}_{\pm}f^{\varepsilon}$, we set
\begin{align}
\begin{split}\nonumber
I_2\equiv \;& \sum_{1 \leq |\alpha_1| \leq|\alpha|}\left(v_i \partial^{\alpha_1+e_i} \phi^{\varepsilon} \partial^{\alpha-\alpha_1} \mathbf{P}_{ \pm} f^{\varepsilon}, e^{ \pm \varepsilon \phi^{\varepsilon} } w_l^2(\alpha, 0) \partial^\alpha f^{\varepsilon}_{ \pm}\right) \\
& \quad+\sum_{1 \leq |\alpha_1| \leq|\alpha|}\left(v_i \partial^{\alpha_1+e_i} \phi^{\varepsilon} \partial^{\alpha-\alpha_1}\{\mathbf{I}_{ \pm}-\mathbf{P}_{ \pm}\} f^{\varepsilon}, e^{ \pm \varepsilon \phi^{\varepsilon}} w_l^2(\alpha, 0) \partial^\alpha \mathbf{P}_{ \pm} f^{\varepsilon}\right) \\
& \quad+\sum_{1 \leq |\alpha_1| \leq|\alpha|}\left(v_i \partial^{\alpha_1+e_i} \phi^{\varepsilon} \partial^{\alpha-\alpha_1}\{\mathbf{I}_{ \pm}-\mathbf{P}_{ \pm}\} f^{\varepsilon}, e^{ \pm \varepsilon \phi^{\varepsilon}} w_l^2(\alpha, 0) \partial^\alpha \{\mathbf{I}_{ \pm}-\mathbf{P}_{ \pm} \} f^{\varepsilon}\right) \\
:=\;&I_{2,1}+I_{2,2}+I_{2,3}.
\end{split}
\end{align}
For $I_{2,1}$, due to $1 \leq | \alpha | \leq N$, one has
\begin{align}
\begin{split}\nonumber
|I_{2,1}| \lesssim\;& \sum_{1 \leq |\alpha_1| \leq|\alpha|}\int_{\mathbb{R}^3} \left|\partial^{\alpha_1} \nabla_x \phi^{\varepsilon}\right|
\left| \partial^{\alpha-\alpha_1} (a^{\varepsilon}_{\pm},b^{\varepsilon},c^{\varepsilon})\right|
\left|\partial^\alpha f^{\varepsilon}\right|_{L^2_{\gamma/2+s}} \d x \\
\lesssim\;& \sum_{|\alpha_1|=1} \left\|\partial^{\alpha_1} \nabla_x \phi^{\varepsilon}\right\|_{L^3}
\left\| \partial^{\alpha-\alpha_1} (a^{\varepsilon}_{\pm},b^{\varepsilon},c^{\varepsilon})\right\|_{L^6}
\left\|\partial^\alpha f^{\varepsilon}\right\|_{L^2_{\gamma/2+s}}\\
&+ \sum_{2 \leq |\alpha_1| \leq N} \left\|\partial^{\alpha_1} \nabla_x \phi^{\varepsilon}\right\|
\left\| \partial^{\alpha-\alpha_1} (a^{\varepsilon}_{\pm},b^{\varepsilon},c^{\varepsilon})\right\|_{L^\infty}
\left\|\partial^\alpha f^{\varepsilon}\right\|_{L^2_{\gamma/2+s}}\\
\lesssim \;&\mathcal{E}^{1/2}_{N,l}(t) \mathcal{D}_{N,l}(t),
\end{split}
\end{align}
where we have used $|e^{\pm \varepsilon \phi^\varepsilon}| \approx 1$, $\| \cdot \|_{L^2_{\gamma/2+s}} \lesssim \| \cdot \|_{N^s_\gamma}$ and the Sobolev inequalities \eqref{Sobolev-ineq}. Similarly, for $I_{2,2}$, we have
\begin{align}
\begin{split}\nonumber
|I_{2,2}|\lesssim\;& \sum_{1 \leq |\alpha_1| \leq|\alpha|}\int_{\mathbb{R}^3} \left|\partial^{\alpha_1} \nabla_x \phi^{\varepsilon}\right|
\left| \partial^{\alpha-\alpha_1} \{\mathbf{I}-\mathbf{P}\} f^{\varepsilon} \right|_{L^2_{\gamma/2+s}}
\left|\partial^\alpha (a^{\varepsilon}_{\pm},b^{\varepsilon},c^{\varepsilon})\right| \d x \\
\lesssim\;& \sum_{|\alpha_1|=1} \left\|\partial^{\alpha_1} \nabla_x \phi^{\varepsilon}\right\|_{L^3}
\left\| \partial^{\alpha-\alpha_1} \{\mathbf{I}-\mathbf{P}\} f^{\varepsilon} \right\|_{L^6_x L^2_{\gamma/2+s}}
\left\|\partial^\alpha (a^{\varepsilon}_{\pm},b^{\varepsilon},c^{\varepsilon})\right\|\\
&+ \!\!\!\sum_{2 \leq |\alpha_1| \leq N} \!\!\! \left\|\partial^{\alpha_1} \nabla_x \phi^{\varepsilon}  \right\|
\left\| \partial^{\alpha-\alpha_1} \{\mathbf{I}-\mathbf{P}\} f^{\varepsilon} \right\|_{L^\infty_x L^2_{\gamma/2+s}}
\left\|\partial^\alpha (a^{\varepsilon}_{\pm},b^{\varepsilon},c^{\varepsilon})\right\|\\
\lesssim \;&\mathcal{E}^{1/2}_{N,l}(t) \mathcal{D}_{N,l}(t).
\end{split}
\end{align}
As for $I_{2,3}$, thanks to the fact $w_l(\alpha,0) \leq w_l(|\alpha|-1,0) \langle v \rangle ^ {-1+\gamma}$, we have
\begin{align}
\begin{split}\nonumber
|I_{2,3}|\lesssim\;& \sum_{1 \leq |\alpha_1| \leq|\alpha|}\int_{\mathbb{R}^3} \left|\partial^{\alpha_1} \nabla_x \phi^{\varepsilon}\right|
\left| w_l(|\alpha|-1,0) \partial^{\alpha-\alpha_1} \{\mathbf{I}-\mathbf{P}\} f^{\varepsilon} \right|_{L^2_{\gamma/2}}
\left| w_l(\alpha,0) \partial^{\alpha} \{\mathbf{I}-\mathbf{P}\} f^{\varepsilon}\right|_{L^2_{\gamma/2}} \d x \\
\lesssim\;& \sum_{|\alpha_1|=1} \left\|\partial^{\alpha_1} \nabla_x \phi^{\varepsilon}\right\|_{L^\infty}
\left\| w_l(|\alpha|-1,0) \partial^{\alpha-\alpha_1} \{\mathbf{I}-\mathbf{P}\} f^{\varepsilon} \right\|_{L^2_{\gamma/2}}
\left\| w_l(\alpha,0) \partial^{\alpha} \{\mathbf{I}-\mathbf{P}\} f^{\varepsilon} \right\|_{L^2_{\gamma/2}}\\
+&\!\!\! \sum_{2 \leq |\alpha_1| \leq N-1} \!\!\!\left\|\partial^{\alpha_1} \nabla_x \phi^{\varepsilon}  \right\|_{L^6}
\left\| w_l(|\alpha|-1,0) \partial^{\alpha-\alpha_1} \{\mathbf{I}-\mathbf{P}\} f^{\varepsilon} \right\|_{L^3_x L^2_{\gamma/2}}
\left\| w_l(\alpha,0) \partial^{\alpha} \{\mathbf{I}-\mathbf{P}\} f^{\varepsilon} \right\|_{L^2_{\gamma/2}}\\
+& \sum_{ |\alpha_1| = N } \left\|\partial^{\alpha_1} \nabla_x \phi^{\varepsilon}  \right\|
\left\| w_l(|\alpha|-1,0) \{\mathbf{I}-\mathbf{P}\} f^{\varepsilon} \right\|_{L^\infty_x L^2_{\gamma/2}}
\left\| w_l(\alpha,0) \partial^{\alpha} \{\mathbf{I}-\mathbf{P}\} f^{\varepsilon} \right\|_{L^2_{\gamma/2}}\\
\lesssim \;&\mathcal{E}^{1/2}_{N,l}(t) \mathcal{D}_{N,l}(t),
\end{split}
\end{align}
where we have used $\| \cdot \|_{L^2_{\gamma/2}} \lesssim \| \cdot \|_{N^s_\gamma}$, the Sobolev inequalities \eqref{Sobolev-ineq}, the fact $w_l(|\alpha|-1,0) \leq w_l(\alpha-\alpha_1+\alpha^{\prime},0)$ in the case when $2 \leq |\alpha_1| \leq |\alpha| \leq N-1$, $|\alpha^\prime| \leq 1$ and $w_l(|\alpha|-1,0) \leq w_l(\alpha^{\prime},0)$ in the case when $|\alpha|=|\alpha_1| = N$, $1 \leq |\alpha^\prime| \leq 2$. Collecting all the estimates of $I_{2,1}$, $I_{2,2}$ and $I_{2,3}$, \eqref{softnonlinearterm1-5} is thus proved.

Then, we turn to estimate \eqref{softnonlinearterm1-4}. For brevity, we denote
$$
I_3\equiv \text {~the left-hand side of~} \eqref{softnonlinearterm1-4}.
$$
We prove \eqref{softnonlinearterm1-4} by considering the following two cases.

{\em Case 1.} $|\beta_1|=1$, $|\alpha_1| \leq |\alpha| \leq N-1$. \;
 By virtue of $w_l(\alpha, \beta) \leq w_l(\alpha,\beta-e_i) \langle v \rangle ^ \gamma$, one has
\begin{align}
\begin{split}\nonumber
|I_3| \lesssim\;& \sum_{|\alpha_1| \leq |\alpha|} \int_{\mathbb{R}^3} \left|\partial^{\alpha_1} \nabla_x \phi^{\varepsilon}\right|
\left|w_l(\alpha,\beta-e_i) \partial^{\alpha-\alpha_1}_{\beta-e_i} \{\mathbf{I}-\mathbf{P}\}f^{\varepsilon}\right|_{L^2_{\gamma/2}}
\left|w_l(\alpha,\beta) \partial^\alpha_\beta \{\mathbf{I}-\mathbf{P}\}f^{\varepsilon}\right|_{L^2_{\gamma/2}} \d x \\
\lesssim\;& \left\| \nabla_x \phi^{\varepsilon}\right\|_{L^\infty}
\left\|w_l(\alpha,\beta-e_i) \partial^{\alpha}_{\beta-e_i} \{\mathbf{I}-\mathbf{P}\}f^{\varepsilon}\right\|_{L^2_{\gamma/2}}
\left\|w_l(\alpha,\beta) \partial^\alpha_\beta \{\mathbf{I}-\mathbf{P}\}f^{\varepsilon}\right\|_{L^2_{\gamma/2}} \\
+&\!\!\!\!\sum_{1 \leq |\alpha_1| \leq N-1}\!\!\!\!\left\| \partial^{\alpha_1} \nabla_x \phi^{\varepsilon}\right\|_{L^6}
\left\|w_l(\alpha,\beta-e_i) \partial^{\alpha-\alpha_1}_{\beta-e_i} \{\mathbf{I}-\mathbf{P}\}f^{\varepsilon}\right\|_{L^3_x L^2_{\gamma/2}}
\!\!\!\left\|w_l(\alpha,\beta) \partial^\alpha_\beta \{\mathbf{I}-\mathbf{P}\}f^{\varepsilon}\right\|_{L^2_{\gamma/2}} \\
\lesssim \;&\mathcal{E}^{1/2}_{N,l}(t) \mathcal{D}_{N,l}(t),
\end{split}
\end{align}
where in the last inequality we have used the Sobolev inequalities \eqref{Sobolev-ineq} and the fact $w_l(\alpha, \beta-e_i) \leq w_l(\alpha-\alpha_1+\alpha^{\prime}, \beta-e_i)$ when $1 \leq |\alpha_1| \leq |\alpha| \leq  N-1$ and $|\alpha^\prime| \leq 1$.

{\em Case 2.} $|\beta_1|=0$, $1 \leq |\alpha_1| \leq |\alpha| \leq N-1$. \;
By virtue of $w_l(\alpha, \beta) \leq w_l(|\alpha|-1, |\beta|) \langle v \rangle ^ {-1+\gamma}$, we obtain
\begin{align}
\begin{split}\nonumber
\! |I_3| \lesssim\;& \!\!\!\sum_{1 \leq |\alpha_1| \leq |\alpha|} \int_{\mathbb{R}^3} \left|\partial^{\alpha_1} \nabla_x \phi^{\varepsilon}\right|
\left|w_l(|\alpha|-1,|\beta|) \partial^{\alpha-\alpha_1}_{\beta} \{\mathbf{I}-\mathbf{P}\}f^{\varepsilon}\right|_{L^2_{\gamma/2}}
\!\!\left|w_l(\alpha,\beta) \partial^\alpha_\beta \{\mathbf{I}-\mathbf{P}\}f^{\varepsilon}\right|_{L^2_{\gamma/2}} \d x \\
\lesssim\;& \sum_{|\alpha_1|=1} \left\| \partial^{\alpha_1} \nabla_x \phi^{\varepsilon}\right\|_{L^\infty}
\left\|w_l(|\alpha|-1,|\beta|) \partial^{\alpha-\alpha_1}_{\beta} \{\mathbf{I}-\mathbf{P}\}f^{\varepsilon}\right\|_{L^2_{\gamma/2}}
\left\|w_l(\alpha,\beta) \partial^\alpha_\beta \{\mathbf{I}-\mathbf{P}\}f^{\varepsilon}\right\|_{L^2_{\gamma/2}} \\
+&\!\!\!\sum_{2 \leq |\alpha_1| \leq N-1} \!\!\!\! \left\| \partial^{\alpha_1} \nabla_x \phi^{\varepsilon}\right\|_{L^6}
\left\|w_l(|\alpha|-1,|\beta|) \partial^{\alpha-\alpha_1}_{\beta} \{\mathbf{I}-\mathbf{P}\}f^{\varepsilon}\right\|_{L^3_x L^2_{\gamma/2}}
\!\!\left\|w_l(\alpha,\beta) \partial^\alpha_\beta \{\mathbf{I}-\mathbf{P}\}f^{\varepsilon}\right\|_{L^2_{\gamma/2}} \\
\lesssim \;&\mathcal{E}^{1/2}_{N,l}(t) \mathcal{D}_{N,l}(t),
\end{split}
\end{align}
where in the last inequality we have used $w_l(|\alpha|-1, |\beta|) \leq w_l(\alpha-\alpha_1+\alpha^{\prime}, \beta)$ when $2 \leq |\alpha_1| \leq |\alpha| \leq N-1$ and $|\alpha^\prime| \leq 1$.
Collecting the above estimates, we prove \eqref{softnonlinearterm1-4}. This completes the proof of Lemma \ref{softnonlinearterm1}.
\end{proof}
\medskip

\begin{lemma}\label{hardnonlinearterm1}
Let $\gamma+2s \geq 0$. Assume that $\mathcal{E}^{1/2}_{N,l}(t) \leq \delta$ for some
positive constant $\delta >0$.
\begin{itemize}
\setlength{\leftskip}{-6mm}
\item[(1)] For $1 \leq | \alpha | \leq N$, we have
\begin{align}\label{hardnonlinearterm1-1}
\sum_{1 \leq | \alpha_1 | \leq |\alpha|}\left(v_i \partial^{\alpha_1+e_i}\phi^\varepsilon \partial^{\alpha-\alpha_1}f_{\pm}^{\varepsilon}, e^{\pm \varepsilon \phi^{\varepsilon}} \partial^\alpha f_{\pm}^{\varepsilon} \right)
\lesssim \mathcal{E}^{1/2}_{N,l}(t) \mathcal{D}_{N,l}(t).
\end{align}
\item[(2)] For $1 \leq | \alpha | \leq N-1$, we have
\begin{align}\label{hardnonlinearterm1-2}
\!\!\!\! \sum_{1 \leq | \alpha_1 | \leq | \alpha |}\!\!\! \left(v_i \partial^{\alpha_1+e_i}\phi^\varepsilon \partial^{\alpha-\alpha_1}\{\mathbf{I}_{\pm}-\mathbf{P}_{\pm}\} f^{\varepsilon}, e^{\pm \varepsilon \phi^{\varepsilon}}
w^2_{l}(\alpha, 0) \partial^\alpha \{\mathbf{I}_{\pm}-\mathbf{P}_{\pm}\} f^{\varepsilon} \right)
\lesssim \mathcal{E}^{1/2}_{N,l}(t) \mathcal{D}_{N,l}(t).
\end{align}
\item[(3)] For $ | \alpha | = N$, we have
\begin{align}\label{hardnonlinearterm1-3}
\sum_{1 \leq | \alpha_1 | \leq | \alpha |}\left(v_i \partial^{\alpha_1+e_i}\phi^\varepsilon \partial^{\alpha-\alpha_1}f_{\pm}^{\varepsilon}, e^{\pm \varepsilon \phi^{\varepsilon}} w^2_l(\alpha,0) \partial^\alpha f_{\pm}^{\varepsilon} \right)
\lesssim \mathcal{E}^{1/2}_{N,l}(t) \mathcal{D}_{N,l}(t).
\end{align}
\item[(4)] For $1 \leq | \alpha | + | \beta | \leq N$, $| \beta | \geq 1$ and
$| \alpha | \leq N-1$, we have
\begin{align}\label{hardnonlinearterm1-4}
 \sum_{\substack{{ | \alpha_1 | + | \beta_1 | \geq 1}\\ {| \beta_1 | \leq 1}}} \left(\partial_{\beta_1} v_i \partial^{\alpha_1+e_i}\phi^\varepsilon \partial^{\alpha-\alpha_1}_{\beta-\beta_1} \{\mathbf{I}_{\pm}-\mathbf{P}_{\pm}\} f^{\varepsilon}, e^{\pm \varepsilon \phi^{\varepsilon}} w^2_l(\alpha,\beta) \partial^\alpha_\beta \{\mathbf{I}_{\pm}-\mathbf{P}_{\pm}\} f^{\varepsilon} \right)
\lesssim \mathcal{E}^{1/2}_{N,l}(t) \mathcal{D}_{N,l}(t).
\end{align}
\end{itemize}
\end{lemma}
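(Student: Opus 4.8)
The plan is to follow the proof of Lemma~\ref{softnonlinearterm1} line by line, exploiting the two features that make the hard-potential case strictly easier: the weight $w_l(\alpha,\beta)=\langle v\rangle^{l-|\alpha|-|\beta|}$ is monotone, i.e.\ $w_l(\alpha,\beta)\leq w_l(\alpha',\beta')$ whenever $|\alpha|+|\beta|\geq|\alpha'|+|\beta'|$, and $\gamma+2s\geq0$ gives $|\cdot|_{N^s_\gamma}\gtrsim|\cdot|_{L^2_{\gamma/2+s}}\gtrsim|\cdot|_{L^2}$, so the dissipation already controls the relevant $w_l$-weighted $L^2$ quantities without any further $\langle v\rangle^{\gamma/2}$-type weight juggling. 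As in the soft case it suffices to prove \eqref{hardnonlinearterm1-4} together with the master estimate
\begin{align*}
\sum_{1\leq|\alpha_1|\leq|\alpha|}\left(v_i\partial^{\alpha_1+e_i}\phi^\varepsilon\,\partial^{\alpha-\alpha_1}f_\pm^\varepsilon,\ e^{\pm\varepsilon\phi^\varepsilon}w_l^2(\alpha,0)\partial^\alpha f_\pm^\varepsilon\right)\lesssim\mathcal{E}^{1/2}_{N,l}(t)\,\mathcal{D}_{N,l}(t),\qquad 1\leq|\alpha|\leq N,
\end{align*}
from which \eqref{hardnonlinearterm1-1}--\eqref{hardnonlinearterm1-3} follow by the same argument, exactly as in the soft case.

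For the master estimate I would insert $f_\pm^\varepsilon=\{\mathbf{I}_\pm-\mathbf{P}_\pm\}f^\varepsilon+\mathbf{P}_\pm f^\varepsilon$ into the two remaining slots, producing exactly the three pieces $I_{2,1},I_{2,2},I_{2,3}$ of Lemma~\ref{softnonlinearterm1}. In $I_{2,1}$ and $I_{2,2}$ one of the factors is $\mathbf{P}f^\varepsilon$, whose Gaussian tail absorbs $v_iw_l^2(\alpha,0)$ (and the $\langle v\rangle^{\gamma/2+s}$ needed to pass to the $N^s_\gamma$ norm); using $|e^{\pm\varepsilon\phi^\varepsilon}|\approx1$ these reduce to spatial integrals of $|\partial^{\alpha_1}\nabla_x\phi^\varepsilon|\,|\partial^{\alpha-\alpha_1}(a^\varepsilon_\pm,b^\varepsilon,c^\varepsilon)|$ against an $L^2$-in-$v$ factor, and are closed by the Sobolev inequalities \eqref{Sobolev-ineq} and the Minkowski inequality \eqref{minkowski} exactly as there. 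For the purely microscopic piece $I_{2,3}$ the only velocity manipulation required is $|v_i|\,w_l(\alpha,0)\lesssim\langle v\rangle\,w_l(\alpha,0)=w_l(|\alpha|-1,0)$, after which the three factors $\partial^{\alpha_1}\nabla_x\phi^\varepsilon$, $w_l(|\alpha|-1,0)\partial^{\alpha-\alpha_1}\{\mathbf{I}-\mathbf{P}\}f^\varepsilon$ and $w_l(\alpha,0)\partial^\alpha\{\mathbf{I}-\mathbf{P}\}f^\varepsilon$ are distributed in $x$ in one of the standard patterns $L^\infty_xL^2_v$--$L^2$--$L^2$, $L^6_x$--$L^3_xL^2_v$--$L^2$ or $L^2$--$L^\infty_xL^2_v$--$L^2$; the pattern is chosen (placing the many-derivative factors of $\nabla_x\phi^\varepsilon$ in $L^6_x$ when needed) so that every differentiated microscopic factor keeps at most $N-1$ spatial derivatives, where the $w_l$-weighted parts of $\mathcal{E}_{N,l}$ and $\mathcal{D}_{N,l}$ with $|\alpha|\leq N-1$ apply, the single exception being the top-order factor $w_l(\alpha,0)\partial^\alpha\{\mathbf{I}-\mathbf{P}\}f^\varepsilon$ with $|\alpha|=N$, which is absorbed by the $\frac{1}{\varepsilon}$-weighted top-order term of $\mathcal{D}_{N,l}(t)$ (or, when it occupies the energy slot, by the $\varepsilon$-weighted top-order term of $\mathcal{E}_{N,l}(t)$). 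Monotonicity of $w_l$ makes every weight comparison $w_l(|\alpha|-1,0)\leq w_l(\alpha-\alpha_1+\alpha',0)$ that arises after \eqref{Sobolev-ineq} automatic in the admissible range of $(\alpha_1,\alpha')$. The estimate \eqref{hardnonlinearterm1-4} follows in the same way, splitting into $|\beta_1|=1$ (via $|v_i|w_l(\alpha,\beta)\lesssim w_l(\alpha,|\beta|-1)$) and $|\beta_1|=0$ (via $|v_i|w_l(\alpha,\beta)\lesssim w_l(|\alpha|-1,|\beta|)$), as in Cases~1 and~2 of the corresponding step in Lemma~\ref{softnonlinearterm1}; here $|\alpha|\leq N-1$ throughout, so no top-order $\varepsilon$-bookkeeping is needed.

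The one point requiring care, though it is routine, is this $\varepsilon$-power bookkeeping at $|\alpha|=N$: one must ensure that of the three factors exactly one is matched against $\mathcal{E}^{1/2}_{N,l}(t)$ and the other two against $\mathcal{D}^{1/2}_{N,l}(t)$, so that the surplus powers of $\varepsilon$ produced by the $\frac{1}{\varepsilon^2}$ and $\frac{1}{\varepsilon}$ prefactors in $\mathcal{D}_{N,l}$ and the $\varepsilon$ prefactor in $\mathcal{E}_{N,l}$ are nonnegative, leaving the clean bound $\lesssim\mathcal{E}^{1/2}_{N,l}(t)\mathcal{D}_{N,l}(t)$. I do not expect any genuine obstruction here: since $w_l$ is monotone, the mechanism that forced $N\geq5$ in the soft case is absent, so $N\geq2$ and $l\geq N$ suffice, and---as already noted in the proof of Lemma~\ref{hard Gamma}---the hard-potential collision estimate \eqref{hard gamma1} is subsumed in \eqref{soft gamma1}, so all the auxiliary lemmas used for Lemma~\ref{softnonlinearterm1} remain available. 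Consequently the full proof is a simplification of that of Lemma~\ref{softnonlinearterm1}, and one may record only the differences above and omit the remaining routine details.
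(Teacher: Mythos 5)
Your proposal is correct and follows essentially the same route as the paper: reduce (1)--(3) to the weighted master estimate \eqref{hardnonlinearterm1-5}, split $f^\varepsilon_\pm$ into macroscopic and microscopic parts handled as $I_{2,1}$--$I_{2,3}$ of Lemma \ref{softnonlinearterm1}, and for the microscopic piece absorb the velocity growth via the hard-potential identity $\langle v\rangle\, w_l(\alpha,0)=w_l(|\alpha|-1,0)$ together with $\|\cdot\|\lesssim\|\cdot\|_{N^s_\gamma}$ and the standard H\"older--Sobolev placements, which is exactly the paper's argument (the paper's only extra remark is the Poisson-equation identity \eqref{reason N geq 2} explaining why the order-$(N+1)$ derivative of $\nabla_x\phi^\varepsilon$ needed in these placements sits in the energy, hence $N\geq 2$ suffices; this is implicit in your use of the functionals). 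The minor differences (your $L^6$--$L^3_xL^2_v$--$L^2$ pattern versus the paper's $L^3$--$L^6_xL^2_v$--$L^2$, and your explicit $\varepsilon$-bookkeeping at $|\alpha|=N$) are cosmetic and cause no gap.
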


\begin{proof}
In this lemma, we explain that $N \geq 2$ is sufficient for the hard potential case.
For brevity, we only give the proof of the most complicated case
\begin{align}\label{hardnonlinearterm1-5}
\!\!\! \sum_{1 \leq |\alpha_1| \leq|\alpha|} \!\!\! \left(v_i \partial^{\alpha_1+e_i} \phi^{\varepsilon} \partial^{\alpha-\alpha_1}\{\mathbf{I}_{ \pm}-\mathbf{P}_{ \pm}\} f^{\varepsilon}, e^{ \pm \varepsilon \phi^{\varepsilon}} w_l^2(\alpha, 0) \partial^\alpha \{\mathbf{I}_{ \pm}-\mathbf{P}_{ \pm} \} f^{\varepsilon}\right) \lesssim \mathcal{E}^{1/2}_{N,l}(t) \mathcal{D}_{N,l}(t)
\end{align}
 for $1 \leq |\alpha| \leq N$.
The estimate \eqref{hardnonlinearterm1-4} can be proved in a similar way, and other cases concerning $\mathbf{P}_{\pm}f^\varepsilon$ can be proved by the similar argument as that of $I_{2,1}$ and $I_{2,2}$ in Lemma \ref{softnonlinearterm1}.

To prove \eqref{hardnonlinearterm1-5},  for brevity, we denote
$$
I_4\equiv \text { the left-hand side of } \eqref{hardnonlinearterm1-5}.
$$
Thanks to the relation $w_l(\alpha,0)=w_l(|\alpha|-1,0)\langle v \rangle ^{-1}$ for the hard potential case $\gamma+2s \geq 0$, one has
\begin{align}
\begin{split}\nonumber
| I_4 | \lesssim\;& \!\!\!\sum_{1 \leq |\alpha_1| \leq |\alpha|} \int_{\mathbb{R}^3} \left|\partial^{\alpha_1} \nabla_x \phi^{\varepsilon}\right|
\left|w_l(|\alpha|-1,0) \partial^{\alpha-\alpha_1}\{\mathbf{I}-\mathbf{P}\} f^{\varepsilon}\right|_{L^2}
\left|w_l(\alpha, 0) \partial^\alpha \{\mathbf{I}-\mathbf{P} \} f^{\varepsilon}\right|_{L^2} \d x \\
\lesssim\;& \sum_{|\alpha_1|=1 }  \left|\partial^{\alpha_1} \nabla_x \phi^{\varepsilon}\right\|_{L^\infty}
\left\|w_l(|\alpha|-1,0) \partial^{\alpha-\alpha_1}\{\mathbf{I}-\mathbf{P}\} f^{\varepsilon}\right\|
\left\| w_l(\alpha, 0) \partial^\alpha \{\mathbf{I}-\mathbf{P} \} f^{\varepsilon}\right\| \\
&\!\!\!+\sum_{2\leq |\alpha_1| \leq N } \!\!\! \left\|\partial^{\alpha_1} \nabla_x \phi^{\varepsilon}\right\|_{L^3}
\left\|w_l(|\alpha|-1,0) \partial^{\alpha-\alpha_1}\{\mathbf{I}-\mathbf{P}\} f^{\varepsilon}\right\|_{L^6_x L^2_v}
\left\| w_l(\alpha, 0) \partial^\alpha \{\mathbf{I}-\mathbf{P} \} f^{\varepsilon}\right\| \\
\lesssim\;& \mathcal{E}^{1/2}_{N,l}(t) \mathcal{D}_{N,l}(t).
\end{split}
\end{align}
Here, we have used $\| \cdot \|\lesssim \| \cdot \|_{N^s_\gamma}$ and the fact that
\begin{equation}\label{reason N geq 2}
\sum_{|\alpha| = N+1} \left\| \partial^\alpha \nabla_x \phi^{\varepsilon} \right\|
=  \sum_{|\alpha| = N} \left\| \partial^\alpha (a_{+}^\varepsilon - a_{-}^\varepsilon) \right\|
\lesssim \mathcal{E}^{1/2}_{N,l}(t).
\end{equation}
This is the reason that $N \geq 2$ is enough for the hard potential case. Therefore, the proof of Lemma \ref{hardnonlinearterm1} is completed.
\end{proof}
\medskip

The following two lemmas concern the estimates on the nonlinear term $\nabla_x \phi^{\varepsilon} \cdot \nabla_v f^{\varepsilon}_{\pm}$.
\begin{lemma}\label{softnonlinearterm2}
Let $ -3 < \gamma < -2s$. Assume that $\mathcal{E}^{1/2}_{N,l}(t) \leq \delta$ for some positive constant $\delta >0$.
\begin{itemize}
\setlength{\leftskip}{-6mm}
\item[(1)] For $1 \leq | \alpha | \leq N$, we have
\begin{align}\label{softnonlinearterm2-1}
\sum_{1 \leq | \alpha_1 | \leq |\alpha|}\left( \partial^{\alpha_1+e_i}\phi^\varepsilon \partial_{e_i}^{\alpha-\alpha_1} f_{\pm}^{\varepsilon}, e^{\pm \varepsilon \phi^{\varepsilon}} \partial^\alpha f_{\pm}^{\varepsilon} \right)
\lesssim \mathcal{E}^{1/2}_{N,l}(t) \mathcal{D}_{N,l}(t).
\end{align}
\item[(2)] For $ | \alpha | \leq N-1$,  we have
\begin{align}
\begin{split}\label{softnonlinearterm2-2}
& \sum_{ | \alpha_1 | \leq | \alpha |} \left( \partial^{\alpha_1+e_i}\phi^\varepsilon \partial^{\alpha-\alpha_1}_{e_i}\{\mathbf{I}_{\pm}-\mathbf{P}_{\pm}\} f^{\varepsilon}, e^{\pm \varepsilon \phi^{\varepsilon}}
w^2_{l}(\alpha, 0) \partial^\alpha \{\mathbf{I}_{\pm}-\mathbf{P}_{\pm}\} f^{\varepsilon} \right) \\
\lesssim\;& \mathcal{E}^{1/2}_{N,l}(t) \mathcal{D}_{N,l}(t)+\left\|\nabla_x \phi^{\varepsilon}\right\|_{L^\infty}
\sum_{\pm}\left\|e^{\pm \frac{\varepsilon \phi^{\varepsilon}}{2}}w_l(\alpha,0)\partial^{\alpha}\{\mathbf{I}_{\pm}-\mathbf{P}_{\pm}\} f^{\varepsilon}\right\|^2.
\end{split}
\end{align}
\item[(3)] For $ | \alpha | = N$,  we have
\begin{align}\label{softnonlinearterm2-3}
\begin{split}
&\varepsilon \sum_{| \alpha_1 | \leq | \alpha |} \left(\partial^{\alpha_1+e_i}\phi^\varepsilon \partial^{\alpha-\alpha_1}_{e_i} f_{\pm}^{\varepsilon}, e^{\pm \varepsilon \phi^{\varepsilon}} w^2_l(\alpha,0) \partial^\alpha f_{\pm}^{\varepsilon} \right) \\
\lesssim\;&\mathcal{E}^{1/2}_{N,l}(t) \mathcal{D}_{N,l}(t)+\varepsilon\left\|\nabla_x \phi^{\varepsilon}\right\|_{L^\infty}
\sum_{|\alpha|= N}\sum_{\pm}\left\|e^{\pm \frac{\varepsilon \phi^{\varepsilon}}{2}}w_l(\alpha,0)\partial^{\alpha} f^{\varepsilon}_{\pm}\right\|^2.
\end{split}
\end{align}
\item[(4)] For $1 \leq | \alpha | + | \beta | \leq N$, $| \beta | \geq 1$
 and $| \alpha | \leq N-1$,  we have
\begin{align}
\begin{split}\label{softnonlinearterm2-4}
&\sum_{| \alpha_1 | \leq | \alpha |}  \left( \partial^{\alpha_1+e_i}\phi^\varepsilon \partial^{\alpha-\alpha_1}_{\beta+e_i} \{\mathbf{I}_{\pm}-\mathbf{P}_{\pm}\} f^{\varepsilon}, e^{\pm \varepsilon \phi^{\varepsilon}} w^2_l(\alpha,\beta) \partial^\alpha_\beta \{\mathbf{I}_{\pm}-\mathbf{P}_{\pm}\} f^{\varepsilon} \right) \\
\lesssim\;& \mathcal{E}^{1/2}_{N,l}(t) \mathcal{D}_{N,l}(t)+\left\|\nabla_x \phi^{\varepsilon}\right\|_{L^\infty}
\sum_{\pm}\left\|e^{\pm \frac{\varepsilon \phi^{\varepsilon}}{2}}w_l(\alpha,\beta)\partial^{\alpha}_{\beta}\{\mathbf{I}_{\pm}-\mathbf{P}_{\pm}\} f^{\varepsilon}\right\|^2.
\end{split}
\end{align}
\end{itemize}
\end{lemma}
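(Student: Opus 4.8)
The plan is to handle the four brackets uniformly by expanding the multi-index sum and separating the \emph{diagonal} contribution $\alpha_1=0$, in which every space derivative falls on $f^\varepsilon_\pm$, from the \emph{off-diagonal} contributions $1\le|\alpha_1|\le|\alpha|$. The diagonal term is \emph{absent} from \eqref{softnonlinearterm2-1}, whose sum already starts at $|\alpha_1|=1$, which is exactly why no $\|\nabla_x\phi^\varepsilon\|_{L^\infty}$-term appears there; it \emph{is} present in \eqref{softnonlinearterm2-2}, \eqref{softnonlinearterm2-3}, \eqref{softnonlinearterm2-4} and is what produces the displayed $\|\nabla_x\phi^\varepsilon\|_{L^\infty}$-terms. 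Estimate \eqref{softnonlinearterm2-1} is the easiest: after inserting $f^\varepsilon_\pm=\mathbf{P}_\pm f^\varepsilon+\{\mathbf{I}_\pm-\mathbf{P}_\pm\}f^\varepsilon$ in the two arguments, I would distribute the $x$-integral as $L^\infty$--$L^2$--$L^2$, $L^3$--$L^6$--$L^2$ or $L^2$--$L^\infty$--$L^2$ according to the size of $|\alpha_1|$, bound $\|\partial^{\alpha_1}\nabla_x\phi^\varepsilon\|$ by $\mathcal{E}^{1/2}_{N,l}(t)$ via \eqref{Sobolev-ineq}, use that $\partial^{\alpha-\alpha_1}_{e_i}\mathbf{P}_\pm f^\varepsilon$ is still a Gaussian so its factors sit in $\mathcal{D}_{N,l}(t)$, and use $|\cdot|_{L^2_{\gamma/2+s}}\lesssim|\cdot|_{N^s_\gamma}$ together with $\varepsilon\le1$ to place the microscopic factors into the dissipation; this is exactly the pattern of Lemma \ref{softnonlinearterm1}.

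For the diagonal term, since $\partial^{e_i}\phi^\varepsilon$ and $e^{\pm\varepsilon\phi^\varepsilon}$ do not depend on $v$, I would integrate by parts in $v_i$: with $g_\pm:=\{\mathbf{I}_\pm-\mathbf{P}_\pm\}f^\varepsilon$,
\[
\Big(\partial^{e_i}\phi^\varepsilon\,\partial^{\alpha}_{e_i}g_\pm,\ e^{\pm\varepsilon\phi^\varepsilon}w_l^2(\alpha,0)\partial^{\alpha}g_\pm\Big)=-\tfrac12\Big(\partial^{e_i}\phi^\varepsilon\,(\partial^{\alpha}g_\pm)^2,\ e^{\pm\varepsilon\phi^\varepsilon}\partial_{v_i}w_l^2(\alpha,0)\Big),
\]
and then $|\partial_{v_i}w_l^2(\alpha,0)|\lesssim w_l^2(\alpha,0)$ because $|\partial_{v_i}\langle v\rangle^{2k}|\lesssim\langle v\rangle^{2k-1}\le\langle v\rangle^{2k}$. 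This gives precisely $\|\nabla_x\phi^\varepsilon\|_{L^\infty}\sum_\pm\|e^{\pm\varepsilon\phi^\varepsilon/2}w_l(\alpha,0)\partial^{\alpha}\{\mathbf{I}_\pm-\mathbf{P}_\pm\}f^\varepsilon\|^2$ as in \eqref{softnonlinearterm2-2}. The identical computation with $w_l(\alpha,\beta)$ in place of $w_l(\alpha,0)$ handles the diagonal part of \eqref{softnonlinearterm2-4}, and carrying the extra factor $\varepsilon$ (and keeping $f^\varepsilon_\pm$ instead of $g_\pm$, with the macroscopic piece harmless by Gaussian decay) handles the diagonal part of \eqref{softnonlinearterm2-3}.

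For the off-diagonal terms I would apply H\"older in $x$, split the weight $w_l^2=(\langle v\rangle^{\gamma/2}w_l)(\langle v\rangle^{-\gamma/2}w_l)$, attach $\langle v\rangle^{\gamma/2}w_l(\alpha,0)$ to the $\partial^{\alpha}$-factor of the test function (so that, for $|\alpha|\le N-1$, it is bounded by $|w_l(\alpha,0)\partial^{\alpha}\{\mathbf{I}-\mathbf{P}\}f^\varepsilon|_{N^s_\gamma}$, hence by $\varepsilon\,\mathcal{D}^{1/2}_{N,l}(t)\le\mathcal{D}^{1/2}_{N,l}(t)$; at order $N$ one uses instead the top-order weight $\frac1\varepsilon\|w_l(\alpha,0)\partial^{\alpha}\{\mathbf{I}-\mathbf{P}\}f^\varepsilon\|^2_{N^s_\gamma}$ in $\mathcal{D}_{N,l}(t)$ and gains a power of $\varepsilon$), and bound $\|\partial^{\alpha_1}\nabla_x\phi^\varepsilon\|$ by $\mathcal{E}^{1/2}_{N,l}(t)$. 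The leftover factor is $|\langle v\rangle^{-\gamma/2}w_l(\alpha,0)\,\partial^{\alpha-\alpha_1}_{e_i}\{\mathbf{I}-\mathbf{P}\}f^\varepsilon|_{L^2_v}$, and here $\langle v\rangle^{-\gamma/2}w_l(\alpha,0)$ does \emph{not} dominate the dissipation weight $\langle v\rangle^{\gamma/2}w_l(\alpha-\alpha_1,e_i)$ when $|\alpha_1|=1$. To repair this I would first absorb the $v_i$-derivative into an $H^1_v$-norm and then apply Lemma \ref{interpolation1} with $\langle v\rangle^\ell=\{w_l(|\alpha|-1,0)\}^{1-s}\{w_l(|\alpha|-1,1)\}^{-(1-s)}$, so that $\langle v\rangle^{-\frac{\ell s}{1-s}}=\{w_l(|\alpha|-1,0)\}^{-s}\{w_l(|\alpha|-1,1)\}^{s}$, combined with the elementary bound $w_l(\alpha,0)\le\{w_l(|\alpha|-1,0)\}^{s}\{w_l(|\alpha|-1,1)\}^{1-s}\langle v\rangle^{\gamma}$, which holds because the weight exponent $m=-\tfrac{3\gamma}{s}$ in \eqref{weight function} satisfies $m\ge-\tfrac{2\gamma}{s}$. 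This collapses the leftover factor into $|w_l(|\alpha|-1,0)\partial^{\alpha-\alpha_1}\{\mathbf{I}-\mathbf{P}\}f^\varepsilon|_{H^s_{\gamma/2}}+|w_l(|\alpha|-1,1)\partial^{\alpha-\alpha_1}\{\mathbf{I}-\mathbf{P}\}f^\varepsilon|_{H^{1+s}_{\gamma/2}}$, and since $|w^\ell\cdot|_{H^s_{\gamma/2}}\lesssim|w^\ell\cdot|_{N^s_\gamma}$, $H^{1+s}_{\gamma/2}$ costs at most one more $v$-derivative, $w_l(|\alpha|-1,\cdot)\le w_l(|\alpha-\alpha_1|,\cdot)$, and $|\alpha-\alpha_1|\le N-2$, $|\alpha-\alpha_1|+1\le N$, both pieces are controlled by $\mathcal{D}_{N,l}(t)$. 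Distributing the $x$-integral as $L^\infty$--$L^2$--$L^2$, $L^3$--$L^6$--$L^2$ or $L^6$--$L^3$--$L^2$ according as $|\alpha_1|=1$, $2\le|\alpha_1|$, or $|\alpha_1|=|\alpha|=N$, one arrives at $\mathcal{E}^{1/2}_{N,l}(t)\mathcal{D}_{N,l}(t)$. For \eqref{softnonlinearterm2-4} the same scheme is run with $w_l(\alpha,\beta)$, $w_l(|\alpha|-1,|\beta|)$, $w_l(|\alpha|-1,|\beta|+1)$ in place of their $\beta=0$ counterparts, noting $|\alpha-\alpha_1|+|\beta|+1\le N$.

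The main obstacle is precisely this weight/derivative mismatch in the off-diagonal terms: one must tune the interpolation exponent $\ell$ so that the two output norms are \emph{exactly} $w_l(|\alpha|-1,0)$- and $w_l(|\alpha|-1,1)$-weighted (this is where $m\ge-\tfrac{2\gamma}{s}$, hence the choice $m=-\tfrac{3\gamma}{s}$, is used), check that after accounting for the extra $v$-derivative produced by $H^{1+s}_{\gamma/2}$ every resulting term still has space order $\le N-1$ and total order $\le N$ so that it sits inside $\mathcal{D}_{N,l}(t)$, and track the powers of $\varepsilon$ coming from the weighted dissipation norms (harmless since $0<\varepsilon\le1$). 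Everything else — the integration by parts in $v$, the Sobolev embeddings \eqref{Sobolev-ineq}, the Minkowski inequality \eqref{minkowski} for interchanging $L^p_xL^q_v$, and the macroscopic reductions — is routine and parallels the proofs of Lemmas \ref{softnonlinearterm1} and \ref{soft Gamma}.
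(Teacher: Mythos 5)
Your proposal is correct and follows essentially the same route as the paper: velocity integration by parts on the $\alpha_1=0$ term with $|\partial_{e_i}w_l^2|\lesssim w_l^2$ to produce the $\|\nabla_x\phi^\varepsilon\|_{L^\infty}$ contributions, and for $1\le|\alpha_1|\le|\alpha|$ the interpolation Lemma \ref{interpolation1} with exactly the weight choice $\langle v\rangle^\ell=\{w_l(|\alpha|-1,0)\}^{1-s}\{w_l(|\alpha|-1,1)\}^{-(1-s)}$ together with $w_l(\alpha,0)\le\{w_l(|\alpha|-1,0)\}^{s}\{w_l(|\alpha|-1,1)\}^{1-s}\langle v\rangle^{\gamma}$ and the $L^\infty$--$L^2$--$L^2$ / $L^6$--$L^3$--$L^2$ Hölder splits, which is the paper's proof of \eqref{softnonlinearterm2-2} (the remaining items being treated, as you do, by the arguments of Lemma \ref{softnonlinearterm1} and the analogous $\beta$-weighted computation).
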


\begin{proof}
For brevity, we only prove \eqref{softnonlinearterm2-2}. The estimate \eqref{softnonlinearterm2-4} can be proved in a similar way, and the remaining other estimates concerning $\mathbf{P}_{\pm}f^{\varepsilon}$ can be proved by the similar arguments as $I_{2,1}$ and $I_{2,2}$ in Lemma \ref{softnonlinearterm1}.

To prove \eqref{softnonlinearterm2-2}, for simplicity, we write
$$
I_5 \equiv \text{~the left-hand side of~} \eqref{softnonlinearterm2-2}.
$$
Firstly, when $|\alpha_1|=0$, one has
\begin{align}
\begin{split}\label{alpha1=0estimate}
I_5=\;&\left( \partial^{e_i}\phi^\varepsilon \partial^{\alpha}_{e_i} \{\mathbf{I}_{\pm}-\mathbf{P}_{\pm}\} f^{\varepsilon}, e^{\pm \varepsilon \phi^{\varepsilon}} w^2_l(\alpha,0) \partial^\alpha \{\mathbf{I}_{\pm}-\mathbf{P}_{\pm}\} f^{\varepsilon} \right)\\
=\;&-\frac{1}{2}\left( \partial^{e_i}\phi^\varepsilon \partial^{\alpha} \{\mathbf{I}_{\pm}-\mathbf{P}_{\pm}\} f^{\varepsilon}, e^{\pm \varepsilon \phi^{\varepsilon}} \partial_{e_i}w^2_l(\alpha,0) \partial^\alpha \{\mathbf{I}_{\pm}-\mathbf{P}_{\pm}\} f^{\varepsilon} \right)\\
\lesssim\;&\left\|\nabla_x \phi^{\varepsilon}\right\|_{L^\infty}
\sum_{\pm}\left\|e^{\pm \frac{\varepsilon \phi^{\varepsilon}}{2}}w_l(\alpha,0)\partial^{\alpha}\{\mathbf{I}_{\pm}-\mathbf{P}_{\pm}\} f^{\varepsilon}\right\|^2,
\end{split}
\end{align}
where we have used the velocity integration by part and the inequality $|\partial_{e_i}w^2_l(\alpha,0)| \lesssim w^2_l(\alpha,0)$.

Secondly, when $1 \leq | \alpha_1 | \leq | \alpha | \leq N-1$, we need to make use of the interpolation inequality \eqref{interpolation}. More precisely, we first choose
$$
\langle v \rangle ^\ell =\{w_l(|\alpha|-1,0)\}^{1-s}\{w_l(|\alpha|-1,1)\}^{-(1-s)},
$$
 then we have
$$
\langle v \rangle ^{-\frac{\ell s}{1-s}} =\{w_l(|\alpha|-1,0)\}^{-s}\{w_l(|\alpha|-1,1)\}^s.
$$
Substituting them into \eqref{interpolation}, we get
\begin{align}\label{inter-H1}
&\left|\langle v\rangle^{-\frac{\gamma}{2}} w_l(\alpha, 0) \partial^{\alpha-\alpha_1}\{\mathbf{I}-\mathbf{P}\} f^{\varepsilon}\right|_{H^1} \nonumber\\
\lesssim\;&\left|\langle v \rangle ^\ell\left(\langle v\rangle^{-\frac{\gamma}{2}} w_l(\alpha, 0) \partial^{\alpha-\alpha_1}\{\mathbf{I}-\mathbf{P}\} f^{\varepsilon}\right)\right|_{H^{s}}
\!\!+\left|\langle v \rangle ^{-\frac{\ell s}{1-s}} \left(\langle v\rangle^{-\frac{\gamma}{2}} w_l(\alpha, 0) \partial^{\alpha-\alpha_1}\{\mathbf{I}-\mathbf{P}\} f^{\varepsilon}\right)\right|_{H^{1+s}} \nonumber\\
 \lesssim \;& \left| w_l(|\alpha|-1,0) \partial^{\alpha-\alpha_1}\{\mathbf{I}-\mathbf{P}\} f^\varepsilon\right|_{H^s_{\gamma/2}}
+\left| w_l(|\alpha|-1,1) \partial^{\alpha-\alpha_1}\{\mathbf{I}-\mathbf{P}\} f^{\varepsilon} \right|_{H^{1+s}_{\gamma/2}},
\end{align}
where in the last inequality we used the fact
$$
w_l(\alpha,0)\leq \{w_l(|\alpha|-1,0)\}^s\{w_l(|\alpha|-1,1)\}^{1-s}\langle v \rangle^{\gamma},
$$
and the definition of the weight function $w_l(\alpha,\beta)$ in \eqref{weight function}.
By the property of the algebra weight $w_l(\alpha,\beta)$ and the above inequality
\eqref{inter-H1}, one has
\begin{align}
\begin{split}\nonumber
I_5 \lesssim \;& \int_{\mathbb{R}^3}\left|\partial^{\alpha_1} \nabla_x \phi^\varepsilon \right|
\left|w_l(\alpha,0)\langle v \rangle ^{-\frac{\gamma}{2}} \partial^{\alpha-\alpha_1}\{\mathbf{I}-\mathbf{P}\}f^{\varepsilon}\right|_{H^1}
\left|w_l(\alpha,0)\partial^{\alpha}\{\mathbf{I}-\mathbf{P}\}f^{\varepsilon}\right|_{L^2_{\gamma/2}} \d x \\
\lesssim\;& \int_{\mathbb{R}^3}\left|\partial^{\alpha_1} \nabla_x \phi^\varepsilon \right|
\left| w_l(|\alpha|-1,0) \partial^{\alpha-\alpha_1}\{\mathbf{I}-\mathbf{P}\} f^\varepsilon \right|_{H^s_{\gamma/2}}
\left|w_l(\alpha,0)\partial^{\alpha}\{\mathbf{I}-\mathbf{P}\}f^{\varepsilon} \right|_{L^2_{\gamma/2}} \d x\\
&+ \int_{\mathbb{R}^3}\left|\partial^{\alpha_1} \nabla_x \phi^\varepsilon \right|
\left| w_l(|\alpha|-1,1)\partial^{\alpha-\alpha_1}\{\mathbf{I}-\mathbf{P}\} f^{\varepsilon} \right|_{H^{1+s}_{\gamma/2}}
\left|w_l(\alpha,0)\partial^{\alpha}\{\mathbf{I}-\mathbf{P}\}f^{\varepsilon}\right|_{L^2_{\gamma/2}} \d x \\
:=\; & I_{5,1}+I_{5,2}.
\end{split}
\end{align}
For $I_{5,1}$, one has from the H\"{o}lder inequality that
\begin{align}
\begin{split}\nonumber
I_{5,1} \lesssim \;& \sum_{|\alpha_1|=1} \left\| \partial^{\alpha_1} \nabla_x \phi^\varepsilon \right\|_{L^\infty}
\left\| w_l(|\alpha|-1,0) \partial^{\alpha-\alpha_1}\{\mathbf{I}-\mathbf{P}\} f^\varepsilon\right\|_{H^s_{\gamma/2}}
\left\|w_l(\alpha,0)\partial^{\alpha}\{\mathbf{I}-\mathbf{P}\}f^{\varepsilon}\right\|_{L^2_{\gamma/2}} \\
& +\!\!\!\sum_{2 \leq |\alpha_1| \leq N-1}\!\!\! \left\|\partial^{\alpha_1} \nabla_x \phi^\varepsilon \right\|_{L^6}
\left\| w_l(|\alpha|-1,0) \partial^{\alpha-\alpha_1}\{\mathbf{I}-\mathbf{P}\} f^\varepsilon\right\|_{L^3_x H^s_{\gamma/2}}
\left\|w_l(\alpha,0)\partial^{\alpha}\{\mathbf{I}-\mathbf{P}\}f^{\varepsilon}\right\|_{L^2_{\gamma/2}} \\
\lesssim\;&\mathcal{E}^{1/2}_{N,l}(t) \mathcal{D}_{N,l}(t),
\end{split}
\end{align}
where we have used the fact $\| \cdot \|_{H^s_{\gamma/2}} \lesssim \| \cdot \|_{N^s_\gamma}$ and $w_l(|\alpha|-1,0) \leq w_l(\alpha-\alpha_1+\alpha^{\prime},0)$ in the case when $2 \leq |\alpha_1| \leq |\alpha| \leq N-1$ and $|\alpha^\prime| \leq 1$.
Similarly, for $I_{5,2}$, thanks to $w_l(|\alpha|-1,1) \leq w_l(\alpha-\alpha_1+\alpha^{\prime},\beta^{\prime})$
in the case when $2 \leq |\alpha_1| \leq |\alpha| \leq N-1$, $|\alpha^\prime| \leq 1$ and $|\beta^{\prime}| \leq 1$, we get
$I_{5,2} \lesssim\;\mathcal{E}^{1/2}_{N,l}(t) \mathcal{D}_{N,l}(t)$. This completes the proof of Lemma \ref{softnonlinearterm2}.
\end{proof}
\medskip

\begin{lemma}\label{hardnonlinearterm2}
Let $ \gamma+2s \geq 0$. Assume that $\mathcal{E}^{1/2}_{N,l}(t) \leq \delta$ for some positive constant $\delta >0$.
\begin{itemize}
\setlength{\leftskip}{-6mm}
\item[(1)] For $1 \leq \left| \alpha \right| \leq N$, we have
\begin{align}\label{hardnonlinearterm2-1}
\sum_{1 \leq | \alpha_1 | \leq |\alpha|}\left( \partial^{\alpha_1+e_i}\phi^\varepsilon \partial_{e_i}^{\alpha-\alpha_1} f_{\pm}^{\varepsilon}, e^{\pm \varepsilon \phi^{\varepsilon}} \partial^\alpha f_{\pm}^{\varepsilon} \right)
\lesssim \mathcal{E}^{1/2}_{N,l}(t) \mathcal{D}_{N,l}(t).
\end{align}
\item[(2)] For $ \left| \alpha \right| \leq N-1$,  we have
\begin{align}
\begin{split}\label{hardnonlinearterm2-2}
& \sum_{ | \alpha_1 | \leq | \alpha |} \left( \partial^{\alpha_1+e_i}\phi^\varepsilon \partial^{\alpha-\alpha_1}_{e_i}\{\mathbf{I}_{\pm}-\mathbf{P}_{\pm}\} f^{\varepsilon}, e^{\pm \varepsilon \phi^{\varepsilon}}
w^2_{l}(\alpha, 0) \partial^\alpha \{\mathbf{I}_{\pm}-\mathbf{P}_{\pm}\} f^{\varepsilon} \right)
\lesssim \mathcal{E}^{1/2}_{N,l}(t) \mathcal{D}_{N,l}(t).
\end{split}
\end{align}
\item[(3)] For $ \left| \alpha \right| = N$,  we have
\begin{align}\label{hardnonlinearterm2-3}
\sum_{| \alpha_1 | \leq | \alpha |} \left(\partial^{\alpha_1+e_i}\phi^\varepsilon \partial^{\alpha-\alpha_1}_{e_i} f_{\pm}^{\varepsilon}, e^{\pm \varepsilon \phi^{\varepsilon}} w^2_l(\alpha,0) \partial^\alpha f_{\pm}^{\varepsilon} \right)
\lesssim \mathcal{E}^{1/2}_{N,l}(t) \mathcal{D}_{N,l}(t).
\end{align}
\item[(4)] For $1 \leq \left| \alpha \right| + \left| \beta \right| \leq N$, $\left| \beta \right| \geq 1$ and $\left| \alpha \right| \leq N-1$,  we have
\begin{align}
\begin{split}\label{hardnonlinearterm2-4}
&\sum_{| \alpha_1 | \leq | \alpha |}  \left( \partial^{\alpha_1+e_i}\phi^\varepsilon \partial^{\alpha-\alpha_1}_{\beta+e_i} \{\mathbf{I}_{\pm}-\mathbf{P}_{\pm}\} f^{\varepsilon}, e^{\pm \varepsilon \phi^{\varepsilon}} w^2_l(\alpha,\beta) \partial^\alpha_\beta \{\mathbf{I}_{\pm}-\mathbf{P}_{\pm}\} f^{\varepsilon} \right)
\lesssim \mathcal{E}^{1/2}_{N,l}(t) \mathcal{D}_{N,l}(t).
\end{split}
\end{align}
\end{itemize}
\end{lemma}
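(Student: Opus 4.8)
The plan is to adapt the proof of Lemma \ref{softnonlinearterm2} almost verbatim, exploiting the fact that in the hard regime the weight $w_l(\alpha,\beta)=\langle v\rangle^{l-|\alpha|-|\beta|}$ is monotone in the total order: $w_l(\alpha,\beta)\leq w_l(\alpha',\beta')$ whenever $|\alpha'|+|\beta'|\leq|\alpha|+|\beta|$, and it obeys the exact identities $w_l(\alpha,0)=w_l(|\alpha|-1,1)$ and $w_l(\alpha,\beta+e_i)=\langle v\rangle^{-1}w_l(\alpha,\beta)$. Hence there is no mismatch between derivatives and weights, and neither the interpolation inequality \eqref{interpolation} nor the delicate bookkeeping of Cases 3--4 in Lemma \ref{soft Gamma} is required. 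Two further simplifications specific to $\gamma+2s\geq0$ will be used: first, $\langle v\rangle^{-1}\lesssim\langle v\rangle^{\gamma+2s}$, so the boundary term produced by integration by parts in $v$ can be absorbed by the $N^s_\gamma$-dissipation --- this is precisely why the extra terms $\|\nabla_x\phi^{\varepsilon}\|_{L^\infty}\sum_\pm\|\cdots\|^2$ on the right of \eqref{softnonlinearterm2-2}--\eqref{softnonlinearterm2-4} disappear and why no factor $\varepsilon$ is needed in \eqref{hardnonlinearterm2-3}; second, $\gamma/2+s\geq0$ gives $|\cdot|_{L^2}\leq|\cdot|_{N^s_\gamma}$, so every velocity $L^2$ norm that appears automatically lies inside the dissipation and no fractional velocity-Sobolev analysis is needed.

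As in Lemma \ref{softnonlinearterm2}, I would reduce to the genuinely microscopic contributions after the splitting $f^{\varepsilon}_\pm=\{\mathbf{I}_\pm-\mathbf{P}_\pm\}f^{\varepsilon}+\mathbf{P}_\pm f^{\varepsilon}$, since the pieces carrying $\mathbf{P}_\pm f^{\varepsilon}$ are handled exactly as $I_{2,1}$ and $I_{2,2}$ in Lemma \ref{softnonlinearterm1}. For the microscopic--microscopic term in \eqref{hardnonlinearterm2-2} I would split on $|\alpha_1|$. When $|\alpha_1|=0$, integrate by parts in $v_i$ as in \eqref{alpha1=0estimate} to reach $\|\nabla_x\phi^{\varepsilon}\|_{L^\infty}\,\|\langle v\rangle^{-1/2}w_l(\alpha,0)\partial^\alpha\{\mathbf{I}-\mathbf{P}\}f^{\varepsilon}\|^2$, then use $\langle v\rangle^{-1}\lesssim\langle v\rangle^{\gamma+2s}$ to dominate it by $\|\nabla_x\phi^{\varepsilon}\|_{L^\infty}\|w_l(\alpha,0)\partial^\alpha\{\mathbf{I}-\mathbf{P}\}f^{\varepsilon}\|_{N^s_\gamma}^2$, which is $\lesssim\mathcal{E}^{1/2}_{N,l}(t)\mathcal{D}_{N,l}(t)$ because $\|\nabla_x\phi^{\varepsilon}\|_{L^\infty}\lesssim\|\nabla_x^2\phi^{\varepsilon}\|_{H^1}\lesssim\mathcal{E}^{1/2}_{N,l}(t)$ and, for $|\alpha|\leq N-1$, $\|w_l(\alpha,0)\partial^\alpha\{\mathbf{I}-\mathbf{P}\}f^{\varepsilon}\|_{N^s_\gamma}^2\leq\varepsilon^2\mathcal{D}_{N,l}(t)\leq\mathcal{D}_{N,l}(t)$. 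When $1\leq|\alpha_1|\leq|\alpha|$, I would use $w_l(\alpha,0)=w_l(|\alpha|-1,1)\leq w_l(\alpha-\alpha_1+\alpha',e_i)$ for $|\alpha'|\leq1$ to match the weight on the middle factor, and distribute the three factors by H\"older as $L^\infty_x$--$L^2_{x,v}$--$L^2_{x,v}$ when $|\alpha_1|=1$ (using $\|\nabla_x^2\phi^{\varepsilon}\|_{L^\infty}\lesssim\mathcal{E}^{1/2}_{N,l}(t)$, which is where $N\geq2$ enters) or as $L^6_x$--$L^3_xL^2_v$--$L^2_{x,v}$ together with \eqref{Sobolev-ineq} and \eqref{minkowski} when $|\alpha_1|\geq2$; the $\nabla_x\phi^{\varepsilon}$-factor then contributes $\mathcal{E}^{1/2}_{N,l}(t)$ and each of the other two contributes $\mathcal{D}^{1/2}_{N,l}(t)$.

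Finally I would treat \eqref{hardnonlinearterm2-1}, \eqref{hardnonlinearterm2-3} and \eqref{hardnonlinearterm2-4} along the same lines. For \eqref{hardnonlinearterm2-3} the relevant dissipation slots are $\varepsilon^{-1}\sum_{|\alpha|=N}\|w_l(\alpha,0)\partial^\alpha\{\mathbf{I}-\mathbf{P}\}f^{\varepsilon}\|_{N^s_\gamma}^2$ and $\sum_{1\leq|\alpha|\leq N}\|\partial^\alpha\mathbf{P}f^{\varepsilon}\|^2$ in $\mathcal{D}_{N,l}(t)$, into which the factors fall after the integration by parts for $|\alpha_1|=0$ (again $\langle v\rangle^{-1}\lesssim\langle v\rangle^{\gamma+2s}$ is what makes the $\varepsilon$-free statement correct), while for $|\alpha_1|\geq1$ the derivative count of the middle factor drops strictly below $N$ and one lands in the weighted slots of $\mathcal{D}_{N,l}(t)$ as before; \eqref{hardnonlinearterm2-4} is identical to \eqref{hardnonlinearterm2-2} with $\partial^\alpha_{\beta+e_i}$ in place of $\partial^\alpha_{e_i}$ and $w_l(\alpha,\beta)\leq w_l(\alpha-\alpha_1+\alpha',\beta+e_i)$ in place of the corresponding inequality. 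Throughout, the order $N$ enters only through spatial derivatives of $\nabla_x\phi^{\varepsilon}$ up to order $N+1$, which by the Poisson equation satisfy $\sum_{|\alpha|=N+1}\|\partial^\alpha\nabla_x\phi^{\varepsilon}\|=\sum_{|\alpha|=N}\|\partial^\alpha(a^{\varepsilon}_+-a^{\varepsilon}_-)\|\lesssim\mathcal{E}^{1/2}_{N,l}(t)$ as in \eqref{reason N geq 2}, so $N\geq2$ suffices. The main --- indeed only --- obstacle is the routine but attentive check that, in each sub-case, after the single extra spatial derivative produced by the Sobolev embedding the weight inequality $w_l(\alpha,\beta)\leq w_l(\alpha',\beta')$ still holds; the monotonicity of the hard-potential weight makes this straightforward, unlike the soft case where precisely this accounting forced $N\geq5$.
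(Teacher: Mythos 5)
Your proposal is correct and follows essentially the same route as the paper's (deliberately terse) proof: split off the macroscopic pieces as in $I_{2,1}$, $I_{2,2}$, integrate by parts in $v$ when $|\alpha_1|=0$ and absorb the resulting $\langle v\rangle^{-1}$-weighted term into the $N^s_\gamma$-dissipation using $\gamma+2s\geq 0$, and for $|\alpha_1|\geq 1$ use the identity $w_l(\alpha,0)=w_l(|\alpha|-1,1)$, the monotone hard-potential weight, $\|\cdot\|\lesssim\|\cdot\|_{N^s_\gamma}$, the $L^\infty$/$L^6$--$L^3$ H\"older splittings, and \eqref{reason N geq 2} to see that $N\geq 2$ suffices. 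Your explanation of why the extra $\|\nabla_x\phi^\varepsilon\|_{L^\infty}$-terms and the factor $\varepsilon$ of the soft case disappear is exactly the mechanism implicit in the paper's one-line treatment of the $|\alpha_1|=0$ case.
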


\begin{proof}
This lemma can be proved similarly as that of Lemma \ref{softnonlinearterm2}. For brevity, we use the same notations as in Lemma \ref{softnonlinearterm2} and only give the estimate of $I_5$ when $|\alpha| \leq N$ for the hard potential case.

When $|\alpha_1|=0$, with the velocity integration by parts, $I_5$ can be controlled by $\mathcal{E}^{1/2}_{N,l}(t) \mathcal{D}_{N,l}(t)$.
When $|\alpha_1| \geq 1$, we have $|\alpha| \geq 1$. By $w_l(\alpha,0)=w_l(|\alpha|-1,1)$ and $\| \cdot \|\lesssim \| \cdot \|_{N^s_\gamma}$, one has
\begin{align}
\begin{split}\nonumber
I_5 \lesssim \;& \sum_{1 \leq |\alpha_1| \leq |\alpha| }\int_{\mathbb{R}^3} \left|\partial^{\alpha_1} \nabla_x \phi^\varepsilon\right|
\left|w_l(|\alpha|-1,1) \partial^{\alpha-\alpha_1}_{e_i}\{\mathbf{I}-\mathbf{P}\}f^{\varepsilon}\right|_{L^2}
\left|w_l(\alpha,0)\partial^{\alpha}\{\mathbf{I}-\mathbf{P}\}f^{\varepsilon}\right|_{L^2}\d x \\
\lesssim\;& \sum_{ |\alpha_1|=1 }\left\|\partial^{\alpha_1} \nabla_x \phi^\varepsilon\right\|_{L^\infty}
\left\|w_l(|\alpha|-1,1) \partial^{\alpha-\alpha_1}_{e_i}\{\mathbf{I}-\mathbf{P}\}f^{\varepsilon}\right\|
\left\|w_l(\alpha,0)\partial^{\alpha}\{\mathbf{I}-\mathbf{P}\}f^{\varepsilon}\right\|\\
&+\sum_{2 \leq |\alpha_1| \leq N }\left\|\partial^{\alpha_1} \nabla_x \phi^\varepsilon\right\|_{L^6}
\left\|w_l(|\alpha|-1,1) \partial^{\alpha-\alpha_1}_{e_i}\{\mathbf{I}-\mathbf{P}\}f^{\varepsilon}\right\|_{L^3_x L^2_v}
\left\|w_l(\alpha,0)\partial^{\alpha}\{\mathbf{I}-\mathbf{P}\}f^{\varepsilon}\right\| \\
\lesssim \;&\mathcal{E}^{1/2}_{N,l}(t) \mathcal{D}_{N,l}(t).
\end{split}
\end{align}
Here, $N \geq 2 $ is sufficient because of \eqref{reason N geq 2} and the fact $w_l(|\alpha|-1,1) \leq w_l(\alpha-\alpha_1+\alpha^{\prime}, e_i)$ in the case when $2 \leq |\alpha_1| \leq |\alpha| \leq N$ and $|\alpha^\prime| \leq 1 $. This completes the proof of Lemma \ref{hardnonlinearterm2}.
\end{proof}
\medskip

\section{The a Priori Estimates}\label{The a Priori Estimate}

In this section, we deduce the uniform a priori estimates  of the VPB system \eqref{rVPB} with respect to $\varepsilon\in (0,1]$
globally in time.

For this purpose, we define the following time-weighted energy norm $X(t)$ by
\begin{equation}\label{X define}
\!\!\!\!X(t):=\left\{
\begin{aligned}& \!\!\sup_{0 \leq \tau \leq t} \!\widetilde{\mathcal{E}}_{N,l_2}(\tau)
+\!\!\sup_{0 \leq \tau \leq t}\! (1+\tau)^{\varrho} \mathcal{E}_{N,l_2}(\tau),
\qquad \qquad \qquad \qquad \qquad \qquad \;\; \gamma+2s\geq 0, \\
&\!\!\sup_{0 \leq \tau \leq t} \! \widetilde{\mathcal{E}}_{N,l_0+l_1}(\tau)+ \!\!\sup_{0 \leq \tau \leq t} \!(1+\tau)^{\varrho} \mathcal{E}_{N,l_0}(\tau)
+\!\!\sup_{0 \leq \tau \leq t} \!(1+\tau)^{\varrho+p}\mathcal{E}^h_{N,l_0}(\tau),
 -3 < \gamma < -2s.
\end{aligned}\right.
\end{equation}
Here, all the involved parameters are fixed to satisfy \eqref{hard assumption} for the hard potential case  $\gamma+2s \geq 0$ with $0 < s < 1$ and \eqref{soft assumption} for the soft potential case $-3 < \gamma <-2s$ with $0 < s < 1$, respectively.

The construction of the weighted instant energy functionals $\mathcal{E}_{N,l}(t)$, $\widetilde{\mathcal{E}}_{N,l}(t)$ and the  dissipation rate functionals $\mathcal{D}_{N,l}(t)$, $\widetilde{\mathcal{D}}_{N,l}(t)$ will be given in the following. Suppose that the VPB system  \eqref{rVPB} admits a smooth solution $\left(f^\varepsilon,\nabla_x \phi^\varepsilon \right)$ over $0 \leq t \leq T$ for $0 < T \leq \infty$, and  the solution
$\left(f^\varepsilon,\nabla_x \phi^\varepsilon \right)$ also satisfies
\begin{align}\label{priori assumption}
\sup_{0 \leq t \leq T} X(t) \leq \delta_0,
\end{align}
where $\delta_0$ is a suitable small positive constant to be determined later.
Moreover, in the remaining parts of this paper, we always assume
\begin{align*}
&\;l = l_2, \;\qquad\text{~if~} \gamma+2s \geq 0;\\
l_0 \leq &\;l \leq l_0+l_1,  \;\text{~if~} -3 < \gamma -2s.
\end{align*}

\subsection{Macroscopic Estimate}
\hspace*{\fill}

In this subsection, we provide the macroscopic estimate of the VPB system with $\varepsilon$, which was also given in \cite{DL2013} without proof for $\varepsilon=1$.

\begin{lemma}\label{macroscopic estimate}
Let $\left(f^\varepsilon,\nabla_x \phi^\varepsilon \right)$ be the solution to the VPB system \eqref{rVPB}.
Then there exist two interactive functionals $\mathcal{E}^N_{int}(t)$  and $\mathcal{E}^{N,h}_{int}(t)$ defined in \eqref{macro energy2} and \eqref{macro energy high} respectively satisfying
\begin{align}
\begin{split}\nonumber
\left|\mathcal{E}^N_{int}(t)\right| \lesssim\;& \sum_{|\alpha| \leq N+1} \left\| \partial^\alpha \nabla_x \phi^{\varepsilon}\right\|^2
+\sum_{|\alpha| \leq N} \left\| \partial^\alpha f^{\varepsilon}\right\|^2, \\
\left|\mathcal{E}^{N, h}_{int}(t)\right| \lesssim\;& \sum_{|\alpha| \leq N+1} \left\| \partial^\alpha \nabla_x \phi^{\varepsilon}\right\|^2
+\sum_{1 \leq |\alpha| \leq N} \left\| \partial^\alpha \mathbf{P}f^{\varepsilon}\right\|^2
+\sum_{ |\alpha| \leq N} \left\| \partial^\alpha \{\mathbf{I}-\mathbf{P}\}f^{\varepsilon}\right\|^2,\\
\end{split}
\end{align}
such that for any $t \geq 0$, there hold
\begin{align}\label{macro estimate}
\begin{split}
&\varepsilon \frac{\d}{\d t}\mathcal{E}^N_{int}(t)+ \sum_{1 \leq |\alpha| \leq N}
\left\|  \partial^\alpha (a_{\pm}^{\varepsilon}, b^{\varepsilon}, c^{\varepsilon})\right\|^2
+ \sum_{|\alpha| \leq N+1} \left\| \partial^\alpha \nabla_x \phi^{\varepsilon} \right\|^2 \\
\lesssim \;& \frac{1}{\varepsilon^2} \sum_{|\alpha| \leq N}\left\| \partial^\alpha \{\mathbf{I}-\mathbf{P}\}f^{\varepsilon}\right\|^2_{N^s_\gamma}
+\mathcal{E}_{N}(t) \mathcal{D}_{N}(t)
\end{split}
\end{align}
and
\begin{align}
\begin{split}\label{macro estimate high}
&\varepsilon \frac{\d}{\d t}\mathcal{E}^{N,h}_{int}(t)+ \sum_{2 \leq |\alpha| \leq N}
\left\|  \partial^\alpha (a_{\pm}^{\varepsilon}, b^{\varepsilon}, c^{\varepsilon})\right\|^2
+ \sum_{|\alpha| \leq N+1} \left\| \partial^\alpha \nabla_x \phi^{\varepsilon} \right\|^2 \\
\lesssim \;& \frac{1}{\varepsilon^2} \sum_{|\alpha| \leq N}\left\| \partial^\alpha \{\mathbf{I}-\mathbf{P}\}f^{\varepsilon}\right\|^2_{N^s_\gamma}
+\mathcal{E}_{N}(t) \mathcal{D}_{N}(t).
\end{split}
\end{align}
\end{lemma}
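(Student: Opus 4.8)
The plan is to derive the macroscopic dissipation from the local conservation laws satisfied by the macroscopic quantities $(a_\pm^\varepsilon, b^\varepsilon, c^\varepsilon)$, following the classical macro--micro decomposition strategy of Guo adapted to the $\varepsilon$-scaled VPB system. First I would project the perturbed equation \eqref{rVPB} onto the hydrodynamic modes $\mu^{1/2}, v_i\mu^{1/2}, (|v|^2-3)\mu^{1/2}$ and the microscopic complement. Plugging $f^\varepsilon = \mathbf{P}f^\varepsilon + \{\mathbf{I}-\mathbf{P}\}f^\varepsilon$ into \eqref{rVPB} and taking the appropriate velocity moments yields a system of the form
\begin{align*}
&\varepsilon\partial_t a_\pm^\varepsilon + \nabla_x\cdot b^\varepsilon = \text{(micro terms)},\\
&\varepsilon\partial_t b^\varepsilon + \nabla_x(a^\varepsilon + 2c^\varepsilon) + \nabla_x\phi^\varepsilon\cdot(\cdots) = \text{(micro terms)} + \text{(nonlinear)},\\
&\varepsilon\partial_t c^\varepsilon + \tfrac{1}{3}\nabla_x\cdot b^\varepsilon = \text{(micro terms)} + \text{(nonlinear)},
\end{align*}
together with an equation for the off-diagonal second moments. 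The key point is that the spatial derivatives of $(a_\pm^\varepsilon, b^\varepsilon, c^\varepsilon)$ appear elliptically in this system, so after differentiating by $\partial^\alpha$ with $1\le|\alpha|\le N$ one can construct an interaction functional $\mathcal{E}^N_{int}(t)$ — a suitable linear combination of terms like $(\partial^\alpha\nabla_x b^\varepsilon, \partial^\alpha a^\varepsilon)$, $(\partial^\alpha b^\varepsilon, \partial^\alpha\nabla_x(a^\varepsilon+2c^\varepsilon))$, etc. — whose time derivative controls $\sum_{1\le|\alpha|\le N}\|\partial^\alpha(a_\pm^\varepsilon, b^\varepsilon, c^\varepsilon)\|^2$ from below, modulo the microscopic and nonlinear remainders.

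The second ingredient is the Poisson equation. From $-\Delta_x\phi^\varepsilon = \langle f^\varepsilon, q_1\mu^{1/2}\rangle = a_+^\varepsilon - a_-^\varepsilon$, elliptic regularity immediately gives $\|\partial^\alpha\nabla_x\phi^\varepsilon\|\lesssim \|\partial^\alpha(a_+^\varepsilon - a_-^\varepsilon)\|$ for any $\alpha$, which both produces the $\sum_{|\alpha|\le N+1}\|\partial^\alpha\nabla_x\phi^\varepsilon\|^2$ term on the left of \eqref{macro estimate} (using the already-controlled $\|\partial^\alpha a^\varepsilon\|$ with $|\alpha|\le N$) and shows that $\mathcal{E}^N_{int}(t)$ has the stated bound $|\mathcal{E}^N_{int}(t)|\lesssim \sum_{|\alpha|\le N+1}\|\partial^\alpha\nabla_x\phi^\varepsilon\|^2 + \sum_{|\alpha|\le N}\|\partial^\alpha f^\varepsilon\|^2$. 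One also needs the zero-order piece: $\|(a_\pm^\varepsilon, b^\varepsilon, c^\varepsilon)\|$ (i.e. $|\alpha|=0$) is not in the dissipation, but it is not needed on the left-hand side of \eqref{macro estimate} since the sum there starts at $|\alpha|=1$; the $\|\nabla_x\phi^\varepsilon\|$ contribution at $|\alpha|=0$ is handled by $\|a_+^\varepsilon-a_-^\varepsilon\|$ which, via a Poincaré-type argument using $\nabla_x\phi^\varepsilon\to 0$ at infinity, or simply via the $|\alpha|=1$ bound on $\nabla_x(a_+^\varepsilon-a_-^\varepsilon)$ plus decay, stays controlled. The factor $\varepsilon$ in front of $\frac{\d}{\d t}\mathcal{E}^N_{int}(t)$ is exactly the $\varepsilon$ multiplying $\partial_t$ in \eqref{rVPB}, so no singular factor is introduced; the microscopic remainders, after using the equation to trade one $\partial_t$ for $\frac{1}{\varepsilon}$ times spatial/collision terms, come out with the $\frac{1}{\varepsilon^2}\|\cdot\|^2_{N^s_\gamma}$ weight as stated.

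For the high-order estimate \eqref{macro estimate high} I would repeat the construction but start the elliptic combination at $|\alpha|=2$, discarding the $|\alpha|=1$ macroscopic information; correspondingly $\mathcal{E}^{N,h}_{int}(t)$ is bounded by $\sum_{|\alpha|\le N+1}\|\partial^\alpha\nabla_x\phi^\varepsilon\|^2 + \sum_{1\le|\alpha|\le N}\|\partial^\alpha\mathbf{P}f^\varepsilon\|^2 + \sum_{|\alpha|\le N}\|\partial^\alpha\{\mathbf{I}-\mathbf{P}\}f^\varepsilon\|^2$, the point being that only first-and-higher derivatives of $\mathbf{P}f^\varepsilon$ enter. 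The nonlinear terms — those coming from $\frac{1}{\varepsilon}\Gamma(f^\varepsilon,f^\varepsilon)$, $q_0\nabla_x\phi^\varepsilon\cdot\nabla_v f^\varepsilon$, and $\frac{1}{2}q_0\nabla_x\phi^\varepsilon\cdot v f^\varepsilon$, after taking moments — are estimated exactly as the $\mathbf{P}$-$\mathbf{P}$ pieces in Lemmas \ref{soft Gamma}--\ref{hardnonlinearterm2} restricted to the macroscopic component, giving the $\mathcal{E}_N(t)\mathcal{D}_N(t)$ bound (the relevant projections of $\Gamma$ onto $\mathcal{N}(L)$ vanish by collision invariance, leaving only terms with at least one microscopic factor or at least one $\nabla_x\phi^\varepsilon$, all bounded by the energy times the dissipation). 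I expect the main obstacle to be the careful bookkeeping of the $\varepsilon$-weights: one must verify that every time one substitutes the equation to eliminate a time derivative in the cross terms, the resulting $\frac{1}{\varepsilon}$ is compensated either by a Cauchy--Schwarz split producing $\frac{1}{\varepsilon^2}$ on the microscopic norm (which lands in the right-hand side) or is absorbed into $\mathcal{E}_N(t)\mathcal{D}_N(t)$ via the smallness from \eqref{priori assumption}; getting all these scalings consistent, together with the non-cutoff $N^s_\gamma$ norm replacing the usual $L^2_{\gamma/2}$ norm in the microscopic remainders, is where the bulk of the technical work lies, though it is routine once the structure is set up.
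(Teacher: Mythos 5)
Your overall strategy (interactive functionals built from moment equations, trading $\partial_t$ via the equation, with the $\varepsilon$ in front of $\frac{\d}{\d t}\mathcal{E}^N_{int}$ coming from testing against $\varepsilon$-weighted quantities) is the same as the paper's, but the specific functionals you propose cannot produce the stated dissipation. The local conservation laws contain $b^\varepsilon$ only through $\nabla_x\cdot b^\varepsilon$ and contain $a^\varepsilon,c^\varepsilon$ only through the combination $a^\varepsilon+2c^\varepsilon$; hence macro--macro couplings such as $(\partial^\alpha\nabla_x b^\varepsilon,\partial^\alpha a^\varepsilon)$ or $(\partial^\alpha b^\varepsilon,\partial^\alpha\nabla_x(a^\varepsilon+2c^\varepsilon))$ can at best yield $\|\partial^\alpha\nabla_x(a^\varepsilon+2c^\varepsilon)\|^2$ modulo $\|\partial^\alpha\nabla_x\cdot b^\varepsilon\|^2$: they can never separate $\nabla_x c^\varepsilon$ from $\nabla_x a^\varepsilon$, nor control the full gradient of $b^\varepsilon$. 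The paper's mechanism is to pair the macroscopic gradients with moments of the \emph{microscopic} part, using the extra moment equations \eqref{macro equation 2}, \eqref{macro equation 4}: $\mathcal{E}^{(\alpha)}_{c^\varepsilon}$ couples $\partial^\alpha\partial_i c^\varepsilon$ with $\partial^\alpha\Lambda_i(\{\mathbf{I}-\mathbf{P}\}f^\varepsilon\cdot[1,1])$, and $\mathcal{E}^{(\alpha)}_{b^\varepsilon}$ couples $\partial^\alpha(\partial_i b^\varepsilon_j+\partial_j b^\varepsilon_i)$ with $\partial^\alpha\Theta_{ij}(\{\mathbf{I}-\mathbf{P}\}f^\varepsilon\cdot[1,1])+2\partial^\alpha c^\varepsilon\delta_{ij}$. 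You mention "an equation for the off-diagonal second moments" but never use it (nor the third moments $\Lambda_i$) in your list of functionals; this is precisely the heart of the proof and is missing.

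The second gap concerns the electric field. The claimed elliptic bound $\|\partial^\alpha\nabla_x\phi^\varepsilon\|\lesssim\|\partial^\alpha(a_+^\varepsilon-a_-^\varepsilon)\|$ fails at $|\alpha|=0$: in $\mathbb{R}^3$ the Poisson equation gives $\|\nabla_x^2\phi^\varepsilon\|\lesssim\|a_+^\varepsilon-a_-^\varepsilon\|$, but no Poincar\'e-type inequality yields $\|\nabla_x\phi^\varepsilon\|$ itself, and "decay at infinity" does not repair this. In the paper the zeroth-order $\|\nabla_x\phi^\varepsilon\|^2$ (and the zeroth-order $\|a_+^\varepsilon-a_-^\varepsilon\|^2$ that feeds the elliptic step for the higher orders of $\nabla_x\phi^\varepsilon$) are extracted from the two-species \emph{difference} system \eqref{macro equation 5}: the singular term $\frac{2}{\varepsilon}\nabla_x\phi^\varepsilon$ in the equation for $G^\varepsilon=\langle v\mu^{1/2},\{\mathbf{I}-\mathbf{P}\}f^\varepsilon\cdot q_1\rangle$, tested against $\varepsilon\,\partial^\alpha\nabla_x\phi^\varepsilon$ (respectively $\varepsilon\,\partial^\alpha\nabla_x(a_+^\varepsilon-a_-^\varepsilon)$), produces $\|\partial^\alpha\nabla_x\phi^\varepsilon\|^2$ (respectively $\|\partial^\alpha(a_+^\varepsilon-a_-^\varepsilon)\|^2$) directly, with interactive functionals $(\partial^\alpha G^\varepsilon,\partial^\alpha\nabla_x\phi^\varepsilon)$ and $(\partial^\alpha G^\varepsilon,\partial^\alpha\nabla_x(a_+^\varepsilon-a_-^\varepsilon))$. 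Your proposal never writes down the difference equation or $G^\varepsilon$, so the $\nabla_x\phi^\varepsilon$ part of \eqref{macro estimate} is unproved. A smaller point: for \eqref{macro estimate high} the paper does not "start the combination at $|\alpha|=2$"; it keeps the functionals for $1\le|\alpha|\le N-1$ (and all $|\alpha|\le N-1$ for the $\phi$-coupling, which involves no macroscopic quantity) and only drops the $|\alpha|=0$ macroscopic couplings --- starting at $|\alpha|=2$ would lose the second-order dissipation of $(a_\pm^\varepsilon,b^\varepsilon,c^\varepsilon)$.
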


\begin{proof}
By applying the macro-micro decomposition \eqref{f decomposition} introduced in \cite{Guo2004} and defining moment functions
$$
\Theta_{ij}(f^{\varepsilon}):=\int_{\mathbb{R}^3}(v_i v_j -1)\mu^{1/2}f^{\varepsilon} \d v, \qquad
\Lambda_{i}(f^{\varepsilon}):=\frac{1}{10}\int_{\mathbb{R}^3}(|v|^2 -5)v_i \mu^{1/2}f^{\varepsilon} \d v,
$$
one can derive from \eqref{rVPB} the following two fluid-type systems of equations
\begin{equation}\label{macro equation 1}
\left\{\begin{aligned}
& \partial_t a^{\varepsilon}_{ \pm}+ \frac{1}{\varepsilon} \nabla_x \cdot b^{\varepsilon}
+\frac{1}{\varepsilon} \nabla_x \cdot \langle v \mu^{1 / 2}, \{ \mathbf{I}_{ \pm}-\mathbf{P}_{ \pm}\} f^{\varepsilon} \rangle=0, \\
& \partial_t \big(b^{\varepsilon}_i+\langle v_i \mu^{1 / 2},\{\mathbf{I}_{ \pm}-\mathbf{P}_{ \pm}\} f^{\varepsilon}\rangle \big)
+ \frac{1}{\varepsilon} \partial_i(a^{\varepsilon}_{ \pm}+2 c^{\varepsilon}) \pm \frac{1}{\varepsilon} \partial_i \phi^{\varepsilon} \\
&\qquad
=-\frac{1}{\varepsilon} \langle v_i \mu^{1 / 2}, v \cdot \nabla_x \{\mathbf{I}_{ \pm}-\mathbf{P}_{ \pm}\} f^\varepsilon \rangle
+\langle g^\varepsilon_{ \pm}-\frac{1}{\varepsilon^2}L_{ \pm} f^\varepsilon, v_i \mu^{1 / 2}\rangle, \\
& \partial_t \bigg(c^\varepsilon+\frac{1}{6}\langle (|v|^2-3) \mu^{1 / 2},\{\mathbf{I}_{ \pm}-\mathbf{P}_{ \pm} \} f^\varepsilon \rangle \bigg)
+\frac{1}{3\varepsilon} \nabla_x \cdot b^\varepsilon \\
&\qquad
=-\frac{1}{6\varepsilon}\langle (|v|^2-3) \mu^{1 / 2}, v \cdot \nabla_x \{\mathbf{I}_{ \pm}-\mathbf{P}_{ \pm}\} f^\varepsilon \rangle
+\frac{1}{6}\langle g^\varepsilon_{ \pm}- \frac{1}{\varepsilon^2} L_{ \pm} f^\varepsilon ,(|v|^2-3) \mu^{1 / 2}\rangle
\end{aligned}\right.
\end{equation}
and
\begin{equation}\label{macro equation 2}
\left\{\begin{aligned}
\!\!\!&\partial_t (\Theta_{i i}(\{\mathbf{I}_{ \pm}-\mathbf{P}_{ \pm}\} f^{\varepsilon})+2 c^{\varepsilon} )+ \frac{2}{\varepsilon} \partial_i b^\varepsilon_i =\Theta_{i i}(g^\varepsilon_{ \pm}+h^\varepsilon_{ \pm}), \\
&\partial_t \Theta_{i j} ( \{\mathbf{I}_{ \pm}-\mathbf{P}_{ \pm} \} f^{\varepsilon} )+\frac{1}{\varepsilon} \partial_i b^\varepsilon_j
+\frac{1}{\varepsilon} \partial_j b^\varepsilon_i+\frac{1}{\varepsilon} \nabla_x \cdot \langle v \mu^{1 / 2},
\{\mathbf{I}_{ \pm}-\mathbf{P}_{ \pm}\} f^{\varepsilon} \rangle=\Theta_{i j}(g^\varepsilon_{ \pm}+h^\varepsilon_{ \pm} ), i \neq j, \\
&\partial_t \Lambda_i (\{\mathbf{I}_{ \pm}-\mathbf{P}_{ \pm}\} f^{\varepsilon})+\frac{1}{\varepsilon} \partial_i c^{\varepsilon}
=\Lambda_i(g^\varepsilon_{ \pm}+h^\varepsilon_{ \pm}),
\end{aligned}\right.
\end{equation}
where
\begin{align}
\begin{split}\label{g,h define}
g^\varepsilon_{\pm}:=&\pm \nabla_x \phi^{\varepsilon} \cdot \nabla_v f_{\pm}^\varepsilon \mp \frac{1}{2}\nabla_x \phi^{\varepsilon} \cdot vf_{\pm}^\varepsilon + \frac{1}{\varepsilon} \Gamma_{\pm}(f^\varepsilon, f^\varepsilon), \\
h^\varepsilon_{\pm}:=&-\frac{1}{\varepsilon}v \cdot \nabla_x \{\mathbf{I}_{\pm}-\mathbf{P}_{\pm}\}f^{\varepsilon}-\frac{1}{\varepsilon^2}L_{\pm} f^{\varepsilon}.
\end{split}
\end{align}
By taking the mean value of every two equations with $\pm$ sign for \eqref{macro equation 1}--\eqref{macro equation 2}, one has
\begin{equation}\label{macro equation 3}
\left\{\begin{aligned}
& \partial_t \bigg(\frac{a^{\varepsilon}_{+}+a^{\varepsilon}_{-}}{2}\bigg) + \frac{1}{\varepsilon} \nabla_x \cdot b^{\varepsilon}=0, \\
& \partial_t b^{\varepsilon}_i
+ \frac{1}{\varepsilon} \partial_i \bigg(\frac{a^{\varepsilon}_{+}+a^{\varepsilon}_{-}}{2}+2 c^{\varepsilon}\bigg)
+\frac{1}{2\varepsilon} \sum_{j=1}^{3} \partial_j \Theta_{ij} ( \{\mathbf{I}-\mathbf{P}\} f^\varepsilon \cdot [1,1] )
= \frac{1}{2} \langle g^\varepsilon_{+}+g^\varepsilon_{-}, v_i \mu^{1 / 2}\rangle, \\
& \partial_t c^\varepsilon +\frac{1}{3\varepsilon} \nabla_x \cdot b^\varepsilon
+\frac{5}{6\varepsilon} \sum_{i=1}^{3} \partial_i \Lambda_i ( \{\mathbf{I}-\mathbf{P}\} f^\varepsilon \cdot [1,1] )
=\frac{1}{12}\langle g^\varepsilon_{+}+g^\varepsilon_{-}, (|v|^2-3) \mu^{1 / 2}\rangle
\end{aligned}\right.
\end{equation}
and
\begin{equation}\label{macro equation 4}
\left\{\begin{aligned}
&\partial_t \bigg(\frac{1}{2}\Theta_{i j}(\{\mathbf{I}-\mathbf{P}\} f^{\varepsilon} \cdot [1,1])+2 c^{\varepsilon}\delta_{ij} \bigg)+ \frac{1}{\varepsilon} \partial_i b^\varepsilon_j + \frac{1}{\varepsilon} \partial_j b^\varepsilon_i
=\frac{1}{2}\Theta_{i j}(g^\varepsilon_{+}+g^\varepsilon_{-}+h^\varepsilon_{+}+h^\varepsilon_{-}), \\
&\frac{1}{2}\partial_t \Lambda_i (\{\mathbf{I}-\mathbf{P}\} f^{\varepsilon} \cdot [1,1])+\frac{1}{\varepsilon} \partial_i c^{\varepsilon}
=\frac{1}{2}\Lambda_i(g^\varepsilon_{+}+g^\varepsilon_{-}+h^\varepsilon_{+}+h^\varepsilon_{-}),
\end{aligned}\right.
\end{equation}
where $1 \leq i, j \leq 3$ and $\delta_{ij}$ denotes the Kronecker delta.
Moreover, in order to further obtain the dissipation rate related to $a_{\pm}^\varepsilon$ from the formula
$$
|a_{+}^\varepsilon|^2+|a_{-}^\varepsilon|^2=\frac{|a_{+}^\varepsilon+a_{-}^\varepsilon|^2}{2}+\frac{|a_{+}^\varepsilon-a_{-}^\varepsilon|^2}{2},
$$
by taking the difference of the first two equations with $\pm$ sign for \eqref{macro equation 1}, we have
\begin{equation}\label{macro equation 5}
\!\!\!\!\!\left\{\begin{aligned}
&\! \partial_t (a^{\varepsilon}_{+}-a^{\varepsilon}_{-}) + \frac{1}{\varepsilon} \nabla_x \cdot G^\varepsilon=0, \\
&\! \partial_t G^\varepsilon + \frac{1}{\varepsilon} \nabla_x(a^{\varepsilon}_{+}-a^{\varepsilon}_{-})+\frac{2}{\varepsilon}\nabla_x \phi^{\varepsilon}
+\frac{1}{\varepsilon} \nabla_x \cdot \Theta ( \{\mathbf{I}-\mathbf{P}\} f^\varepsilon \cdot q_1 )
=  \Big\langle g^\varepsilon\!-\!\frac{1}{\varepsilon^2}Lf^\varepsilon, [v,-v] \mu^{1 / 2} \Big\rangle,
\end{aligned}\right.
\end{equation}
where
\begin{align}\label{G define}
G^\varepsilon:=\langle v\mu^{1/2}, \{\mathbf{I}-\mathbf{P}\}f^{\varepsilon} \cdot q_1\rangle.
\end{align}
Notice that the second equation of \eqref{rVPB} gives
\begin{equation}\label{a equality}
-\Delta_x \phi^\varepsilon=a_{+}^\varepsilon-a_{-}^\varepsilon.
\end{equation}

First of all, let's estimate $c^\varepsilon$. By applying $\partial^\alpha$ to the second equation of \eqref{macro equation 4}, multiplying the identity with $\varepsilon \partial^\alpha \partial_i c^\varepsilon$, and integrating the identity result over $\mathbb{R}_x^3$, one has
\begin{align}\label{c estimate 1}
& \left\|\partial^\alpha \nabla_x c^{\varepsilon}\right\|^2=\sum_{i=1}^3 \left(\frac{1}{\varepsilon} \partial^\alpha \partial_i c^{\varepsilon}, \varepsilon \partial^\alpha \partial_i c^{\varepsilon}\right) \nonumber \\
=\; & -\sum_{i=1}^3 \left(\frac{1}{2}\partial_t \partial^{\alpha} \Lambda_i (\{\mathbf{I}-\mathbf{P}\} f^{\varepsilon} \cdot [1,1]), \varepsilon \partial^\alpha \partial_i c^{\varepsilon}\right)+\sum_{i=1}^3 \left(\frac{1}{2} \partial^\alpha \Lambda_i (g^\varepsilon_{+}+g^\varepsilon_{-}+h^\varepsilon_{+}+h^\varepsilon_{-}), \varepsilon \partial^\alpha \partial_i c^{\varepsilon}\right) \nonumber \\
=\; & -\frac{\d}{{\d} t} \sum_{i=1}^3 \left(\frac{1}{2} \partial^\alpha \Lambda_i (\{\mathbf{I}-\mathbf{P}\} f^{\varepsilon} \cdot[1,1]), \varepsilon \partial^\alpha \partial_i c^{\varepsilon}\right)+\sum_{i=1}^3 \left(\frac{1}{2} \partial^\alpha \Lambda_i (\{\mathbf{I}-\mathbf{P}\} f^{\varepsilon} \cdot[1,1]), \varepsilon \partial^\alpha \partial_i \partial_t c^{\varepsilon}\right) \nonumber \\
& +\sum_{i=1}^3 \left(\frac{1}{2} \partial^\alpha \Lambda_i (g^\varepsilon_{+}+g^\varepsilon_{-}+h^\varepsilon_{+}+h^\varepsilon_{-}), \varepsilon \partial^\alpha \partial_i c^{\varepsilon}\right).
\end{align}
For the second term on the right-hand side of the second equality in \eqref{c estimate 1}, we obtain from the third equation of \eqref{macro equation 3} that
\begin{align}
\begin{split}\nonumber
&\sum_{i=1}^3 \left(\frac{1}{2} \partial^\alpha \Lambda_i (\{\mathbf{I}-\mathbf{P}\} f^{\varepsilon} \cdot[1,1]), \varepsilon \partial^\alpha \partial_i \partial_t c^{\varepsilon}\right) \\
=\;&\sum_{i=1}^3 \bigg(
   \varepsilon \partial^\alpha \partial_i \Big\{ -\frac{1}{3\varepsilon} \nabla_x \cdot b^\varepsilon
-\frac{5}{6\varepsilon} \sum_{j=1}^{3} \partial_j \Lambda_j ( \{\mathbf{I}-\mathbf{P}\} f^\varepsilon \cdot [1,1] )
+\frac{1}{12}\langle g^\varepsilon_{+}+g^\varepsilon_{-}, (|v|^2-3) \mu^{1 / 2} \rangle \Big\},\\
 &\qquad\;\; \frac{1}{2} \partial^\alpha \Lambda_i (\{\mathbf{I}-\mathbf{P}\} f^{\varepsilon} \cdot[1,1]) \bigg) \\
\lesssim\;&
\eta \left\|\partial^\alpha \nabla_x b^\varepsilon \right\|^2+\left\| \partial^\alpha \nabla_x \{\mathbf{I}-\mathbf{P}\}f^\varepsilon \right\|_{N^s_\gamma}^2+ \varepsilon^2 \left\| \langle \partial^\alpha (g^\varepsilon_{+}+g^\varepsilon_{-}), \zeta \rangle \right\|^2_{L^2_x},
\end{split}
\end{align}
where we have employed integration by parts with respect to $x_i$, \eqref{zeta} and the Cauchy--Schwarz inequality with $\eta$.
For the last term on the right-hand side of \eqref{c estimate 1}, we have
\begin{align}
\begin{split}
&\sum_{i=1}^3 \left(\frac{1}{2} \partial^\alpha \Lambda_i (g^\varepsilon_{+}+g^\varepsilon_{-}+h^\varepsilon_{+}+h^\varepsilon_{-}), \varepsilon \partial^\alpha \partial_i c^{\varepsilon}\right)\\
\lesssim\;&\eta \left\| \partial^\alpha \nabla_x c^{\varepsilon}\right\|^2+ \varepsilon^2\left\| \langle \partial^\alpha (g^\varepsilon_{+}+g^\varepsilon_{-}+h^\varepsilon_{+}+h^\varepsilon_{-}), \zeta \rangle \right\|^2_{L^2_x}.
\end{split}
\end{align}
As a consequence, we can obtain
\begin{align}\label{c estimate 2}
\begin{split}
&\varepsilon \frac{\d }{\d t} \mathcal{E}^{(\alpha)}_{c^\varepsilon}(t) + \left\|\partial^\alpha \nabla_x c^{\varepsilon}\right\|^2 \\
\lesssim \;& \eta \left\|\partial^\alpha \nabla_x b^\varepsilon \right\|^2+\left\| \partial^\alpha \nabla_x \{\mathbf{I}-\mathbf{P}\}f^\varepsilon \right\|_{N^s_\gamma}^2+ \varepsilon^2 \left\| \langle \partial^\alpha (g^\varepsilon_{+}+g^\varepsilon_{-}+h^\varepsilon_{+}+h^\varepsilon_{-}), \zeta \rangle \right\|^2_{L^2_x},
\end{split}
\end{align}
where $\mathcal{E}^{(\alpha)}_{c^\varepsilon}(t)$ is given by
$$
\mathcal{E}^{(\alpha)}_{c^\varepsilon}(t) := \sum_{i=1}^3 \left(\frac{1}{2} \partial^{\alpha} \Lambda_i (\{\mathbf{I}-\mathbf{P}\} f^{\varepsilon} \cdot [1,1]), \partial^\alpha \partial_i c^{\varepsilon}\right).
$$

Secondly, for the estimate of $b^\varepsilon$, applying $\partial^\alpha$ to the first equation of \eqref{macro equation 4}, multiplying it with $\varepsilon ( \partial^\alpha \partial_i b_j^{\varepsilon}+\partial^\alpha \partial_j b_i^{\varepsilon})$, and integrating the resulting equation over $\mathbb{R}^3_x$, we can derive
\begin{align}\label{b estimate 1}
&2\left\| \partial^\alpha \nabla_x b^\varepsilon \right\|^2+2\left\| \partial^\alpha \nabla_x \cdot b^\varepsilon\right\|^2
=\sum_{i,j=1}^3\left(\frac{1}{\varepsilon}( \partial^\alpha \partial_i b^\varepsilon_j +  \partial^\alpha \partial_j b^\varepsilon_i),
\varepsilon (\partial^\alpha \partial_i b^\varepsilon_j + \partial^\alpha \partial_j b^\varepsilon_i) \right) \nonumber\\
=\;&\sum_{i,j=1}^3\left(- \partial_t \Big(\frac{1}{2}\partial^\alpha \Theta_{i j}(\{\mathbf{I}-\mathbf{P}\} f^{\varepsilon} \cdot [1,1])
+2 \partial^\alpha c^{\varepsilon}\delta_{ij}\Big), \varepsilon (\partial^\alpha \partial_i b^\varepsilon_j + \partial^\alpha \partial_j b^\varepsilon_i) \right) \nonumber \\
&+\sum_{i,j=1}^3\left( \frac{1}{2}\partial^\alpha \Theta_{i j}(g^\varepsilon_{+}+g^\varepsilon_{-}+h^\varepsilon_{+}+h^\varepsilon_{-}),
\varepsilon (\partial^\alpha \partial_i b^\varepsilon_j + \partial^\alpha \partial_j b^\varepsilon_i) \right) \nonumber \\
=\;&-\frac{\d}{\d t}  \sum_{i,j=1}^3\left( \frac{1}{2}\partial^\alpha \Theta_{i j}(\{\mathbf{I}-\mathbf{P}\} f^{\varepsilon} \cdot [1,1])
+2 \partial^\alpha c^{\varepsilon}\delta_{ij}, \varepsilon (\partial^\alpha \partial_i b^\varepsilon_j + \partial^\alpha \partial_j b^\varepsilon_i) \right)\\
&+\sum_{i,j=1}^3\left( \frac{1}{2}\partial^\alpha \Theta_{i j}(\{\mathbf{I}-\mathbf{P}\} f^{\varepsilon} \cdot [1,1])
+2 \partial^\alpha c^{\varepsilon}\delta_{ij}, \varepsilon (\partial^\alpha \partial_i \partial_t b^\varepsilon_j + \partial^\alpha \partial_j \partial_t b^\varepsilon_i) \right) \nonumber\\
&+\sum_{i,j=1}^3\left( \frac{1}{2}\partial^\alpha \Theta_{i j}(g^\varepsilon_{+}+g^\varepsilon_{-}+h^\varepsilon_{+}+h^\varepsilon_{-}),
\varepsilon (\partial^\alpha \partial_i b^\varepsilon_j + \partial^\alpha \partial_j b^\varepsilon_i) \right).  \nonumber
\end{align}
For the second term on the right-hand side of \eqref{b estimate 1}, it follows from the second equation of \eqref{macro equation 3} that
\begin{align}
\begin{split}\nonumber
&\sum_{i,j=1}^3\left( \frac{1}{2}\partial^\alpha \Theta_{i j}(\{\mathbf{I}-\mathbf{P}\} f^{\varepsilon} \cdot [1,1])
+2 \partial^\alpha c^{\varepsilon}\delta_{ij} , \varepsilon (\partial^\alpha \partial_i \partial_t b^\varepsilon_j + \partial^\alpha \partial_j \partial_t b^\varepsilon_i) \right)\\
\lesssim\;& \eta \left\|\partial^\alpha \nabla_x (a^\varepsilon_{+}+a^\varepsilon_{-})\right\|^2+\left\|\partial^\alpha \nabla_x c^\varepsilon \right\|^2
+\left\| \partial^\alpha \nabla_x \{\mathbf{I}-\mathbf{P}\}f^\varepsilon \right\|_{N^s_\gamma}^2
+\varepsilon^2\left\| \langle \partial^\alpha (g^\varepsilon_{+}+g^\varepsilon_{-}), \zeta \rangle \right\|^2_{L^2_x}.
\end{split}
\end{align}
For the last term on the right-hand side of \eqref{b estimate 1}, we can deduce that
\begin{align}
\begin{split}\nonumber
&\sum_{i,j=1}^3\left( \frac{1}{2}\partial^\alpha \Theta_{i j}(g^\varepsilon_{+}+g^\varepsilon_{-}+h^\varepsilon_{+}+h^\varepsilon_{-}),
\varepsilon (\partial^\alpha \partial_i b^\varepsilon_j + \partial^\alpha \partial_j b^\varepsilon_i) \right)\\
\lesssim\;& \eta \left\|\partial^\alpha \nabla_x  b^\varepsilon \right\|^2+\varepsilon^2 \left\| \langle \partial^\alpha (g^\varepsilon_{+}+g^\varepsilon_{-}+h^\varepsilon_{+}+h^\varepsilon_{-}), \zeta \rangle \right\|^2_{L^2_x}.
\end{split}
\end{align}
Consequently, one has
\begin{align}\label{b estimate 2}
\begin{split}
&\varepsilon\frac{\d}{\d t}  \mathcal{E}^{(\alpha)}_{b^\varepsilon}(t)+ \left\| \partial^\alpha \nabla_x b^\varepsilon \right\|^2+ \left\| \partial^\alpha \nabla_x \cdot b^\varepsilon \right\|^2 \\
\lesssim\; &\eta \left\|\partial^\alpha \nabla_x (a^\varepsilon_{+}+a^\varepsilon_{-}) \right\|^2+ \left\|\partial^\alpha \nabla_x c^\varepsilon \right\|^2+ \left\| \partial^\alpha \nabla_x \{\mathbf{I}-\mathbf{P}\}f^\varepsilon \right\|_{N^s_\gamma}^2\\
&+ \varepsilon^2 \left\| \langle \partial^\alpha (g^\varepsilon_{+}+g^\varepsilon_{-}+h^\varepsilon_{+}+h^\varepsilon_{-}), \zeta \rangle \right\|^2_{L^2_x},
\end{split}
\end{align}
where $\mathcal{E}^{(\alpha)}_{b^\varepsilon}(t)$ is defined as
$$
\mathcal{E}^{(\alpha)}_{b^\varepsilon}(t) := \sum_{i,j=1}^3\left( \frac{1}{2}\partial^\alpha \Theta_{i j}(\{\mathbf{I}-\mathbf{P}\} f^{\varepsilon} \cdot [1,1])+2 \partial^\alpha c^{\varepsilon}\delta_{ij} , \partial^\alpha \partial_i b^\varepsilon_j + \partial^\alpha \partial_j b^\varepsilon_i \right).
$$

Next, for $a_{+}^\varepsilon+a_{-}^\varepsilon$, applying $\partial^\alpha$ to the second equation of \eqref{macro equation 3}, multiplying the identity by $2 \varepsilon \partial^\alpha \partial_i (a^\varepsilon_{+}+a^\varepsilon_{-})$, and integrating the identity result over $\mathbb{R}^3_x$, we arrive at
\begin{align}\label{a estimate 1}
&\left\| \partial^\alpha \nabla_x (a^\varepsilon_{+}+a^\varepsilon_{-}) \right\|^2
= \sum_{i=1}^3 \left( \frac{1}{\varepsilon} \partial^\alpha \partial_i \Big(\frac{a^\varepsilon_{+}+a^\varepsilon_{-}}{2}\Big), 2\varepsilon \partial^\alpha \partial_i (a^\varepsilon_{+}+a^\varepsilon_{-})\right)  \nonumber\\
=\;&\sum_{i=1}^3 \bigg( -\partial_t \partial^\alpha b^{\varepsilon}_i-\frac{2}{\varepsilon} \partial^\alpha \partial_i c^{\varepsilon}
-\frac{1}{2\varepsilon} \sum_{j=1}^{3} \partial^\alpha \partial_j \Theta_{ij} ( \{\mathbf{I}-\mathbf{P}\} f^\varepsilon \cdot [1,1] ),
2\varepsilon \partial^\alpha \partial_i (a^\varepsilon_{+}+a^\varepsilon_{-})\bigg)  \nonumber\\
&+\sum_{i=1}^3 \bigg( \frac{1}{2} \langle \partial^\alpha( g^\varepsilon_{+}+g^\varepsilon_{-}), v_i \mu^{1 / 2}\rangle,
2\varepsilon \partial^\alpha \partial_i (a^\varepsilon_{+}+a^\varepsilon_{-})\bigg)  \\
=\;&-\frac{\d}{\d t} \sum_{i=1}^3 \left(  \partial^\alpha b^{\varepsilon}_i, 2\varepsilon \partial^\alpha \partial_i (a^\varepsilon_{+}+a^\varepsilon_{-})\right) + \sum_{i=1}^3 \left(  \partial^\alpha b^{\varepsilon}_i, 2\varepsilon \partial^\alpha \partial_i \partial_t (a^\varepsilon_{+}+a^\varepsilon_{-})\right) \nonumber\\
&+\sum_{i=1}^3 \bigg(-\frac{2}{\varepsilon} \partial^\alpha \partial_i c^{\varepsilon}-\frac{1}{2\varepsilon} \sum_{j=1}^{3} \partial^\alpha \partial_j \Theta_{ij} ( \{\mathbf{I}-\mathbf{P}\} f^\varepsilon \cdot [1,1] ), 2\varepsilon \partial^\alpha \partial_i (a^\varepsilon_{+}+a^\varepsilon_{-})\bigg) \nonumber\\
&+\sum_{i=1}^3 \bigg( \frac{1}{2} \langle \partial^\alpha( g^\varepsilon_{+}+g^\varepsilon_{-}), v_i \mu^{1 / 2}\rangle,
2\varepsilon \partial^\alpha \partial_i (a^\varepsilon_{+}+a^\varepsilon_{-})\bigg). \nonumber
\end{align}
For the  second term on the right-hand side of \eqref{a estimate 1}, we can deduce from the first equation of \eqref{macro equation 3} that
\begin{align}\nonumber
\sum_{i=1}^3 \left(  \partial^\alpha b^{\varepsilon}_i, 2\varepsilon \partial^\alpha \partial_i \partial_t (a^\varepsilon_{+}+a^\varepsilon_{-})\right)=4\left\|\partial^\alpha \nabla_x \cdot b^\varepsilon\right\|^2.
\end{align}
Other terms can be dominated by
$$
\eta \left\|\partial^\alpha \nabla_x (a^\varepsilon_{+}+a^\varepsilon_{-}) \right\|^2
+\left\|\partial^\alpha \nabla_x c^\varepsilon \right\|^2
+\left\|\partial^\alpha \nabla_x \{\mathbf{I}-\mathbf{P}\} f^\varepsilon \right\|^2_{N^s_\gamma}
+\varepsilon^2 \left\| \langle \partial^\alpha (g^\varepsilon_{+}+g^\varepsilon_{-}), \zeta \rangle \right\|^2_{L^2_x}.
$$
Therefore, we have
\begin{align}\label{a estimate 2}
\begin{split}
&\varepsilon\frac{\d}{\d t} \mathcal{E}^{(\alpha)}_{a^\varepsilon_{+}+a^\varepsilon_{-}}(t)+ \left\| \partial^\alpha \nabla_x (a^\varepsilon_{+}+a^\varepsilon_{-}) \right\|^2\\
\lesssim \;& \left\|\partial^\alpha \nabla_x \cdot b^\varepsilon\right\|^2
+ \left\|\partial^\alpha \nabla_x c^\varepsilon\right\|^2
+ \left\|\partial^\alpha \nabla_x \{\mathbf{I}-\mathbf{P}\} f^\varepsilon \right\|^2_{N^s_\gamma}
+\varepsilon^2 \left\| \langle \partial^\alpha (g^\varepsilon_{+}+g^\varepsilon_{-}), \zeta \rangle \right\|^2_{L^2_x},
\end{split}
\end{align}
where $\mathcal{E}^{(\alpha)}_{a^\varepsilon_{+}+a^\varepsilon_{-}}(t)$ is defined by
$$
\mathcal{E}^{(\alpha)}_{a^\varepsilon_{+}+a^\varepsilon_{-}}(t) := \sum_{i=1}^3 \left(  \partial^\alpha b^{\varepsilon}_i, 2 \partial^\alpha \partial_i (a^\varepsilon_{+}+a^\varepsilon_{-})\right).
$$
Defining
$$
\mathcal{E}^{(\alpha)}_{a^\varepsilon_{+}+a^\varepsilon_{-},b^\varepsilon,c^\varepsilon}(t):=
\mathcal{E}^{(\alpha)}_{c^\varepsilon}(t)+ \kappa_1 \mathcal{E}^{(\alpha)}_{b^\varepsilon}(t)+
\kappa_2 \mathcal{E}^{(\alpha)}_{a^\varepsilon_{+}+a^\varepsilon_{-}}(t),\quad 0 < \kappa_2 \ll \kappa_1 \ll 1,
$$
we can obtain from \eqref{c estimate 2}, \eqref{b estimate 2} and \eqref{a estimate 2} that
\begin{align} \label{macro estimate 1}
\begin{split}
&\varepsilon \frac{\d}{\d t} \mathcal{E}^{(\alpha)}_{a^\varepsilon_{+}+a^\varepsilon_{-},b^\varepsilon,c^\varepsilon}(t)
+\left\|\partial^\alpha \nabla_x (a^\varepsilon_{+}+a^\varepsilon_{-},b^\varepsilon,c^\varepsilon)\right\|^2 \\
\lesssim \;& \left\|\partial^\alpha \nabla_x \{\mathbf{I}-\mathbf{P}\} f^\varepsilon \right\|^2_{N^s_\gamma}
+\varepsilon^2\left\| \langle \partial^\alpha (g^\varepsilon_{+}+g^\varepsilon_{-}+h^\varepsilon_{+}+h^\varepsilon_{-}), \zeta \rangle \right\|^2_{L^2_x}.
\end{split}
\end{align}

Moreover, for $a^\varepsilon_{+}-a^\varepsilon_{-}$, applying $\partial^\alpha$ to the second equation of \eqref{macro equation 5}, multiplying the equality with $\varepsilon \partial^\alpha \nabla_x (a^\varepsilon_{+}-a^\varepsilon_{-})$, and integrating the identity result over $\mathbb{R}_x^3$, one has
\begin{align}\label{a estimate 3}
&\left\|\partial^\alpha \nabla_x (a^\varepsilon_{+}-a^\varepsilon_{-})\right\|^2+2\left\|\partial^\alpha (a^\varepsilon_{+}-a^\varepsilon_{-})\right\|^2 \nonumber\\
=\;&\left( \frac{1}{\varepsilon} \big(\partial^\alpha \nabla_x (a^\varepsilon_{+}-a^\varepsilon_{-})+2\partial^\alpha \nabla_x \phi^{\varepsilon}\big), \varepsilon \partial^\alpha \nabla_x (a^\varepsilon_{+}-a^\varepsilon_{-})\right)\\
=\;&\left(-\partial^\alpha \partial_t G^\varepsilon
-\frac{1}{\varepsilon} \partial^\alpha \nabla_x \cdot \Theta ( \{\mathbf{I}-\mathbf{P}\} f^\varepsilon \cdot q_1 )
+\Big\langle \partial^\alpha g^\varepsilon-\frac{1}{\varepsilon^2}L\partial^\alpha f^\varepsilon, [v,-v] \mu^{1 / 2}\Big\rangle,
\varepsilon \partial^\alpha \nabla_x (a^\varepsilon_{+}-a^\varepsilon_{-})\right)\nonumber\\
=\;& -\frac{\d}{\d t} \left(\partial^\alpha G^\varepsilon, \varepsilon \partial^\alpha \nabla_x (a^\varepsilon_{+}-a^\varepsilon_{-})\right)
+\left(\partial^\alpha G^\varepsilon, \varepsilon \partial^\alpha \nabla_x \partial_t(a^\varepsilon_{+}-a^\varepsilon_{-})\right)\nonumber\\
&+\left(-\frac{1}{\varepsilon} \partial^\alpha \nabla_x \cdot \Theta ( \{\mathbf{I}-\mathbf{P}\} f^\varepsilon \cdot q_1 )
+\Big\langle \partial^\alpha g^\varepsilon-\frac{1}{\varepsilon^2}L\partial^\alpha f^\varepsilon, [v,-v] \mu^{1 / 2}\Big\rangle,
\varepsilon \partial^\alpha \nabla_x (a^\varepsilon_{+}-a^\varepsilon_{-})\right). \nonumber
\end{align}
For the second term on the right-hand side of \eqref{a estimate 3}, we deduce from the first equation of \eqref{macro equation 5} that
\begin{align}
\left(\partial^\alpha G^\varepsilon, \varepsilon \partial^\alpha \nabla_x \partial_t(a^\varepsilon_{+}-a^\varepsilon_{-})\right)
=\left\|\partial^\alpha \nabla_x \cdot G^\varepsilon \right\|^2 \lesssim \left\| \partial^\alpha \nabla_x \{\mathbf{I}-\mathbf{P}\}f^\varepsilon \right\|^2_{N^s_\gamma}.
\end{align}
The last two terms on the right-hand side of \eqref{a estimate 3} can be controlled by
$$
\eta \left\|\partial^\alpha \nabla_x (a^\varepsilon_{+}-a^\varepsilon_{-})\right\|^2
+ \left\| \partial^\alpha \nabla_x \{\mathbf{I}-\mathbf{P}\}f^\varepsilon\right\|^2_{N^s_\gamma}
+\frac{1}{\varepsilon^2} \left\| \partial^\alpha \{\mathbf{I}-\mathbf{P}\}f^\varepsilon\right\|^2_{N^s_\gamma}
+\varepsilon^2 \left\| \langle \partial^\alpha (g^\varepsilon_{+}+g^\varepsilon_{-}), \zeta \rangle \right\|^2_{L^2_x}.
$$
As a consequence, we have
\begin{align}\label{macro estimate 2}
\begin{split}
&\varepsilon\frac{\d}{\d t} \mathcal{E}^{(\alpha)}_{a^\varepsilon_{+}-a^\varepsilon_{-}}(t)
+\left\|\partial^\alpha \nabla_x (a^\varepsilon_{+}-a^\varepsilon_{-})\|^2+\|\partial^\alpha (a^\varepsilon_{+}-a^\varepsilon_{-})\right\|^2 \\
\lesssim\;&\left\| \partial^\alpha \nabla_x \{\mathbf{I}-\mathbf{P}\}f^\varepsilon\right\|^2_{N^s_\gamma}
+\frac{1}{\varepsilon^2}\left\| \partial^\alpha \{\mathbf{I}-\mathbf{P}\}f^\varepsilon\right\|^2_{N^s_\gamma}
+\varepsilon^2\left\| \langle \partial^\alpha (g^\varepsilon_{+}+g^\varepsilon_{-}), \zeta \rangle \right\|^2_{L^2_x}.
\end{split}
\end{align}
Here,
$$
\mathcal{E}^{(\alpha)}_{a^\varepsilon_{+}-a^\varepsilon_{-}}(t) := \left(\partial^\alpha G^\varepsilon, \partial^\alpha \nabla_x (a^\varepsilon_{+}-a^\varepsilon_{-})\right).
$$

Finally, to estimate $\nabla_x \phi^\varepsilon$, applying $\partial^\alpha$ to the second equation of \eqref{macro equation 5}, multiplying it by $\varepsilon \partial^\alpha \nabla_x \phi^\varepsilon$, and integrating the resulting equation over $\mathbb{R}_x^3$, we have
\begin{align}\label{phi estimate 2}
\begin{split}
&2\left\|\partial^\alpha \nabla_x \phi^\varepsilon\right\|^2
=\left(\frac{2}{\varepsilon} \partial^\alpha \nabla_x \phi^\varepsilon, \varepsilon \partial^\alpha \nabla_x \phi^\varepsilon\right)\\
=\;& \left( -\partial^\alpha \partial_t G^\varepsilon- \frac{1}{\varepsilon} \partial^\alpha \nabla_x(a^{\varepsilon}_{+}-a^{\varepsilon}_{-})-\frac{1}{\varepsilon} \partial^\alpha \nabla_x \cdot \Theta ( \{\mathbf{I}-\mathbf{P}\} f^\varepsilon \cdot q_1 ) , \varepsilon \partial^\alpha \nabla_x \phi^\varepsilon\right)\\
&+\left( \Big\langle \partial^\alpha g^\varepsilon-\frac{1}{\varepsilon^2}L\partial^\alpha f^\varepsilon, [v,-v] \mu^{1 / 2} \Big\rangle,
\varepsilon \partial^\alpha \nabla_x \phi^\varepsilon\right)\\
=\;& -\frac{\d }{\d t}\left( \partial^\alpha G^\varepsilon, \varepsilon \partial^\alpha \nabla_x \phi^\varepsilon\right)
+\left( \partial^\alpha G^\varepsilon, \varepsilon \partial^\alpha \partial_t \nabla_x \phi^\varepsilon\right)\\
&+\left(- \frac{1}{\varepsilon} \partial^\alpha \nabla_x(a^{\varepsilon}_{+}-a^{\varepsilon}_{-})-\frac{1}{\varepsilon} \partial^\alpha \nabla_x \cdot \Theta ( \{\mathbf{I}-\mathbf{P}\} f^\varepsilon \cdot q_1 ) , \varepsilon \partial^\alpha \nabla_x \phi^\varepsilon\right)\\
&+\left( \Big\langle \partial^\alpha g^\varepsilon-\frac{1}{\varepsilon^2}L\partial^\alpha f^\varepsilon, [v,-v] \mu^{1 / 2} \Big\rangle,
\varepsilon \partial^\alpha \nabla_x \phi^\varepsilon\right).
\end{split}
\end{align}
For the second term on the right-hand side of \eqref{phi estimate 2}, we can deduce from the first equation of \eqref{macro equation 5} and \eqref{a equality} that
$$
\left( \partial^\alpha G^\varepsilon, \varepsilon \partial^\alpha \partial_t \nabla_x \phi^\varepsilon\right) \lesssim \|\partial^\alpha G^\varepsilon\|^2
\lesssim \| \partial^\alpha \{\mathbf{I}-\mathbf{P}\}f^\varepsilon\|^2_{N^s_\gamma}.
$$
Other terms can be controlled by
\begin{align}
\begin{split}\nonumber
&\eta \left\|\partial^\alpha \nabla_x \phi^\varepsilon\right\|^2
+\left\|\partial^\alpha \nabla_x(a^{\varepsilon}_{+}-a^{\varepsilon}_{-})\right\|^2
+\left\| \partial^\alpha \nabla_x \{\mathbf{I}-\mathbf{P}\}f^\varepsilon\right\|^2_{N^s_\gamma} \\
&+\frac{1}{\varepsilon^2} \left\| \partial^\alpha \{\mathbf{I}-\mathbf{P}\}f^\varepsilon\right\|^2_{N^s_\gamma}
+\varepsilon^2\left\| \langle \partial^\alpha (g^\varepsilon_{+}+g^\varepsilon_{-}), \zeta \rangle \right\|^2_{L^2_x}.
\end{split}
\end{align}
Thereby, we can get
\begin{align}\label{macro estimate 3}
\begin{split}
\varepsilon \frac{\d }{\d t} \mathcal{E}^{(\alpha)}_{\phi^\varepsilon}(t)
+\left\|\partial^\alpha \nabla_x \phi^\varepsilon\right\|^2
\lesssim\;& \left\|\partial^\alpha \nabla_x(a^{\varepsilon}_{+}-a^{\varepsilon}_{-})\right\|^2
+\left\| \partial^\alpha \nabla_x \{\mathbf{I}-\mathbf{P}\}f^\varepsilon\right\|^2_{N^s_\gamma} \\
&+\frac{1}{\varepsilon^2} \left\| \partial^\alpha \{\mathbf{I}-\mathbf{P}\}f^\varepsilon\right\|^2_{N^s_\gamma}
+\varepsilon^2\left\| \langle \partial^\alpha (g^\varepsilon_{+}+g^\varepsilon_{-}), \zeta \rangle \right\|^2_{L^2_x},
\end{split}
\end{align}
where $\mathcal{E}^{(\alpha)}_{\phi^\varepsilon}(t)$ is given by
$$
\mathcal{E}^{(\alpha)}_{\phi^\varepsilon}(t) := \left( \partial^\alpha G^\varepsilon,  \partial^\alpha \nabla_x \phi^\varepsilon\right).
$$
Setting
$$
\mathcal{E}^{(\alpha)}_{int}(t) := \mathcal{E}^{(\alpha)}_{a^\varepsilon_{+}
+a^\varepsilon_{-},b^\varepsilon,c^\varepsilon}(t)+\mathcal{E}^{(\alpha)}_{a^\varepsilon_{+}-a^\varepsilon_{-}}(t)
+\kappa_3 \mathcal{E} ^{(\alpha)}_{\phi^\varepsilon}(t), \quad 0 < \kappa_3 \ll 1,
$$
it follows from \eqref{macro estimate 1}, \eqref{macro estimate 2} and \eqref{macro estimate 3} that
\begin{align}\label{macro energy1}
\begin{split}
&\varepsilon \frac{\d}{\d t} \mathcal{E}^{(\alpha)}_{int}(t)
+ \left\|\partial^\alpha \nabla_x (\alpha^\varepsilon_{\pm}, b^\varepsilon, c^\varepsilon)\right\|^2
+ \left\|\partial^\alpha \nabla_x \phi^\varepsilon\right\|^2
+ \left\|\partial^\alpha (a^\varepsilon_{+}-a^\varepsilon_{-})\right\|^2
+ \left\|\partial^\alpha \nabla_x (a^\varepsilon_{+}-a^\varepsilon_{-})\right\|^2 \\
\lesssim \;& \left\| \partial^\alpha \nabla_x \{\mathbf{I}-\mathbf{P}\}f^\varepsilon\right\|^2_{N^s_\gamma}
+\frac{1}{\varepsilon^2}\!\left\| \partial^\alpha \{\mathbf{I}-\mathbf{P}\}f^\varepsilon\right\|^2_{N^s_\gamma}
+\varepsilon^2\! \left\| \langle \partial^\alpha (g^\varepsilon_{+}+g^\varepsilon_{-}+h^\varepsilon_{+}+h^\varepsilon_{-}), \zeta \rangle \right\|^2_{L^2_x}.
\end{split}
\end{align}

Now, we define
\begin{align}\label{macro energy2}
\mathcal{E}^{N}_{int}(t):= \sum_{|\alpha| \leq N-1}\mathcal{E}^{(\alpha)}_{int}(t).
\end{align}
By referring back to \eqref{g,h define} for $g^\varepsilon_{\pm}$ and $h^\varepsilon_{\pm}$, and employing \eqref{Gamma zeta1}, we obtain
\begin{align}\label{g estimate}
&\sum_{|\alpha| \leq N-1}\varepsilon^2\left\| \langle \partial^\alpha (g^\varepsilon_{+}+g^\varepsilon_{-}), \zeta \rangle \right\|^2_{L^2_x}
\lesssim \mathcal{E}_{N}(t)\mathcal{D}_{N}(t),\\
&\sum_{|\alpha| \leq N-1}\varepsilon^2 \left\| \langle \partial^\alpha (h^\varepsilon_{+}+h^\varepsilon_{-}), \zeta \rangle \right\|^2_{L^2_x}
\lesssim \frac{1}{\varepsilon^2}\sum_{|\alpha|\leq N} \left\| \partial^\alpha \{\mathbf{I}-\mathbf{P}\}f^\varepsilon \right\|^2_{N^s_\gamma}. \label{h estimate}
\end{align}
Hence, we can obtain \eqref{macro estimate} from \eqref{a equality} and \eqref{macro energy1}--\eqref{h estimate}.

Furthermore, we set
\begin{align}\label{macro energy high}
\mathcal{E}_{int}^{N,h}(t) := \sum_{1 \leq |\alpha|\leq N-1}\left\{\mathcal{E}^{(\alpha)}_{a^\varepsilon_{+}
+a^\varepsilon_{-},b^\varepsilon,c^\varepsilon}(t)+\mathcal{E}^{(\alpha)}_{a^\varepsilon_{+}-a^\varepsilon_{-}}(t)\right\}
+\kappa_3 \sum_{|\alpha|\leq N-1}\mathcal{E} ^{(\alpha)}_{\phi^\varepsilon}(t), \quad 0 < \kappa_3 \ll 1.
\end{align}
As a result, combining \eqref{a equality}, \eqref{macro estimate 1}, \eqref{macro estimate 2}, \eqref{macro estimate 3} with \eqref{g estimate}--\eqref{macro energy high}, we conclude \eqref{macro estimate high}.
This completes the proof of Lemma \ref{macroscopic estimate}.
\end{proof}
\medskip

\subsection{Energy Estimate Without Weight}
\hspace*{\fill}

This subsection provides the energy estimate without weight, for both hard and soft potentials.
\begin{lemma}\label{energy estimate without weight}
There holds
\begin{align}\label{estimate without weight}
\begin{split}
&\frac{\d}{\d t}\bigg\{\sum_{|\alpha| \leq N}\sum_{\pm}\left\|e^{\pm \frac{\varepsilon \phi^\varepsilon}{2}}\partial^\alpha f_{\pm}^\varepsilon\right\|^2
+\sum_{|\alpha| \leq N}\left\|\partial^\alpha \nabla_x\phi^\varepsilon\right\|^2\bigg\}+\frac{\lambda}{\varepsilon^2}
\sum_{|\alpha| \leq N}\left\|\partial^\alpha \{\mathbf{I}-\mathbf{P}\}f^\varepsilon\right\|^2_{N^s_\gamma} \\
\lesssim\;& \varepsilon \left\|\partial_t \phi^\varepsilon \right\|_{L^\infty} \sum_{|\alpha| \leq N}\sum_{\pm}\left\|e^{\pm \frac{\varepsilon \phi^\varepsilon}{2}}\partial^\alpha f_{\pm}^\varepsilon\right\|^2 +\left\{\mathcal{E}^{1/2}_{N,l}(t)+\mathcal{E}_{N,l}(t)\right\}\mathcal{D}_{N,l}(t).
\end{split}
\end{align}
\end{lemma}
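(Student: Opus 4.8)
The plan is to carry out the standard $\partial^\alpha$-energy estimate directly on the perturbed system \eqref{rVPB}, but with the twist of inserting the $\varepsilon$-dependent exponential weight $e^{\pm\varepsilon\phi^\varepsilon}$ to cancel the worst part of the electric force. Concretely, for each $|\alpha|\le N$ I would apply $\partial^\alpha$ to the two scalar equations for $f_\pm^\varepsilon$ in \eqref{rVPB}, take the $L^2_{x,v}$ inner product of the $\pm$ equation with $e^{\pm\varepsilon\phi^\varepsilon}\partial^\alpha f_\pm^\varepsilon$, and sum over $\pm$ and over $|\alpha|\le N$. The time-derivative term produces $\tfrac12\frac{\d}{\d t}\sum_\pm\|e^{\pm\varepsilon\phi^\varepsilon/2}\partial^\alpha f_\pm^\varepsilon\|^2$ up to the commutator term $\mp\tfrac{\varepsilon}{2}(\partial_t\phi^\varepsilon e^{\pm\varepsilon\phi^\varepsilon}\partial^\alpha f_\pm^\varepsilon,\partial^\alpha f_\pm^\varepsilon)$, which is exactly the first term on the right-hand side of \eqref{estimate without weight}, bounded by $\varepsilon\|\partial_t\phi^\varepsilon\|_{L^\infty}\sum\|e^{\pm\varepsilon\phi^\varepsilon/2}\partial^\alpha f_\pm^\varepsilon\|^2$.

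Next I would treat the streaming and force terms on the left. The key algebraic observation, already highlighted in the introduction, is that when all derivatives fall on $f_\pm^\varepsilon$ one has the exact cancellation
\[
\Big(\tfrac{1}{\varepsilon}v_i\partial^{\alpha+e_i}f_\pm^\varepsilon\pm\tfrac12 v_i\partial^{e_i}\phi^\varepsilon\partial^\alpha f_\pm^\varepsilon,\;e^{\pm\varepsilon\phi^\varepsilon}\partial^\alpha f_\pm^\varepsilon\Big)=0,
\]
because the transport term integrates against $e^{\pm\varepsilon\phi^\varepsilon}$ to a boundary term that reproduces exactly the $\mp\tfrac12\varepsilon\nabla_x\phi^\varepsilon\cdot v$ factor from $\nabla_x(e^{\pm\varepsilon\phi^\varepsilon})$. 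So the genuine transport contribution vanishes, and what remains from $\partial^\alpha$ of $\tfrac1\varepsilon\nabla_x\phi^\varepsilon\cdot v\mu^{1/2}q_1$ together with the lower-order Leibniz terms of $v\cdot\nabla_x\phi^\varepsilon f^\varepsilon$ are handled by Lemmas \ref{softnonlinearterm1}, \ref{hardnonlinearterm1}; these are absorbed into $\mathcal{E}^{1/2}_{N,l}(t)\mathcal{D}_{N,l}(t)$. The term $q_0\nabla_x\phi^\varepsilon\cdot\nabla_v f^\varepsilon$ after the integration by parts in $v$ (as in \eqref{alpha1=0estimate}) and its Leibniz remainders are controlled by Lemmas \ref{softnonlinearterm2}, \ref{hardnonlinearterm2}; note when $|\alpha|\le N$ and all derivatives hit $f^\varepsilon_\pm$ this self-adjoint cancellation leaves only $\|\nabla_x\phi^\varepsilon\|_{L^\infty}$ times an energy factor, which after using $\|\nabla_x\phi^\varepsilon\|_{L^\infty}\lesssim\mathcal{E}_{N,l}^{1/2}$ is again $\lesssim\mathcal{E}^{1/2}_{N,l}\mathcal{D}_{N,l}$. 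The nonlinear collision term $\tfrac1\varepsilon\partial^\alpha\Gamma(f^\varepsilon,f^\varepsilon)$ is bounded by \eqref{soft gamma3}/\eqref{hard gamma3}, giving $\{\mathcal{E}^{1/2}_{N,l}+\mathcal{E}_{N,l}\}\mathcal{D}_{N,l}$.

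The coercive term comes from $\tfrac{1}{\varepsilon^2}(L\partial^\alpha f^\varepsilon,e^{\pm\varepsilon\phi^\varepsilon}\partial^\alpha f^\varepsilon)$. Writing $e^{\pm\varepsilon\phi^\varepsilon}=1+(e^{\pm\varepsilon\phi^\varepsilon}-1)$ and using $L\mathbf{P}f^\varepsilon=0$, the leading piece is $\tfrac{1}{\varepsilon^2}(L\partial^\alpha f^\varepsilon,\partial^\alpha f^\varepsilon)=\tfrac1{\varepsilon^2}\langle L\partial^\alpha\{\mathbf{I}-\mathbf{P}\}f^\varepsilon,\partial^\alpha\{\mathbf{I}-\mathbf{P}\}f^\varepsilon\rangle\gtrsim\tfrac{\lambda}{\varepsilon^2}\|\partial^\alpha\{\mathbf{I}-\mathbf{P}\}f^\varepsilon\|^2_{N^s_\gamma}$ by \eqref{L coercive1}; the error piece carries a factor $|e^{\pm\varepsilon\phi^\varepsilon}-1|\lesssim\varepsilon\|\nabla_x\phi^\varepsilon\|_{H^1}$ which kills the $\tfrac1{\varepsilon^2}$ down to $\tfrac1\varepsilon$, and after splitting $L=$ (coercive) $+$ (compact, supported in $B_C$) and using $\|\nabla_x\phi^\varepsilon\|_{H^1}\lesssim\mathcal{E}^{1/2}_{N,l}$ it is absorbed partly into the good term and partly into $\mathcal{E}^{1/2}_{N,l}\mathcal{D}_{N,l}$. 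Finally the Poisson part: applying $\partial^\alpha$ to $-\Delta_x\phi^\varepsilon=\int f^\varepsilon\cdot q_1\mu^{1/2}\d v$ and pairing with $\partial^\alpha\nabla_x\phi^\varepsilon$, one reproduces $\frac{\d}{\d t}\sum_{|\alpha|\le N}\|\partial^\alpha\nabla_x\phi^\varepsilon\|^2$ from the $\tfrac1\varepsilon v\cdot\nabla_x$ term (this is where the $\tfrac1\varepsilon$ singularity cancels against the time derivative of $\|\nabla_x\phi^\varepsilon\|^2$, using $\partial_t a_\pm^\varepsilon$ equations from \eqref{macro equation 1}); summing everything and choosing $\delta_0$ small gives \eqref{estimate without weight}. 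I expect the main obstacle to be bookkeeping the $\tfrac1\varepsilon$ and $\tfrac1{\varepsilon^2}$ factors so that every term is either a full $\frac{\d}{\d t}(\cdot)$, a genuinely negative dissipation term, or bounded by $\varepsilon$-independent quantities times $\mathcal{D}_{N,l}$ — in particular making sure the coupling between $\tfrac1\varepsilon v\cdot\nabla_x\phi^\varepsilon f^\varepsilon$ and the Poisson equation telescopes correctly and does not leave an uncontrolled $\tfrac1\varepsilon\|\mathbf{P}f^\varepsilon\|$.
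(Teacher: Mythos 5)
Your proposal follows essentially the same route as the paper's proof: the $e^{\pm\varepsilon\phi^\varepsilon}$-weighted $\partial^\alpha$-estimate with the exact cancellation of the transport and electric-force terms, the time-derivative commutator producing the $\varepsilon\|\partial_t\phi^\varepsilon\|_{L^\infty}$ term, coercivity of $L$ combined with $|e^{\pm\varepsilon\phi^\varepsilon}-1|\lesssim\varepsilon\|\nabla_x\phi^\varepsilon\|_{H^1}$, the singular term $\tfrac1\varepsilon\partial^{\alpha}\nabla_x\phi^\varepsilon\cdot v\mu^{1/2}q_1$ turning into $\tfrac12\frac{\d}{\d t}\|\partial^\alpha\nabla_x\phi^\varepsilon\|^2$ via the continuity equation for $a_+^\varepsilon-a_-^\varepsilon$ and the Poisson equation, and the nonlinear lemmas (Lemmas \ref{soft Gamma}--\ref{hardnonlinearterm2}) for all remaining terms. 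One small correction: in this unweighted estimate the $|\alpha_1|=0$ part of $\left(\partial^{e_i}\phi^\varepsilon\,\partial^\alpha_{e_i}f^\varepsilon_\pm,\,e^{\pm\varepsilon\phi^\varepsilon}\partial^\alpha f^\varepsilon_\pm\right)$ vanishes identically after the velocity integration by parts (there is no $v$-dependent weight), rather than leaving an $\|\nabla_x\phi^\varepsilon\|_{L^\infty}$-times-energy residual as in the weighted case \eqref{alpha1=0estimate} --- and this matters, since at $\alpha=0$ such a residual contains $\|\mathbf{P}f^\varepsilon\|^2$ and could not be absorbed into $\mathcal{E}^{1/2}_{N,l}(t)\mathcal{D}_{N,l}(t)$ or into the stated right-hand side of \eqref{estimate without weight}.
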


\begin{proof}
Notice that the first equation of \eqref{rVPB} can be rewritten as
\begin{align}\label{rrVPB}
\partial_t f_{\pm}^\varepsilon + \frac{1}{\varepsilon}v_i \partial^{e_i} f_{ \pm}^\varepsilon
\mp \partial^{e_i} \phi^\varepsilon \partial_{e_i} f_{ \pm}^\varepsilon
\pm \frac{1}{2}\partial^{e_i} \phi^\varepsilon v_i f_{ \pm}^\varepsilon
\pm\frac{1}{\varepsilon}\partial^{e_i} \phi^\varepsilon v_i \mu^{1/2}+\frac{1}{\varepsilon^2}L_{ \pm} f^\varepsilon=\frac{1}{\varepsilon}\Gamma_{ \pm}(f^\varepsilon, f^\varepsilon) .
\end{align}
Applying $\partial^\alpha$ with $| \alpha | \leq N $ to \eqref{rrVPB} and taking the inner product with $ e^{\pm \varepsilon {\phi^\varepsilon}} \partial^\alpha f_{\pm}^\varepsilon $ over $\mathbb{R}_x^3$ $\times$  $\mathbb{R}_v^3$, we obtain
\begin{align}\label{without weight}
&\left(\partial_t \partial^\alpha f^\varepsilon_{ \pm}, e^{ \pm \varepsilon\phi^\varepsilon} \partial^\alpha f^\varepsilon_{ \pm}\right) +\left(\frac{1}{\varepsilon}v_i \partial^{\alpha+e_i} f^\varepsilon_{ \pm}, e^{ \pm \varepsilon \phi^\varepsilon} \partial^\alpha f^\varepsilon_{ \pm}\right)
\pm \frac{1}{2}\left(\partial^{e_i} \phi^\varepsilon v_i \partial^\alpha f^\varepsilon_{ \pm}, e^{ \pm \varepsilon \phi^\varepsilon} \partial^\alpha f^\varepsilon_{ \pm}\right) \nonumber \\
& \pm\left(\frac{1}{\varepsilon}\partial^{\alpha+e_i} \phi^\varepsilon v_i \mu^{1/2}, \partial^\alpha f^\varepsilon_{ \pm}\right)
+\left(\frac{1}{\varepsilon^2}L_{ \pm} \partial^\alpha f^\varepsilon, \partial^\alpha f^\varepsilon_{ \pm}\right)  \nonumber\\
=\;&\mp \frac{1}{2}\chi_{|\alpha|} \sum_{1 \leq\left|\alpha_1\right| \leq|\alpha|} C_\alpha^{\alpha_1}\left(\partial^{\alpha_1+e_i} \phi^\varepsilon v_i \partial^{\alpha-\alpha_1} f^\varepsilon_{ \pm}, e^{ \pm \varepsilon \phi^\varepsilon} \partial^\alpha f^\varepsilon_{ \pm}\right) \\
& \pm \sum_{|\alpha_1| \leq |\alpha|}  C_\alpha^{\alpha_1} \left(\partial^{\alpha_1+e_i} \phi^\varepsilon \partial^{\alpha-\alpha_1}_{e_i}f^\varepsilon_{ \pm}, e^{ \pm \varepsilon \phi^\varepsilon} \partial^\alpha f^\varepsilon_{ \pm}\right)
 \mp \left(\frac{1}{\varepsilon}\partial^{\alpha+e_i} \phi^\varepsilon v_i \mu^{1/2},
\left(e^{ \pm \varepsilon \phi^\varepsilon}-1\right) \partial^\alpha f^\varepsilon_{ \pm}\right)  \nonumber\\
&-\left(\frac{1}{\varepsilon^2}L_{ \pm} \partial^\alpha f^\varepsilon,\left(e^{ \pm \varepsilon \phi^\varepsilon}-1\right)
\partial^\alpha f^\varepsilon_{ \pm}\right)+\left(\frac{1}{\varepsilon}\partial^\alpha \Gamma_{ \pm}(f^\varepsilon, f^\varepsilon), e^{ \pm \varepsilon \phi^\varepsilon} \partial^\alpha f^\varepsilon_{ \pm}\right). \nonumber
\end{align}
Here and hereafter, $\chi_{|\alpha|}=1$ if $|\alpha| > 0$, and $\chi_{|\alpha|}=0$ if $|\alpha| = 0 $.

Now we denote the ten terms on both sides of \eqref{without weight} with summation $\sum_{\pm}$ by $J_{1,1}$ to $J_{1,10}$ and estimate them term by term.
For the first term $J_{1,1}$, we have the following equality
\begin{align}\nonumber
J_{1,1}=\frac{1}{2}\frac{\d}{\d t} \sum_{\pm}\left\|e^{\pm \frac{\varepsilon \phi^\varepsilon}{2}} \partial^\alpha f_{\pm}^\varepsilon \right\|^2
-\frac{1}{2}\sum_{\pm} \left(\pm \varepsilon \partial_t \phi^\varepsilon e^{\pm \varepsilon \phi^\varepsilon} \partial^\alpha f_{\pm}^\varepsilon, \partial^\alpha f_{\pm}^\varepsilon\right).
\end{align}
The second term on the right-hand side of the above equality can be governed by
\begin{align}\nonumber
\varepsilon \left\|\partial_t \phi^\varepsilon \right\|_{L^\infty} \sum_{\pm}\left\|e^{\pm \frac{\varepsilon \phi^\varepsilon}{2}}\partial^\alpha f_{\pm}^\varepsilon\right\|^2.
\end{align}
As in \cite{Guo2012JAMS}, the extra factor $e^{\pm \varepsilon \phi^\varepsilon}$ is designed to treat $J_{1,3}$. In fact, we can deduce from  integration by parts with respect to $x_i$ that
\begin{align}\nonumber
\left(\frac{1}{\varepsilon}v_i \partial^{\alpha+e_i} f^\varepsilon_{ \pm}, e^{ \pm \varepsilon \phi^\varepsilon} \partial^\alpha f^\varepsilon_{ \pm}\right)
\pm \frac{1}{2}\left(\partial^{e_i} \phi^\varepsilon v_i \partial^\alpha f^\varepsilon_{ \pm}, e^{ \pm \varepsilon \phi^\varepsilon} \partial^\alpha f^\varepsilon_{ \pm}\right)=0.
\end{align}
That is $J_{1,2}+J_{1,3}=0$. For the term $J_{1,4}$, recalling the first equation of \eqref{macro equation 5}  and \eqref{a equality}, we have
\begin{align}\nonumber
J_{1,4}=\sum_{\pm} \left(\pm\frac{1}{\varepsilon}\partial^{\alpha+e_i} \phi^\varepsilon v_i \mu^{1/2}, \partial^\alpha f^\varepsilon_{ \pm}\right)
=\frac{1}{2}\frac{\d}{\d t}\left\| \partial^\alpha \nabla_x \phi^\varepsilon \right\|^2.
\end{align}
For the term $J_{1,5}$, it follows from \eqref{L coercive1} that
\begin{align}\nonumber
J_{1,5}=\sum_{\pm}\left(\frac{1}{\varepsilon^2}L_{ \pm} \partial^\alpha f^\varepsilon, \partial^\alpha f^\varepsilon_{ \pm}\right)
=\left( \frac{1}{\varepsilon^2} L\partial^\alpha \{\mathbf{I}-\mathbf{P}\}f^\varepsilon, \partial^\alpha \{\mathbf{I}-\mathbf{P}\}f^\varepsilon\right)
\geq \frac{\lambda}{\varepsilon^2}\left\|\partial^\alpha \{\mathbf{I}-\mathbf{P}\}f^\varepsilon\right\|^2_{N^s_\gamma}.
\end{align}
For the term $J_{1,6}$, thanks to \eqref{softnonlinearterm1} and \eqref{hardnonlinearterm1-1}, one has $J_{1,6} \lesssim \mathcal{E}^{1/2}_{N,l}(t)\mathcal{D}_{N,l}(t)$.
Next, we estimate the term $J_{1,7}$. When $| \alpha_1 |=0$, making use of integration by parts with respect to $v_i$, we get
\begin{align}\nonumber
J_{1,7}=\sum_{\pm} \left(\pm \partial^{e_i} \phi^\varepsilon \partial^{\alpha}_{e_i}f^\varepsilon_{ \pm}, e^{ \pm \varepsilon \phi^\varepsilon} \partial^\alpha f^\varepsilon_{ \pm}\right) =0.
\end{align}
When $|\alpha_1| \geq 1$, using \eqref{softnonlinearterm2-1} and \eqref{hardnonlinearterm2-1}, we obtain $J_{1,7} \lesssim \mathcal{E}^{1/2}_{N,l}(t)\mathcal{D}_{N,l}(t)$. For the term $J_{1,8}$, utilizing \eqref{phi estimate1}, one has
\begin{align}\nonumber
J_{1,8} =\sum_{\pm} \left(\mp\frac{1}{\varepsilon}\partial^{\alpha+e_i} \phi^\varepsilon v_i \mu^{1/2},
\left(e^{ \pm \varepsilon \phi^\varepsilon}-1\right) \partial^\alpha f^\varepsilon_{ \pm}\right)
\lesssim \mathcal{E}^{1/2}_{N,l}(t)\mathcal{D}_{N,l}(t).
\end{align}
For the term $J_{1,9}$, combining \eqref{Gamma zeta2}, \eqref{Gamma zeta3} and \eqref{phi estimate1}, we deduce that
\begin{align}
\begin{split}\nonumber
J_{1,9}=\;&-\sum_{\pm}\left(\frac{1}{\varepsilon^2}L_{ \pm} \partial^\alpha f^\varepsilon,\left(e^{ \pm \varepsilon \phi^\varepsilon}-1\right)
\partial^\alpha f^\varepsilon_{ \pm}\right)\\
\lesssim\;&\frac{1}{\varepsilon}\left\|\partial^\alpha \{\mathbf{I}-\mathbf{P}\}f^\varepsilon \right\|_{N^s_\gamma}
\left\| \nabla_x \phi^\varepsilon\right\|_{H^1}\left\|\partial^\alpha f^\varepsilon\right\|_{N^s_\gamma} \\
\lesssim\;&\frac{1}{\varepsilon}\left\|\partial^\alpha \{\mathbf{I}-\mathbf{P}\}f^\varepsilon \right\|_{N^s_\gamma}
\left\| \nabla_x \phi^\varepsilon\right\|_{H^1} \left(\left\|\partial^\alpha \{\mathbf{I}-\mathbf{P}\} f^\varepsilon\right\|_{N^s_\gamma}
+\left\|\partial^\alpha \mathbf{P}f^\varepsilon\right\|\right).
\end{split}
\end{align}
It should be noted that we treat $\| \mathbf{P} f^\varepsilon\|$ as $\mathcal{E}^{1/2}_{N,l}(t)$ when $| \alpha |=0$. Hence, $J_{1,9} \lesssim \mathcal{E}^{1/2}_{N,l}(t)\mathcal{D}_{N,l}(t)$. As for the term $J_{1,10}$, utilizing \eqref{soft gamma3} and \eqref{hard gamma3}, we obtain that $J_{1,10} \lesssim \left\{\mathcal{E}^{1/2}_{N,l}(t)+\mathcal{E}_{N,l}(t)\right\} \mathcal{D}_{N,l}(t)$.

Collecting all the estimates of $J_{1,1}$--$J_{1,10}$ and taking summation over $|\alpha| \leq N$, we get \eqref{estimate without weight}. This completes the proof of Lemma \ref{energy estimate without weight}.
\end{proof}
\medskip

\subsection{Weighted Energy Estimate}
\hspace*{\fill}

In this subsection, we give the energy estimates with the weight function $w_l(\alpha, \beta)$ for both hard and soft potentials. The following lemma is the energy estimate about the pure spatial derivatives $|\alpha| \leq N-1$.
\begin{lemma}\label{weighted 1}
There holds
\begin{align}\label{weighted estimate1}
&\frac{\d}{\d t} \sum_{|\alpha|\leq N-1} \sum_{\pm}\left\|e^{\pm \frac{\varepsilon \phi^\varepsilon}{2}} w_l(\alpha,0) \partial^\alpha \{\mathbf{I}_\pm-\mathbf{P}_\pm\} f^\varepsilon \right\|^2
+ \frac{\lambda}{\varepsilon^2}\sum_{|\alpha| \leq N-1} \left\| w_l(\alpha,0)\partial^\alpha \{\mathbf{I}-\mathbf{P}\} f^\varepsilon\right\|_{N^s_\gamma}^2 \nonumber\\
\lesssim\;&\left\{ \varepsilon \left\| \partial_t \phi^\varepsilon\right\|_{L^\infty} + \chi_{\gamma}\left\| \nabla_x \phi^\varepsilon \right\|_{L^\infty} \right\} \sum_{|\alpha|\leq N-1} \sum_{\pm}\left\|e^{\pm \frac{\varepsilon \phi^\varepsilon}{2}} w_l(\alpha,0) \partial^\alpha \{\mathbf{I}_\pm-\mathbf{P}_\pm\} f^\varepsilon \right\|^2 \nonumber \\
&+\frac{1}{\varepsilon^2}\sum_{|\alpha| \leq N}\left\|\partial^\alpha \{\mathbf{I}-\mathbf{P}\}f^\varepsilon\right\|^2_{N^s_\gamma} +\sum_{|\alpha|\leq N-1}\!\!\!\left\| \partial^\alpha \nabla_x \phi^\varepsilon \right\|^2
+\sum_{1 \leq |\alpha| \leq N}\!\!\!\left\|\partial^\alpha(a^\varepsilon_{\pm},b^\varepsilon,c^\varepsilon)\right\|^2 \\ &+\left\{\mathcal{E}^{1/2}_{N,l}(t)+\mathcal{E}_{N,l}(t)\right\}\mathcal{D}_{N,l}(t). \nonumber
\end{align}
Here and hereafter, $\chi_{\gamma}=1$ if $ -3 < \gamma < -2s$, and $\chi_{\gamma}=0$ if $\gamma+2s \geq 0$.
\end{lemma}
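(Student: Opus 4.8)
The plan is to apply the weighted derivative $\partial^\alpha$ with $|\alpha|\leq N-1$ to the rewritten first equation \eqref{rrVPB}, restrict to the microscopic part by using the decomposition $f^\varepsilon=\mathbf{P}f^\varepsilon+\{\mathbf{I}-\mathbf{P}\}f^\varepsilon$, and then take the $L^2_{x,v}$ inner product against $e^{\pm\varepsilon\phi^\varepsilon}w_l^2(\alpha,0)\partial^\alpha\{\mathbf{I}_\pm-\mathbf{P}_\pm\}f^\varepsilon$, summing over $\pm$. The exponential factor $e^{\pm\varepsilon\phi^\varepsilon}$ is kept precisely so that the transport term $\frac1\varepsilon v_i\partial^{\alpha+e_i}\{\mathbf{I}-\mathbf{P}\}f^\varepsilon$ and the electric-force term $\pm\frac12\partial^{e_i}\phi^\varepsilon v_i\partial^\alpha\{\mathbf{I}-\mathbf{P}\}f^\varepsilon$ cancel after an integration by parts in $x_i$ (exactly as in $J_{1,2}+J_{1,3}=0$ in Lemma \ref{energy estimate without weight}), up to commutator terms where $\partial^\alpha$ hits the $\mathbf{P}$ part. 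The time-derivative term produces $\tfrac12\frac{\d}{\d t}\sum_\pm\|e^{\pm\varepsilon\phi^\varepsilon/2}w_l(\alpha,0)\partial^\alpha\{\mathbf{I}_\pm-\mathbf{P}_\pm\}f^\varepsilon\|^2$ plus a term controlled by $\varepsilon\|\partial_t\phi^\varepsilon\|_{L^\infty}$ times the same energy, which is the first term on the right-hand side of \eqref{weighted estimate1}.

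First I would handle the linear operator term: $\frac1{\varepsilon^2}(L_\pm\partial^\alpha f^\varepsilon, e^{\pm\varepsilon\phi^\varepsilon}w_l^2(\alpha,0)\partial^\alpha\{\mathbf{I}_\pm-\mathbf{P}_\pm\}f^\varepsilon)$. Writing $e^{\pm\varepsilon\phi^\varepsilon}=1+(e^{\pm\varepsilon\phi^\varepsilon}-1)$ and using $|e^{\pm\varepsilon\phi^\varepsilon}-1|\lesssim\varepsilon\|\nabla_x\phi^\varepsilon\|_{H^1}$ from \eqref{phi estimate1} to absorb the singular $\frac1{\varepsilon^2}$, the main contribution comes from the coercivity estimate \eqref{L coercive2}, which gives the dissipation $\frac{\lambda}{\varepsilon^2}\|w_l(\alpha,0)\partial^\alpha\{\mathbf{I}-\mathbf{P}\}f^\varepsilon\|_{N^s_\gamma}^2$ minus a compactly-supported remainder $\frac{C}{\varepsilon^2}\|\partial^\alpha\{\mathbf{I}-\mathbf{P}\}f^\varepsilon\|_{L^2(B_C)}^2$, which is absorbed into $\frac1{\varepsilon^2}\sum_{|\alpha|\leq N}\|\partial^\alpha\{\mathbf{I}-\mathbf{P}\}f^\varepsilon\|_{N^s_\gamma}^2$; I also need the commutator $\frac1{\varepsilon^2}(L_\pm\partial^\alpha f^\varepsilon, e^{\pm\varepsilon\phi^\varepsilon}w_l^2(\alpha,0)\partial^\alpha\mathbf{P}_\pm f^\varepsilon)$, controlled by Cauchy--Schwarz, \eqref{Gamma zeta2}--\eqref{Gamma zeta3}, and Young's inequality into $\eta\frac1{\varepsilon^2}\|\partial^\alpha\{\mathbf{I}-\mathbf{P}\}f^\varepsilon\|_{N^s_\gamma}^2+C\sum_{1\leq|\alpha|\leq N}\|\partial^\alpha\mathbf{P}f^\varepsilon\|^2$. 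The term $\pm\frac1\varepsilon\partial^{\alpha+e_i}\phi^\varepsilon v_i\mu^{1/2}$ has zero microscopic projection, so it contributes nothing against $\{\mathbf{I}-\mathbf{P}\}$; the analogous term arising from commuting $\partial^\alpha$ with $\mathbf{P}$ in the transport piece gives a bound $\lesssim\sum_{|\alpha|\leq N-1}\|\partial^\alpha\nabla_x\phi^\varepsilon\|^2+\sum_{1\leq|\alpha|\leq N}\|\partial^\alpha(a^\varepsilon_\pm,b^\varepsilon,c^\varepsilon)\|^2$, which are the two linear dissipation terms on the right-hand side of \eqref{weighted estimate1} (these are absorbed by the macroscopic dissipation once the full energy functional is assembled).

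Next I would dispatch the genuinely nonlinear terms. The electric-force term $\nabla_x\phi^\varepsilon\cdot\nabla_v f^\varepsilon_\pm$ in the weighted microscopic estimate is handled by Lemma \ref{softnonlinearterm2} (soft case) or Lemma \ref{hardnonlinearterm2} (hard case), namely by \eqref{softnonlinearterm2-2} / \eqref{hardnonlinearterm2-2}: this is where the $\chi_\gamma\|\nabla_x\phi^\varepsilon\|_{L^\infty}$ coefficient in the first bracket of \eqref{weighted estimate1} appears, coming from the $|\alpha_1|=0$ integration-by-parts piece \eqref{alpha1=0estimate} in the soft case (hence $\chi_\gamma$). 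The velocity-growth electric term $\nabla_x\phi^\varepsilon\cdot v f^\varepsilon_\pm$ is controlled by Lemma \ref{softnonlinearterm1}/\ref{hardnonlinearterm1} via \eqref{softnonlinearterm1-2} / \eqref{hardnonlinearterm1-2}, and the collision term $\frac1\varepsilon\Gamma(f^\varepsilon,f^\varepsilon)$ by Lemma \ref{soft Gamma}/\ref{hard Gamma} via \eqref{soft gamma4} / \eqref{hard gamma4}, each bounded by $\mathcal{E}^{1/2}_{N,l}(t)\mathcal{D}_{N,l}(t)$. Finally I would sum over $|\alpha|\leq N-1$ and collect all contributions. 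The main obstacle I anticipate is the bookkeeping of the weight mismatch in the commutator terms where $\partial^\alpha$ partially hits $\mathbf{P}f^\varepsilon$ versus $\{\mathbf{I}-\mathbf{P}\}f^\varepsilon$: one must repeatedly invoke $w_l(\alpha,0)\leq w_l(|\alpha|-1,0)\langle v\rangle^{-1+\gamma}$ (or $\langle v\rangle^{-1}$ in the hard case) together with the Sobolev splittings $L^\infty$--$L^2$--$L^2$, $L^6$--$L^3$--$L^2$, $L^2$--$L^\infty$--$L^2$ to place each factor in the right weighted norm, and to make sure the $\frac1\varepsilon$ and $\frac1{\varepsilon^2}$ singular factors are always either absorbed by $|e^{\pm\varepsilon\phi^\varepsilon}-1|\lesssim\varepsilon\|\nabla_x\phi^\varepsilon\|_{H^1}$ or matched against the $\frac1{\varepsilon^2}$-weighted dissipation.
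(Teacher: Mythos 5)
Your overall strategy is essentially the paper's: apply $\partial^\alpha$ to the microscopic form \eqref{rrrVPB}, test against $e^{\pm \varepsilon \phi^\varepsilon} w^2_l(\alpha,0)\partial^\alpha \{\mathbf{I}_{\pm}-\mathbf{P}_{\pm}\} f^\varepsilon$, exploit the cancellation \eqref{e phi reason}, use the coercivity \eqref{L coercive2} for the main $L$ term, \eqref{phi estimate1} together with \eqref{Gamma zeta2}--\eqref{Gamma zeta3} for the $(e^{\pm\varepsilon\phi^\varepsilon}-1)$ piece, and Lemmas \ref{softnonlinearterm1}--\ref{hardnonlinearterm2} and \ref{soft Gamma}--\ref{hard Gamma} for the nonlinear terms; this matches the paper's treatment of $J_{2,1}$--$J_{2,5}$ and $J_{2,9}$. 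However, one step as you state it is wrong: you discard the source term $\mp\frac{1}{\varepsilon}\partial^{\alpha+e_i}\phi^\varepsilon v_i\mu^{1/2}$ on the grounds that it ``has zero microscopic projection.'' That orthogonality is destroyed by the weight: the pairing is against $e^{\pm\varepsilon\phi^\varepsilon}w^2_l(\alpha,0)\partial^\alpha\{\mathbf{I}_{\pm}-\mathbf{P}_{\pm}\}f^\varepsilon$, and $v_i\mu^{1/2}w^2_l(\alpha,0)$ is no longer in $\mathcal{N}(L)$, so this term (the paper's $J_{2,6}$) does not vanish and carries a genuine $\frac1\varepsilon$ singularity. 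It must be estimated, as the paper does, by the Cauchy--Schwarz inequality with $\eta$, producing $\frac{\eta}{\varepsilon^2}\left\|w_l(\alpha,0)\partial^\alpha\{\mathbf{I}-\mathbf{P}\}f^\varepsilon\right\|^2_{N^s_\gamma}+C_\eta\left\|\partial^\alpha\nabla_x\phi^\varepsilon\right\|^2$; this is precisely why $\sum_{|\alpha|\leq N-1}\|\partial^\alpha\nabla_x\phi^\varepsilon\|^2$ sits on the right-hand side of \eqref{weighted estimate1}. Your final inequality survives because that term is already present in your bound, but not for the reason you give.

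Two further remarks on the bookkeeping. The cross term $\frac{1}{\varepsilon^2}\big(L_\pm\partial^\alpha f^\varepsilon, e^{\pm\varepsilon\phi^\varepsilon}w^2_l(\alpha,0)\partial^\alpha\mathbf{P}_\pm f^\varepsilon\big)$ you propose to control does not arise here, since the test function is purely microscopic and $Lf^\varepsilon=L\{\mathbf{I}-\mathbf{P}\}f^\varepsilon$; it only appears in the top-order estimate of Lemma \ref{weighted 2}. More substantively, among the terms where derivatives fall on $\mathbf{P}_\pm f^\varepsilon$ you must handle $\partial^\alpha\partial_t\mathbf{P}_\pm f^\varepsilon$, and the way to do so is to substitute the local conservation laws \eqref{macro equation 3} and the first equation of \eqref{macro equation 5} to trade $\partial_t(a^\varepsilon_\pm,b^\varepsilon,c^\varepsilon)$ for $\frac1\varepsilon\nabla_x$ of macroscopic and microscopic quantities plus the nonlinear sources $g^\varepsilon_\pm$, followed by Cauchy--Schwarz with $\eta$; this is where the contributions $\sum_{1\leq|\alpha|\leq N}\|\partial^\alpha(a^\varepsilon_\pm,b^\varepsilon,c^\varepsilon)\|^2$ and the $\mathcal{E}_{N,l}(t)\mathcal{D}_{N,l}(t)$ remainder genuinely come from, and your sketch passes over it.
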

\begin{proof}
One can rewrite \eqref{rrVPB} as
\begin{align}\label{rrrVPB}
\begin{split}
& {\left[\partial_t+\frac{1}{\varepsilon}v_i \partial^{e_i} \mp \partial^{e_i} \phi^\varepsilon \partial_{e_i}\right]\{\mathbf{I}_{ \pm}-\mathbf{P}_{ \pm}\} f^\varepsilon \pm \frac{1}{2}\partial^{e_i} \phi^\varepsilon v_i \{\mathbf{I}_{ \pm}-\mathbf{P}_{ \pm}\} f^\varepsilon+\frac{1}{\varepsilon^2}L_{ \pm}\{\mathbf{I}-\mathbf{P}\}f^\varepsilon} \\
 = &-\left[\partial_t+\frac{1}{\varepsilon}v_i \partial^{e_i} \mp \partial^{e_i} \phi^\varepsilon \partial_{e_i}\right] \mathbf{P}_{ \pm} f^\varepsilon \mp \frac{1}{2}\partial^{e_i} \phi^\varepsilon v_i \mathbf{P}_{ \pm} f^\varepsilon \mp \frac{1}{\varepsilon}\partial^{e_i} \phi^\varepsilon v_i \mu^{1/2}
+\frac{1}{\varepsilon}\Gamma_{ \pm}(f^\varepsilon, f^\varepsilon).
\end{split}
\end{align}
By applying $\partial^\alpha$ to \eqref{rrrVPB}, taking the inner product with $e^{\pm \varepsilon \phi^\varepsilon} w^2_l(\alpha,0)\partial^\alpha \{\mathbf{I}_\pm-\mathbf{P}_\pm\} f^\varepsilon$ over $\mathbb{R}_x^3 \times \mathbb{R}_v^3$ and summing with $\pm$, one has
\begin{align}
&\sum_{\pm}\left( \partial_t \partial^\alpha \{\mathbf{I}_{\pm}-\mathbf{P}_{\pm}\} f^\varepsilon,
 e^{ \pm \varepsilon \phi^\varepsilon} w_l^2(\alpha,0)\partial^\alpha \{\mathbf{I}_{ \pm}-\mathbf{P}_{ \pm}\}f^\varepsilon\right) \nonumber\\
&+\sum_{ \pm}\left(\frac{1}{\varepsilon^2}L_{\pm}\partial^\alpha \{\mathbf{I}-\mathbf{P}\} f^\varepsilon, w_l^2(\alpha ,0)\partial^\alpha \{ \mathbf{I}_{\pm}-\mathbf{P}_{\pm} \} f^\varepsilon \right) \nonumber\\
=\;&-\sum_{ \pm}\left( \frac{1}{\varepsilon^2} L_{ \pm}\partial^\alpha\{\mathbf{I}-\mathbf{P}\} f^\varepsilon, \left(e^{ \pm \varepsilon \phi^\varepsilon }-1\right) w_l^2(\alpha,0)\partial^\alpha\{\mathbf{I}_{ \pm}-\mathbf{P}_{ \pm}\} f^\varepsilon \right) \nonumber \\
&+ \sum_{ \pm}\sum_{|\alpha_1| \leq |\alpha|} C_\alpha^{\alpha_1} \left( \pm \partial^{\alpha_1+e_i} \phi^\varepsilon \partial^{\alpha-\alpha_1}_{e_i}\{\mathbf{I}_{ \pm}-\mathbf{P}_{ \pm}\} f^\varepsilon, e^{ \pm \varepsilon \phi^\varepsilon} w_l^2(\alpha,0)\partial^\alpha\{\mathbf{I}_{ \pm}-\mathbf{P}_{ \pm}\} f^\varepsilon\right) \nonumber\\
&+\frac{1}{2} \chi_{|\alpha|}\sum_{\pm} \sum_{1 \leq |\alpha_1| \leq |\alpha|} C_\alpha^{\alpha_1} \left(\mp \partial^{\alpha_1+e_i} \phi^\varepsilon v_i \partial^{\alpha-\alpha_1} \{\mathbf{I}_{ \pm}-\mathbf{P}_{ \pm}\}f^\varepsilon, e^{ \pm \varepsilon \phi^\varepsilon} w_l^2(\alpha,0)\partial^\alpha\{\mathbf{I}_{ \pm}-\mathbf{P}_{ \pm}\} f^\varepsilon\right) \nonumber \\
&+  \sum_{ \pm}\left( \mp \frac{1}{\varepsilon} \partial^{\alpha+e_i} \phi^\varepsilon v_i \mu^{1/2}, e^{ \pm \varepsilon \phi^\varepsilon} w_l^2(\alpha,0)\partial^\alpha \{\mathbf{I}_{ \pm}-\mathbf{P}_{ \pm}\} f^\varepsilon\right) \label{with weight 1}\\
&-\sum_{ \pm}\left(\partial^\alpha \left[\partial_t+\frac{1}{\varepsilon}v_i \partial^{e_i} \mp \partial^{e_i} \phi^\varepsilon \partial_{e_i}\right] \mathbf{P}_{ \pm} f^\varepsilon, e^{ \pm \varepsilon \phi^\varepsilon} w_l^2(\alpha,0)\partial^\alpha\{\mathbf{I}_{ \pm}-\mathbf{P}_{ \pm}\} f^\varepsilon\right) \nonumber\\
&+\frac{1}{2}\sum_{\pm}\sum_{|\alpha_1| \leq |\alpha|}C_{\alpha}^{\alpha_1}\left( \mp \partial^{\alpha_1+e_i} \phi^\varepsilon v_i \partial^{\alpha-\alpha_1}\mathbf{P}_{\pm}f^\varepsilon, e^{ \pm \varepsilon \phi^\varepsilon} w_l^2(\alpha,0)\partial^\alpha\{\mathbf{I}_{ \pm}-\mathbf{P}_{ \pm}\} f^\varepsilon \right) \nonumber\\
&+\sum_{ \pm}\left(\frac{1}{\varepsilon}\partial^\alpha\Gamma_{ \pm}(f^\varepsilon, f^\varepsilon), e^{ \pm \varepsilon \phi^\varepsilon} w_l^2(\alpha,0)\partial^\alpha\{\mathbf{I}_{ \pm}-\mathbf{P}_{ \pm}\} f^\varepsilon \right) \nonumber
\end{align}
for $|\alpha| \leq N-1$, where we have used the identity
\begin{align}\label{e phi reason}
\left( \frac{1}{\varepsilon} v_i \partial^{\alpha+e_i} \{\mathbf{I}_{\pm}-\mathbf{P}_{\pm}\}f^\varepsilon \pm \frac{1}{2}\partial^{e_i} \phi^\varepsilon v_i \partial^\alpha\{\mathbf{I}_{\pm}-\mathbf{P}_{\pm}\}f^\varepsilon, e^{\pm \varepsilon \phi^\varepsilon } w^2_l(\alpha,0)\partial^\alpha\{\mathbf{I}_{\pm}-\mathbf{P}_{\pm}\}f^\varepsilon\right)=0.
\end{align}

Now we denote the nine terms on both sides of \eqref{with weight 1} by $J_{2,1}$ to $J_{2,9}$ and estimate them one by one. Firstly, for the term $J_{2,1}$, taking integration by parts with respect to $t$, we have
\begin{align}
J_{2,1}=\;&
\frac{1}{2} \frac{\d}{\d t} \sum_{ \pm}\left\| e^{\pm \frac{ \varepsilon \phi^\varepsilon}{2}}
w_l(\alpha,0)\partial^\alpha\{\mathbf{I}_{ \pm}-\mathbf{P}_{ \pm}\} f^\varepsilon\right\|^2 \nonumber\\
&-\frac{1}{2} \sum_{ \pm}\left( \pm \varepsilon \partial_t \phi^\varepsilon \partial^\alpha \{\mathbf{I}_{ \pm}-\mathbf{P}_{ \pm}\} f^\varepsilon, e^{ \pm \varepsilon \phi^\varepsilon} w_l^2(\alpha,0)\partial^\alpha\{\mathbf{I}_{ \pm}-\mathbf{P}_{ \pm}\} f^\varepsilon \right). \nonumber
\end{align}
The second term on the right-hand side of the above equality is dominated by
\begin{equation}\nonumber
\varepsilon \left\|\partial_t \phi^\varepsilon \right\|_{L^\infty} \sum_{\pm}\left\|e^{\pm \frac{\varepsilon \phi^\varepsilon}{2}}w_l(\alpha,0)\partial^\alpha \{\mathbf{I}_{\pm}-\mathbf{P}_{\pm}\}f^\varepsilon\right\|^2.
\end{equation}
For the term $J_{2,2}$, it follows from \eqref{L coercive2} that
\begin{align}
\begin{split}\nonumber
J_{2,2}=\;&\left(\frac{1}{\varepsilon^2} L \partial^\alpha \{\mathbf{I}-\mathbf{P}\}f^\varepsilon,
w^2_l(\alpha,0)\partial^\alpha \{\mathbf{I}-\mathbf{P}\}f^\varepsilon\right) \\
\geq\;&\frac{\lambda}{\varepsilon^2} \left\| w_l(\alpha,0) \partial^\alpha \{\mathbf{I}-\mathbf{P}\}f^\varepsilon\right\|^2_{N^s_\gamma}
-\frac{C}{\varepsilon^2}\left\|\partial^\alpha \{\mathbf{I}-\mathbf{P}\}f^\varepsilon\right\|^2_{L^2(B_C)}.
\end{split}
\end{align}
For the term $J_{2,3}$, we can deduce from \eqref{Gamma zeta2}, \eqref{Gamma zeta3} and \eqref{phi estimate1} that
\begin{align}\nonumber
J_{2,3} \lesssim \frac{1}{\varepsilon} \left\|\nabla_x \phi^\varepsilon\right\|_{H^1}\left\| w_l(\alpha,0) \partial^\alpha \{\mathbf{I}-\mathbf{P}\}f^\varepsilon\right\|^2_{N^s_\gamma}
\lesssim \mathcal{E}^{1/2}_{N,l}(t)\mathcal{D}_{N,l}(t).
\end{align}
As for $J_{2,4}$ and $J_{2,5}$, it follows from \eqref{softnonlinearterm1-2}, \eqref{hardnonlinearterm1-2}, \eqref{softnonlinearterm2-2} and \eqref{hardnonlinearterm2-2} that
\begin{align}\nonumber
J_{2,4}+J_{2,5} \lesssim \mathcal{E}^{1/2}_{N,l}(t)\mathcal{D}_{N,l}(t)
+ \chi_{\gamma} \left\|\nabla_x \phi^{\varepsilon}\right\|_{L^\infty} \sum_{\pm}\left\|e^{\pm \frac{\varepsilon \phi^\varepsilon}{2}} w_l(\alpha,0) \partial^\alpha \{\mathbf{I}_{\pm}-\mathbf{P}_{\pm}\}f^\varepsilon \right\|^2.
\end{align}
For the terms $J_{2,6}$--$J_{2,8}$ with $|\alpha| \leq N-1$, from the Cauchy--Schwarz inequality with $\eta$, the local conservation laws \eqref{macro equation 3} and the first equation of \eqref{macro equation 5}, we can bound these terms by
\begin{align*}
&C\left\{ \left\|\partial^\alpha \nabla_x \phi^\varepsilon \right\|^2
+ \left\|\partial^\alpha \nabla_x (a_{\pm}^\varepsilon, b^\varepsilon, c^\varepsilon) \right\|^2
+ \left\|\partial^\alpha \nabla_x \{\mathbf{I}-\mathbf{P}\}f^\varepsilon \right\|_{N^s_{\gamma}}^2
+ \mathcal{E}_{N}(t)\mathcal{D}_{N}(t) \right\}\\
& +\frac{\eta}{\varepsilon^2} \left\|w_l(\alpha,0)\partial^\alpha \{\mathbf{I}-\mathbf{P}\}f^\varepsilon \right\|_{N^s_\gamma}^2 \nonumber ,\nonumber
\end{align*}
where we have used \eqref{g estimate} and $\left| e^{\pm \varepsilon \phi^\varepsilon} \right| \approx 1$. Finally, for the term $J_{2,9}$, it follows from \eqref{soft gamma4} and \eqref{hard gamma4} that $J_{2,9} \lesssim \mathcal{E}^{1/2}_{N,l}(t)\mathcal{D}_{N,l}(t)$.

Collecting all the estimates of $J_{2,1}$--$J_{2,9}$ and taking summation over $|\alpha| \leq N-1$, \eqref{weighted estimate1} follows. This completes the proof of Lemma \ref{weighted 1}.
\end{proof}
\medskip

Next, we provide the weighted energy estimate with the pure spatial derivative $|\alpha|=N$.
\begin{lemma}\label{weighted 2}
There holds
\begin{align}\label{weighted estimate2}
&\varepsilon \frac{\d}{\d t} \sum_{|\alpha| = N} \sum_{\pm}\left\|e^{\pm \frac{\varepsilon \phi^\varepsilon}{2}} w_l(\alpha,0) \partial^\alpha f^\varepsilon_{\pm} \right\|^2
+ \frac{\lambda}{\varepsilon}\sum_{|\alpha| = N} \left\| w_l(\alpha,0)\partial^\alpha \{\mathbf{I}-\mathbf{P}\} f^\varepsilon\right\|_{N^s_\gamma}^2 \nonumber\\
\lesssim\;&\left\{\varepsilon \left\| \partial_t \phi^\varepsilon\right\|_{L^\infty} +\chi_{\gamma}\left\|\nabla_x \phi^{\varepsilon}\right\|_{L^\infty} \right\} \varepsilon \sum_{|\alpha| = N} \sum_{\pm}\left\|e^{\pm \frac{\varepsilon \phi^\varepsilon}{2}} w_l(\alpha,0) \partial^\alpha f^\varepsilon_{\pm} \right\|^2\\
&+\frac{1}{\varepsilon^2}\sum_{|\alpha| = N}\left\|\partial^\alpha \{\mathbf{I}-\mathbf{P}\}f^\varepsilon\right\|^2_{N^s_\gamma}
+\sum_{|\alpha| = N}\left\| \partial^\alpha \nabla_x \phi^\varepsilon \right\|^2
+\sum_{|\alpha| = N}\left\|\partial^\alpha(a^\varepsilon_{\pm},b^\varepsilon,c^\varepsilon)\right\|^2
+\mathcal{E}^{1/2}_{N,l}(t)\mathcal{D}_{N,l}(t). \nonumber
\end{align}
\end{lemma}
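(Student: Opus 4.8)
The strategy mirrors the proof of Lemma \ref{weighted 1}, but now with the top-order spatial derivative $|\alpha|=N$, so the key modification is to multiply the equation by $\varepsilon$ before doing the $w_l(\alpha,0)$-weighted energy estimate. As explained in the introduction, this $\varepsilon$-prefactor is essential: it trades the fatal singular term $\frac{1}{\varepsilon^2}\|\partial^\alpha\mathbf{P}f^\varepsilon\|^2$ coming from $L$ for the controllable $\frac1{\varepsilon^2}\|\partial^\alpha\{\mathbf{I}-\mathbf{P}\}f^\varepsilon\|^2_{N^s_\gamma}$ plus $\|\partial^\alpha\mathbf{P}f^\varepsilon\|^2$. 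The plan is: first apply $\partial^\alpha$ with $|\alpha|=N$ to \eqref{rrVPB} (the full equation for $f^\varepsilon_\pm$, not the $\{\mathbf{I}-\mathbf{P}\}$-version, since at top order we cannot afford to generate $\partial^\alpha\mathbf{P}f^\varepsilon$ through an extra time derivative), take the inner product against $\varepsilon\, e^{\pm\varepsilon\phi^\varepsilon}w_l^2(\alpha,0)\partial^\alpha f^\varepsilon_\pm$ over $\mathbb{R}^3_x\times\mathbb{R}^3_v$, and sum over $\pm$.

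\textbf{Key steps.} After this pairing, the transport term $\frac1\varepsilon v_i\partial^{\alpha+e_i}f^\varepsilon_\pm$ cancels against $\pm\frac12\partial^{e_i}\phi^\varepsilon v_i\partial^\alpha f^\varepsilon_\pm$ exactly as in \eqref{e phi reason} (using integration by parts in $x_i$), which is precisely what the factor $e^{\pm\varepsilon\phi^\varepsilon}$ is designed for. The time-derivative term produces $\frac{\varepsilon}{2}\frac{\d}{\d t}\sum_\pm\|e^{\pm\varepsilon\phi^\varepsilon/2}w_l(\alpha,0)\partial^\alpha f^\varepsilon_\pm\|^2$ plus an error $\lesssim\varepsilon\|\partial_t\phi^\varepsilon\|_{L^\infty}\varepsilon\sum_\pm\|e^{\pm\varepsilon\phi^\varepsilon/2}w_l(\alpha,0)\partial^\alpha f^\varepsilon_\pm\|^2$. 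The linear term $\frac1{\varepsilon^2}L_\pm\partial^\alpha f^\varepsilon$ paired with $\varepsilon w_l^2(\alpha,0)\partial^\alpha f^\varepsilon_\pm$ is split via $f^\varepsilon=\mathbf{P}f^\varepsilon+\{\mathbf{I}-\mathbf{P}\}f^\varepsilon$: the $\{\mathbf{I}-\mathbf{P}\}$--$\{\mathbf{I}-\mathbf{P}\}$ piece is handled by the weighted coercivity \eqref{L coercive2}, giving $\frac\lambda\varepsilon\|w_l(\alpha,0)\partial^\alpha\{\mathbf{I}-\mathbf{P}\}f^\varepsilon\|^2_{N^s_\gamma}-\frac{C}{\varepsilon}\|\partial^\alpha\{\mathbf{I}-\mathbf{P}\}f^\varepsilon\|^2_{L^2(B_C)}$, and the cross term $\frac1\varepsilon(L\partial^\alpha\{\mathbf{I}-\mathbf{P}\}f^\varepsilon, w_l^2(\alpha,0)\partial^\alpha\mathbf{P}f^\varepsilon)$ is bounded by Cauchy--Schwarz, absorbing a small fraction of $\frac1{\varepsilon^2}\|\partial^\alpha\{\mathbf{I}-\mathbf{P}\}f^\varepsilon\|^2_{N^s_\gamma}$ and leaving $C\|\partial^\alpha\mathbf{P}f^\varepsilon\|^2$, which is part of $\sum_{|\alpha|=N}\|\partial^\alpha(a^\varepsilon_\pm,b^\varepsilon,c^\varepsilon)\|^2$. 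The term $\pm\frac1\varepsilon\partial^{\alpha+e_i}\phi^\varepsilon v_i\mu^{1/2}$ paired with $\varepsilon e^{\pm\varepsilon\phi^\varepsilon}w_l^2(\alpha,0)\partial^\alpha f^\varepsilon_\pm$: on the $e^{\pm\varepsilon\phi^\varepsilon}=1$ part it contributes (after summing $\pm$ and using \eqref{macro equation 5}, \eqref{a equality}) $\frac{\varepsilon}{2}\frac{\d}{\d t}\|\partial^\alpha\nabla_x\phi^\varepsilon\|^2$-type terms or is moved to the dissipation; but at $|\alpha|=N$ note $\partial^{\alpha+e_i}\phi^\varepsilon$ has order $N+1$ — by $-\Delta_x\phi^\varepsilon=a^\varepsilon_+-a^\varepsilon_-$ this is controlled by $\sum_{|\alpha|=N}\|\partial^\alpha(a^\varepsilon_\pm,b^\varepsilon,c^\varepsilon)\|$ as in \eqref{reason N geq 2}; on the $(e^{\pm\varepsilon\phi^\varepsilon}-1)$ part one uses \eqref{phi estimate1} to kill the $\frac1\varepsilon$ and lands in $\mathcal{E}^{1/2}_{N,l}(t)\mathcal{D}_{N,l}(t)$. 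The lower-order electric-force commutator terms $\sum_{1\le|\alpha_1|\le|\alpha|}\partial^{\alpha_1+e_i}\phi^\varepsilon v_i\partial^{\alpha-\alpha_1}f^\varepsilon_\pm$ and $\sum_{|\alpha_1|\le|\alpha|}\partial^{\alpha_1+e_i}\phi^\varepsilon\partial^{\alpha-\alpha_1}_{e_i}f^\varepsilon_\pm$ (with the extra $\varepsilon$) are controlled by \eqref{softnonlinearterm1-3}/\eqref{hardnonlinearterm1-3} and \eqref{softnonlinearterm2-3}/\eqref{hardnonlinearterm2-3}, the latter producing the extra term $\varepsilon\chi_\gamma\|\nabla_x\phi^\varepsilon\|_{L^\infty}\varepsilon\sum_\pm\|e^{\pm\varepsilon\phi^\varepsilon/2}w_l(\alpha,0)\partial^\alpha f^\varepsilon_\pm\|^2$. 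Finally $\frac1\varepsilon\partial^\alpha\Gamma_\pm(f^\varepsilon,f^\varepsilon)$ paired with $\varepsilon e^{\pm\varepsilon\phi^\varepsilon}w_l^2(\alpha,0)\partial^\alpha f^\varepsilon_\pm$ is bounded by \eqref{soft gamma5}/\eqref{hard gamma5}, giving $\mathcal{E}^{1/2}_{N,l}(t)\mathcal{D}_{N,l}(t)$. Summing over $|\alpha|=N$ yields \eqref{weighted estimate2}, after noting that the leftover local terms $\frac1\varepsilon\|\partial^\alpha\{\mathbf{I}-\mathbf{P}\}f^\varepsilon\|^2_{L^2(B_C)}$ are absorbed by $\frac1{\varepsilon^2}\|\partial^\alpha\{\mathbf{I}-\mathbf{P}\}f^\varepsilon\|^2_{N^s_\gamma}$ since $\varepsilon\le1$.

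\textbf{Main obstacle.} The delicate point is the bookkeeping of powers of $\varepsilon$ so that nothing singular survives: the coercivity term appears at order $\frac\lambda\varepsilon$ (not $\frac{\lambda}{\varepsilon^2}$), so it can only absorb a $C/\varepsilon$-size bad term, and one must verify that after the $\varepsilon$-multiplication the cross term from $L$ and the macroscopic source $\frac1\varepsilon\partial^{\alpha+e_i}\phi^\varepsilon v_i\mu^{1/2}$ genuinely produce only $O(1)$-coefficient quantities already present in $\mathcal{D}_{N,l}(t)$ or absorbable into $\mathcal{E}^{1/2}_{N,l}(t)\mathcal{D}_{N,l}(t)$. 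A secondary subtlety is that at $|\alpha|=N$ we differentiate the full $f^\varepsilon_\pm$ rather than $\{\mathbf{I}-\mathbf{P}\}f^\varepsilon$, so the cancellation \eqref{e phi reason} must be re-derived for $\partial^\alpha f^\varepsilon_\pm$ (it still holds verbatim), and the $\mathbf{P}f^\varepsilon$-contributions to the $L$-term must be routed into $\sum_{|\alpha|=N}\|\partial^\alpha(a^\varepsilon_\pm,b^\varepsilon,c^\varepsilon)\|^2$ on the right-hand side rather than hidden in the dissipation.
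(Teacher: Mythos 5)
Your proposal is correct and follows essentially the same route as the paper: multiply by $\varepsilon$, pair $\partial^\alpha$ of \eqref{rrVPB} with $\varepsilon e^{\pm\varepsilon\phi^\varepsilon}w_l^2(\alpha,0)\partial^\alpha f_\pm^\varepsilon$, use the exponential cancellation \eqref{e phi reason}, the weighted coercivity \eqref{L coercive2} for the micro--micro part, Cauchy--Schwarz on the $L$--$\mathbf{P}$ cross term yielding $\frac{1}{\varepsilon^2}\|\partial^\alpha\{\mathbf{I}-\mathbf{P}\}f^\varepsilon\|^2_{N^s_\gamma}+\|\partial^\alpha(a^\varepsilon_\pm,b^\varepsilon,c^\varepsilon)\|^2$, and Lemmas \ref{soft Gamma}--\ref{hardnonlinearterm2} (at $|\alpha|=N$) for the commutator and collision terms, exactly as in the paper's estimates of $J_{3,1}$--$J_{3,8}$. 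The only cosmetic difference is the $\frac{1}{\varepsilon}\partial^{\alpha+e_i}\phi^\varepsilon v_i\mu^{1/2}$ term: because of the weight $w_l^2(\alpha,0)$ the exact identity producing $\frac{\varepsilon}{2}\frac{\d}{\d t}\|\partial^\alpha\nabla_x\phi^\varepsilon\|^2$ is not available, but the direct Cauchy--Schwarz bound you also describe (landing in $\|\partial^\alpha\nabla_x\phi^\varepsilon\|^2+\|\partial^\alpha(a^\varepsilon_\pm,b^\varepsilon,c^\varepsilon)\|^2$ plus a small multiple of the weighted dissipation) is precisely what the paper does.
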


\begin{proof}
Applying $\partial^\alpha $ with $|\alpha|=N$ to \eqref{rrVPB}, taking the inner product with $\varepsilon e^{\pm \varepsilon \phi^\varepsilon} w^2_l(\alpha,0)\partial^\alpha f^\varepsilon_{\pm}$ over $\mathbb{R}_x^3 \times \mathbb{R}_v^3$ and summing with $\pm$, we have
\begin{align}\label{with weight 2}
&\sum_{\pm}\left(\partial_t \partial^\alpha f^\varepsilon_{ \pm},
\varepsilon e^{ \pm \varepsilon\phi^\varepsilon} w^2_l(\alpha,0) \partial^\alpha f^\varepsilon_{ \pm}\right)
+\sum_{\pm}\left(\frac{1}{\varepsilon^2}L_{ \pm} \partial^\alpha \{\mathbf{I}-\mathbf{P}\} f^\varepsilon,
\varepsilon w^2_l(\alpha,0) \partial^\alpha \{\mathbf{I}_{ \pm}-\mathbf{P}_{\pm}\}f^\varepsilon \right) \nonumber \\
=&-\sum_{\pm}\left(\frac{1}{\varepsilon^2}L_{ \pm} \partial^\alpha \{\mathbf{I}-\mathbf{P}\} f^\varepsilon,
\varepsilon \left(e^{ \pm \varepsilon\phi^\varepsilon}-1\right) w^2_l(\alpha,0) \partial^\alpha \{\mathbf{I}_{ \pm}-\mathbf{P}_{\pm}\}f^\varepsilon \right)  \nonumber\\
&-\sum_{\pm}\left(\frac{1}{\varepsilon^2}L_{ \pm} \partial^\alpha \{\mathbf{I}-\mathbf{P}\} f^\varepsilon,
\varepsilon e^{ \pm \varepsilon\phi^\varepsilon} w^2_l(\alpha,0) \partial^\alpha \mathbf{P}_{\pm} f^\varepsilon \right) \\
&+\sum_{\pm} \sum_{|\alpha_1| \leq |\alpha|}  C_\alpha^{\alpha_1} \left( \pm \partial^{\alpha_1+e_i} \phi^\varepsilon \partial^{\alpha-\alpha_1}_{e_i}f^\varepsilon_{ \pm},
\varepsilon e^{ \pm \varepsilon\phi^\varepsilon} w^2_l(\alpha,0) \partial^\alpha f^\varepsilon_{ \pm}\right) \nonumber\\
&+\frac{1}{2} \sum_{\pm}\sum_{1 \leq\left|\alpha_1\right| \leq|\alpha|} C_\alpha^{\alpha_1} \left( \mp \partial^{\alpha_1+e_i} \phi^\varepsilon v_i \partial^{\alpha-\alpha_1} f^\varepsilon_{ \pm},
\varepsilon e^{ \pm \varepsilon\phi^\varepsilon} w^2_l(\alpha,0) \partial^\alpha f^\varepsilon_{ \pm}\right) \nonumber\\
&+\sum_{\pm} \left(\mp \frac{1}{\varepsilon}\partial^{\alpha+e_i} \phi^\varepsilon v_i \mu^{1/2}, \varepsilon e^{ \pm \varepsilon\phi^\varepsilon} w^2_l(\alpha,0) \partial^\alpha f^\varepsilon_{ \pm}\right)
+\sum_{\pm}\left(\frac{1}{\varepsilon}\partial^\alpha \Gamma_{ \pm}(f^\varepsilon, f^\varepsilon), \varepsilon e^{ \pm \varepsilon \phi^\varepsilon} w_l(\alpha,0) \partial^\alpha f^\varepsilon_{ \pm}\right), \nonumber
\end{align}
where we used \eqref{f decomposition} and the identity \eqref{e phi reason}.

As before, we denote these terms in \eqref{with weight 2} by $J_{3,1}$ to $J_{3,8}$ and estimate them term by term. For the term $J_{3,1}$, from integration by parts with respect to $t$, one has
\begin{align}\nonumber
J_{3,1}=\frac{1}{2}\frac{\d}{\d t} \sum_{\pm}\left\|e^{\pm \frac{\varepsilon \phi^\varepsilon}{2}} w_l(\alpha,0) \partial^\alpha f_{\pm}^\varepsilon \right\|^2
-\frac{1}{2}\sum_{\pm} \left(\pm \varepsilon^2 \partial_t \phi^\varepsilon \partial^\alpha f_{\pm}^\varepsilon, e^{\pm \varepsilon \phi^\varepsilon} w^2_l(\alpha,0)\partial^\alpha f_{\pm}^\varepsilon\right).
\end{align}
The second term on the right-hand side of the above equality is bounded by
\begin{equation}\nonumber
\varepsilon^2 \left\|\partial_t \phi^\varepsilon \right\|_{L^\infty} \sum_{\pm}\left\|e^{\pm \frac{\varepsilon \phi^\varepsilon}{2}}w_l(\alpha,0)\partial^\alpha f_{\pm}^\varepsilon\right\|^2.
\end{equation}
For the term $J_{3,2}$, it follows from \eqref{L coercive2} that
\begin{align}
\begin{split}\nonumber
J_{3,2}=\;&\left(\frac{1}{\varepsilon} L \partial^\alpha \{\mathbf{I}-\mathbf{P}\}f^\varepsilon,
w^2_l(\alpha,0)\partial^\alpha \{\mathbf{I}-\mathbf{P}\}f^\varepsilon\right) \\
\geq\;&\frac{\lambda}{\varepsilon} \left\| w_l(\alpha,0) \partial^\alpha \{\mathbf{I}-\mathbf{P}\}f^\varepsilon\right\|^2_{N^s_\gamma}
-\frac{C}{\varepsilon}\left\|\partial^\alpha \{\mathbf{I}-\mathbf{P}\}f^\varepsilon\right\|^2_{L^2(B_C)}.
\end{split}
\end{align}
For the term $J_{3,3}$, applying \eqref{Gamma zeta2}, \eqref{Gamma zeta3} and \eqref{phi estimate1}, we can deduce
\begin{align}\nonumber
J_{3,3} \lesssim \left\|\nabla_x \phi^\varepsilon\right\|_{H^1}\left\| w_l(\alpha,0) \partial^\alpha \{\mathbf{I}-\mathbf{P}\}f^\varepsilon\right\|^2_{N^s_\gamma}
\lesssim \mathcal{E}^{1/2}_{N,l}(t)\mathcal{D}_{N,l}(t).
\end{align}
For the term $J_{3,4}$, one can derive from \eqref{Gamma zeta1} that
\begin{align}\nonumber
J_{3,4} \lesssim \frac{1}{\varepsilon} \left\| \partial^\alpha \{\mathbf{I}-\mathbf{P}\}f^\varepsilon \right\|_{N^s_\gamma}
\left\|\partial^\alpha(a_{\pm}^\varepsilon,b^\varepsilon,c^\varepsilon)\right\|
\lesssim \frac{1}{\varepsilon^2} \left\| \partial^\alpha \{\mathbf{I}-\mathbf{P}\}f^\varepsilon \right\|^2_{N^s_\gamma}
+\left\|\partial^\alpha(a_{\pm}^\varepsilon,b^\varepsilon,c^\varepsilon)\right\|^2.
\end{align}
As for $J_{3,5}$ and $J_{3,6}$, by using \eqref{softnonlinearterm1-3}, \eqref{hardnonlinearterm1-3}, \eqref{softnonlinearterm2-3} and \eqref{hardnonlinearterm2-3}, one has
\begin{align}\nonumber
J_{3,5}+ J_{3,6} \lesssim \mathcal{E}^{1/2}_{N,l}(t)\mathcal{D}_{N,l}(t)
+ \chi_{\gamma} \left\|\nabla_x \phi^{\varepsilon}\right\|_{L^\infty}
\varepsilon \sum_{|\alpha|= N}\sum_{\pm}\left\|e^{\pm \frac{\varepsilon \phi^{\varepsilon}}{2}}w_l(\alpha,0)\partial^{\alpha} f^{\varepsilon}_{\pm}\right\|^2.
\end{align}
For the term $J_{3,7}$, it is straightforward to see that
\begin{align}\nonumber
J_{3,7} \lesssim \eta \left(\left\|w_l(\alpha,0) \partial^\alpha \{\mathbf{I}-\mathbf{P}\}f^\varepsilon \right\|^2_{N^s_\gamma}
+\left\|\partial^\alpha(a_{\pm}^\varepsilon,b^\varepsilon,c^\varepsilon)\right\|^2\right)
+\left\|\partial^\alpha \nabla_x \phi^\varepsilon\right\|^2,
\end{align}
where we have used \eqref{f decomposition} and $\left| e^{\pm \varepsilon \phi^\varepsilon} \right| \approx 1$. Finally, for the term $J_{3,8}$, it follows from \eqref{soft gamma5} and \eqref{hard gamma5} that
$J_{3,8} \lesssim \mathcal{E}^{1/2}_{N,l}(t)\mathcal{D}_{N,l}(t)$.

Collecting all the estimates of $J_{3,1}$--$J_{3,8}$ with $|\alpha|=N$, \eqref{weighted estimate2} follows. This completes the proof of Lemma \ref{weighted 2}.
\end{proof}
\medskip

Finally, we present the weighted energy estimate with the mixed derivatives $|\alpha|+|\beta| \leq N$ and $|\beta| \geq 1$.
\begin{lemma}\label{weighted 3}
There holds
\begin{align}\label{weighted estimate3}
&\sum_{m=1}^N C_m \sum_{\substack{{|\alpha|+|\beta|\leq N}\\{|\beta|=m}}} \left\{\frac{\d}{\d t}\sum_{\pm}
\left\| e^{\pm \frac{\varepsilon \phi^\varepsilon}{2}} w_l(\alpha,\beta)\partial^\alpha_\beta \{\mathbf{I}_{\pm}-\mathbf{P}_{\pm}\}f^\varepsilon\right\|^2
+\frac{\lambda}{\varepsilon^2}\left\| w_l(\alpha,\beta)\partial^\alpha_\beta \{\mathbf{I}-\mathbf{P}\}f^\varepsilon\right\|_{N^s_\gamma}^2\right\} \nonumber\\
\lesssim\;&\left\{ \varepsilon \left\|\partial_t \phi^\varepsilon\right\|_{L^\infty} + \chi_{\gamma} \left\|\nabla_x \phi^{\varepsilon}\right\|_{L^\infty} \right\}\sum_{\substack{{|\alpha|+|\beta|\leq N}\\{|\beta|\geq 1}}}\sum_{\pm}
\left\| e^{\pm \frac{\varepsilon \phi^\varepsilon}{2}} w_l(\alpha,\beta)\partial^\alpha_\beta \{\mathbf{I}_{\pm}-\mathbf{P}_{\pm}\}f^\varepsilon\right\|^2 \nonumber\\
&+\frac{1}{\varepsilon^2} \sum_{|\alpha|\leq N-1}\|w_l(\alpha,0)\partial^\alpha \{\mathbf{I}-\mathbf{P}\}f^\varepsilon\|^2_{N^s_\gamma}
+\sum_{|\alpha|= N}\!\!\|w_l(\alpha,0)\partial^\alpha \{\mathbf{I}-\mathbf{P}\}f^\varepsilon\|^2_{N^s_\gamma} \nonumber\\
&+\sum_{|\alpha| \leq N-1}\left\| \partial^\alpha \nabla_x \phi^\varepsilon \right\|^2
+\sum_{1 \leq |\alpha| \leq N}\left\|\partial^\alpha(a^\varepsilon_{\pm},b^\varepsilon,c^\varepsilon)\right\|^2
+ \left\{\mathcal{E}^{1/2}_{N,l}(t)+\mathcal{E}_{N,l}(t)\right\}\mathcal{D}_{N,l}(t).
\end{align}
Here, $C_m$ is a fixed constant satisfying $C_m \gg C_{m+1}$.
\end{lemma}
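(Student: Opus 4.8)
\textbf{Proof proposal for Lemma \ref{weighted 3}.}

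The plan is to perform, for each multi-index pair $(\alpha,\beta)$ with $|\alpha|+|\beta|\le N$ and $|\beta|=m\ge 1$, a $w_l^2(\alpha,\beta)$-weighted energy estimate on the microscopic equation \eqref{rrrVPB} differentiated by $\partial^\alpha_\beta$, exactly paralleling the treatment of Lemmas \ref{weighted 1} and \ref{weighted 2}, and then to combine the resulting inequalities by an induction on $|\beta|=m$ running downward from $m=N$ to $m=1$ with a suitable hierarchy of constants $C_m\gg C_{m+1}$. First I would apply $\partial^\alpha_\beta$ to \eqref{rrrVPB}, take the inner product with $e^{\pm\varepsilon\phi^\varepsilon}w_l^2(\alpha,\beta)\partial^\alpha_\beta\{\mathbf{I}_\pm-\mathbf{P}_\pm\}f^\varepsilon$ over $\mathbb{R}^3_x\times\mathbb{R}^3_v$, and sum over $\pm$. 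As in \eqref{e phi reason}, the exponential factor $e^{\pm\varepsilon\phi^\varepsilon}$ is chosen so that the top-order transport term $\frac1\varepsilon v_i\partial^{\alpha+e_i}_\beta$ cancels against the $\pm\frac12\partial^{e_i}\phi^\varepsilon v_i$ contribution when no derivative falls on $\phi^\varepsilon$; the $\partial_t$ term produces the time-derivative of $\sum_\pm\|e^{\pm\varepsilon\phi^\varepsilon/2}w_l(\alpha,\beta)\partial^\alpha_\beta\{\mathbf{I}_\pm-\mathbf{P}_\pm\}f^\varepsilon\|^2$ plus an error bounded by $\varepsilon\|\partial_t\phi^\varepsilon\|_{L^\infty}\sum_\pm\|e^{\pm\varepsilon\phi^\varepsilon/2}w_l(\alpha,\beta)\partial^\alpha_\beta\{\mathbf{I}_\pm-\mathbf{P}_\pm\}f^\varepsilon\|^2$.

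The key structural point is the coercivity of $L$ in the presence of velocity derivatives. For $|\beta|\ge1$ I would invoke \eqref{L coercive3}: the term $\frac1{\varepsilon^2}\langle w_l^2(\alpha,\beta)\partial_\beta L\partial^\alpha\{\mathbf{I}-\mathbf{P}\}f^\varepsilon,\partial^\alpha_\beta\{\mathbf{I}-\mathbf{P}\}f^\varepsilon\rangle$ is bounded below by $\frac{\lambda}{\varepsilon^2}\|w_l(\alpha,\beta)\partial^\alpha_\beta\{\mathbf{I}-\mathbf{P}\}f^\varepsilon\|_{N^s_\gamma}^2$ minus $\frac{\eta}{\varepsilon^2}\sum_{|\beta'|<|\beta|}\|w_l(\alpha,\beta')\partial^\alpha_{\beta'}\{\mathbf{I}-\mathbf{P}\}f^\varepsilon\|_{N^s_\gamma}^2$ minus $\frac{C_\eta}{\varepsilon^2}\|\partial^\alpha\{\mathbf{I}-\mathbf{P}\}f^\varepsilon\|_{L^2(B_C)}^2$; here one must also account for the transport commutator $\frac1\varepsilon[\partial_\beta,v_i\partial^{e_i}]\partial^\alpha\{\mathbf{I}-\mathbf{P}\}f^\varepsilon$, which lowers $|\beta|$ by one and raises $|\alpha|$ by one, and is absorbed by the dissipation of the lower-$|\beta|$, higher-$|\alpha|$ level using the weight identity $w_l(\alpha,\beta)=w_l(\alpha+e_i,\beta-e_i)\langle v\rangle^\gamma$ exactly as in the introduction's transport estimate. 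This is precisely why the constants $C_m\gg C_{m+1}$ are needed: summing the level-$m$ inequality multiplied by $C_m$, the bad $\frac{\eta}{\varepsilon^2}$ terms coming from level $m$ are controlled by the good $\frac{\lambda}{\varepsilon^2}C_{m-1}$ dissipation at level $m-1$ after choosing $C_{m-1}$ large relative to $C_m$ and $\eta$ small. The lower-order $L^2(B_C)$ remainders, together with the $|\beta|=0$ base terms, are subsumed into $\frac1{\varepsilon^2}\sum_{|\alpha|\le N-1}\|w_l(\alpha,0)\partial^\alpha\{\mathbf{I}-\mathbf{P}\}f^\varepsilon\|_{N^s_\gamma}^2$ and $\sum_{|\alpha|=N}\|w_l(\alpha,0)\partial^\alpha\{\mathbf{I}-\mathbf{P}\}f^\varepsilon\|_{N^s_\gamma}^2$ on the right-hand side; note that for the pure-velocity level $|\alpha|=0$, $|\beta|=m$ there is no $|\alpha|=N$ conflict since $N\ge 5$ (soft) resp. $N\ge2$ (hard).

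The remaining terms on the right of the differentiated equation are the electric-force contributions and the $\mathbf{P}f^\varepsilon$-source terms. The term $\partial^\alpha_\beta(\nabla_x\phi^\varepsilon\cdot\nabla_v f^\varepsilon_\pm)$ is handled by Lemma \ref{softnonlinearterm2}\,(4) in the soft case and Lemma \ref{hardnonlinearterm2}\,(4) in the hard case, producing $\mathcal{E}_{N,l}^{1/2}(t)\mathcal{D}_{N,l}(t)$ plus the $\chi_\gamma\|\nabla_x\phi^\varepsilon\|_{L^\infty}\sum_\pm\|e^{\pm\varepsilon\phi^\varepsilon/2}w_l(\alpha,\beta)\partial^\alpha_\beta\{\mathbf{I}_\pm-\mathbf{P}_\pm\}f^\varepsilon\|^2$ absorbing term; the term $\frac12\partial^\alpha_\beta(\nabla_x\phi^\varepsilon\cdot v f^\varepsilon_\pm)$ with at least one derivative on $\phi^\varepsilon$ is handled by Lemma \ref{softnonlinearterm1}\,(4) resp. Lemma \ref{hardnonlinearterm1}\,(4); the terms arising from $\partial^\alpha_\beta$ acting on $[\partial_t+\frac1\varepsilon v_i\partial^{e_i}\mp\partial^{e_i}\phi^\varepsilon\partial_{e_i}]\mathbf{P}_\pm f^\varepsilon$ and on $\mp\frac1\varepsilon\partial^{e_i}\phi^\varepsilon v_i\mu^{1/2}$ involve only macroscopic quantities $\partial^{\alpha'}(a^\varepsilon_\pm,b^\varepsilon,c^\varepsilon)$ with $|\alpha'|\le N$ times a Maxwellian-localized weight, hence by the local conservation laws \eqref{macro equation 3}, \eqref{macro equation 5}, Cauchy--Schwarz with $\eta$ and \eqref{g estimate} they are dominated by $\sum_{1\le|\alpha|\le N}\|\partial^\alpha(a^\varepsilon_\pm,b^\varepsilon,c^\varepsilon)\|^2+\sum_{|\alpha|\le N-1}\|\partial^\alpha\nabla_x\phi^\varepsilon\|^2$ plus $\eta/\varepsilon^2$-dissipation plus $\mathcal{E}_{N}(t)\mathcal{D}_{N}(t)$; the $e^{\pm\varepsilon\phi^\varepsilon}-1$ error in the $L$-term is handled via \eqref{phi estimate1}, \eqref{Gamma zeta2}, \eqref{Gamma zeta3}; and the nonlinear $\frac1\varepsilon\partial^\alpha_\beta\Gamma_\pm(f^\varepsilon,f^\varepsilon)$ is controlled by \eqref{soft gamma6} resp. \eqref{hard gamma6}. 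Collecting everything, multiplying the $|\beta|=m$ sum by $C_m$, summing $m$ from $N$ down to $1$, and choosing $C_1\gg C_2\gg\cdots\gg C_N$ and then $\eta$ small (depending on the $C_m$), the negative $\frac{\eta}{\varepsilon^2}$ lower-order dissipation terms at each level are absorbed, and \eqref{weighted estimate3} follows. The main obstacle is the bookkeeping of the downward induction on $|\beta|$: one must verify that every $\frac{\eta}{\varepsilon^2}$ term generated at level $m$ (both from \eqref{L coercive3} and from the transport commutator) is strictly lower order in a sense compatible with the weight hierarchy $w_l(\alpha,\beta)=w_l(\alpha+e_i,\beta-e_i)\langle v\rangle^\gamma$, so that it can be charged against the level $m-1$ (or the pure-spatial $|\beta|=0$) dissipation with a favorable constant; once the ordering $C_m\gg C_{m+1}$ and the smallness of $\eta$ are fixed in the right order this closes, but the indices must be tracked carefully, especially the interplay between the transport commutator (which trades a $v$-derivative for an $x$-derivative) and the requirement $|\alpha|+|\beta|\le N$.
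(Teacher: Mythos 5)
Your proposal follows essentially the same route as the paper: the same weighted inner product of $\partial^\alpha_\beta$ of \eqref{rrrVPB} against $e^{\pm\varepsilon\phi^\varepsilon}w_l^2(\alpha,\beta)\partial^\alpha_\beta\{\mathbf{I}_\pm-\mathbf{P}_\pm\}f^\varepsilon$, the cancellation \eqref{e phi reason}, coercivity via \eqref{L coercive3}, the transport commutator absorbed through the weight relation $w_l(\alpha,\beta)\lesssim w_l(\alpha+e_i,\beta-e_i)\langle v\rangle^{\gamma+2s}$, the nonlinear and macroscopic terms treated by Lemmas \ref{soft Gamma}--\ref{hardnonlinearterm2} and the conservation laws, and the final hierarchy $C_1\gg\cdots\gg C_N$ with $\eta$ small to close the $|\beta|$-ladder, with the $|\beta'|=0$ leftovers sent to the right-hand side exactly as in \eqref{weighted estimate3}. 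This matches the paper's proof, so no further comment is needed.
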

\begin{proof}
Let $1 \leq m \leq N$. By applying $\partial^\alpha_\beta$ with $|\beta|=m$ and $|\alpha|+|\beta| \leq N$ to \eqref{rrrVPB}, taking the inner product with $e^{\pm \varepsilon \phi^\varepsilon}w^2_l(\alpha,\beta)\partial^\alpha_\beta \{\mathbf{I}_{\pm}-\mathbf{P}_{\pm}\}f^\varepsilon$ over $\mathbb{R}_x^3 \times \mathbb{R}_v^3$ and summing with $\pm$, one has
\begin{align}\label{with weight 3}
&\sum_{\pm}\left(\partial_t \partial^\alpha_\beta \{\mathbf{I}_{\pm}-\mathbf{P}_{\pm}\}f^\varepsilon,
e^{\pm \varepsilon \phi^\varepsilon} w^2_l(\alpha,\beta)\partial^\alpha_\beta \{\mathbf{I}_{\pm}-\mathbf{P}_{\pm}\}f^\varepsilon \right) \nonumber\\
&+\sum_{\pm}\left( \frac{1}{\varepsilon^2}\partial^\alpha_\beta L_{\pm}\{\mathbf{I}-\mathbf{P}\}f^\varepsilon,
w^2_l(\alpha,\beta)\partial^\alpha_\beta \{\mathbf{I}_{\pm}-\mathbf{P}_{\pm}\}f^\varepsilon \right)\nonumber\\
=\;&-\sum_{\pm}\left(\frac{1}{\varepsilon^2}\partial^\alpha_\beta L_{\pm}\{\mathbf{I}-\mathbf{P}\}f^\varepsilon,
\left(e^{\pm \varepsilon \phi^\varepsilon}-1\right) w^2_l(\alpha,\beta)\partial^\alpha_\beta \{\mathbf{I}_{\pm}-\mathbf{P}_{\pm}\}f^\varepsilon \right)\nonumber \\
&-\sum_{\pm}C_\beta^{e_i}\left( \frac{1}{\varepsilon} \partial^{\alpha+e_i}_{\beta-e_i}\{\mathbf{I}_{\pm}-\mathbf{P}_{\pm}\}f^\varepsilon,
e^{\pm \varepsilon \phi^\varepsilon} w^2_l(\alpha,\beta)\partial^\alpha_\beta \{\mathbf{I}_{\pm}-\mathbf{P}_{\pm}\}f^\varepsilon \right)\nonumber\\
&+ \sum_{\pm}\sum_{|\alpha_1|\leq |\alpha|} C^{\alpha_1}_{\alpha} \left(\pm \partial^{\alpha_1+e_i} \phi^\varepsilon \partial^{\alpha-\alpha_1}_{\beta+e_i} \{\mathbf{I}_{\pm}-\mathbf{P}_{\pm}\}f^\varepsilon,
e^{\pm \varepsilon \phi^\varepsilon} w^2_l(\alpha,\beta)\partial^\alpha_\beta \{\mathbf{I}_{\pm}-\mathbf{P}_{\pm}\}f^\varepsilon \right)\nonumber\\
&+\frac{1}{2}\sum_{\pm}\sum_{\substack{{|\alpha_1|+|\beta_1| \geq 1}\\{|\beta_1| \leq 1}}}C^{\alpha_1,\beta_1}_{\alpha,\beta}
\left( \mp \partial_{\beta_1}v_i \partial^{\alpha_1+e_i} \phi^\varepsilon \partial^{\alpha-\alpha_1}_{\beta-\beta_1} \{\mathbf{I}_{\pm}-\mathbf{P}_{\pm}\}f^\varepsilon,
e^{\pm \varepsilon \phi^\varepsilon} w^2_l(\alpha,\beta)\partial^\alpha_\beta \{\mathbf{I}_{\pm}-\mathbf{P}_{\pm}\}f^\varepsilon \right)\nonumber\\
&+\frac{1}{2} \sum_{\pm} \sum_{\substack{{|\alpha_1|\leq |\alpha|}\\{|\beta_1|\leq 1}}}
C^{\alpha_1,\beta_1}_{\alpha,\beta} \left( \mp \partial_{\beta_1}v_i \partial^{\alpha_1+e_i} \phi^\varepsilon \partial^{\alpha-\alpha_1}_{\beta-\beta_1} \mathbf{P}_{\pm} f^\varepsilon,
e^{\pm \varepsilon \phi^\varepsilon} w^2_l(\alpha,\beta)\partial^\alpha_\beta \{\mathbf{I}_{\pm}-\mathbf{P}_{\pm}\}f^\varepsilon \right)\nonumber\\
&-\sum_{\pm}\left(\partial^\alpha_{\beta} \left[\partial_t+\frac{1}{\varepsilon}v_i \partial^{e_i} \mp \partial^{e_i} \phi^\varepsilon \partial_{e_i}\right] \mathbf{P}_{ \pm} f^\varepsilon,
e^{\pm \varepsilon \phi^\varepsilon} w^2_l(\alpha,\beta)\partial^\alpha_\beta \{\mathbf{I}_{\pm}-\mathbf{P}_{\pm}\}f^\varepsilon \right)\nonumber\\
&+\sum_{\pm}\left(\mp \frac{1}{\varepsilon}\partial^{\alpha+e_i} \phi^\varepsilon \partial_\beta (v_i \mu^{1/2}),
e^{\pm \varepsilon \phi^\varepsilon} w^2_l(\alpha,\beta)\partial^\alpha_\beta \{\mathbf{I}_{\pm}-\mathbf{P}_{\pm}\}f^\varepsilon \right)\nonumber\\
&+\sum_{\pm} \left( \frac{1}{\varepsilon} \partial^\alpha_\beta \Gamma_{\pm}(f^\varepsilon,f^\varepsilon),
e^{\pm \varepsilon \phi^\varepsilon} w^2_l(\alpha,\beta)\partial^\alpha_\beta \{\mathbf{I}_{\pm}-\mathbf{P}_{\pm}\}f^\varepsilon \right).
\end{align}
Here, we have used the identity
\begin{align}\nonumber
\left(\frac{1}{\varepsilon}v_i\partial^{\alpha+e_i}_\beta \{\mathbf{I}_{\pm}-\mathbf{P}_{\pm}\}f^\varepsilon \pm \frac{1}{2}\partial^{e_i}\phi^\varepsilon v_i\partial^{\alpha}_\beta \{\mathbf{I}_{\pm}-\mathbf{P}_{\pm}\}f^\varepsilon,
e^{\pm \varepsilon \phi^\varepsilon} w^2_l(\alpha,\beta)\partial^\alpha_\beta \{\mathbf{I}_{\pm}-\mathbf{P}_{\pm}\}f^\varepsilon \right)=0.
\end{align}

Now we denote all terms in \eqref{with weight 3} by $J_{4,1}$ to $J_{4,10}$ and estimate them term by term. First of all, for the term $J_{4,1}$, employing integration by parts with respect to $t$, we obtain
\begin{align}
J_{4,1}=\;&\frac{1}{2}\frac{\d}{\d t} \sum_{\pm}\left\|e^{\pm \frac{\varepsilon \phi^\varepsilon}{2}} w_l(\alpha,\beta) \partial^\alpha_\beta \{\mathbf{I}_{\pm}-\mathbf{P}_{\pm}\}f^\varepsilon \right\|^2 \nonumber\\
&-\frac{1}{2}\sum_{\pm} \left(\pm \varepsilon \partial_t \phi^\varepsilon \partial^\alpha_\beta \{\mathbf{I}_{\pm}-\mathbf{P}_{\pm}\}f^\varepsilon, e^{\pm \varepsilon \phi^\varepsilon} w^2_l(\alpha,\beta)\partial^\alpha_\beta \{\mathbf{I}_{\pm}-\mathbf{P}_{\pm}\}f^\varepsilon\right). \nonumber
\end{align}
The second term  on the right-hand side of the above equality is dominated by
\begin{equation}\nonumber
\varepsilon \left\|\partial_t \phi^\varepsilon \right\|_{L^\infty} \sum_{\pm}\left\|e^{\pm \frac{\varepsilon \phi^\varepsilon}{2}}w_l(\alpha,\beta) \partial^\alpha_\beta \{\mathbf{I}_{\pm}-\mathbf{P}_{\pm}\}f^\varepsilon\right\|^2.
\end{equation}
For the term $J_{4,2}$, it follows from \eqref{L coercive3} that
\begin{align}
\begin{split}\nonumber
J_{4,2}\geq\;&\frac{\lambda}{\varepsilon^2} \left\| w_l(\alpha,\beta) \partial^\alpha_\beta \{\mathbf{I}-\mathbf{P}\}f^\varepsilon\right\|^2_{N^s_\gamma}
-\frac{\eta}{\varepsilon^2}\sum_{|\beta^\prime| < |\beta|}\left\|w_l(\alpha,\beta^\prime)\partial^\alpha_{\beta^\prime} \{\mathbf{I}-\mathbf{P}\}f^\varepsilon\right\|^2_{N^s_\gamma}\\
&-\frac{C}{\varepsilon^2}\left\|\partial^\alpha \{\mathbf{I}-\mathbf{P}\}f^\varepsilon\right\|^2_{L^2(B_C)}.
\end{split}
\end{align}
For the term $J_{4,3}$, with the help of \eqref{Gamma zeta2}, \eqref{Gamma zeta3} and \eqref{phi estimate1}, one obtains
\begin{align}
J_{4,3}\lesssim\;&\frac{1}{\varepsilon} \left\| \nabla_x \phi^\varepsilon\right\|_{H^1} \sum_{|\beta^\prime| \leq |\beta|} \left\|w_l(\alpha,\beta^\prime)\partial^\alpha_{\beta^\prime} \{\mathbf{I}-\mathbf{P}\}f^\varepsilon \right\|_{N^s_\gamma}
\left\|w_l(\alpha,\beta)\partial^\alpha_{\beta} \{\mathbf{I}-\mathbf{P}\}f^\varepsilon \right\|_{N^s_\gamma} \nonumber\\
\lesssim\;& \mathcal{E}^{1/2}_{N,l}(t)\mathcal{D}_{N,l}(t). \nonumber
\end{align}
For the term $J_{4,4}$, it can be inferred from \eqref{weight function} and the Cauchy--Schwarz inequality with $\eta$  that
\begin{align}
J_{4,4} \lesssim \;&\frac{1}{\varepsilon}\left\| w_l(\alpha,\beta) \left\langle v \right\rangle^\frac{\gamma+2s}{2} \partial^\alpha_{\beta} \{\mathbf{I}-\mathbf{P}\}f^\varepsilon \right\|
\left\| w_l(\alpha+e_i,\beta-e_i) \left\langle v \right\rangle^\frac{\gamma+2s}{2} \partial^{\alpha+e_i}_{\beta-e_i} \{\mathbf{I}-\mathbf{P}\}f^\varepsilon \right\| \nonumber\\
\lesssim\;&\frac{\eta}{\varepsilon^2}\left\| w_l(\alpha,\beta)  \partial^\alpha_{\beta} \{\mathbf{I}-\mathbf{P}\}f^\varepsilon \right\|^2_{N^s_\gamma}
+\sum_{\substack{{|\alpha^\prime|=|\alpha|+1}\\{|\beta^\prime|=|\beta|-1}}}\left\| w_l(\alpha^\prime,\beta^\prime) \partial^{\alpha^\prime}_{\beta^\prime} \{\mathbf{I}-\mathbf{P}\}f^\varepsilon \right\|^2_{N^s_\gamma}, \nonumber
\end{align}
where we used the fact $w_l(\alpha,\beta) \leq w_l(\alpha+e_i,\beta-e_i) \left\langle v \right\rangle^{\gamma+2s}$  for $\gamma > -3$.
For the terms $J_{4,5}$ and $J_{4,6}$, making use of \eqref{softnonlinearterm1-4}, \eqref{hardnonlinearterm1-4}, \eqref{softnonlinearterm2-4} and \eqref{hardnonlinearterm2-4}, we get
\begin{align}\nonumber
J_{4,5}+J_{4,6}\lesssim \mathcal{E}^{1/2}_{N,l}(t)\mathcal{D}_{N,l}(t)
+ \chi_{\gamma} \left\|\nabla_x \phi^{\varepsilon}\right\|_{L^\infty}
\sum_{\pm}\left\| e^{\pm \frac{\varepsilon \phi^\varepsilon}{2}} w_l(\alpha,\beta)\partial^\alpha_\beta \{\mathbf{I}_{\pm}-\mathbf{P}_{\pm}\}f^\varepsilon\right\|^2.
\end{align}
For the terms $J_{4,7}$--$J_{4,9}$ with $|\alpha|+|\beta|\leq N$, $|\beta|\geq 1$ and $|\alpha|\leq N-1$, applying \eqref{macro equation 3}, $\eqref{g estimate}$, the first equation of \eqref{macro equation 5}, the Sobolev inequalities \eqref{Sobolev-ineq} and the Cauchy--Schwarz inequality with $\eta$, they can be bounded by
\begin{align}\nonumber
&C\left\{\left\|\partial^\alpha \nabla_x \phi^\varepsilon\right\|^2
+ \left\|\partial^\alpha \nabla_x (a_{\pm}^\varepsilon, b^\varepsilon, c^\varepsilon) \right\|^2
+ \left\|\partial^\alpha \nabla_x \{\mathbf{I}-\mathbf{P}\}f^\varepsilon \right\|_{N^s_{\gamma}}^2
+ \mathcal{E}_{N,l}(t)\mathcal{D}_{N,l}(t) \right\}\\
&+\frac{\eta}{\varepsilon^2}\left\|w_l(\alpha,\beta)\partial^\alpha_\beta\{\mathbf{I}-\mathbf{P}\}f^\varepsilon\right\|^2_{N^s_\gamma}.\nonumber
\end{align}
Finally, for the term $J_{4,10}$, it follows from \eqref{soft gamma6} and \eqref{hard gamma6} that
$J_{4,10} \lesssim \mathcal{E}^{1/2}_{N,l}(t)\mathcal{D}_{N,l}(t)$.

As a consequence, by plugging all the estimates above into \eqref{with weight 3}, taking the summation over $\left\{ |\beta|=m,|\alpha|+|\beta| \leq N\right\}$ for each given $1 \leq m \leq N$, and then taking combination of those $N$ estimates with properly chosen constant $C_m > 0 $ $(1 \leq m \leq N)$ and $\eta$ small enough,
we obtain \eqref{weighted estimate3}. This completes the proof of Lemma \ref{weighted 3}.
\end{proof}
\medskip

\subsection{Energy Estimate in Negative Sobolev Space}
\hspace*{\fill}

This subsection aims to provide an energy estimate in negative Sobolev space, for both hard and soft potentials.
\begin{lemma}\label{negative sobolev lemma}
There holds
\begin{align}
\begin{split}\label{negative sobolev estimate}
&\frac{\d}{\d t} \left\{ \left\|\Lambda^{-\varrho}f^\varepsilon\right\|^2+\left\|\Lambda^{-\varrho}\nabla_x \phi^\varepsilon\right\|^2 \right\}
+\frac{\lambda}{\varepsilon^2}\left\|\Lambda^{-\varrho}\{\mathbf{I}-\mathbf{P}\}f^\varepsilon\right\|_{N^s_\gamma}^2 \\
\lesssim\;& \widetilde{\mathcal{E}}^{1/2}_{N,l}(t)\left\{ \left\| \Lambda^{\frac{3}{4}-\frac{\varrho}{2}} f^\varepsilon\right\|^2_{N^s_\gamma}
+ \left\| \Lambda^{\frac{3}{4}-\frac{\varrho}{2}} \nabla_x \phi^\varepsilon\right\|^2 \right\}
+\mathcal{E}_{N,l}(t)\left\{ \left\| \Lambda^{\frac{3}{2}-\varrho} f^\varepsilon\right\|^2
+ \left\| \Lambda^{\frac{3}{2}-\varrho} \nabla_x \phi^\varepsilon\right\|^2 \right\} .
\end{split}
\end{align}
\end{lemma}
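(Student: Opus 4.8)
The strategy is the standard negative-Sobolev energy estimate à la \cite{GW2012CPDE}, adapted to the VPB system \eqref{rrVPB} and to the $\varepsilon$-dependent structure. First I would apply $\Lambda^{-\varrho}$ to the first equation of \eqref{rrVPB} (without the extra exponential factor $e^{\pm\varepsilon\phi^\varepsilon}$, since $\Lambda^{-\varrho}$ is a nonlocal Fourier multiplier in $x$ and does not commute with multiplication by $\phi^\varepsilon$; instead one works directly with the original $L^2$ energy), take the $L^2(\mathbb{R}^3_x\times\mathbb{R}^3_v)$ inner product with $\Lambda^{-\varrho}f^\varepsilon_\pm$, and sum over $\pm$. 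The transport term $\frac{1}{\varepsilon}v_i\partial^{e_i}f^\varepsilon_\pm$ is antisymmetric and vanishes upon integration; the linear term $\pm\frac{1}{\varepsilon}\partial^{e_i}\phi^\varepsilon v_i\mu^{1/2}$ combines, via the Poisson equation \eqref{a equality} and the continuity equation (first line of \eqref{macro equation 5}), into the time derivative $\frac12\frac{\d}{\d t}\|\Lambda^{-\varrho}\nabla_x\phi^\varepsilon\|^2$, exactly as the term $J_{1,4}$ was handled in Lemma~\ref{energy estimate without weight}. The coercivity \eqref{L coercive1} of $L$ gives the dissipation $\frac{\lambda}{\varepsilon^2}\|\Lambda^{-\varrho}\{\mathbf{I}-\mathbf{P}\}f^\varepsilon\|^2_{N^s_\gamma}$ on the left-hand side.

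\textbf{Treatment of the nonlinear terms.} The remaining terms are the nonlinear contributions: the collision term $\frac{1}{\varepsilon}\Gamma(f^\varepsilon,f^\varepsilon)$, the force terms $\nabla_x\phi^\varepsilon\cdot\nabla_v f^\varepsilon_\pm$ and $\nabla_x\phi^\varepsilon\cdot v f^\varepsilon_\pm$. For each such term $\mathcal{N}$, I would estimate $|(\Lambda^{-\varrho}\mathcal{N},\Lambda^{-\varrho}f^\varepsilon)|$ by moving $\Lambda^{-\varrho}$ onto the other factor and invoking Lemma~\ref{negative embedding theorem} (Hardy--Littlewood--Sobolev) together with Lemma~\ref{negative embedding theorem2}; concretely, the product is split into a ``low-derivative'' piece controlled via $\|\cdot\|_{L^{12/(3+2\varrho)}}\lesssim\|\Lambda^{3/4-\varrho/2}\cdot\|$ and a ``high-derivative'' piece controlled via $\|\cdot\|_{L^{3/\varrho}}\lesssim\|\Lambda^{3/2-\varrho}\cdot\|$, with the complementary factor estimated in $L^\infty_x$ or $H^N_x$ and absorbed into $\widetilde{\mathcal{E}}^{1/2}_{N,l}(t)$ or $\mathcal{E}_{N,l}(t)$. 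The singular prefactor $\frac{1}{\varepsilon}$ in front of $\Gamma$ is handled exactly as in the $J_{1,10}$/Lemma~\ref{soft Gamma} estimates: one factor of $\{\mathbf{I}-\mathbf{P}\}f^\varepsilon$ carries a $\frac{1}{\varepsilon}$ which, paired against the $\frac{1}{\varepsilon}$-weighted dissipation norm $\frac{1}{\varepsilon}\|\{\mathbf{I}-\mathbf{P}\}f^\varepsilon\|_{N^s_\gamma}$ in $\mathcal{D}_{N,l}$, remains bounded, while the Riesz-potential factor produces the $\Lambda^{3/4-\varrho/2}$ and $\Lambda^{3/2-\varrho}$ norms on the right. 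One must also verify the range condition $0<\varrho<3/2$ guarantees $3/4-\varrho/2\in(0,3/4)$ and $3/2-\varrho\in(0,3/2)$ so that Lemmas~\ref{negative embedding theorem} and \ref{negative embedding theorem2} apply, and that $3/2-\varrho<1$ (true since $\varrho>1$ in \eqref{soft assumption}, \eqref{hard assumption}) so the $\Lambda^{3/2-\varrho}$ norm is controlled by $H^1_x$.

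\textbf{Main obstacle.} The principal difficulty is the bilinear collision term $\frac{1}{\varepsilon}\Gamma$, where one must simultaneously (i) absorb the $\frac{1}{\varepsilon}$ singularity, (ii) produce only the mild negative-Sobolev norms $\|\Lambda^{3/4-\varrho/2}f^\varepsilon\|_{N^s_\gamma}$ and $\|\Lambda^{3/2-\varrho}f^\varepsilon\|$ on the right (not, say, $\|\Lambda^{-\varrho}\nabla_x f^\varepsilon\|$, which would not close), and (iii) keep all velocity-weighted norms at level $N^s_\gamma$ rather than the stronger $H^{1+s}_{\gamma/2}$ norms that appeared for the soft potentials in the weighted estimates. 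The resolution is to exploit the bilinear estimates \eqref{soft gamma1}/\eqref{hard gamma1} with weight exponent $\ell=0$ (so no velocity-weight mismatch arises and the $\min\{\cdots\}$ structure in \eqref{soft gamma1} is never the binding constraint), always placing the factor that receives $\Lambda^{-\varrho}$ in the larger Lebesgue space and the other in $L^\infty_x$ or $L^3_x$, estimated by $H^N_x$-norms which sit inside $\widetilde{\mathcal{E}}_{N,l}$. Once all nonlinear terms are bounded by the right-hand side of \eqref{negative sobolev estimate}, summing the identity and collecting the time-derivative terms yields the claimed estimate; the proof of the companion estimate for $\|\Lambda^{-\varrho}\nabla_x\phi^\varepsilon\|_{H^1}$ (needed for $\widetilde{\mathcal{D}}_{N,l}$) then follows by taking one more $x$-derivative of the Poisson equation and repeating the argument.
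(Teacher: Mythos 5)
Your overall strategy coincides with the paper's: drop the exponential weight, apply $\Lambda^{-\varrho}$ to the equation and pair with $\Lambda^{-\varrho}f^\varepsilon$, kill the transport term by antisymmetry, convert the linear electric term into $\tfrac12\tfrac{\d}{\d t}\|\Lambda^{-\varrho}\nabla_x\phi^\varepsilon\|^2$ via the continuity equation and \eqref{a equality}, use \eqref{L coercive1} for the dissipation, and treat the nonlinear terms with \eqref{negative embed 1}, \eqref{negative embed 2}, Minkowski and H\"older, splitting between the $L^{12/(3+2\varrho)}$ and $L^{3/\varrho}$ scales. That is exactly how the paper proceeds.

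There is, however, one step where your plan as written would not go through: the collision term. You propose to use the trilinear estimates \eqref{soft gamma1}/\eqref{hard gamma1} with $\ell=0$, but those are pointwise-in-$x$ bounds on the inner product $\langle\Gamma,h\rangle_v$, and they cannot be applied once the Riesz potential $\Lambda^{-\varrho}$ (nonlocal in $x$) has been distributed: after writing $\tfrac1\varepsilon(\Lambda^{-\varrho}\Gamma(f^\varepsilon,f^\varepsilon),\Lambda^{-\varrho}\{\mathbf I-\mathbf P\}f^\varepsilon)$ (the reduction to the microscopic test function by collision invariance, which you only gesture at), one must first Cauchy--Schwarz in $v$, which forces an $L^2_v$-type bound on $\Gamma$ itself, measured in $L_x^{6/(3+2\varrho)}$ via Hardy--Littlewood--Sobolev. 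This is why the paper invokes the bilinear $L^2$-norm estimates \eqref{soft gamma2}/\eqref{hard gamma2} (with a negative velocity weight $\langle v\rangle^{-\frac{\gamma+2s}{2}}$), yielding $\||f^\varepsilon|_{L^2}|f^\varepsilon|_{H^4}\|_{L_x^{6/(3+2\varrho)}}$ and then $\|f^\varepsilon\|_{L_x^{3/\varrho}L^2_v}\|f^\varepsilon\|_{H^4_vL^2_x}$, not the trilinear lemma. Relatedly, your statement that the $\tfrac1\varepsilon$ is ``paired against the $\tfrac1\varepsilon$-weighted dissipation norm in $\mathcal{D}_{N,l}$'' cannot be taken literally, since $\mathcal{D}_{N,l}$ does not appear on the right-hand side of \eqref{negative sobolev estimate}; the singular factor must instead be absorbed, via Cauchy--Schwarz with $\eta$, into the term $\frac{\eta}{\varepsilon^2}\|\Lambda^{-\varrho}\{\mathbf I-\mathbf P\}f^\varepsilon\|^2_{N^s_\gamma}$, i.e.\ into the negative-index dissipation already present on the left of \eqref{negative sobolev estimate}. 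With these two corrections (use Lemma \ref{Gamma1}(2) rather than \ref{Gamma1}(1), and absorb into the left-hand dissipation) your argument matches the paper's proof.
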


\begin{proof}
Applying $\Lambda^{-\varrho}$ to the first equation of \eqref{rVPB} and taking the inner product with $\Lambda^{-\varrho} f^\varepsilon$ over $\mathbb{R}_x^3 \times \mathbb{R}_v^3$, one has
\begin{align}\label{negative sobolev estimate 1}
&\frac{1}{2}\frac{\d}{\d t} \left\| \Lambda^{-\varrho} f^\varepsilon \right\|^2
+\left( \frac{1}{\varepsilon} \Lambda^{-\varrho} \nabla_x \phi^\varepsilon \cdot v \mu^{1/2}q_1 ,\Lambda^{-\varrho} f^\varepsilon \right)
+\left( \frac{1}{\varepsilon^2} L\Lambda^{-\varrho} f^\varepsilon, \Lambda^{-\varrho} f^\varepsilon\right) \\
=\;&\left( q_0 \Lambda^{-\varrho}\left(\nabla_x \phi^\varepsilon \cdot \nabla_v f^\varepsilon \right), \Lambda^{-\varrho} f^\varepsilon \right)
-\left( \frac{q_0}{2} \Lambda^{-\varrho}\left(\nabla_x \phi^\varepsilon \cdot v f^\varepsilon \right), \Lambda^{-\varrho} f^\varepsilon \right)
+\left( \frac{1}{\varepsilon} \Lambda^{-\varrho}\Gamma(f^\varepsilon, f^\varepsilon), \Lambda^{-\varrho} f^\varepsilon \right). \nonumber
\end{align}

Now we further estimate \eqref{negative sobolev estimate 1} term by term. For the left-hand second term, it follows from the first equation of \eqref{macro equation 5} and \eqref{a equality} that
\begin{align}\nonumber
\left( \frac{1}{\varepsilon} \Lambda^{-\varrho} \nabla_x \phi^\varepsilon \cdot v \mu^{1/2}q_1 ,\Lambda^{-\varrho} f^\varepsilon \right)
=\frac{1}{2}\frac{\d}{\d t} \left\| \Lambda^{-\varrho} \nabla_x \phi^\varepsilon\right\|^2.
\end{align}
For the left-hand third term, one has from \eqref{L coercive1} that
\begin{align}\nonumber
\left( \frac{1}{\varepsilon^2} L\Lambda^{-\varrho} f^\varepsilon, \Lambda^{-\varrho} f^\varepsilon \right)
\geq \frac{\lambda}{\varepsilon^2} \left\| \Lambda^{-\varrho} \{\mathbf{I}-\mathbf{P}\}f^\varepsilon \right\|_{N^s_\gamma}^2.
\end{align}
To estimate the right-hand first term, owing to $1 < \varrho < \frac{3}{2}$, by using \eqref{negative embed 1}, \eqref{negative embed 2}, the Minkowski inequality \eqref{minkowski}, the H\"{o}lder inequality and the Cauchy--Schwarz inequality with $\eta$, one has
\begin{align}\nonumber
&\left( q_0 \Lambda^{-\varrho}\left(\nabla_x \phi^\varepsilon \cdot \nabla_v f^\varepsilon \right), \Lambda^{-\varrho} f^\varepsilon \right)\\
=\;&\left( q_0 \Lambda^{-\varrho}\left(\nabla_x \phi^\varepsilon \cdot \nabla_v f^\varepsilon \right), \Lambda^{-\varrho} \mathbf{P}f^\varepsilon \right)
+\left( q_0 \Lambda^{-\varrho}\left(\nabla_x \phi^\varepsilon \cdot \nabla_v f^\varepsilon \right), \Lambda^{-\varrho} \{\mathbf{I}-\mathbf{P}\}f^\varepsilon \right) \nonumber\\
\lesssim\;&\left\| \Lambda^{-\varrho}(\nabla_x \phi^\varepsilon \mu^{\delta}f^\varepsilon)\right\|
\left\| \Lambda^{-\varrho} \mathbf{P}f^\varepsilon \right\|
+\left\| \Lambda^{-\varrho}(\nabla_x \phi^\varepsilon \cdot \nabla_v f^\varepsilon\langle v \rangle ^{-\frac{\gamma+2s}{2}})\right\|
\left\| \langle v \rangle ^ \frac{\gamma+2s}{2}\Lambda^{-\varrho} \{\mathbf{I}-\mathbf{P}\}f^\varepsilon \right\| \nonumber\\
\lesssim\;& \left\| \nabla_x \phi^\varepsilon \mu^{\delta}f^\varepsilon\right\|_{{L^2_v}{L^{\frac{6}{3+2\varrho}}_x}}
\left\| \Lambda^{-\varrho} \mathbf{P}f^\varepsilon \right\|
+\left\| \nabla_x \phi^\varepsilon \cdot \nabla_v f^\varepsilon\langle v \rangle ^{-\frac{\gamma+2s}{2}}\right\|_{L^2_v L_x^{\frac{6}{3+2\varrho}}}
\left\| \Lambda^{-\varrho} \{\mathbf{I}-\mathbf{P}\}f^\varepsilon \right\|_{N^s_\gamma}   \label{negative sobolev estimate 2}\\
\lesssim\;& \left\| \nabla_x \phi^\varepsilon \right\|_{L^{\frac{12}{3+2\varrho}}}
\left\| \mu^{\delta}f^\varepsilon \right\|_{L^2_{v}{L^{\frac{12}{3+2\varrho}}_x}}
\left\| \Lambda^{-\varrho} \mathbf{P}f^\varepsilon \right\|
+\left\| \nabla_x \phi^\varepsilon \right\|_{L^{\frac{3}{\varrho}}}
\left\|  \langle v \rangle ^{-\frac{\gamma+2s}{2}} \nabla_v f^\varepsilon \right\|
\left\| \Lambda^{-\varrho} \{\mathbf{I}-\mathbf{P}\}f^\varepsilon \right\|_{N^s_\gamma} \nonumber\\
\lesssim\;& \left\| \Lambda^{\frac{3}{4}-\frac{\varrho}{2}} \nabla_x \phi^\varepsilon\right\|
\left\| \Lambda^{\frac{3}{4}-\frac{\varrho}{2}} (\mu^{\delta}f^\varepsilon)\right\|
\left\| \Lambda^{-\varrho} \mathbf{P}f^\varepsilon \right\|
+\left\| \Lambda^{\frac{3}{2}-\varrho} \nabla_x \phi^\varepsilon\right\|
\left\|  \langle v \rangle ^{-\frac{\gamma+2s}{2}} \nabla_v f^\varepsilon \right\|
\left\| \Lambda^{-\varrho} \{\mathbf{I}-\mathbf{P}\}f^\varepsilon \right\|_{N^s_\gamma} \nonumber\\
\lesssim\;& \widetilde{\mathcal{E}}^{1/2}_{N,l}(t)
\left\{  \left\| \Lambda^{\frac{3}{4}-\frac{\varrho}{2}} \nabla_x \phi^\varepsilon\right\|^2
+\left\| \Lambda^{\frac{3}{4}-\frac{\varrho}{2}} f^\varepsilon\right\|^2_{N^s_\gamma}\right\}
+\mathcal{E}_{N,l}(t)\left\| \Lambda^{\frac{3}{2}-\varrho} \nabla_x \phi^\varepsilon\right\|^2
+\eta \left\| \Lambda^{-\varrho} \{\mathbf{I}-\mathbf{P}\}f^\varepsilon \right\|^2_{N^s_\gamma}. \nonumber
\end{align}
Similarly,  the right-hand second term has the same upper bound as \eqref{negative sobolev estimate 2}. Finally, for the last term on the right-hand side of \eqref{negative sobolev estimate 1}, by the collision invariant property, we have for soft potentials that
\begin{align} \label{negative sobolev Gamma1}
\left( \frac{1}{\varepsilon} \Lambda^{-\varrho}\Gamma(f^\varepsilon, f^\varepsilon), \Lambda^{-\varrho} f^\varepsilon \right)
=\;&\left( \frac{1}{\varepsilon} \Lambda^{-\varrho}\Gamma(f^\varepsilon, f^\varepsilon),
\Lambda^{-\varrho} \{\mathbf{I}-\mathbf{P}\}f^\varepsilon \right) \nonumber\\
\lesssim \;& \frac{1}{\varepsilon} \left\|\Lambda^{-\varrho}\left( \langle v \rangle ^{-\frac{\gamma+2s}{2}}\Gamma(f^\varepsilon, f^\varepsilon)\right) \right\|
\left\| \Lambda^{-\varrho} \{\mathbf{I}-\mathbf{P}\}f^\varepsilon \right\|_{N^s_\gamma} \nonumber\\
\lesssim\;& \frac{1}{\varepsilon} \left\| \langle v \rangle ^{-\frac{\gamma+2s}{2}}\Gamma(f^\varepsilon, f^\varepsilon) \right\|_{L^2_{v}L_x^{\frac{6}{3+2\varrho}}}
\left\| \Lambda^{-\varrho} \{\mathbf{I}-\mathbf{P}\}f^\varepsilon \right\|_{N^s_\gamma} \nonumber\\
\lesssim\;&\frac{1}{\varepsilon} \left\| \langle v \rangle ^{-\frac{\gamma+2s}{2}}\Gamma(f^\varepsilon, f^\varepsilon) \right\|_{L_x^{\frac{6}{3+2\varrho}}L^2_{v}}
\left\| \Lambda^{-\varrho} \{\mathbf{I}-\mathbf{P}\}f^\varepsilon \right\|_{N^s_\gamma} \\
\lesssim\;&\frac{1}{\varepsilon} \left\| |f^\varepsilon|_{L^2} |f^\varepsilon|_{H^4} \right\|_{L_x^{\frac{6}{3+2\varrho}}}
\left\| \Lambda^{-\varrho} \{\mathbf{I}-\mathbf{P}\}f^\varepsilon \right\|_{N^s_\gamma} \nonumber\\
\lesssim\;&\frac{1}{\varepsilon} \left\|f^\varepsilon\right\|_{L_x^{\frac{3}{\varrho}}L^2_v} \left\|f^\varepsilon\right\|_{H^4_vL^2_x}
\left\| \Lambda^{-\varrho} \{\mathbf{I}-\mathbf{P}\}f^\varepsilon \right\|_{N^s_\gamma} \nonumber\\
\lesssim\;& \mathcal{E}_{N,l}(t) \left\| \Lambda^{\frac{3}{2}-\varrho}f^\varepsilon\right\|^2
+\frac{\eta}{\varepsilon^2} \left\| \Lambda^{-\varrho} \{\mathbf{I}-\mathbf{P}\} f^\varepsilon \right\|_{N^s_\gamma}^2, \nonumber
\end{align}
where we have used \eqref{soft gamma2}, \eqref{negative embed 1}, \eqref{negative embed 2}, the Minkowski inequality \eqref{minkowski},  the H\"{o}lder inequality and the Cauchy--Schwarz inequality with $\eta$. Similarly, for the hard potential case, we obtain the same upper bound as \eqref{negative sobolev Gamma1}.

As a result, combining all the estimates above, \eqref{negative sobolev estimate} follows. This completes the proof of Lemma \ref{negative sobolev lemma}.
\end{proof}
\medskip

\subsection{Uniform Energy Estimate}
\hspace*{\fill}

In this subsection, based on Lemma \ref{macroscopic estimate}--\ref{negative sobolev lemma}, we give the uniform a priori estimates
with respect to $\varepsilon\in (0,1]$, for both hard and soft potentials.
\begin{proposition}\label{energy estimates total}
There is $\mathcal{E}_{N,l}(t)$ satisfying \eqref{energy functional} such that
\begin{align}\label{a priori estimates}
\frac{\d}{\d t} \mathcal{E}_{N,l}(t) + \lambda  \mathcal{D}_{N,l}(t) \lesssim \left\{\varepsilon \left\| \partial_t \phi^\varepsilon \right\|_{L^\infty}+  \chi_{\gamma}\left\|\nabla_x \phi^{\varepsilon}\right\|_{L^\infty}\right\} \mathcal{E}_{N,l}(t).
\end{align}
\end{proposition}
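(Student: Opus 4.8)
The plan is to assemble the inequality \eqref{a priori estimates} by forming a suitable linear combination of all the energy estimates established in the preceding lemmas, namely the macroscopic estimate \eqref{macro estimate} (or \eqref{macro estimate high} where needed), the unweighted estimate \eqref{estimate without weight} of Lemma \ref{energy estimate without weight}, the weighted spatial estimates \eqref{weighted estimate1} and \eqref{weighted estimate2} of Lemmas \ref{weighted 1} and \ref{weighted 2}, and the weighted mixed estimate \eqref{weighted estimate3} of Lemma \ref{weighted 3}. First I would introduce a large constant $M_1$ in front of the macroscopic interactive functional $\mathcal{E}^N_{int}(t)$ (which contributes the $\varepsilon\frac{\d}{\d t}$ term and dissipation for $\partial^\alpha(a^\varepsilon_\pm,b^\varepsilon,c^\varepsilon)$ and $\partial^\alpha\nabla_x\phi^\varepsilon$, recalling $|\mathcal{E}^N_{int}(t)|\lesssim \mathcal{E}_N(t)$), then add the unweighted estimate with a coefficient $M_2$ large enough to absorb the macroscopic terms $\sum_{1\le|\alpha|\le N}\|\partial^\alpha(a^\varepsilon_\pm,b^\varepsilon,c^\varepsilon)\|^2$ and $\sum_{|\alpha|\le N+1}\|\partial^\alpha\nabla_x\phi^\varepsilon\|^2$ appearing on the right of the weighted estimates, and finally add \eqref{weighted estimate1}, \eqref{weighted estimate2}, \eqref{weighted estimate3} with coefficients $M_3\gg M_4\gg M_5$ chosen so that the error terms $\frac{1}{\varepsilon^2}\sum_{|\alpha|\le N}\|\partial^\alpha\{\mathbf{I}-\mathbf{P}\}f^\varepsilon\|^2_{N^s_\gamma}$ and the lower-weight-order terms appearing on the right-hand sides are dominated by the corresponding dissipation terms on the left-hand sides of the estimates used with larger constants. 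The hierarchy $M_1\gg M_2\gg M_3\gg M_4\gg M_5$ together with the fixed constants $\kappa_1,\kappa_2,\kappa_3$ inside $\mathcal{E}^N_{int}(t)$ and the $C_m$ in Lemma \ref{weighted 3} must be chosen in this order, each one after the previous, so that every appearance of a term on some right-hand side has already been produced with a strictly larger coefficient as a genuine dissipation term.

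The resulting functional, call it temporarily $\widetilde{\mathcal{E}}(t)$, is a sum: the pure $L^2$ energy $\sum_{|\alpha|\le N}\sum_\pm\|e^{\pm\varepsilon\phi^\varepsilon/2}\partial^\alpha f^\varepsilon_\pm\|^2+\sum_{|\alpha|\le N}\|\partial^\alpha\nabla_x\phi^\varepsilon\|^2$; the weighted microscopic pieces $\sum_{|\alpha|+|\beta|\le N,\,|\alpha|\le N-1}\|e^{\pm\varepsilon\phi^\varepsilon/2}w_l(\alpha,\beta)\partial^\alpha_\beta\{\mathbf{I}-\mathbf{P}\}f^\varepsilon\|^2$; the top-order $\varepsilon$-weighted piece $\varepsilon\sum_{|\alpha|=N}\|e^{\pm\varepsilon\phi^\varepsilon/2}w_l(\alpha,0)\partial^\alpha f^\varepsilon_\pm\|^2$; plus $\varepsilon M_1\mathcal{E}^N_{int}(t)$. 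Using $|e^{\pm\varepsilon\phi^\varepsilon/2}|\approx 1$ (valid since $\|\phi^\varepsilon\|_{L^\infty}\lesssim\|\nabla_x\phi^\varepsilon\|_{H^1}$ is small under the a priori assumption \eqref{priori assumption}) and $|\varepsilon M_1\mathcal{E}^N_{int}(t)|\lesssim\varepsilon M_1\mathcal{E}_N(t)$, which is absorbed once $M_2\gg M_1$ and $\varepsilon\le 1$, one checks $\widetilde{\mathcal{E}}(t)\sim\mathcal{E}_{N,l}(t)$ in the sense of \eqref{energy functional}; this defines the $\mathcal{E}_{N,l}(t)$ claimed in the statement. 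Similarly, summing the dissipation terms on the left of the component estimates — $\frac{1}{\varepsilon^2}\sum_{|\alpha|\le N}\|\partial^\alpha\{\mathbf{I}-\mathbf{P}\}f^\varepsilon\|^2_{N^s_\gamma}$, the macroscopic gradients, the weighted microscopic $N^s_\gamma$-norms including the $\frac{1}{\varepsilon}$ top-order one — reproduces $\mathcal{D}_{N,l}(t)$ as in \eqref{dissipation functional}, up to the constant $\lambda$.

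The right-hand side requires care: collecting all nonlinear contributions gives terms of the form $\{\mathcal{E}^{1/2}_{N,l}(t)+\mathcal{E}_{N,l}(t)\}\mathcal{D}_{N,l}(t)$, and under the smallness \eqref{priori assumption} we have $\mathcal{E}^{1/2}_{N,l}(t)+\mathcal{E}_{N,l}(t)\le C\delta_0^{1/2}$, so this whole block is $\le\frac{\lambda}{2}\mathcal{D}_{N,l}(t)$ and can be moved to the left; this is where choosing $\delta_0$ small enters, after all the $M_i$ are fixed. The remaining terms are exactly the linear "error" terms $\varepsilon\|\partial_t\phi^\varepsilon\|_{L^\infty}\widetilde{\mathcal{E}}(t)$ and (in the soft case only) $\chi_\gamma\|\nabla_x\phi^\varepsilon\|_{L^\infty}\widetilde{\mathcal{E}}(t)$, which are already in the form $\{\varepsilon\|\partial_t\phi^\varepsilon\|_{L^\infty}+\chi_\gamma\|\nabla_x\phi^\varepsilon\|_{L^\infty}\}\mathcal{E}_{N,l}(t)$ since $\widetilde{\mathcal{E}}(t)\sim\mathcal{E}_{N,l}(t)$. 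The main obstacle I anticipate is bookkeeping the order of constant selection so that, in particular, the singular term $\frac{1}{\varepsilon^2}\sum_{|\alpha|=N}\|\partial^\alpha\{\mathbf{I}-\mathbf{P}\}f^\varepsilon\|^2_{N^s_\gamma}$ that is produced on the right of the top-order estimate \eqref{weighted estimate2} (and of \eqref{weighted estimate1}, \eqref{weighted estimate3}) is genuinely controlled: it must be dominated by the $\frac{\lambda}{\varepsilon^2}$-dissipation coming from the unweighted estimate \eqref{estimate without weight} used with the much larger constant $M_2$, which forces $M_2$ to be chosen only after $M_3,M_4,M_5$ — a reversal of the naive ordering — so one has to verify the whole chain closes consistently. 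Once this is arranged, \eqref{a priori estimates} follows.
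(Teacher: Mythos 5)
Your overall strategy coincides with the paper's: Proposition \ref{energy estimates total} is obtained there from the linear combination $\widetilde{C}_2\times\{\widetilde{C}_1\times[\eqref{estimate without weight}+\kappa_4\times\eqref{macro estimate}]+\eqref{weighted estimate1}+\eqref{weighted estimate2}\}+\eqref{weighted estimate3}$ with $\widetilde{C}_1,\widetilde{C}_2$ large and $0<\kappa_4\ll 1$, after which the block $\{\mathcal{E}^{1/2}_{N,l}(t)+\mathcal{E}_{N,l}(t)\}\mathcal{D}_{N,l}(t)$ is absorbed into $\lambda\mathcal{D}_{N,l}(t)$ using the a priori smallness \eqref{priori assumption}. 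But your stated hierarchy of constants is wrong at the one link that actually matters. You put the \emph{largest} coefficient $M_1$ on the macroscopic estimate \eqref{macro estimate} and demand $M_1\gg M_2\gg M_3\gg M_4\gg M_5$. The right-hand side of \eqref{macro estimate} carries $\frac{C}{\varepsilon^2}\sum_{|\alpha|\le N}\|\partial^\alpha\{\mathbf{I}-\mathbf{P}\}f^\varepsilon\|^2_{N^s_\gamma}$; multiplied by $M_1$, its $|\alpha|=N$ part can only be absorbed by the unweighted dissipation $\frac{M_2\lambda}{\varepsilon^2}\|\partial^\alpha\{\mathbf{I}-\mathbf{P}\}f^\varepsilon\|^2_{N^s_\gamma}$ from \eqref{estimate without weight}, since the top-order weighted estimate \eqref{weighted estimate2} only supplies dissipation of strength $\varepsilon^{-1}$, not $\varepsilon^{-2}$. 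Hence one is forced to take $M_2\gtrsim M_1$, i.e. the macroscopic estimate must enter with a \emph{small} coefficient relative to the unweighted one (this is precisely the role of $\kappa_4\ll1$ in the paper), which contradicts your announced ordering. Your own verification of $\widetilde{\mathcal{E}}(t)\sim\mathcal{E}_{N,l}(t)$ even invokes ``$M_2\gg M_1$,'' so the proposal is internally inconsistent on this point, and the closing remark about choosing $M_2$ after $M_3,M_4,M_5$ addresses a different (and unproblematic) comparison while leaving the $M_1$ versus $M_2$ conflict unresolved.

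A secondary slip: the macroscopic terms $\sum_{1\le|\alpha|\le N}\|\partial^\alpha(a^\varepsilon_\pm,b^\varepsilon,c^\varepsilon)\|^2$ and $\sum_{|\alpha|\le N+1}\|\partial^\alpha\nabla_x\phi^\varepsilon\|^2$ appearing on the right of \eqref{weighted estimate1}--\eqref{weighted estimate3} are not controlled by the unweighted estimate \eqref{estimate without weight} (whose dissipation is purely microscopic), but by the macroscopic dissipation produced in \eqref{macro estimate}; attributing them to $M_2$ is harmless only because your combination happens to contain the macroscopic estimate anyway. With the corrected ordering — macroscopic estimate with small relative weight $\kappa_4$, unweighted estimate dominant at the $\varepsilon^{-2}$ level, and \eqref{weighted estimate1}, \eqref{weighted estimate2}, \eqref{weighted estimate3} entering with successively smaller coefficients — the rest of your argument (equivalence of the resulting functional with $\mathcal{E}_{N,l}(t)$ using $|e^{\pm\varepsilon\phi^\varepsilon/2}|\approx1$ and $\varepsilon|\mathcal{E}^N_{int}(t)|\lesssim\mathcal{E}_N(t)$, and absorption of the nonlinear block via \eqref{priori assumption} after the constants are fixed) is exactly the paper's derivation of \eqref{a priori estimates 1} and hence of \eqref{a priori estimates}.
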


\begin{proof}
To prove \eqref{a priori estimates}, we take the proper linear combination of those estimates obtained in the previous Lemmas as follows. For $\widetilde{C}_1 > 0$ and $\widetilde{C}_2 >0$ large enough, $0 < \kappa_4 \ll 1$, the combination $\widetilde{C}_2 \times \left\{ \widetilde{C}_1 \times \big[ \eqref{estimate without weight}+ \kappa_4 \times \eqref{macro estimate}\big]+ \eqref{weighted estimate1}+\eqref{weighted estimate2}\right\}+\eqref{weighted estimate3}$ gives
\begin{align}\label{a priori estimates 1}
\!\!\!\!\!\frac{\d}{\d t} \mathcal{E}_{N,l}(t) + \lambda  \mathcal{D}_{N,l}(t) \lesssim\left\{\varepsilon \left\| \partial_t \phi^\varepsilon \right\|_{L^\infty}\!+\! \chi_{\gamma} \left\|\nabla_x \phi^{\varepsilon}\right\|_{L^\infty}\right\} \mathcal{E}_{N,l}(t) + \left\{\mathcal{E}^{1/2}_{N,l}(t)+\mathcal{E}_{N,l}(t)\right\} \mathcal{D}_{N,l}(t),
\end{align}
where $\mathcal{E}_{N,l}(t)$ is given by
\begin{align}\label{energy functional define}
\begin{split}
\mathcal{E}_{N,l}(t) :=\; &\widetilde{C}_2\bigg\{ \widetilde{C}_1\Big[ \sum_{|\alpha|\leq N}\sum_{\pm}\left\| e^{\pm \frac{\varepsilon \phi^\varepsilon}{2}}\partial^\alpha f^\varepsilon_{\pm}\right\|^2+\sum_{|\alpha| \leq N}\left\| \partial^\alpha \nabla_x \phi^\varepsilon \right\|^2+ \kappa_4 \varepsilon \mathcal{E}_{int}^N(t)\Big]\\
&\qquad+\sum_{|\alpha| \leq N-1}\sum_{\pm}\left\| e^{\pm \frac{\varepsilon \phi^\varepsilon}{2}}w_l(\alpha,0)\partial^\alpha \{\mathbf{I}_{\pm}-\mathbf{P}_{\pm}\}f^\varepsilon\right\|^2\\
&\qquad+\sum_{|\alpha|=N}\sum_{\pm}\varepsilon \left\| e^{\pm \frac{\varepsilon \phi^\varepsilon}{2}}w_l(\alpha,0)\partial^\alpha f_{\pm}^\varepsilon \right\|^2\bigg\}\\
&+\sum_{m=1}^N C_m \sum_{\substack{{|\alpha|+|\beta|\leq N}\\{|\beta|=m}}} \sum_{\pm}
\left\| e^{\pm \frac{\varepsilon \phi^\varepsilon}{2}} w_l(\alpha,\beta)\partial^\alpha_\beta \{\mathbf{I}_{\pm}-\mathbf{P}_{\pm}\}f^\varepsilon\right\|^2.
\end{split}
\end{align}

Recalling the a priori assumption \eqref{priori assumption}, the desired estimate \eqref{a priori estimates} follows directly from \eqref{a priori estimates 1}. This completes the proof of Proposition \ref{energy estimates total}.
\end{proof}
\medskip

\begin{proposition}\label{high energy estimates total}
There is $\mathcal{E}^h_{N,l}(t)$ satisfying \eqref{high energy functional} such that
\begin{align}\label{a priori estimates2}
\frac{\d}{\d t} \mathcal{E}^h_{N,l}(t) + \lambda  \mathcal{D}_{N,l}(t) \lesssim
\left\|\nabla_x(a_{\pm}^\varepsilon, b^\varepsilon, c^\varepsilon)\right\|^2
+\left\{\varepsilon \left\| \partial_t \phi^\varepsilon \right\|_{L^\infty}+  \chi_{\gamma}\left\|\nabla_x \phi^{\varepsilon}\right\|_{L^\infty}\right\} \mathcal{E}^h_{N,l}(t).
\end{align}
\end{proposition}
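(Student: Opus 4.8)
\textbf{Proof proposal for Proposition \ref{high energy estimates total}.}
The plan is to mimic the proof of Proposition \ref{energy estimates total}, but to replace the role of the ``full'' energy estimate without weight \eqref{estimate without weight} and the macroscopic estimate \eqref{macro estimate} by their high-order counterparts, namely the energy estimate without weight restricted to $1 \leq |\alpha| \leq N$ together with the high-order macroscopic estimate \eqref{macro estimate high}. First I would derive, by the same argument as in Lemma \ref{energy estimate without weight} but only applying $\partial^\alpha$ for $1 \leq |\alpha| \leq N$, the analogue of \eqref{estimate without weight} in which the sum over $|\alpha|\leq N$ is replaced by $1 \leq |\alpha| \leq N$ and the term $\sum_{|\alpha|\leq N}\|\partial^\alpha\nabla_x\phi^\varepsilon\|^2$ on the left is replaced by $\sum_{1\leq|\alpha|\leq N}\|\partial^\alpha\nabla_x\phi^\varepsilon\|^2$; here the only subtlety is that for $|\alpha|\geq 1$ the lowest-order macroscopic quantity $\|\mathbf{P}f^\varepsilon\|$ does not appear, so in the estimate of $J_{1,9}$ one controls $\|\partial^\alpha\mathbf{P}f^\varepsilon\|$ directly by the dissipation $\mathcal{D}_{N,l}(t)$ rather than by $\mathcal{E}^{1/2}_{N,l}(t)$. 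This produces a closed high-order dissipation for $\{\mathbf{I}-\mathbf{P}\}f^\varepsilon$ in $1\leq|\alpha|\leq N$ and for $\nabla_x\phi^\varepsilon$.

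Next I would combine this high-order estimate without weight with a small multiple $\kappa_4$ of the high-order macroscopic estimate \eqref{macro estimate high}, which supplies the dissipation $\sum_{2\leq|\alpha|\leq N}\|\partial^\alpha(a^\varepsilon_\pm,b^\varepsilon,c^\varepsilon)\|^2$ and $\sum_{|\alpha|\leq N+1}\|\partial^\alpha\nabla_x\phi^\varepsilon\|^2$ at the cost of the interactive functional $\varepsilon\mathcal{E}^{N,h}_{int}(t)$ and the error $\frac{1}{\varepsilon^2}\sum_{|\alpha|\leq N}\|\partial^\alpha\{\mathbf{I}-\mathbf{P}\}f^\varepsilon\|^2_{N^s_\gamma}+\mathcal{E}_N(t)\mathcal{D}_N(t)$. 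The key point is that the missing dissipative quantity, namely $\|\nabla_x(a^\varepsilon_\pm,b^\varepsilon,c^\varepsilon)\|^2$ for $|\alpha|=1$, is exactly the term allowed to appear on the right-hand side of \eqref{a priori estimates2}; everything else from \eqref{macro estimate high} is already controlled. Then I would add the weighted estimates \eqref{weighted estimate1}, \eqref{weighted estimate2}, \eqref{weighted estimate3} with large constants $\widetilde C_1,\widetilde C_2$ exactly as in the proof of Proposition \ref{energy estimates total}; the right-hand sides of these weighted estimates involve $\sum_{|\alpha|\leq N-1}\|\partial^\alpha\nabla_x\phi^\varepsilon\|^2$, $\sum_{1\leq|\alpha|\leq N}\|\partial^\alpha(a^\varepsilon_\pm,b^\varepsilon,c^\varepsilon)\|^2$ and $\frac{1}{\varepsilon^2}\sum_{|\alpha|\leq N}\|\partial^\alpha\{\mathbf{I}-\mathbf{P}\}f^\varepsilon\|^2_{N^s_\gamma}$, all of which except the $|\alpha|=1$ piece of $\|\partial^\alpha(a^\varepsilon_\pm,b^\varepsilon,c^\varepsilon)\|^2$ are absorbed into the left-hand dissipation after choosing $\widetilde C_1$, then $\widetilde C_2$ large enough and $\kappa_4$ small enough. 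The resulting functional
\begin{align*}
\mathcal{E}^h_{N,l}(t) :=\; &\widetilde{C}_2\bigg\{ \widetilde{C}_1\Big[ \sum_{1 \leq |\alpha|\leq N}\sum_{\pm}\big\| e^{\pm \frac{\varepsilon \phi^\varepsilon}{2}}\partial^\alpha f^\varepsilon_{\pm}\big\|^2+\sum_{1 \leq |\alpha| \leq N}\big\| \partial^\alpha \nabla_x \phi^\varepsilon \big\|^2+ \kappa_4 \varepsilon \mathcal{E}_{int}^{N,h}(t)\Big]\\
&+\sum_{|\alpha| \leq N-1}\sum_{\pm}\big\| e^{\pm \frac{\varepsilon \phi^\varepsilon}{2}}w_l(\alpha,0)\partial^\alpha \{\mathbf{I}_{\pm}-\mathbf{P}_{\pm}\}f^\varepsilon\big\|^2 +\sum_{|\alpha|=N}\sum_{\pm}\varepsilon \big\| e^{\pm \frac{\varepsilon \phi^\varepsilon}{2}}w_l(\alpha,0)\partial^\alpha f_{\pm}^\varepsilon \big\|^2\bigg\}\\
&+\sum_{m=1}^N C_m \sum_{\substack{{|\alpha|+|\beta|\leq N}\\{|\beta|=m}}} \sum_{\pm} \big\| e^{\pm \frac{\varepsilon \phi^\varepsilon}{2}} w_l(\alpha,\beta)\partial^\alpha_\beta \{\mathbf{I}_{\pm}-\mathbf{P}_{\pm}\}f^\varepsilon\big\|^2
\end{align*}
is readily checked to satisfy \eqref{high energy functional}, using $|e^{\pm\varepsilon\phi^\varepsilon/2}|\approx 1$, the bound on $|\mathcal{E}^{N,h}_{int}(t)|$ from Lemma \ref{macroscopic estimate}, and the identity $\sum_{|\alpha|=N}\|\partial^\alpha\nabla_x\phi^\varepsilon\|^2=\sum_{|\alpha|=N-1}\|\partial^\alpha\nabla_x(a^\varepsilon_+-a^\varepsilon_-)\|^2$ coming from \eqref{a equality} to recover the $|\alpha|=N+1$ potential term in \eqref{high energy functional}.

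Finally, invoking the a priori assumption \eqref{priori assumption} (so that $\mathcal{E}^{1/2}_{N,l}(t)+\mathcal{E}_{N,l}(t)\leq C\delta_0$ is small), the cubic error term $\{\mathcal{E}^{1/2}_{N,l}(t)+\mathcal{E}_{N,l}(t)\}\mathcal{D}_{N,l}(t)$ produced along the way is absorbed into $\lambda\mathcal{D}_{N,l}(t)$ on the left, which yields \eqref{a priori estimates2}. The main obstacle I anticipate is bookkeeping rather than conceptual: one must verify that in the high-order version of Lemma \ref{energy estimate without weight} no uncontrolled lowest-order macroscopic quantity $\|\mathbf{P}f^\varepsilon\|$ or $\|\nabla_x\phi^\varepsilon\|$ (without derivatives) is generated — in particular in the terms $J_{1,4}$, $J_{1,8}$, $J_{1,9}$ where factors of $\partial^{\alpha+e_i}\phi^\varepsilon$ appear — and that the $|\alpha|=1$ macroscopic dissipation $\|\nabla_x(a^\varepsilon_\pm,b^\varepsilon,c^\varepsilon)\|^2$, which genuinely cannot be closed at the high-order level, is precisely the term tolerated on the right-hand side of \eqref{a priori estimates2} and nothing worse. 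Once this is confirmed the linear combination closes exactly as in Proposition \ref{energy estimates total}.
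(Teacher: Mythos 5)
Your plan follows the paper up to a point (the paper also re-runs Lemma \ref{energy estimate without weight} for $1\leq|\alpha|\leq N$ and then combines with \eqref{macro estimate high}, \eqref{weighted estimate1}--\eqref{weighted estimate3}), but it omits the one extra ingredient the paper needs: a \emph{zeroth-order} energy estimate for the microscopic part, obtained by testing \eqref{rrrVPB} (with no derivatives) against $e^{\pm\varepsilon\phi^\varepsilon}\{\mathbf{I}_\pm-\mathbf{P}_\pm\}f^\varepsilon$, i.e.\ the paper's estimate \eqref{I-P estimate}. Without it your scheme does not close, for three concrete reasons. First, the left-hand side of \eqref{a priori estimates2} is the full $\lambda\mathcal{D}_{N,l}(t)$, which contains $\frac{1}{\varepsilon^2}\|\{\mathbf{I}-\mathbf{P}\}f^\varepsilon\|^2_{N^s_\gamma}$ at $|\alpha|=0$; more importantly, the weighted estimate \eqref{weighted estimate1} at $|\alpha|=0$ (and \eqref{macro estimate high}) produce on their right-hand sides unweighted zeroth-order errors of the form $\frac{C}{\varepsilon^2}\|\{\mathbf{I}-\mathbf{P}\}f^\varepsilon\|^2_{L^2(B_C)}$, coming from the coercivity \eqref{L coercive2}, with a constant $C$ that is not small. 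In the paper these are absorbed because the unweighted dissipation over \emph{all} $|\alpha|\leq N$ enters with the large prefactor $\widetilde C_1\widetilde C_2$; in your combination the only zeroth-order microscopic dissipation is the weighted one generated by \eqref{weighted estimate1} itself, which carries the same prefactor $\widetilde C_2$ and constant $\lambda$, so absorption would require $C\leq\lambda$ and there is no free parameter left to enforce it. Second, your candidate functional cannot satisfy \eqref{high energy functional}: that equivalence contains $\|\nabla_x\phi^\varepsilon\|^2$ (the $|\alpha|=0$ term of $\sum_{|\alpha|\leq N+1}\|\partial^\alpha\nabla_x\phi^\varepsilon\|^2$), which is absent from your functional and is not controlled by higher derivatives of $\phi^\varepsilon$ or by $\|a_+^\varepsilon-a_-^\varepsilon\|$ in the whole space. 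In the paper both missing pieces come for free from \eqref{I-P estimate}, whose time-derivative bracket is exactly $\sum_\pm\|e^{\pm\varepsilon\phi^\varepsilon/2}\{\mathbf{I}_\pm-\mathbf{P}_\pm\}f^\varepsilon\|^2+\|\nabla_x\phi^\varepsilon\|^2$ and which supplies $\frac{\lambda}{\varepsilon^2}\|\{\mathbf{I}-\mathbf{P}\}f^\varepsilon\|^2_{N^s_\gamma}$ with a large prefactor after multiplication by $\widetilde C_1$. Third, this zeroth-order microscopic estimate is also the true source of the tolerated term $\|\nabla_x(a_\pm^\varepsilon,b^\varepsilon,c^\varepsilon)\|^2$ on the right of \eqref{a priori estimates2} (it arises when the transported macroscopic part $[\partial_t+\frac{1}{\varepsilon}v\cdot\nabla_x\mp\nabla_x\phi^\varepsilon\cdot\nabla_v]\mathbf{P}_\pm f^\varepsilon$ is moved to the right via the local conservation laws); your identification of that term solely with the missing $|\alpha|=1$ piece of the macroscopic dissipation in \eqref{macro estimate high} is only part of the story. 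Note also that one cannot simply restore $|\alpha|=0$ in the no-weight estimate, since that would reintroduce $\|\mathbf{P}f^\varepsilon\|^2$ into the time-derivative bracket, defeating the purpose of the high-order functional; estimating only the microscopic projection at zeroth order is precisely the fix.
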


\begin{proof}
Let $1 \leq |\alpha| \leq N$ in \eqref{without weight}. Repeating those computations in Lemma \ref{energy estimate without weight}, one can obtain
\begin{align}\label{1-N estimate}
&\frac{\d}{\d t}\left\{\sum_{1 \leq |\alpha| \leq N}\sum_{\pm}\left\|e^{\pm \frac{\varepsilon \phi^\varepsilon}{2}}\partial^\alpha f_{\pm}^\varepsilon\right\|^2
+\sum_{1 \leq |\alpha| \leq N}\left\|\partial^\alpha \nabla_x\phi^\varepsilon\right\|^2 \right\} +\frac{\lambda}{\varepsilon^2}
\sum_{1 \leq |\alpha| \leq N}\left\|\partial^\alpha \{\mathbf{I}-\mathbf{P}\}f^\varepsilon\right\|^2_{N^s_\gamma} \nonumber \\
\lesssim\;& \varepsilon \left\|\partial_t \phi^\varepsilon \right\|_{L^\infty} \sum_{1 \leq |\alpha| \leq N}\sum_{\pm}\left\|e^{\pm \frac{\varepsilon \phi^\varepsilon}{2}}\partial^\alpha f_{\pm}^\varepsilon\right\|^2
+\left\{\mathcal{E}^{1/2}_{N,l}(t)+\mathcal{E}_{N,l}(t)\right\}\mathcal{D}_{N,l}(t).
\end{align}

Moreover, taking the inner product of \eqref{rrrVPB} with $e^{\pm \varepsilon \phi^\varepsilon} \{\mathbf{I}_\pm-\mathbf{P}_\pm\}f^\varepsilon$ over $\mathbb{R}_x^3 \times \mathbb{R}_v^3$ and summing with $\pm$, we have
\begin{align}
&\sum_{\pm}\left( \partial_t \{\mathbf{I}_\pm-\mathbf{P}_\pm\}f^\varepsilon, e^{\pm \varepsilon \phi^\varepsilon} \{\mathbf{I}_\pm-\mathbf{P}_\pm\}f^\varepsilon\right)
+\sum_{\pm}\left( \pm\frac{1}{\varepsilon} \partial^{e_i}\phi^\varepsilon v_i \mu^{1/2},
\{\mathbf{I}_\pm-\mathbf{P}_\pm\}f^\varepsilon\right) \nonumber\\
&+\sum_{\pm}\left( \frac{1}{\varepsilon^2} L_{\pm} \{\mathbf{I}-\mathbf{P}\}f^\varepsilon,  \{\mathbf{I}_\pm-\mathbf{P}_\pm\}f^\varepsilon\right) \nonumber\\
=\;&-\sum_{\pm}\left(\frac{1}{\varepsilon^2} L_{\pm} \{\mathbf{I}-\mathbf{P}\}f^\varepsilon, \left(e^{\pm \varepsilon \phi^\varepsilon}-1\right) \{\mathbf{I}_\pm-\mathbf{P}_\pm\}f^\varepsilon\right) \label{E^h-N,l-formula} \\
&-\sum_{\pm}\left( \pm\frac{1}{\varepsilon} \partial^{e_i}\phi^\varepsilon v_i \mu^{1/2},  \left(e^{\pm \varepsilon \phi^\varepsilon}-1\right) \{\mathbf{I}_\pm-\mathbf{P}_\pm\}f^\varepsilon\right) \nonumber\\
&-\sum_{\pm}\left( \left[\partial_t+\frac{1}{\varepsilon}v_i \partial^{e_i} \mp \partial^{e_i} \phi^\varepsilon \partial_{e_i}\right] \mathbf{P}_{ \pm} f^\varepsilon, e^{\pm \varepsilon \phi^\varepsilon} \{\mathbf{I}_\pm-\mathbf{P}_\pm\}f^\varepsilon\right) \nonumber\\
&+\frac{1}{2}\sum_{\pm}\left( \mp \partial^{e_i}\phi^\varepsilon v_i \mathbf{P}_{\pm}f^\varepsilon, e^{\pm \varepsilon \phi^\varepsilon} \{\mathbf{I}_\pm-\mathbf{P}_\pm\}f^\varepsilon\right)
+\sum_{\pm}\left( \frac{1}{\varepsilon} \Gamma_{\pm}(f^\varepsilon,f^\varepsilon), e^{\pm \varepsilon \phi^\varepsilon} \{\mathbf{I}_\pm-\mathbf{P}_\pm\}f^\varepsilon \right),\nonumber
\end{align}
where we have used
\begin{align}\nonumber
\left(\frac{1}{\varepsilon}v_i\partial^{e_i} \{\mathbf{I}_{\pm}-\mathbf{P}_{\pm}\}f^\varepsilon \pm \frac{1}{2}\partial^{e_i}\phi^\varepsilon v_i \{\mathbf{I}_{\pm}-\mathbf{P}_{\pm}\}f^\varepsilon,
e^{\pm \varepsilon \phi^\varepsilon} \{\mathbf{I}_{\pm}-\mathbf{P}_{\pm}\}f^\varepsilon \right)=0
\end{align}
and
\begin{align}\nonumber
\left(\partial^{e_i}\phi^\varepsilon \partial_{e_i}\{\mathbf{I}_{\pm}-\mathbf{P}_{\pm}\}f^\varepsilon, e^{\pm \varepsilon \phi^\varepsilon} \{\mathbf{I}_{\pm}-\mathbf{P}_{\pm}\}f^\varepsilon \right)=0.
\end{align}

Now we denote all terms in \eqref{E^h-N,l-formula} by $J_{5,1}$ to $J_{5,8}$ and estimate them term by term.
For the term $J_{5,1}$, we acquire from integration by parts with respect to $t$ that
\begin{align}
J_{5,1}=\;&\frac{1}{2}\frac{\d}{\d t} \sum_{\pm}\left\|e^{\pm \frac{\varepsilon \phi^\varepsilon}{2}} \{\mathbf{I}_{\pm}-\mathbf{P}_{\pm}\}f^\varepsilon \right\|^2
-\frac{1}{2}\sum_{\pm} \left(\pm \varepsilon \partial_t \phi^\varepsilon  \{\mathbf{I}_{\pm}-\mathbf{P}_{\pm}\}f^\varepsilon, e^{\pm \varepsilon \phi^\varepsilon}  \{\mathbf{I}_{\pm}-\mathbf{P}_{\pm}\}f^\varepsilon\right). \nonumber
\end{align}
The second term  on the right-hand side of the above equality is bounded by
\begin{equation}\nonumber
\varepsilon \left\|\partial_t \phi^\varepsilon \right\|_{L^\infty} \sum_{\pm}\left\|e^{\pm \frac{\varepsilon \phi^\varepsilon}{2}}  \{\mathbf{I}_{\pm}-\mathbf{P}_{\pm}\}f^\varepsilon\right\|^2.
\end{equation}
For the term $J_{5,2}$, recalling the first equation of \eqref{macro equation 5} and \eqref{a equality}, one has
\begin{align}\nonumber
J_{5,2}=\sum_{\pm} \left(\pm\frac{1}{\varepsilon}\partial^{e_i} \phi^\varepsilon v_i \mu^{1/2},  \{\mathbf{I}_{\pm}-\mathbf{P}_{\pm}\}f^\varepsilon \right)
=\frac{1}{2}\frac{\d}{\d t}\left\| \nabla_x \phi^\varepsilon \right\|^2.
\end{align}
For the term $J_{5,3}$, it follows from \eqref{L coercive1} that
\begin{align}
\begin{split}\nonumber
J_{5,3}=\;&\left(\frac{1}{\varepsilon^2} L \partial^\alpha \{\mathbf{I}-\mathbf{P}\}f^\varepsilon,
\partial^\alpha \{\mathbf{I}-\mathbf{P}\}f^\varepsilon\right)
\geq\frac{\lambda}{\varepsilon^2} \left\| \{\mathbf{I}-\mathbf{P}\}f^\varepsilon\right\|^2_{N^s_\gamma}.
\end{split}
\end{align}
For the term $J_{5,4}$, combining \eqref{Gamma zeta2}, \eqref{Gamma zeta3} and \eqref{phi estimate1}, we have
\begin{align}
J_{5,4}\lesssim \frac{1}{\varepsilon} \left\| \nabla_x \phi^\varepsilon\right\|_{H^1} \left\|\{\mathbf{I}-\mathbf{P}\}f^\varepsilon \right\|^2_{N^s_\gamma}
\lesssim \mathcal{E}^{1/2}_{N,l}(t)\mathcal{D}_{N,l}(t). \nonumber
\end{align}
For the term $J_{5,5}$, we can obtain from \eqref{phi estimate1} that
\begin{align}\nonumber
J_{5,5}= \sum_{\pm} \left(\mp\frac{1}{\varepsilon}\partial^{e_i} \phi^\varepsilon v_i \mu^{1/2},
\left(e^{ \pm \varepsilon \phi^\varepsilon}-1\right)  \{\mathbf{I}_{\pm}-\mathbf{P}_{\pm}\}f^\varepsilon\right)
\lesssim \mathcal{E}^{1/2}_{N,l}(t)\mathcal{D}_{N,l}(t).
\end{align}
For the terms $J_{5,6}$ and $J_{5,7}$, by utilizing \eqref{macro equation 3}, $ \eqref{g estimate}$, the first equation of \eqref{macro equation 5}, the Sobolev inequalities \eqref{Sobolev-ineq} and the Cauchy--Schwarz inequality with $\eta$, they can be dominated by
\begin{align}\nonumber
\frac{\eta}{\varepsilon^2}\left\|\{\mathbf{I}-\mathbf{P}\}f^\varepsilon\right\|^2_{N^s_\gamma}
+ \left\| \nabla_x (a_{\pm}^\varepsilon, b^\varepsilon, c^\varepsilon) \right\|^2
+ \mathcal{E}_{N}(t)\mathcal{D}_{N}(t),\nonumber
\end{align}
where we used the definitions of $\mathbf{P}$ and $\mathbf{I}-\mathbf{P}$.
Finally, for the term $J_{5,8}$, it follows from \eqref{soft gamma3} and \eqref{hard gamma3} that $J_{5,8} \lesssim \mathcal{E}^{1/2}_{N,l}(t)\mathcal{D}_{N,l}(t)$. Therefore, combining all the estimates above, one has
\begin{align}\label{I-P estimate}
&\frac{\d}{\d t}\left\{\sum_{\pm}\left\|e^{\pm \frac{\varepsilon \phi^\varepsilon}{2}}\{\mathbf{I}_{\pm}-\mathbf{P}_{\pm}\}f^\varepsilon\right\|^2
+\left\|\nabla_x\phi^\varepsilon\right\|^2\right\}
+\frac{\lambda}{\varepsilon^2}\left\| \{\mathbf{I}-\mathbf{P}\}f^\varepsilon\right\|^2_{N^s_\gamma} \\
\lesssim\;& \varepsilon \left\|\partial_t \phi^\varepsilon \right\|_{L^\infty} \sum_{\pm}\left\|e^{\pm \frac{\varepsilon \phi^\varepsilon}{2}}\{\mathbf{I}_{\pm}-\mathbf{P}_{\pm}\}f^\varepsilon\right\|^2
+\left\| \nabla_x(a_{\pm}^\varepsilon, b^\varepsilon, c^\varepsilon)\right\|^2
+\left\{\mathcal{E}^{1/2}_{N,l}(t)+\mathcal{E}_{N,l}(t)\right\}\mathcal{D}_{N,l}(t). \nonumber
\end{align}

As in Proposition \ref{energy estimates total}, under the a priori assumption \eqref{priori assumption}, a suitable linear combination of \eqref{1-N estimate}, \eqref{I-P estimate}, \eqref{macro estimate high}, \eqref{weighted estimate1}, \eqref{weighted estimate2} and \eqref{weighted estimate3} yields the desired estimate \eqref{a priori estimates2}. This completes the proof of Proposition \ref{high energy estimates total}.
\end{proof}
\medskip

\begin{proposition}\label{negative sobolev energy estimates total}
There is $\widetilde{\mathcal{E}}_{N,l}(t)$ satisfying \eqref{negative sobolev energy} such that
\begin{align}\label{a priori estimates3}
\frac{\d}{\d t} \widetilde{\mathcal{E}}_{N,l}(t) + \lambda  \widetilde{\mathcal{D}}_{N,l}(t)
\lesssim \left\{\varepsilon \left\| \partial_t \phi^\varepsilon \right\|_{L^\infty}+  \chi_{\gamma}\left\|\nabla_x \phi^{\varepsilon}\right\|_{L^\infty}\right\} \mathcal{E}_{N,l}(t).
\end{align}
\end{proposition}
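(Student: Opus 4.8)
The plan is to upgrade the a priori estimate of Proposition \ref{energy estimates total} by incorporating the negative‑Sobolev contributions. Concretely, I would combine \eqref{a priori estimates} with the negative‑Sobolev estimate \eqref{negative sobolev estimate} of Lemma \ref{negative sobolev lemma} and with a negative‑index analogue of the macroscopic estimate of Lemma \ref{macroscopic estimate}, and then absorb all error terms using the a priori smallness \eqref{priori assumption}, the Sobolev interpolation inequalities of Lemma \ref{sobolev interpolation}, and the Riesz‑potential bounds of Lemmas \ref{negative embedding theorem}--\ref{negative embedding theorem2}.

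First I would establish a negative‑Sobolev macroscopic estimate: apply $\Lambda^{-\varrho}$ to the fluid‑type systems \eqref{macro equation 3}--\eqref{macro equation 5} and repeat the interaction‑functional argument of Lemma \ref{macroscopic estimate} verbatim, with $\Lambda^{-\varrho}$ playing the role of $\partial^\alpha$ and keeping track of the $\varepsilon$‑scalings. This produces an interaction functional $\Lambda^{-\varrho}\mathcal E_{int}$, bounded by $\|\Lambda^{-\varrho}f^\varepsilon\|^2+\|\Lambda^{-\varrho}\nabla_x\phi^\varepsilon\|^2$, such that
\[
\varepsilon\frac{\d}{\d t}\,\Lambda^{-\varrho}\mathcal E_{int}(t)+\big\|\Lambda^{1-\varrho}(a^\varepsilon_\pm,b^\varepsilon,c^\varepsilon)\big\|^2+\big\|\Lambda^{-\varrho}\nabla_x\phi^\varepsilon\big\|^2_{H^1}+\big\|\Lambda^{-\varrho}(a^\varepsilon_+-a^\varepsilon_-)\big\|^2\lesssim\frac{1}{\varepsilon^2}\big\|\Lambda^{-\varrho}\{\mathbf I-\mathbf P\}f^\varepsilon\big\|^2_{N^s_\gamma}+\big\|\Lambda^{1-\varrho}\{\mathbf I-\mathbf P\}f^\varepsilon\big\|^2_{N^s_\gamma}+\varepsilon^2\big\|\langle\Lambda^{-\varrho}(g^\varepsilon_++g^\varepsilon_-+h^\varepsilon_++h^\varepsilon_-),\zeta\rangle\big\|^2_{L^2_x}.
\]
The $h^\varepsilon$‑contribution is treated exactly as in \eqref{h estimate}; the nonlinear $g^\varepsilon$‑contribution is estimated through \eqref{Gamma zeta1}, the Minkowski inequality \eqref{minkowski}, the H\"older inequality and Lemmas \ref{negative embedding theorem}--\ref{negative embedding theorem2}, following the same chain as in \eqref{negative sobolev estimate 2}--\eqref{negative sobolev Gamma1}, and yields terms of the form $\widetilde{\mathcal E}^{1/2}_{N,l}\{\|\Lambda^{3/4-\varrho/2}f^\varepsilon\|^2_{N^s_\gamma}+\|\Lambda^{3/4-\varrho/2}\nabla_x\phi^\varepsilon\|^2\}+\mathcal E_{N,l}\{\|\Lambda^{3/2-\varrho}f^\varepsilon\|^2+\|\Lambda^{3/2-\varrho}\nabla_x\phi^\varepsilon\|^2\}$.

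Next I would set $\widetilde{\mathcal E}_{N,l}(t):=\mathcal E_{N,l}(t)+C_*\big(\|\Lambda^{-\varrho}f^\varepsilon\|^2+\|\Lambda^{-\varrho}\nabla_x\phi^\varepsilon\|^2_{H^1}\big)+\kappa\,\varepsilon\,\Lambda^{-\varrho}\mathcal E_{int}(t)$ with $C_*$ large and $0<\kappa\ll1$, and form the linear combination $\widetilde C\times$\eqref{a priori estimates}$+$\eqref{negative sobolev estimate}$+\kappa\times(\text{negative‑Sobolev macroscopic estimate})$. By construction $\widetilde{\mathcal E}_{N,l}(t)$ satisfies \eqref{negative sobolev energy}, and since $\widetilde{\mathcal D}_{N,l}(t)$ in \eqref{negative sobolev dissipation} is defined only up to equivalence, it suffices to verify that every error term is dominated by $\lambda\widetilde{\mathcal D}_{N,l}(t)$ plus the desired forcing. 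The higher‑order pieces of $\|\Lambda^{-\varrho}\nabla_x\phi^\varepsilon\|_{H^2}$ are recovered from $-\Delta_x\phi^\varepsilon=a^\varepsilon_+-a^\varepsilon_-$ and the $L^2$‑boundedness of the Riesz transform, giving $\|\Lambda^{2-\varrho}\nabla_x\phi^\varepsilon\|\lesssim\|\Lambda^{1-\varrho}(a^\varepsilon_+-a^\varepsilon_-)\|\lesssim\|\Lambda^{1-\varrho}\mathbf P f^\varepsilon\|$, and one derivative lower for the $H^1$ norm inside $\widetilde{\mathcal E}_{N,l}$. For $1<\varrho<3/2$ the intermediate orders $3/4-\varrho/2$ and $3/2-\varrho$ lie strictly between $\Lambda^{1-\varrho}$ (resp.\ $\Lambda^{-\varrho}$) and $\nabla_x$, so Lemma \ref{sobolev interpolation} together with Young's inequality bounds each intermediate norm by a combination of $\|\Lambda^{1-\varrho}\mathbf P f^\varepsilon\|^2$, $\|\nabla_x\mathbf P f^\varepsilon\|^2$, $\frac{1}{\varepsilon^2}\|\Lambda^{-\varrho}\{\mathbf I-\mathbf P\}f^\varepsilon\|^2_{N^s_\gamma}$ and $\frac{1}{\varepsilon^2}\|\nabla_x\{\mathbf I-\mathbf P\}f^\varepsilon\|^2_{N^s_\gamma}$ (using $\varepsilon\le1$ and $|\cdot|_{L^2}\lesssim|\cdot|_{N^s_\gamma}$), all of which belong to $\widetilde{\mathcal D}_{N,l}(t)$; multiplying by the small prefactors $\widetilde{\mathcal E}^{1/2}_{N,l}\lesssim\delta_0$ or $\mathcal E_{N,l}\lesssim\delta_0$ from \eqref{priori assumption} lets them be absorbed. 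The only inhomogeneous term that survives is the one already present on the right of \eqref{a priori estimates}, namely $\{\varepsilon\|\partial_t\phi^\varepsilon\|_{L^\infty}+\chi_\gamma\|\nabla_x\phi^\varepsilon\|_{L^\infty}\}\mathcal E_{N,l}(t)$, which is precisely \eqref{a priori estimates3}.

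The step I expect to be the main obstacle is the negative‑Sobolev macroscopic estimate and the attendant bookkeeping: one must check that after the interaction‑functional manipulations (and after the Riesz‑potential/Minkowski/H\"older chain for $g^\varepsilon$) every leftover fractional norm genuinely interpolates between quantities that already sit in $\widetilde{\mathcal D}_{N,l}(t)$ --- in particular that the lowest‑order surviving macroscopic quantity is $\|\Lambda^{1-\varrho}\mathbf P f^\varepsilon\|^2$ (which is dissipative) rather than $\|\Lambda^{-\varrho}\mathbf P f^\varepsilon\|^2$ (which is only energy‑bounded), this being exactly why the negative‑index macroscopic estimate, and not merely Lemma \ref{negative sobolev lemma}, is indispensable. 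Tracking the powers of $\varepsilon$ carefully through this combination, so that no uncontrolled $1/\varepsilon$ singularity is introduced, is the delicate point.
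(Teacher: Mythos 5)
Your proposal follows essentially the same route as the paper: a negative-index analogue of the macroscopic interaction-functional estimate (the paper's $\mathcal{E}^{-\varrho}_{int}$ bound \eqref{macro negative sobolev1}), the Riesz-potential/Minkowski/H\"older chain for the $g^\varepsilon$, $h^\varepsilon$ terms as in \eqref{g estimate negative sobolev}--\eqref{h estimate negative sobolev}, the interpolation of the intermediate orders $\Lambda^{3/4-\varrho/2}$, $\Lambda^{3/2-\varrho}$ against $\Lambda^{1-\varrho}\mathbf{P}f^\varepsilon$, $\nabla_x\mathbf{P}f^\varepsilon$ and the microscopic dissipation as in \eqref{interpolation negative}, and finally a proper linear combination of \eqref{negative sobolev estimate}, \eqref{a priori estimates} and the negative-Sobolev macroscopic estimate under the a priori assumption \eqref{priori assumption}. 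Your observation that the dissipative macroscopic quantity must be $\|\Lambda^{1-\varrho}\mathbf{P}f^\varepsilon\|^2$ rather than $\|\Lambda^{-\varrho}\mathbf{P}f^\varepsilon\|^2$, and the recovery of the $H^2$ part of $\|\Lambda^{-\varrho}\nabla_x\phi^\varepsilon\|$ through the Poisson equation, are exactly consistent with the paper's argument.
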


\begin{proof}
Similar to Lemma \ref{macroscopic estimate}, there exists an interactive functional $\mathcal{E}^{-\varrho}_{int}(t)$ satisfying
\begin{align}\nonumber
\left|\mathcal{E}^{-\varrho}_{int}(t)\right| \lesssim \left\| \Lambda^{-\varrho} f^\varepsilon \right\|^2
+ \left\| \Lambda^{1-\varrho} f^\varepsilon \right\|^2
+ \left\| \Lambda^{-\varrho} \nabla_x \phi^\varepsilon \right\|^2_{H^2},
\end{align}
such that
\begin{align}
\begin{split}\label{macro negative sobolev1}
&\varepsilon \frac{\d}{\d t} \mathcal{E}^{-\varrho}_{int}(t) + \left\| \Lambda^{1-\varrho} (a_{\pm}^\varepsilon, b^\varepsilon, c^\varepsilon) \right\|^2+  \left\| \Lambda^{-\varrho} \nabla_x \phi^ \varepsilon \right\|_{H^2}^2 \\
\lesssim\;& \left\| \Lambda^{1-\varrho}\{\mathbf{I}-\mathbf{P}\}f^\varepsilon \right\|^2_{N^s_\gamma}
+ \frac{1}{\varepsilon^2}\left\| \Lambda^{-\varrho}\{\mathbf{I}-\mathbf{P}\}f^\varepsilon \right\|^2_{N^s_\gamma}
+\varepsilon^2 \left\| \langle \Lambda^{-\varrho}(g^\varepsilon_{+}+g^\varepsilon_{-}+h^\varepsilon_{+}+h^\varepsilon_{-}), \zeta \rangle\right\|_{L^2_x}^2.
\end{split}
\end{align}
By the interpolation inequality with respect to the spatial derivatives, it is easy to derive that
\begin{align}
\begin{split}\label{interpolation negative}
&\left\| \Lambda^{1-\varrho}\{\mathbf{I}-\mathbf{P}\}f^\varepsilon \right\|^2_{N^s_\gamma}
\lesssim \left\| \Lambda^{-\varrho}\{\mathbf{I}-\mathbf{P}\}f^\varepsilon \right\|^2_{N^s_\gamma}
+ \left\|  \nabla_x \{\mathbf{I}-\mathbf{P}\}f^\varepsilon \right\|^2_{N^s_\gamma},  \\
&\left\| \Lambda^{\frac{3}{4}-\frac{\varrho}{2}} \nabla_x \phi^\varepsilon \right\|^2
+\left\| \Lambda^{\frac{3}{2}-\varrho} \nabla_x \phi^\varepsilon \right\|^2
\lesssim \left\| \Lambda^{-\varrho} \nabla_x \phi^\varepsilon \right\|^2
+\left\| \nabla^2_x \phi^\varepsilon \right\|^2,   \\
&\left\| \Lambda^{\frac{3}{4}-\frac{\varrho}{2}}f^\varepsilon  \right\|^2_{N^s_\gamma}
\lesssim  \left\| \Lambda^{1-\varrho}\mathbf{P}f^\varepsilon \right\|^2
+\left\| \nabla_x \mathbf{P}f^\varepsilon \right\|^2
+\left\| \Lambda^{-\varrho}\{\mathbf{I}-\mathbf{P}\}f^\varepsilon \right\|^2_{N^s_\gamma}
+\left\| \nabla_x\{\mathbf{I}-\mathbf{P}\}f^\varepsilon \right\|^2_{N^s_\gamma}, \\
&\left\| \Lambda^{\frac{3}{2}-\varrho}f^\varepsilon \right\|^2
\lesssim  \left\| \Lambda^{1-\varrho} \mathbf{P}f^\varepsilon \right\|^2
+\left\| \nabla_x \mathbf{P}f^\varepsilon \right\|^2
+\left\| \langle v \rangle ^{-\frac{\gamma+2s}{2}} \{\mathbf{I}-\mathbf{P}\}f^\varepsilon \right\|_{H^1_x N^s_\gamma}^2,
\end{split}
\end{align}
where we have used $\frac{3}{4}-\frac{\varrho}{2} > 1-\varrho$ for $1 < \varrho < \frac{3}{2}$.
Recalling \eqref{g,h define} for $g^\varepsilon_{\pm}$ and $h^\varepsilon_{\pm}$, we obtain from \eqref{Gamma zeta1} and \eqref{interpolation negative} that
\begin{align}
\begin{split}\label{g estimate negative sobolev}
\varepsilon \left\| \langle \Lambda^{-\varrho}(g^\varepsilon_{+}+g^\varepsilon_{-}), \zeta \rangle \right\|_{L^2_x}
\lesssim\;& \left\|  \Lambda^{-\varrho}(\nabla_x \phi^\varepsilon \mu^\delta f^\varepsilon )\right\|
+\left\|  \Lambda^{-\varrho} \langle \Gamma(f^\varepsilon, f^\varepsilon), \zeta \rangle\right\|_{L^2_x} \\
\lesssim\;& \left\| \nabla_x \phi^\varepsilon \mu^\delta f^\varepsilon \right\|_{L^2_v L_x^{\frac{6}{3+2\varrho}}}
+ \left\|  \langle \Gamma(f^\varepsilon, f^\varepsilon), \zeta \rangle\right\|_{L_x^{\frac{6}{3+2\varrho}}} \\
\lesssim\;& \left\| \nabla_x \phi^\varepsilon \right\|_{L^{\frac{12}{3+2\varrho}}}
\left\| \mu^\delta f^\varepsilon \right\|_{L^2_v L_x^{\frac{12}{3+2\varrho}}}
+\left\| |f^\varepsilon|^2_{L^2_{-m}} \right\|_{L_x^{\frac{6}{3+2\varrho}}} \\
\lesssim\;& \left\| \Lambda^{\frac{3}{4}-\frac{\varrho}{2}}\nabla_x \phi^\varepsilon \right\|
\left\| \Lambda^{\frac{3}{4}-\frac{\varrho}{2}} (\mu^\delta f^\varepsilon) \right\|
+\left\| |f^\varepsilon|_{L^2_{-m}} \right\|^2_{L_x^{\frac{12}{3+2\varrho}}}\\
\lesssim\;& \left\| \Lambda^{\frac{3}{4}-\frac{\varrho}{2}}\nabla_x \phi^\varepsilon \right\|
\left\| \Lambda^{\frac{3}{4}-\frac{\varrho}{2}} (\mu^\delta f^\varepsilon) \right\|
+\left\|  \Lambda^{\frac{3}{4}-\frac{\varrho}{2}} f^\varepsilon \right\|^2_{L^2_{-m}} \\
\lesssim \; & \widetilde{\mathcal{E}}^{1/2}_{N,l}(t)\widetilde{\mathcal{D}}^{1/2}_{N,l}(t),
\end{split}
\end{align}
and
\begin{align} \label{h estimate negative sobolev}
\varepsilon \left\| \langle \Lambda^{-\varrho}(h^\varepsilon_{+}+h^\varepsilon_{-}), \zeta \rangle \right\|_{L^2_x}
\lesssim \left\| \Lambda^{1-\varrho}\{\mathbf{I}-\mathbf{P}\}f^\varepsilon \right\|_{N^s_\gamma}
+ \frac{1}{\varepsilon}\left\| \Lambda^{-\varrho}\{\mathbf{I}-\mathbf{P}\}f^\varepsilon \right\|_{N^s_\gamma}.
\end{align}
Here, we used \eqref{negative embed 1}, \eqref{negative embed 2}, the Minkowski inequality \eqref{minkowski}  and the H\"{o}lder inequality. Hence, it follows from \eqref{macro negative sobolev1}--\eqref{h estimate negative sobolev} that
\begin{align}
\begin{split}\label{macro negative sobolev2}
&\varepsilon \frac{\d}{\d t} \mathcal{E}^{-\varrho}_{int}(t) + \left\| \Lambda^{1-\varrho} (a_{\pm}^\varepsilon, b^\varepsilon, c^\varepsilon) \right\|^2+  \left\| \Lambda^{-\varrho} \nabla_x \phi^ \varepsilon \right\|_{H^2}^2 \\
\lesssim\;&\frac{1}{\varepsilon^2}\left\| \Lambda^{-\varrho}\{\mathbf{I}-\mathbf{P}\}f^\varepsilon \right\|^2_{N^s_\gamma}
+ \left\| \nabla_x \{\mathbf{I}-\mathbf{P}\}f^\varepsilon \right\|^2_{N^s_\gamma}
+\widetilde{\mathcal{E}}_{N,l}(t)\widetilde{\mathcal{D}}_{N,l}(t).
\end{split}
\end{align}

Combining \eqref{interpolation negative} with the a priori assumption \eqref{priori assumption},  one can deduce \eqref{a priori estimates3} by a proper linear combination of \eqref{negative sobolev estimate}, \eqref{a priori estimates} and \eqref{macro negative sobolev2}. This completes the proof of Proposition \ref{negative sobolev energy estimates total}.
\end{proof}
\medskip

\section{Global Existence}\label{Global Existence}

In this section, we give the proof of Theorem \ref{mainth1} and \ref{mainth2} by first establishing the closed uniform estimates on $X(t)$ defined in \eqref{X define}.

 Noticing the estimate \eqref{a priori estimates2} and the difference between $\mathcal{E}_{N,l}(t)$ and $\mathcal{D}_{N,l}(t)$, we need to obtain the temporal decay estimates of $\left\| \mathbf{P}f^\varepsilon \right\|$ and $\left\| \nabla_x \mathbf{P}f^\varepsilon \right\|$.

\subsection{Time Decay Estimate}
\hspace*{\fill}

Our main idea to deduce the time decay estimate is based on the approach proposed by \cite{GW2012CPDE}.
First of all, we give a more refined energy estimate for the pure spatial derivatives than \eqref{estimate without weight}.

\begin{lemma}\label{0-N 1-N lemma}
There hold
\begin{align}
&\frac{\d}{\d t} \bigg\{ \sum_{k \leq N} \left\| \nabla^k f^\varepsilon \right\|^2 + \sum_{k \leq N} \left\| \nabla^k \nabla_x \phi^\varepsilon \right\|^2 \bigg\}
+ \frac{\lambda}{\varepsilon^2} \sum_{k \leq N }  \left\| \nabla^k \{\mathbf{I}-\mathbf{P}\} f^\varepsilon \right\|^2_{N^s_\gamma} \nonumber \\
\lesssim\;& \mathcal{E}^{1/2}_{N,l}(t)\bigg\{ \sum_{k \leq N+1} \left\| \nabla^k \nabla_x \phi^\varepsilon \right\|^2
+  \sum_{1 \leq k \leq N} \left\| \nabla^k \mathbf{P}f^\varepsilon \right\|^2 \bigg\}  \label{0-N estimate decay}
\end{align}
and
\begin{align}
&\frac{\d}{\d t} \bigg\{ \sum_{1 \leq k \leq N} \left\| \nabla^k f^\varepsilon \right\|^2 + \sum_{1 \leq k \leq N} \left\| \nabla^k \nabla_x \phi^\varepsilon \right\|^2 \bigg\}
+ \frac{\lambda}{\varepsilon^2} \sum_{1 \leq k \leq N }  \left\| \nabla^k \{\mathbf{I}-\mathbf{P}\} f^\varepsilon \right\|^2_{N^s_\gamma} \nonumber \\
\lesssim\;& \mathcal{E}^{1/2}_{N,l}(t)\bigg\{ \sum_{1 \leq k \leq N+1} \left\| \nabla^k \nabla_x \phi^\varepsilon \right\|^2
+  \sum_{2 \leq k \leq N} \left\| \nabla^k \mathbf{P}f^\varepsilon \right\|^2 \bigg\}. \label{1-N estimate decay}
\end{align}
\end{lemma}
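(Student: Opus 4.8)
\textbf{Proof proposal for Lemma \ref{0-N 1-N lemma}.}

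The plan is to re-run the energy estimate that produced \eqref{estimate without weight} in Lemma \ref{energy estimate without weight}, but this time being careful to keep \emph{all} the source terms on the right-hand side in a form that is quadratic in $\mathbf{P}f^\varepsilon$, $\nabla_x\phi^\varepsilon$ and their derivatives, rather than lumping everything into $\{\mathcal{E}^{1/2}_{N,l}(t)+\mathcal{E}_{N,l}(t)\}\mathcal{D}_{N,l}(t)$. The point is that for the decay argument we cannot afford a factor $\mathcal{D}_{N,l}(t)$ on the right — we need something that, after integrating in time against the decay rates, closes. So I would apply $\partial^\alpha$ with $|\alpha|=k$ (for $k\le N$ in the first estimate, $1\le k\le N$ in the second) to the rewritten equation \eqref{rrVPB}, take the inner product with $e^{\pm\varepsilon\phi^\varepsilon}\partial^\alpha f_{\pm}^\varepsilon$, and sum over $\pm$ and over $|\alpha|=k$, exactly as in \eqref{without weight}. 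The terms $J_{1,1}$ through $J_{1,5}$ are handled verbatim as in Lemma \ref{energy estimate without weight}: $J_{1,2}+J_{1,3}=0$ by the $e^{\pm\varepsilon\phi^\varepsilon}$ trick, $J_{1,4}$ reproduces $\tfrac12\tfrac{\d}{\d t}\|\partial^\alpha\nabla_x\phi^\varepsilon\|^2$ via \eqref{macro equation 5} and \eqref{a equality}, and $J_{1,5}$ gives the coercive lower bound $\tfrac{\lambda}{\varepsilon^2}\|\partial^\alpha\{\mathbf{I}-\mathbf{P}\}f^\varepsilon\|^2_{N^s_\gamma}$ via \eqref{L coercive1}.

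The real work is in re-estimating the nonlinear and potential terms $J_{1,6}$--$J_{1,10}$ so that the output is $\mathcal{E}^{1/2}_{N,l}(t)$ times $\sum_{k\le N+1}\|\nabla^k\nabla_x\phi^\varepsilon\|^2 + \sum_{1\le k\le N}\|\nabla^k\mathbf{P}f^\varepsilon\|^2$ (note: no $\{\mathbf{I}-\mathbf{P}\}f^\varepsilon$-dissipation, and the $\nabla_x\phi^\varepsilon$ and $\mathbf{P}f^\varepsilon$ norms appearing are \emph{full} norms, not just dissipative ones, since $\nabla_x\phi^\varepsilon$ is fully controlled and $\{\mathbf{I}-\mathbf{P}\}f^\varepsilon$ can be absorbed using \eqref{L coercive1}). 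Concretely: for $J_{1,6}$ and $J_{1,7}$ (the terms $v_i\partial^{\alpha_1+e_i}\phi^\varepsilon\partial^{\alpha-\alpha_1}f_\pm^\varepsilon$ and $\partial^{\alpha_1+e_i}\phi^\varepsilon\partial^{\alpha-\alpha_1}_{e_i}f_\pm^\varepsilon$) I would redo the $L^\infty$--$L^2$--$L^2$, $L^6$--$L^3$--$L^2$, $L^2$--$L^\infty$--$L^2$ splittings of Lemmas \ref{softnonlinearterm1}, \ref{softnonlinearterm2}, \ref{hardnonlinearterm1}, \ref{hardnonlinearterm2}, but now, instead of estimating by $\mathcal{E}^{1/2}\mathcal{D}$, I peel off one factor as $\|\nabla_x\phi^\varepsilon\|_{L^\infty}$ or $\mathcal{E}^{1/2}_{N,l}(t)$ and keep the remaining $\|\partial^\alpha f^\varepsilon\|$-type factor squared; every occurrence of $\partial^\alpha f^\varepsilon$ is then written as $\partial^\alpha\mathbf{P}f^\varepsilon + \partial^\alpha\{\mathbf{I}-\mathbf{P}\}f^\varepsilon$, the latter being absorbed into the dissipation on the left via a small constant (using \eqref{L coercive1}; the lost $\eta$-multiple of $\sum\|\partial^k\{\mathbf{I}-\mathbf{P}\}f^\varepsilon\|^2_{N^s_\gamma}$ is harmless), while the $\partial^\alpha\mathbf{P}f^\varepsilon$ factor joins the right-hand side. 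For $J_{1,4}$-type rewriting of $\nabla_x\phi^\varepsilon$ in higher-order terms I would use the identity \eqref{reason N geq 2}, $\|\partial^\alpha\nabla_x\phi^\varepsilon\|=\|\partial^{\alpha-e_i}(a_+^\varepsilon-a_-^\varepsilon)\|$, so that the $(N+1)$-st order $\phi$-derivative is controlled by the $N$-th order $\mathbf{P}f^\varepsilon$ contained in $\mathcal{E}^{1/2}_{N,l}(t)$. The terms $J_{1,8}$, $J_{1,9}$, $J_{1,10}$ carry an explicit small factor $|e^{\pm\varepsilon\phi^\varepsilon}-1|\lesssim\varepsilon\|\nabla_x\phi^\varepsilon\|_{H^1}$ (via \eqref{phi estimate1}) which kills the $1/\varepsilon$ singularity and leaves $\|\nabla_x\phi^\varepsilon\|_{H^1}\lesssim\mathcal{E}^{1/2}_{N,l}(t)$ times a full energy-type quantity, which again after the $\mathbf{P}/\{\mathbf{I}-\mathbf{P}\}$ splitting lands on the asserted right-hand side; for $J_{1,10}=\tfrac1\varepsilon\partial^\alpha\Gamma$ I would invoke \eqref{soft gamma3}/\eqref{hard gamma3}, but again extract one $\mathcal{E}^{1/2}$ and keep the full $\|\nabla^k f^\varepsilon\|^2$-type norm, not the dissipation.

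Summing over $|\alpha|=k$ and then over $k\le N$ (resp.\ $1\le k\le N$), and choosing the constant in the $\eta$-absorption of the micro part small enough, yields \eqref{0-N estimate decay} (resp.\ \eqref{1-N estimate decay}); the only structural difference between the two is that the lowest order $k=0$ is dropped in the second, which is exactly why the $\mathbf{P}f^\varepsilon$-sum on the right starts at $k=2$ and the $\nabla_x\phi^\varepsilon$-sum at $k=1$ there (the $J_{1,4}$ term at order $k$ produces $\|\partial^\alpha\nabla_x\phi^\varepsilon\|^2$ with $|\alpha|=k$ as a time derivative on the left, so it never appears on the right; the right-hand $\nabla_x\phi^\varepsilon$ norms come only from the nonlinear terms and the \eqref{reason N geq 2}-substitution). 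The main obstacle I anticipate is bookkeeping: making sure that in every nonlinear term one can genuinely extract a full power of $\mathcal{E}^{1/2}_{N,l}(t)$ \emph{and} that the residual factor only involves $\nabla^k\nabla_x\phi^\varepsilon$ for $k\le N+1$ and $\nabla^k\mathbf{P}f^\varepsilon$ for $k\le N$ (not, say, a top-order $\{\mathbf{I}-\mathbf{P}\}f^\varepsilon$ norm that is not in the dissipation, nor an order-$(N+1)$ $\mathbf{P}f^\varepsilon$ norm) — this hinges on the velocity-weight bookkeeping already carried out in Lemmas \ref{softnonlinearterm1}--\ref{hardnonlinearterm2} together with the crucial substitution \eqref{reason N geq 2}, and on the fact that $N\ge 2$ suffices so that $H^1_x\hookrightarrow L^6_x$, $H^2_x\hookrightarrow L^\infty_x$ leave enough room.
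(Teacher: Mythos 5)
There is a genuine gap, and it comes from your choice to run the estimate through the $e^{\pm\varepsilon\phi^\varepsilon}$-weighted pairing of Lemma \ref{energy estimate without weight}. Handling $J_{1,1}$ ``verbatim'' leaves the term $\varepsilon\left\|\partial_t\phi^\varepsilon\right\|_{L^\infty}\sum_{\pm}\|e^{\pm\varepsilon\phi^\varepsilon/2}\partial^\alpha f^\varepsilon_{\pm}\|^2$ on the right-hand side, and this term is \emph{not} of the asserted form: it contains the zero-order macroscopic piece $\|\mathbf{P}f^\varepsilon\|^2$ (and unweighted micro $L^2_v$ norms which, for soft potentials, are not dominated by the $N^s_\gamma$ dissipation without the velocity-weight interpolation). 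Since the whole purpose of Lemma \ref{0-N 1-N lemma} is that the macroscopic norms on the right start at $k=1$ (resp.\ $k=2$), so that it can be combined with \eqref{0-N macro 1}--\eqref{1-N macro 1} into the Lyapunov inequality \eqref{k-N estimate}, any residual $\|\mathbf{P}f^\varepsilon\|^2$ or $\mathcal{E}^{1/2}_{N,l}\|f^\varepsilon\|^2$ term destroys the conclusion. The paper avoids this precisely by \emph{not} using the exponential factor here: it pairs $\nabla^k$ of the original system \eqref{rVPB} with $\nabla^k f^\varepsilon$ in plain $L^2$, uses the collision invariance to put $\Gamma$ against $\nabla^k\{\mathbf{I}-\mathbf{P}\}f^\varepsilon$, and treats $\tfrac12 q_0\nabla_x\phi^\varepsilon\cdot v f^\varepsilon$ (the term the exponential weight was designed to cancel) as a genuine nonlinearity $J_{6,2}$, absorbing the $\langle v\rangle$-growth via the weighted energy $\mathcal{E}_{N,l}$ and absorbing the $\frac{1}{\varepsilon^2}\|\nabla^k\{\mathbf{I}-\mathbf{P}\}f^\varepsilon\|^2_{N^s_\gamma}$ pieces into the left using the smallness of $\mathcal{E}^{1/2}_{N,l}$.

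A second, related gap: your recipe ``split $\partial^\alpha f^\varepsilon=\partial^\alpha\mathbf{P}f^\varepsilon+\partial^\alpha\{\mathbf{I}-\mathbf{P}\}f^\varepsilon$, absorb the micro part, keep the $\partial^\alpha\mathbf{P}f^\varepsilon$ factor squared'' does not by itself exclude the zero-order $\|\mathbf{P}f^\varepsilon\|^2$ from \eqref{0-N estimate decay} (nor $\|\nabla_x\mathbf{P}f^\varepsilon\|^2$ from \eqref{1-N estimate decay}) when $|\alpha|=0$ (resp.\ $|\alpha|=1$): the gain of one spatial derivative on the macroscopic factors at the lowest retained order is the real content of the lemma and has to be produced by distributing the Sobolev embeddings so that the $L^\infty_x$/$L^3_x$/$L^6_x$ factors carry the extra derivatives, exactly as in the paper's estimates of $J_{6,1}$--$J_{6,3}$ (with the five-case analysis of the micro-micro collision term) and the separate low-order estimates \eqref{k=0 estimate decay}, \eqref{k=1 estimate decay}. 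Your bookkeeping remark worries about too-high orders (which the substitution \eqref{reason N geq 2} indeed handles, as in the paper), but misses that the delicate constraint is at the \emph{lowest} order; as written, both issues above leave the claimed inequalities unproved.
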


\begin{proof}
Applying $\nabla^k$ to the first equation of \eqref{rVPB} and taking the inner product with $\nabla^k f^\varepsilon$ over $\mathbb{R}_x^3 \times \mathbb{R}_v^3$, one has
\begin{align} \label{pure spatial estimate}
&\frac{1}{2}\frac{\d}{\d t} \left\{ \left\| \nabla^k f^\varepsilon \right\|^2 + \left\| \nabla^k \nabla_x \phi^\varepsilon \right\|^2 \right\}
+\frac{\lambda}{\varepsilon^2}  \left\| \nabla^k \{\mathbf{I}-\mathbf{P}\} f^\varepsilon \right\|^2_{N^s_\gamma} \nonumber\\
=\;& \left( q_0 \nabla^k \left( \nabla_x \phi^\varepsilon \cdot \nabla_v f^\varepsilon\right), \nabla^k f^\varepsilon \right)
-\left( \frac{q_0}{2} \nabla^k \left(\nabla_x \phi^\varepsilon \cdot v f^\varepsilon \right), \nabla^k f^\varepsilon \right)
+\left( \frac{1}{\varepsilon} \nabla^k \Gamma(f^\varepsilon, f^\varepsilon),\nabla^k \{\mathbf{I}-\mathbf{P}\}f^\varepsilon \right) \nonumber\\
:=\;& J_{6,1}+ J_{6,2}+ J_{6,3}.
\end{align}
Here, we have used the collision invariant property.

Firstly, we estimate $ J_{6,1}$--$J_{6,3} $ with $2 \leq k \leq N$ for soft potentials.
For the term $J_{6,1}$, by employing the H\"{o}lder inequality, the Sobolev inequalities \eqref{Sobolev-ineq} and the Cauchy--Schwarz inequality, one has
\begin{align}
\begin{split}\label{J61 estimate}
J_{6,1} \lesssim\;& \sum_{1 \leq j \leq k} \left\| \nabla^j \nabla_x \phi^\varepsilon \right\|_{L^3}
\left\| \nabla^{k-j} (\mu^\delta f^\varepsilon) \right\|_{L^6_x L^2_v} \left\| \nabla^k \mathbf{P}f^\varepsilon\right\| \\
&+\sum_{1 \leq j \leq k-1} \left\| \nabla^j \nabla_x \phi^\varepsilon \right\|_{L^\infty}
\left\| \langle v \rangle ^{-\frac{\gamma+2s}{2}}\nabla^{k-j} \nabla_v f^\varepsilon \right\| \left\| \nabla^k \{\mathbf{I}-\mathbf{P}\}f^\varepsilon\right\|_{N^s_\gamma}\\
&+\sum_{ j =k} \left\| \nabla^j \nabla_x \phi^\varepsilon \right\|_{L^3}
\left\| \langle v \rangle ^{-\frac{\gamma+2s}{2}} \nabla_v f^\varepsilon \right\|_{L^6_x L^2_v} \left\| \nabla^k \{\mathbf{I}-\mathbf{P}\}f^\varepsilon\right\|_{N^s_\gamma}\\
\lesssim\;& \mathcal{E}^{1/2}_{N,l}(t) \bigg\{ \sum_{1 \leq \ell \leq N+1 }\left\| \nabla^\ell \nabla_x\phi^\varepsilon \right\|^2
+ \left\| \nabla^k \mathbf{P}f^\varepsilon \right\|^2
+ \frac{1}{\varepsilon^2}\left\| \nabla^k \{\mathbf{I}-\mathbf{P}\}f^\varepsilon \right\|_{N^s_\gamma}^2 \bigg\}.
\end{split}
\end{align}
Similarly, $J_{6,2}$ has the same upper bound as \eqref{J61 estimate}.

For the term $J_{6,3}$, we denote $J^{\prime}_{6,3}$, $J^{\prime\prime}_{6,3}$, $J^{\prime\prime\prime}_{6,3}$, $J^{\prime\prime\prime\prime}_{6,3}$ to be the terms corresponding to the decomposition \eqref{gamma decomposition}. For the term $J^{\prime}_{6,3}$,  making use of the H\"{o}lder inequality, the Sobolev inequalities \eqref{Sobolev-ineq} and the Cauchy--Schwarz inequality, we obtain the following result
\begin{align*}
J^{\prime}_{6,3}=\;& \left( \frac{1}{\varepsilon} \nabla^k \Gamma( \mathbf{P}f^\varepsilon, \mathbf{P}f^\varepsilon),
\nabla^k \{\mathbf{I}-\mathbf{P}\}f^\varepsilon \right)\\
\lesssim\;& \frac{1}{\varepsilon} \left\| \mathbf{P}f^\varepsilon \right\|_{L^\infty_x L^2_v}\left\| \nabla^k \mathbf{P}f^\varepsilon \right\|
\left\| \nabla^k \{\mathbf{I}-\mathbf{P}\}f^\varepsilon \right\|_{N^s_\gamma} \\
&+ \frac{1}{\varepsilon} \sum_{1 \leq j \leq k-1}
\left\| \nabla^j \mathbf{P}f^\varepsilon \right\|_{L^6_x L^2_v}\left\| \nabla^{k-j} \mathbf{P}f^\varepsilon \right\|_{L^3_x L^2_v}
\left\| \nabla^k \{\mathbf{I}-\mathbf{P}\}f^\varepsilon \right\|_{N^s_\gamma}\\
\lesssim\;&\mathcal{E}^{1/2}_{N,l}(t) \bigg\{ \sum_{2 \leq \ell \leq N} \left\| \nabla^\ell \mathbf{P}f^\varepsilon\right\|^2
+\frac{1}{\varepsilon^2} \left\| \nabla^k \{\mathbf{I}-\mathbf{P}\}f^\varepsilon\right\|_{N^s_\gamma}^2 \bigg\}.
\end{align*}
In a similar fashion, for the term $J^{\prime \prime}_{6,3}$, selecting the first term within the minimum function in \eqref{soft gamma1} yields
\begin{align}\nonumber
J^{\prime \prime}_{6,3} \lesssim\; \mathcal{E}^{1/2}_{N,l}(t) \bigg\{ \sum_{1 \leq \ell \leq N} \left\| \nabla^\ell \{\mathbf{I}-\mathbf{P}\}f^\varepsilon\right\|_{N^s_\gamma}^2
+\frac{1}{\varepsilon^2} \left\| \nabla^k \{\mathbf{I}-\mathbf{P}\}f^\varepsilon\right\|_{N^s_\gamma}^2 \bigg\}.
\end{align}
Considering the term $J^{\prime \prime \prime}_{6,3}$ and opting for the second term inside the minimum function in \eqref{soft gamma1}, we arrive at the following outcome
\begin{align}
\begin{split}\label{J63 1}
J^{\prime \prime \prime}_{6,3} \lesssim\;& \frac{1}{\varepsilon}
\left\| \{\mathbf{I}-\mathbf{P}\} f^\varepsilon \right\|_{L^\infty_x L^2_v}\left\| \nabla^{k} \mathbf{P}f^\varepsilon \right\|
\left\| \nabla^k \{\mathbf{I}-\mathbf{P}\}f^\varepsilon \right\|_{N^s_\gamma}\\
&+\frac{1}{\varepsilon} \sum_{1 \leq j \leq k-1}
\left\| \nabla^j \{\mathbf{I}-\mathbf{P}\} f^\varepsilon \right\|_{L^3_x L^2_v}\left\| \nabla^{k-j} \mathbf{P}f^\varepsilon \right\|_{L^6_x L^2_v}
\left\| \nabla^k \{\mathbf{I}-\mathbf{P}\}f^\varepsilon \right\|_{N^s_\gamma}\\
&+\frac{1}{\varepsilon}\left\| \nabla^k \{\mathbf{I}-\mathbf{P}\} f^\varepsilon \right\|\left\|  \mathbf{P}f^\varepsilon \right\|_{L^\infty_x L^2_v}
\left\| \nabla^k \{\mathbf{I}-\mathbf{P}\}f^\varepsilon \right\|_{N^s_\gamma}.
\end{split}
\end{align}
The final term on the right-hand side of the above inequality can be bounded by
\begin{align}\label{J63 2}
\frac{1}{\varepsilon} \left\|\nabla^k \{\mathbf{I}-\mathbf{P}\} f^\varepsilon \right\|_{N^s_\gamma}^{\frac{3}{2}}
\left\| \langle v \rangle ^{-\frac{\gamma+2s}{2}}\nabla^k \{\mathbf{I}-\mathbf{P}\} f^\varepsilon \right\|^{\frac{1}{2}}
\left\|  \nabla_x \mathbf{P}f^\varepsilon \right\|^{\frac{1}{2}}
\left\|  \nabla_x^2 \mathbf{P}f^\varepsilon \right\|^{\frac{1}{2}}.
\end{align}
Consequently, thanks to $0 < \varepsilon \leq 1$, by combining \eqref{J63 1} and \eqref{J63 2}, we can derive
\begin{align}\nonumber
J^{\prime \prime \prime}_{6,3} \lesssim \mathcal{E}^{1/2}_{N,l}(t) \bigg\{ \sum_{2 \leq \ell \leq N} \left\| \nabla^\ell \mathbf{P}f^\varepsilon\right\|^2 + \frac{1}{\varepsilon^2} \left\| \nabla^k \{\mathbf{I}-\mathbf{P}\}f^\varepsilon\right\|_{N^s_\gamma}^2 \bigg\}.
\end{align}
Regarding the term $J^{\prime \prime \prime \prime}_{6,3}$, it is crucial to estimate it with great care.

{\emph{Case 1.}}
$j=0$.\; By considering the first term inside the minimum function in $\eqref{soft gamma1}$, we have
\begin{align}\nonumber
J^{\prime \prime \prime \prime}_{6,3}
\lesssim\;& \frac{1}{\varepsilon} \left\| \{\mathbf{I}-\mathbf{P}\}f^\varepsilon \right\|_{L^\infty_x L^2_v}
\left\| \nabla^k \{\mathbf{I}-\mathbf{P}\}f^\varepsilon \right\|^2_{N^s_\gamma} \\
&+\frac{1}{\varepsilon} \sum_{|\beta^\prime| \leq 2}\left\| \partial_{\beta^\prime}\{\mathbf{I}-\mathbf{P}\}f^\varepsilon \right\|_{L^\infty_x L^2_v}
\left\| \nabla^k \{\mathbf{I}-\mathbf{P}\}f^\varepsilon \right\|^2_{N^s_\gamma} \\
\lesssim\;& \mathcal{E}^{1/2}_{N,l}(t)\frac{1}{\varepsilon^2} \left\| \nabla^k \{\mathbf{I}-\mathbf{P}\}f^\varepsilon \right\|^2_{N^s_\gamma}. \nonumber
\end{align}

{\emph{Case 2.}}
$1 \leq j \leq k-3$. \;By selecting the first term from within the minimum function in $\eqref{soft gamma1}$, we obtain
\begin{align}
J^{\prime \prime \prime \prime}_{6,3}
\lesssim\;&\frac{1}{\varepsilon} \left\| \nabla^j \{\mathbf{I}-\mathbf{P}\}f^\varepsilon \right\|_{L^\infty_x L^2_v}
\left\| \nabla^{k-j} \{\mathbf{I}-\mathbf{P}\}f^\varepsilon \right\|_{N^s_\gamma}
\left\| \nabla^k \{\mathbf{I}-\mathbf{P}\}f^\varepsilon \right\|_{N^s_\gamma}\nonumber \\
&+\frac{1}{\varepsilon} \sum_{|\beta^\prime| \leq 2} \left\| \nabla^j \partial_{\beta^\prime}\{\mathbf{I}-\mathbf{P}\}f^\varepsilon \right\|_{L^3_x L^2_v}
\left\| \nabla^{k-j} \{\mathbf{I}-\mathbf{P}\}f^\varepsilon \right\|_{L^6_x N^s_\gamma}
\left\| \nabla^k \{\mathbf{I}-\mathbf{P}\}f^\varepsilon \right\|_{N^s_\gamma} \nonumber\\
\lesssim\;& \mathcal{E}^{1/2}_{N,l}(t) \bigg\{ \sum_{3 \leq \ell \leq N}
\left\| \nabla^\ell \{\mathbf{I}-\mathbf{P}\}f^\varepsilon \right\|^2_{N^s_\gamma}
+\frac{1}{\varepsilon^2} \left\| \nabla^k \{\mathbf{I}-\mathbf{P}\}f^\varepsilon \right\|^2_{N^s_\gamma} \bigg\}. \nonumber
\end{align}

{\emph{Case 3.}}
$j = k-2$. \;  Singling out the first term inside the minimum function in $\eqref{soft gamma1}$, we arrive at
\begin{align}
J^{\prime \prime \prime \prime}_{6,3}
\lesssim\;&\frac{1}{\varepsilon} \left\| \nabla^j \{\mathbf{I}-\mathbf{P}\}f^\varepsilon \right\|_{L^\infty_x L^2_v}
\left\| \nabla^{k-j} \{\mathbf{I}-\mathbf{P}\}f^\varepsilon \right\|_{N^s_\gamma}
\left\| \nabla^k \{\mathbf{I}-\mathbf{P}\}f^\varepsilon \right\|_{N^s_\gamma}\nonumber \\
&+\frac{1}{\varepsilon} \sum_{|\beta^\prime| \leq 2} \left\| \nabla^j \partial_{\beta^\prime}\{\mathbf{I}-\mathbf{P}\}f^\varepsilon \right\|
\left\| \nabla^{k-j} \{\mathbf{I}-\mathbf{P}\}f^\varepsilon \right\|_{L^\infty_x N^s_\gamma}
\left\| \nabla^k \{\mathbf{I}-\mathbf{P}\}f^\varepsilon \right\|_{N^s_\gamma} \nonumber\\
\lesssim\;& \mathcal{E}^{1/2}_{N,l}(t) \bigg\{ \sum_{2 \leq \ell \leq 4}
\left\| \nabla^\ell \{\mathbf{I}-\mathbf{P}\}f^\varepsilon \right\|^2_{N^s_\gamma}
+\frac{1}{\varepsilon^2} \left\| \nabla^k \{\mathbf{I}-\mathbf{P}\}f^\varepsilon \right\|^2_{N^s_\gamma} \bigg\}. \nonumber
\end{align}

{\emph{Case 4.}}
$j = k-1$.\; By choosing the second term inside the minimum function in $\eqref{soft gamma1}$ and setting $m=-\frac{\gamma+2s}{2}$, we get
\begin{align}
J^{\prime \prime \prime \prime}_{6,3}
\lesssim\;&\frac{1}{\varepsilon} \left\| \nabla^j \{\mathbf{I}-\mathbf{P}\}f^\varepsilon \right\|_{L^3_x L^2_v}
\left\| \nabla^{k-j} \{\mathbf{I}-\mathbf{P}\}f^\varepsilon \right\|_{L^6_x N^s_\gamma}
\left\| \nabla^k \{\mathbf{I}-\mathbf{P}\}f^\varepsilon \right\|_{N^s_\gamma}\nonumber \\
&+\frac{1}{\varepsilon} \left\| \langle v \rangle ^ {\frac{\gamma+2s}{2}}\nabla^j \{\mathbf{I}-\mathbf{P}\}f^\varepsilon \right\|_{L^6_x L^2_v}
\sum_{|\beta^\prime| \leq 2}\left\| \nabla^{k-j} \partial_{\beta^\prime}\{\mathbf{I}-\mathbf{P}\}f^\varepsilon \right\|_{L^3_x N^s_\gamma}
\left\| \nabla^k \{\mathbf{I}-\mathbf{P}\}f^\varepsilon \right\|_{N^s_\gamma} \nonumber\\
\lesssim\;& \mathcal{E}^{1/2}_{N,l}(t) \bigg\{
\left\| \nabla^2 \{\mathbf{I}-\mathbf{P}\}f^\varepsilon \right\|^2_{N^s_\gamma}
+\frac{1}{\varepsilon^2} \left\| \nabla^k \{\mathbf{I}-\mathbf{P}\}f^\varepsilon \right\|^2_{N^s_\gamma} \bigg\}, \nonumber
\end{align}
where we have used
\begin{align} \nonumber
\sum_{|\beta^\prime| \leq 2}\left\| \nabla^{k-j} \partial_{\beta^\prime}\{\mathbf{I}-\mathbf{P}\}f^\varepsilon \right\|_{L^3_x N^s_\gamma}
\lesssim \left\| \{\mathbf{I}-\mathbf{P}\}f^\varepsilon \right\|_{H^2_x H^3_v}
\lesssim \mathcal{E}^{1/2}_{N,l}(t).
\end{align}

{\emph{Case 5.}}
$j = k$. \; By selecting the second term within the minimum function in $\eqref{soft gamma1}$ and setting $m=-\frac{\gamma+2s}{2}$, we can deduce
\begin{align}
J^{\prime \prime \prime \prime}_{6,3}
\lesssim\;&\frac{1}{\varepsilon} \left\| \nabla^k \{\mathbf{I}-\mathbf{P}\}f^\varepsilon \right\|
\left\|  \{\mathbf{I}-\mathbf{P}\}f^\varepsilon \right\|_{L_x^\infty N^s_\gamma}
\left\| \nabla^k \{\mathbf{I}-\mathbf{P}\}f^\varepsilon \right\|_{N^s_\gamma}\nonumber \\
&+\frac{1}{\varepsilon} \left\| \langle v \rangle ^ {\frac{\gamma+2s}{2}}\nabla^k \{\mathbf{I}-\mathbf{P}\}f^\varepsilon \right\|
\sum_{|\beta^\prime| \leq 2}\left\| \partial_{\beta^\prime}\{\mathbf{I}-\mathbf{P}\}f^\varepsilon \right\|_{L^\infty_x N^s_\gamma}
\left\| \nabla^k \{\mathbf{I}-\mathbf{P}\}f^\varepsilon \right\|_{N^s_\gamma} \nonumber\\
\lesssim\;& \mathcal{E}^{1/2}_{N,l}(t) \bigg\{ \sum_{1 \leq \ell \leq 2}
\left\| \nabla^\ell \{\mathbf{I}-\mathbf{P}\}f^\varepsilon \right\|^2_{N^s_\gamma}
+\frac{1}{\varepsilon^2} \left\| \nabla^k \{\mathbf{I}-\mathbf{P}\}f^\varepsilon \right\|^2_{N^s_\gamma} \bigg\}. \nonumber
\end{align}
Therefore, combining all the estimates mentioned above with the a priori assumption \eqref{priori assumption}, we can conclude
\begin{align}
&\frac{\d}{\d t} \bigg\{ \sum_{2 \leq k \leq N} \left\| \nabla^k f^\varepsilon \right\|^2 + \sum_{2 \leq k \leq N} \left\| \nabla^k \nabla_x \phi^\varepsilon \right\|^2 \bigg\}
+ \frac{\lambda}{\varepsilon^2} \sum_{2 \leq k \leq N }  \left\| \nabla^k \{\mathbf{I}-\mathbf{P}\} f^\varepsilon \right\|^2_{N^s_\gamma} \nonumber \\
\lesssim\;& \mathcal{E}^{1/2}_{N,l}(t)\bigg\{ \sum_{1 \leq k \leq N+1} \left\| \nabla^k \nabla_x \phi^\varepsilon \right\|^2
+  \sum_{2 \leq k \leq N} \left\| \nabla^k \mathbf{P}f^\varepsilon \right\|^2
+ \left\| \nabla_x \{\mathbf{I}-\mathbf{P}\}f^\varepsilon \right\|^2_{N^s_\gamma} \bigg\}. \label{k=2-N estimate decay}
\end{align}

Now, let's estimate \eqref{pure spatial estimate} for $k=0$. Under the a priori assumption \eqref{priori assumption}, by combining \eqref{hard gamma1} and \eqref{soft gamma1} along with the Sobolev inequalities \eqref{Sobolev-ineq}, it is easy to derive that
\begin{align}\label{k=0 estimate decay}
\frac{1}{2}\frac{\d}{\d t} \left\{ \left\| f^\varepsilon \right\|^2 + \left\|  \nabla_x \phi^\varepsilon \right\|^2 \right\}
+\frac{\lambda}{\varepsilon^2}  \left\|  \{\mathbf{I}-\mathbf{P}\} f^\varepsilon \right\|^2_{N^s_\gamma}
\lesssim \mathcal{E}^{1/2}_{N,l}(t)\left\{ \left\| \nabla_x \phi^\varepsilon \right\|^2
+ \left\| \nabla_x \mathbf{P}f^\varepsilon \right\|^2 \right\}.
\end{align}
Similarly, we can deduce that for $k=1$, there holds
\begin{align}\label{k=1 estimate decay}
\!\!\frac{1}{2}\frac{\d}{\d t} \left\{ \left\| \nabla_x f^\varepsilon \right\|^2 + \left\| \nabla^2_x \phi^\varepsilon \right\|^2 \right\}
+\frac{\lambda}{\varepsilon^2}  \left\| \nabla_x \{\mathbf{I}-\mathbf{P}\} f^\varepsilon \right\|^2_{N^s_\gamma}
\lesssim \mathcal{E}^{1/2}_{N,l}(t) \left\{ \left\| \nabla^2_x\phi^\varepsilon \right\|^2
+ \left\| \nabla^2_x \mathbf{P}f^\varepsilon \right\|^2 \right\}.
\end{align}

Therefore, for the soft potential case, the desired estimates \eqref{0-N estimate decay} and \eqref{1-N estimate decay} follow directly by \eqref{k=2-N estimate decay}, \eqref{k=0 estimate decay} and \eqref{k=1 estimate decay}.

Then, we turn to the hard potential case. In fact,  for the hard potential case, the estimates of $ J_{6,1}$--$J_{6,3}$ in \eqref{pure spatial estimate} with $2 \leq k \leq N$ can be derived in a similar but simpler way than the soft potential case. Moreover, $l = l_2 \geq N+ \max\{1, \gamma+2s\}$ will be used in the process of estimating $J_{6,2}$ and $J_{6,3}$. The details are omitted for simplicity. This completes the proof of Lemma \ref{0-N 1-N lemma}.
\end{proof}
\medskip

\begin{lemma}\label{k-N decay lemma}
There exist the energy functional $\mathcal{E}^k_{N}(t)$ and the corresponding energy dissipation rate functional $\mathcal{D}^k_{N}(t)$ satisfying \eqref{negative sobolev energy} and \eqref{negative sobolev dissipation} respectively such that
\begin{align}\label{k-N estimate}
\frac{\d}{\d t} \mathcal{E}^k_{N}(t)+ \lambda \mathcal{D}^k_{N}(t) \leq 0
\end{align}
holds for $k=0,1$ and all $0 \leq t \leq T$.

Furthermore, we can obtain that for $k=0, 1$,
\begin{align} \label{k-N decay}
\mathcal{E}^k_{N}(t) \lesssim (1+t)^{-(k+\varrho)} \sup_{0 \leq \tau \leq t}\widetilde{\mathcal{E}}_{N,l}(\tau).
\end{align}
Here $l=l_2$ for hard potentials $\gamma+2s \geq 0$, while $l= l_0+l_1$ for soft potentials $-3 < \gamma <-2s$.
\end{lemma}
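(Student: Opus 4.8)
The plan is to first establish the Lyapunov-type differential inequality \eqref{k-N estimate} by assembling $\mathcal{E}^k_N(t)$ and $\mathcal{D}^k_N(t)$ from ingredients already at hand, and then to extract the algebraic decay \eqref{k-N decay} from \eqref{k-N estimate} by the negative-Sobolev interpolation scheme of \cite{GW2012CPDE}. Throughout, $\mathcal{E}^k_N$ and $\mathcal{D}^k_N$ are to be constructed equivalent to the functionals \eqref{low k energy} and \eqref{low k dissipation}.

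\textbf{Step 1 (the inequality \eqref{k-N estimate}).} For $k\in\{0,1\}$ I would combine the refined pure-spatial estimate of Lemma \ref{0-N 1-N lemma} — namely \eqref{0-N estimate decay} for $k=0$ and \eqref{1-N estimate decay} for $k=1$ — with a small multiple $\kappa$ of a macroscopic estimate adapted to the lowest $k$-th order. For $k=0$ this is precisely \eqref{macro estimate}; for $k=1$ one re-runs the proof of Lemma \ref{macroscopic estimate}, summing \eqref{macro energy1} only over $1\le|\alpha|\le N-1$ and recovering the top-order term $\sum_{|\alpha|=N+1}\|\partial^\alpha\nabla_x\phi^\varepsilon\|^2$ via the Poisson identity \eqref{a equality}, which reproduces the relevant content of \eqref{macro estimate high}. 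Taking $\mathcal{E}^k_N(t)$ to be $\sum_{k\le|\alpha|\le N}\sum_\pm\|e^{\pm\varepsilon\phi^\varepsilon/2}\partial^\alpha f^\varepsilon_\pm\|^2+\sum_{k\le|\alpha|\le N}\|\partial^\alpha\nabla_x\phi^\varepsilon\|^2$ plus $\kappa\varepsilon$ times the corresponding interactive functional ($\mathcal{E}^N_{int}$ for $k=0$, the restricted analogue of $\mathcal{E}^{N,h}_{int}$ for $k=1$), the bound $|e^{\pm\varepsilon\phi^\varepsilon/2}|\approx1$ together with the control of the interactive functionals makes $\mathcal{E}^k_N(t)$ equivalent to \eqref{low k energy}, with the associated dissipation equivalent to \eqref{low k dissipation}. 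Fixing $\kappa$ first and then invoking the a priori assumption $X(t)\le\delta_0$ (so $\mathcal{E}^{1/2}_{N,l}(t)\le\delta_0$), all the cubic error terms are absorbed: the bracketed factors on the right of \eqref{0-N estimate decay} and \eqref{1-N estimate decay} are exactly pieces of $\mathcal{D}^k_N(t)$, and the term $\mathcal{E}_N(t)\mathcal{D}_N(t)$ coming from \eqref{macro estimate}/\eqref{macro estimate high} is $\le\delta_0^2\,\mathcal{D}^0_N(t)$ since $\mathcal{D}_N(t)=\mathcal{D}^0_N(t)$. This yields \eqref{k-N estimate}.

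\textbf{Step 2 (the decay \eqref{k-N decay}).} The only component of $\mathcal{E}^k_N(t)$ not already dominated by $\mathcal{D}^k_N(t)$ is the lowest-order macroscopic piece $\|\nabla^k\mathbf{P}f^\varepsilon\|^2$, so $\mathcal{E}^k_N(t)\lesssim\mathcal{D}^k_N(t)+\|\nabla^k\mathbf{P}f^\varepsilon\|^2$. Applying the Sobolev interpolation \eqref{sobolev interpolation1} with $\ell=-\varrho$, $m=k+1$ (legitimate as $k+1\le N$) gives $\|\nabla^k\mathbf{P}f^\varepsilon\|\lesssim\|\Lambda^{-\varrho}\mathbf{P}f^\varepsilon\|^{\theta}\|\nabla^{k+1}\mathbf{P}f^\varepsilon\|^{1-\theta}$ with $\theta=\tfrac1{k+1+\varrho}$. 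Since $\|\Lambda^{-\varrho}\mathbf{P}f^\varepsilon(t)\|^2\le\widetilde{\mathcal{E}}_{N,l}(t)\le M$, where $M:=\sup_{0\le\tau\le t}\widetilde{\mathcal{E}}_{N,l}(\tau)$, and $\|\nabla^{k+1}\mathbf{P}f^\varepsilon\|^2\lesssim\mathcal{D}^k_N(t)$, we obtain $\|\nabla^k\mathbf{P}f^\varepsilon\|^2\lesssim M^{\theta}\mathcal{D}^k_N(t)^{1-\theta}$; a case distinction on whether $\mathcal{D}^k_N(t)\le M$ then shows in both regimes that $\mathcal{D}^k_N(t)\gtrsim\mathcal{E}^k_N(t)^{1+\frac1{k+\varrho}}/M^{\frac1{k+\varrho}}$. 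Plugging this into \eqref{k-N estimate} gives $\frac{\d}{\d t}\mathcal{E}^k_N(t)\le-c\,M^{-\frac1{k+\varrho}}\mathcal{E}^k_N(t)^{1+\frac1{k+\varrho}}$; integrating the equivalent inequality $\frac{\d}{\d t}\big(\mathcal{E}^k_N(t)^{-\frac1{k+\varrho}}\big)\ge c'M^{-\frac1{k+\varrho}}$ and using $\mathcal{E}^k_N(0)\le\widetilde{\mathcal{E}}_{N,l}(0)\le M$ yields $\mathcal{E}^k_N(t)^{-\frac1{k+\varrho}}\gtrsim M^{-\frac1{k+\varrho}}(1+t)$, i.e. \eqref{k-N decay}.

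\textbf{Main obstacle.} I expect the interpolation/ODE argument of Step 2 to be routine once $\theta=\tfrac1{k+1+\varrho}$ is identified; the genuinely delicate part is the bookkeeping for $k=1$ in Step 1. One must (i) set up the lowest-$1$-order macroscopic estimate and verify that the resulting $\mathcal{E}^1_N(t)$ is honestly equivalent to \eqref{low k energy}, reconciling the low-order contributions hidden in $\mathcal{E}^{N,h}_{int}$ and in the Poisson-identified top-order term with the index ranges of \eqref{low k energy}/\eqref{low k dissipation}; and (ii) confirm that every error term and linear source in \eqref{1-N estimate decay} and in the restricted macroscopic estimate is controlled by $\mathcal{D}^1_N(t)$ — not merely by the larger $\mathcal{D}^0_N(t)$ — so that the absorptions leading to \eqref{k-N estimate} are legitimate. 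For hard potentials the same scheme applies verbatim with $l=l_2$.
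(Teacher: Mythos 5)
Your Step 1 is essentially the paper's own route: the paper also combines the refined pure-spatial estimates \eqref{0-N estimate decay}, \eqref{1-N estimate decay} with lowest-$k$ macroscopic estimates (for $k=0$ a sharpened version of \eqref{macro estimate}, and for $k=1$ a restricted interactive functional $\mathcal{E}^{1\rightarrow N}_{int}$ whose error involves only the $|\alpha|\geq 1$ microscopic dissipation), and absorbs the cubic terms by the a priori assumption; the $k=1$ bookkeeping you flag is exactly what the restriction of the sums to $1\leq|\alpha|$ buys. Your interpolation for $\|\nabla^k\mathbf{P}f^\varepsilon\|$ with exponent $\theta=\tfrac{1}{k+1+\varrho}$ and the final ODE argument also coincide with the paper.

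The genuine gap is in Step 2, in the claim that ``the only component of $\mathcal{E}^k_N(t)$ not already dominated by $\mathcal{D}^k_N(t)$ is $\|\nabla^k\mathbf{P}f^\varepsilon\|^2$.'' For soft potentials $-3<\gamma<-2s$ this is false: $\mathcal{E}^k_N(t)$ in \eqref{low k energy} contains the \emph{unweighted} microscopic pieces $\|\partial^\alpha\{\mathbf{I}-\mathbf{P}\}f^\varepsilon\|^2$ for all $k\leq|\alpha|\leq N$, while $\mathcal{D}^k_N(t)$ only controls $\tfrac{1}{\varepsilon^2}\|\partial^\alpha\{\mathbf{I}-\mathbf{P}\}f^\varepsilon\|^2_{N^s_\gamma}$, whose $L^2_{\gamma/2+s}$ part carries the decaying weight $\langle v\rangle^{\gamma/2+s}$ ($\gamma+2s<0$); no power of $\tfrac1\varepsilon$ compensates a velocity weight, so $\|g\|\not\lesssim\tfrac1\varepsilon\|g\|_{N^s_\gamma}$ and your inequality $\mathcal{E}^k_N\lesssim\mathcal{D}^k_N+\|\nabla^k\mathbf{P}f^\varepsilon\|^2$, hence the subsequent case distinction and the differential inequality, are not justified in the soft case (they are fine for $\gamma+2s\geq0$). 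The paper closes exactly this point with a second, velocity-weight interpolation with the same exponents,
\begin{align*}
\left\| \nabla^m \{\mathbf{I}-\mathbf{P}\}f^\varepsilon \right\|
\lesssim \Big( \tfrac{1}{\varepsilon} \big\| \langle v \rangle ^{\frac{\gamma+2s}{2}}\nabla^m \{\mathbf{I}-\mathbf{P}\}f^\varepsilon\big\|\Big)^{\frac{k+\varrho}{k+1+\varrho}}
\Big( \varepsilon^{\frac{1}{2}}\big\| \langle v \rangle ^{-\frac{\gamma+2s}{2}(k+\varrho)}\nabla^m \{\mathbf{I}-\mathbf{P}\}f^\varepsilon\big\|\Big)^{\frac{1}{k+1+\varrho}},
\end{align*}
whose first factor is dissipation and whose second factor is controlled by the weighted energy $\widetilde{\mathcal{E}}_{N,l}$ (the polynomial weight is dominated by $w_l(\alpha,\beta)$ because $l$ is large, and the extra $\varepsilon^{1/2}$ accommodates the $\varepsilon$-prefactor of the $|\alpha|=N$ weighted terms). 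Combining this with your $\mathbf{P}f^\varepsilon$-interpolation gives $\mathcal{E}^k_N(t)\lesssim\{\widetilde{\mathcal{E}}_{N,l}(t)\}^{\frac{1}{k+1+\varrho}}\{\mathcal{D}^k_N(t)\}^{\frac{k+\varrho}{k+1+\varrho}}$ for the \emph{whole} functional, after which your ODE step yields \eqref{k-N decay}. You should add this ingredient (and note it is precisely where the largeness of $l_0+l_1$ enters) to make the soft-potential case complete.
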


\begin{proof}
Similar to Lemma \ref{macroscopic estimate}, by recalling \eqref{macro energy2} for $\mathcal{E}^N_{int}(t)$ and considering the a priori assumption \eqref{priori assumption}, we obtain a more precise estimate compared to \eqref{macro estimate},
\begin{align}
\begin{split}\label{0-N macro 1}
&\varepsilon \frac{\d}{\d t} \mathcal{E}^N_{int}(t) + \sum_{1 \leq k \leq N} \left\| \nabla^k \mathbf{P}f^\varepsilon\right\|^2
+\sum_{ k \leq N+1} \left\| \nabla^k \nabla_x \phi^\varepsilon\right\|^2\\
\lesssim\;&\frac{1}{\varepsilon^2} \sum_{k \leq N} \left\| \nabla^k \{\mathbf{I}-\mathbf{P}\}f^\varepsilon\right\|_{N^s_\gamma}^2
+\mathcal{E}_{N}(t)\frac{1}{\varepsilon^2} \sum_{ k \leq N} \left\| \nabla^k \{\mathbf{I}-\mathbf{P}\}f^\varepsilon\right\|_{N^s_\gamma}^2.
\end{split}
\end{align}

Moreover, similar to Lemma \ref{macroscopic estimate}, there is an interactive energy functional $\mathcal{E}^{1 \rightarrow N}_{int}(t)$ satisfying
\begin{align}\nonumber
\left| \mathcal{E}^{1 \rightarrow N}_{int}(t) \right| \lesssim \sum_{1 \leq |\alpha| \leq N+1} \left\| \partial^\alpha \nabla_x \phi^{\varepsilon}\right\|^2
+\sum_{1 \leq |\alpha| \leq N} \left\| \partial^\alpha f^{\varepsilon}\right\|^2
\end{align}
such that
\begin{align}
\begin{split} \label{1-N macro 1}
&\varepsilon \frac{\d}{\d t} \mathcal{E}^{1 \rightarrow N}_{int}(t) + \sum_{2 \leq k \leq N} \left\| \nabla^k \mathbf{P}f^\varepsilon\right\|^2
+\sum_{1\leq k \leq N+1} \left\| \nabla^k \nabla_x \phi^\varepsilon\right\|^2\\
\lesssim\;&\frac{1}{\varepsilon^2} \sum_{1 \leq k \leq N} \left\| \nabla^k \{\mathbf{I}-\mathbf{P}\}f^\varepsilon\right\|_{N^s_\gamma}^2
+\mathcal{E}_{N}(t) \frac{1}{\varepsilon^2}\sum_{1 \leq k \leq N} \left\| \nabla^k \{\mathbf{I}-\mathbf{P}\}f^\varepsilon\right\|_{N^s_\gamma}^2.
\end{split}
\end{align}

Therefore, recalling \eqref{low k energy} for $\mathcal{E}^k_N(t)$, \eqref{k-N estimate} for $k=0$ can be derived from \eqref{0-N estimate decay}, \eqref{0-N macro 1} and the a priori assumption \eqref{priori assumption}.
Similarly, \eqref{k-N estimate} for $k=1$ follows from \eqref{1-N estimate decay}, \eqref{1-N macro 1} and the a priori assumption \eqref{priori assumption}.

To deduce \eqref{k-N decay}, for $k=0,1$, one has
\begin{align}\nonumber
\left\| \nabla^k \mathbf{P}f^\varepsilon \right\|
\lesssim \left\| \nabla^{k+1} \mathbf{P}f^\varepsilon \right\|^{\frac{k+\varrho}{k+1+\varrho}}
\left\| \Lambda^{-\varrho} \mathbf{P}f^\varepsilon \right\|^{\frac{1}{k+1+\varrho}}.
\end{align}
The above inequality together with the fact that
\begin{align}\nonumber
\left\| \nabla^m \{\mathbf{I}-\mathbf{P}\}f^\varepsilon \right\|
\lesssim\;& \left( \frac{1}{\varepsilon} \left\| \langle v \rangle ^{\frac{\gamma+2s}{2}}\nabla^m \{\mathbf{I}-\mathbf{P}\}f^\varepsilon\right\|\right)^{\frac{k+\varrho}{k+1+\varrho}}
\left( \varepsilon^{\frac{1}{2}}\left\| \langle v \rangle ^{-\frac{\gamma+2s}{2}(k+\varrho)}\nabla^m \{\mathbf{I}-\mathbf{P}\}f^\varepsilon\right\|\right)^{\frac{1}{k+1+\varrho}}
\end{align}
implies
\begin{align} \nonumber
\mathcal{E}^k_{N}(t) \lesssim \left\{\widetilde{\mathcal{E}}_{N,l}(t)\right\}^{\frac{1}{k+1+\varrho}}
\left\{ \mathcal{D}^k_{N}(t) \right\}^{\frac{k+\varrho}{k+1+\varrho}}
\lesssim \left\{\sup_{0 \leq \tau \leq t}\widetilde{\mathcal{E}}_{N,l}(\tau) \right\} ^{\frac{1}{k+1+\varrho}}
\left\{ \mathcal{D}^k_{N}(t) \right\}^{\frac{k+\varrho}{k+1+\varrho}}.
\end{align}
Hence, we deduce that
\begin{align}
\frac{\d}{\d t} \mathcal{E}^k_{N}(t)+ \lambda \left\{ \sup_{0 \leq \tau \leq t}\widetilde{\mathcal{E}}_{N,l}(\tau) \right\} ^{-\frac{1}{k+\varrho}}
\left\{ \mathcal{E}^k_{N}(t) \right\}^{1+\frac{1}{k+\varrho}} \leq 0.
\end{align}
Therefore, \eqref{k-N decay} can be deduced by solving the above inequality directly. This completes the proof of Lemma \ref{k-N decay lemma}.
\end{proof}
\medskip

\subsection{Proof of Global Existence for Soft Potentials}
\hspace*{\fill}

In this subsection, we prove Theorem \ref{mainth1} by closing the a priori estimates on $X(t)$ in the following lemma. The idea of the following proof is similar to the Boltzmann case in \cite{Strain2012}.
\begin{lemma}\label{soft close priori estimate}
Let $0 < T \leq \infty $. Assume that there exists a suitable small constant $\delta_0 > 0$ such that the a priori assumption \eqref{priori assumption} holds. Then one has
\begin{align}\label{close priori estimate}
X(t) \lesssim \widetilde{\mathcal{E}}_{N,l_0+l_1}(0)+ X^{\frac{3}{2}}(t),
\end{align}
for any $0 \leq t \leq T$.
\end{lemma}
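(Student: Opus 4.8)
The plan is to bound each of the three constituent pieces of $X(t)$ in \eqref{X define} (for the soft range $-3<\gamma<-2s$) in terms of the initial data $\widetilde{\mathcal{E}}_{N,l_0+l_1}(0)$ plus a superlinear remainder $X^{3/2}(t)$. The three pieces are $\sup_\tau\widetilde{\mathcal{E}}_{N,l_0+l_1}(\tau)$, $\sup_\tau(1+\tau)^\varrho\mathcal{E}_{N,l_0}(\tau)$, and $\sup_\tau(1+\tau)^{\varrho+p}\mathcal{E}^h_{N,l_0}(\tau)$. For all of them the engine is the weighted differential inequalities already established: Proposition \ref{negative sobolev energy estimates total} for the full negative-Sobolev functional, Proposition \ref{energy estimates total} for $\mathcal{E}_{N,l_0}$, and Proposition \ref{high energy estimates total} for $\mathcal{E}^h_{N,l_0}$, together with the time-decay mechanism of Lemma \ref{k-N decay lemma}. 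First I would observe that under \eqref{priori assumption} the coefficient $\{\varepsilon\|\partial_t\phi^\varepsilon\|_{L^\infty}+\chi_\gamma\|\nabla_x\phi^\varepsilon\|_{L^\infty}\}$ appearing on the right of \eqref{a priori estimates}, \eqref{a priori estimates2}, \eqref{a priori estimates3} is itself controlled by $\widetilde{\mathcal{E}}^{1/2}_{N,l}(t)\lesssim X^{1/2}(t)\lesssim\delta_0^{1/2}$ (the $\varepsilon\partial_t\phi^\varepsilon$ term being estimated from the $\phi$-equation and the transport equation, using $0<\varepsilon\le1$), so these forcing terms can be absorbed or turned into $X^{3/2}$-type contributions.

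Concretely, for the first piece I would integrate \eqref{a priori estimates3} in time: $\widetilde{\mathcal{E}}_{N,l_0+l_1}(t)+\lambda\int_0^t\widetilde{\mathcal{D}}_{N,l_0+l_1}\,\d\tau\lesssim\widetilde{\mathcal{E}}_{N,l_0+l_1}(0)+\int_0^t X^{1/2}(\tau)\mathcal{E}_{N,l_0+l_1}(\tau)\,\d\tau$, and then bound the last integral by $\sup_\tau X^{1/2}(\tau)\cdot\int_0^t\mathcal{E}_{N,l_0+l_1}(\tau)\,\d\tau$. Here I would use the decay $\mathcal{E}_{N,l_0}(\tau)\lesssim(1+\tau)^{-\varrho}X(t)$ — which comes from Lemma \ref{k-N decay lemma} via \eqref{k-N decay} with $k=0$ plus an interpolation/absorption argument matching $\mathcal{E}_N(t)$ to $\mathcal{E}^0_N(t)$ — so that $\int_0^t(1+\tau)^{-\varrho}\,\d\tau<\infty$ since $\varrho>1$; the weight discrepancy between $l_0$ and $l_0+l_1$ is handled by the standard interpolation $\|w_{l_0}\,\cdot\|^{\theta}\|w_{l_0+l_1}\,\cdot\|^{1-\theta}$-type trick, which is exactly where the choice $l_1=-\frac{\varrho+p}{2(1-p)}(\gamma+2s)$ enters. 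This yields $\sup_\tau\widetilde{\mathcal{E}}_{N,l_0+l_1}(\tau)\lesssim\widetilde{\mathcal{E}}_{N,l_0+l_1}(0)+X^{3/2}(t)$.

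For the second piece, $\sup_\tau(1+\tau)^\varrho\mathcal{E}_{N,l_0}(\tau)$, I would run \eqref{a priori estimates} for the functional at weight $l_0$, multiply by $(1+\tau)^\varrho$, and use the Gronwall-type argument together with the dissipation $\mathcal{D}_{N,l_0}$ controlling $\mathcal{E}_{N,l_0}$ up to the non-dissipative low-order term $\|\mathbf{P}f^\varepsilon\|^2+\|\nabla_x\mathbf{P}f^\varepsilon\|^2$; the latter is supplied with decay rate $(1+\tau)^{-(1+\varrho)}$ by Lemma \ref{k-N decay lemma} (the $k=1$ case gives the needed extra power), so the time-weighted integral closes. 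For the third piece $\mathcal{E}^h_{N,l_0}$ I would use \eqref{a priori estimates2}, whose right side carries the extra $\|\nabla_x(a^\varepsilon_\pm,b^\varepsilon,c^\varepsilon)\|^2$ term; this is precisely the quantity enjoying the faster decay $(1+\tau)^{-(1+\varrho)}$ provided by \eqref{k-N decay} with $k=1$, hence multiplying \eqref{a priori estimates2} by $(1+\tau)^{\varrho+p}$ (with $p<1$ so that $\varrho+p<1+\varrho$) keeps the forcing integrable, and the $X^{1/2}\mathcal{E}^h_{N,l_0}$ term is absorbed on the left for $\delta_0$ small. Collecting the three bounds gives \eqref{close priori estimate}.

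\textbf{Main obstacle.} The delicate point is the interplay between the velocity-weight mismatch and the time-decay bookkeeping: the decay estimates of Lemma \ref{k-N decay lemma} are stated at the top weight $l=l_0+l_1$, whereas the dissipation coming out of \eqref{a priori estimates} lives at the lower weight $l_0$, and one must interpolate losing a controlled power of $\langle v\rangle$ while still retaining enough integrability in $\tau$. Making the exponents consistent — so that the loss is exactly compensated by the surplus decay $(1+\tau)^{-p}$ and the identity $l_1=-\frac{\varrho+p}{2(1-p)}(\gamma+2s)$ — is the real content; the remaining manipulations (time integration, Gronwall, absorption of $X^{1/2}$-small coefficients, and reducing $\varepsilon\|\partial_t\phi^\varepsilon\|_{L^\infty}$ via the equations) are routine given the lemmas already proved.
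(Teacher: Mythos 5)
Your overall skeleton (estimate the three pieces of $X(t)$ via Propositions \ref{energy estimates total}--\ref{negative sobolev energy estimates total} and Lemma \ref{k-N decay lemma}, after controlling the coefficients $\varepsilon\|\partial_t\phi^\varepsilon\|_{L^\infty}+\|\nabla_x\phi^\varepsilon\|_{L^\infty}$) matches the paper, but two of your key steps fail in the soft-potential regime this lemma is about. For the piece $\sup_\tau\widetilde{\mathcal{E}}_{N,l_0+l_1}(\tau)$ you pull out $X^{1/2}$ and are left needing $\int_0^t\mathcal{E}_{N,l_0+l_1}(\tau)\,\d\tau$ to be finite; but no time decay of the top-weight functional $\mathcal{E}_{N,l_0+l_1}$ is available (it is not part of $X(t)$, and the interpolation $\|w_{l_0}\cdot\|^\theta\|w_{l_0+l_1}\cdot\|^{1-\theta}$ you invoke only transfers the $(1+\tau)^{-\varrho}$ decay of the $l_0$-functional to weights strictly \emph{below} $l_0+l_1$), so that integral may grow linearly in $t$. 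The paper closes this step differently: using $\varepsilon\,\partial_t\phi^\varepsilon=\Delta_x^{-1}\nabla_x\cdot G^\varepsilon$ it shows the Gronwall coefficient itself decays, $\varepsilon\|\partial_t\phi^\varepsilon\|_{L^\infty}+\|\nabla_x\phi^\varepsilon\|_{L^\infty}\lesssim\{\mathcal{E}^h_{N,l_0}(t)\}^{1/2}\lesssim X^{1/2}(t)(1+t)^{-(\varrho+p)/2}$, which is integrable in time precisely because $\varrho+p>2$; then Gronwall applied to \eqref{a priori estimates3}, with $\mathcal{E}_{N,l_0+l_1}\le\widetilde{\mathcal{E}}_{N,l_0+l_1}$ kept inside the exponential factor, gives $\widetilde{\mathcal{E}}_{N,l_0+l_1}(t)\lesssim\widetilde{\mathcal{E}}_{N,l_0+l_1}(0)$ directly.

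More seriously, your handling of $\mathcal{E}_{N,l_0}$ and $\mathcal{E}^h_{N,l_0}$ (multiplying \eqref{a priori estimates} and \eqref{a priori estimates2} by $(1+\tau)^{\varrho}$, $(1+\tau)^{\varrho+p}$ and absorbing) presupposes that $\mathcal{D}_{N,l_0}$ controls these energies up to $\|\mathbf{P}f^\varepsilon\|^2+\|\nabla_x\mathbf{P}f^\varepsilon\|^2$. That inequality is exactly the hard-potential shortcut \eqref{energy dissipation} used in Lemma \ref{hard close priori estimate}; for $-3<\gamma<-2s$ the $N^s_\gamma$ norms in $\mathcal{D}_{N,l_0}$ carry the degenerate weight $\langle v\rangle^{(\gamma+2s)/2}$, so the differentiated time-weight terms such as $(1+\tau)^{\varrho+p-1}\mathcal{E}^h_{N,l_0}$ cannot be absorbed by the dissipation. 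This is where the paper uses the Strain--Guo velocity splitting $\mathbb{E}(t)=\{\langle v\rangle^{-(\gamma+2s)}\le t^{1-p}\}$: on $\mathbb{E}(t)$ the dissipation dominates $t^{p-1}$ times the low-velocity energy, leading to $\frac{\d}{\d t}\mathcal{E}^h_{N,l_0}+\lambda p\,t^{p-1}\mathcal{E}^h_{N,l_0}\lesssim\cdots+t^{p-1}\mathcal{E}^{h,high}_{N,l_0}$, solved with the kernel $e^{-\lambda(t^p-\tau^p)}$ and the convolution bounds \eqref{decay inequality}; on $\mathbb{E}^c(t)$ the surplus weight $l_1$ converts $t^{\varrho+p}\le\langle v\rangle^{-\frac{\varrho+p}{1-p}(\gamma+2s)}$ into $\mathcal{E}^{h,high}_{N,l_0}(t)\lesssim\widetilde{\mathcal{E}}_{N,l_0+l_1}(0)(1+t)^{-(\varrho+p)}$ — that pointwise-in-$v$ exchange, not a norm interpolation, is how $l_1$ enters. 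Finally, $\|\mathbf{P}f^\varepsilon\|^2$ decays only like $(1+t)^{-\varrho}$ (the $k=0$ case of \eqref{k-N decay}); the rate $(1+t)^{-(1+\varrho)}$ is available only for $\|\nabla_x\mathbf{P}f^\varepsilon\|^2$, and the $\varrho$-decay of $\mathcal{E}_{N,l_0}$ then follows from the decomposition $\mathcal{E}_{N,l_0}\approx\|\mathbf{P}f^\varepsilon\|^2+\mathcal{E}^h_{N,l_0}$ rather than from a separate weighted Gronwall.
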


\begin{proof}
We divide it into three steps.

\text{{Step 1.}} \  It follows from the first equation of \eqref{macro equation 5} and \eqref{a equality} that
\begin{align}
\partial_t \phi^\varepsilon = -\Delta_x^{-1}\partial_t(a_{+}^\varepsilon-a_{-}^\varepsilon)
=\frac{1}{\varepsilon} \Delta^{-1}_x \nabla_x \cdot G^\varepsilon,
\end{align}
where $G^\varepsilon$ is given by \eqref{G define}. Then by the Sobolev inequality and the Riesz inequality, we have
\begin{align}\label{partial t phi}
\begin{split}
\varepsilon^2 \left\| \partial_t \phi^\varepsilon\right\|^2_{L^\infty}
&\lesssim \left\| G^\varepsilon \right\| \left\| \nabla_x G^\varepsilon\right\|
\lesssim \mathcal{E}^h_{N,l}(t)
\lesssim X(t)(1+t)^{-(\varrho+p)},\\
\left\| \nabla_x \phi^\varepsilon \right\|^2_{L^\infty}
&\lesssim \left\| \nabla^2_x \phi^\varepsilon \right\|
\left\| \nabla^3_x \phi^\varepsilon \right\|
\lesssim \mathcal{E}^h_{N,l}(t)
\lesssim X(t)(1+t)^{-(\varrho+p)}.
\end{split}
\end{align}
Here $1 < \varrho < 3/2$, $ 1/2 < p < 1$  and $\varrho+p >2$.
By applying Proposition  \ref{negative sobolev energy estimates total} in the case when $l=l_0+l_1$, one has from \eqref{a priori estimates3} that
\begin{align}\nonumber
\frac{\d}{\d t} \widetilde{\mathcal{E}}_{N,l_0+l_1}(t) + \lambda  \widetilde{\mathcal{D}}_{N,l_0+l_1}(t) \lesssim  \left\{ \varepsilon \left\| \partial_t \phi^\varepsilon \right\|_{L^\infty}+ \left\| \nabla_x \phi^\varepsilon \right\|_{L^\infty} \right\} \mathcal{E}_{N,l_0+l_1}(t).
\end{align}
This, together with \eqref{partial t phi} and the Gronwall inequality, as well as \eqref{priori assumption}, implies that
\begin{align}\label{l0+l1}
\widetilde{\mathcal{E}}_{N,l_0+l_1}(t)
+\int_{0}^t \widetilde{\mathcal{D}}_{N,l_0+l_1}(\tau) \d \tau
\lesssim \widetilde{\mathcal{E}}_{N,l_0+l_1}(0)e^{C\int_{0}^t \{ \varepsilon \left\| \partial_t \phi^\varepsilon \right\|_{L^\infty}
+ \left\| \nabla_x \phi^\varepsilon \right\|_{L^\infty} \}  \d \tau }
\lesssim \widetilde{\mathcal{E}}_{N,l_0+l_1}(0)
\end{align}
for any $0 \leq t \leq T$.

\text{{Step 2.}} \   We begin with \eqref{a priori estimates2} with $l=l_0$ in Proposition \ref{high energy estimates total}, i.e.
\begin{align}\label{high l0}
\frac{\d}{\d t} \mathcal{E}^h_{N,l_0}(t) + \lambda  \mathcal{D}_{N,l_0}(t) \lesssim
\left\|\nabla_x \mathbf{P}f^\varepsilon \right\|^2
+\left\{\varepsilon \left\| \partial_t \phi^\varepsilon \right\|_{L^\infty}+ \left\|\nabla_x \phi^{\varepsilon}\right\|_{L^\infty}\right\} \mathcal{E}^h_{N,l_0}(t).
\end{align}
As in \cite{Strain2012, SG2008ARMA}, at time $t$, we split the velocity space $\mathbb{R}^3_v$ into
\begin{align}\label{E,EC}
\mathbb{E}(t)=\left\{ \langle v \rangle ^{-(\gamma+2s)} \leq t^{1-p}\right\}, \quad
\mathbb{E}^c(t)=\left\{ \langle v \rangle ^{-(\gamma+2s)} > t^{1-p}\right\},
\end{align}
where $-3 < \gamma < -2s$ and $1/2 < p < 1$. Corresponding to this splitting, we define $\mathcal{E}^{h,low}_{N,l_0}(t)$ to be the restriction of $\mathcal{E}^h_{N,l_0}(t)$ to $\mathbb{E}(t)$ and similarly $\mathcal{E}^{h,high}_{N,l_0}(t)$ to be the restriction of $\mathcal{E}^h_{N,l_0}(t)$ to $\mathbb{E}^c(t)$.
Then, thanks to \eqref{E,EC} and \eqref{high energy functional}, it follows from \eqref{high l0} that
\begin{align}\nonumber
\frac{\d}{\d t} \mathcal{E}^h_{N,l_0}(t) + \lambda p t^{p-1}  \mathcal{E}^h_{N,l_0}(t) \lesssim
\left\|\nabla_x \mathbf{P}f^\varepsilon\right\|^2
+\left\{\varepsilon \left\| \partial_t \phi^\varepsilon \right\|_{L^\infty}+ \left\|\nabla_x \phi^{\varepsilon}\right\|_{L^\infty}\right\} \mathcal{E}^h_{N,l_0}(t) + t^{p-1} \mathcal{E}^{h,high}_{N,l_0}(t),
\end{align}
which by solving the ODE inequality, gives
\begin{align}\label{high l0 1}
\mathcal{E}^h_{N,l_0}(t) \lesssim\; &\!\int_0^t \!e^{-\lambda (t^p-\tau^p)}\!
\left( \!\left\|\nabla_x \mathbf{P}f^\varepsilon\right\|^2\!
+\left\{\varepsilon \left\| \partial_t \phi^\varepsilon \right\|_{L^\infty}\!+ \!\left\|\nabla_x \phi^{\varepsilon}\right\|_{L^\infty}\!\right\} \mathcal{E}^h_{N,l_0}(\tau) + \!\tau^{p-1} \mathcal{E}^{h,high}_{N,l_0}(\tau)\right) \d \tau \nonumber\\
& +e^{-\lambda t^p}\mathcal{E}^h_{N,l_0}(0).
\end{align}

In what follows we estimate these three terms in the time integral on the right-hand side of \eqref{high l0 1}.
First of all, making use of \eqref{partial t phi}, one has
\begin{align}\label{partial t phi decay}
\{ \varepsilon \left\| \partial_t \phi^\varepsilon \right\|_{L^\infty}+ \left\|\nabla_x \phi^{\varepsilon}\right\|_{L^\infty}\} \mathcal{E}^h_{N,l_0}(t)
\lesssim \{\mathcal{E}^h_{N,l_0}(t)\} ^{\frac{3}{2}}
\lesssim X^{\frac{3}{2}}(t)(1+t)^{-\frac{3}{2}(\varrho+p)}.
\end{align}
Next, one can prove
\begin{align}\label{E h,high decay}
\mathcal{E}^{h,high}_{N,l_0}(t) \leq \widetilde{\mathcal{E}}_{N,l_0+l_1}(0) (1+t)^{-(\varrho+p)}.
\end{align}
In fact, it is straightforward to see from \eqref{l0+l1} that for any $0 \leq t \leq T$,
\begin{align}\nonumber
\mathcal{E}^{h,high}_{N,l_0}(t) \lesssim \mathcal{E}^{h}_{N,l_0}(t) \lesssim \mathcal{E}_{N,l_0}(t)
\lesssim \mathcal{E}_{N,l_0+l_1}(t) \lesssim \widetilde{\mathcal{E}}_{N,l_0+l_1}(t) \lesssim \widetilde{\mathcal{E}}_{N,l_0+l_1}(0).
\end{align}
On the other hand, noticing that $t^{\varrho+p} \leq  \langle v \rangle^{-\frac{\varrho+p}{1-p}(\gamma+2s)}$ on $\mathbb{E}^c(t)$ and $l_1$ is defined in \eqref{soft assumption},
one has from \eqref{l0+l1} that
\begin{align}\nonumber
\mathcal{E}^{h,high}_{N,l_0}(t) \lesssim \widetilde{\mathcal{E}}_{N,l_0+l_1}(0) t^{-(\varrho+p)}.
\end{align}
Therefore, \eqref{E h,high decay} holds true. Finally, by employing \eqref{k-N decay} and \eqref{l0+l1}, we obtain
\begin{align}\label{nabla Pf decay}
\left\| \nabla_x \mathbf{P}f^\varepsilon\right\|^2 \lesssim (1+t)^{-(1+\varrho)} \sup_{0 \leq \tau \leq t}\widetilde{\mathcal{E}}_{N,l_0+l_1}(\tau)
\lesssim \widetilde{\mathcal{E}}_{N,l_0+l_1}(0) (1+t)^{-(1+\varrho)}.
\end{align}
Notice that one has the following inequalities:
\begin{align}
\begin{split}\label{decay inequality}
&\int_{0}^t e^{-\lambda (t^p-\tau^p)}(1+\tau)^{-\frac{3}{2}(\varrho+p)} \d \tau
\lesssim (1+t)^{-(\frac{3\varrho}{2}-1+\frac{5p}{2})}, \\
&\int_{0}^t e^{-\lambda (t^p-\tau^p)}\tau^{p-1}(1+\tau)^{-(\varrho+p)} \d \tau
\lesssim (1+t)^{-(\varrho+p)},  \\
&\int_{0}^t e^{-\lambda (t^p-\tau^p)}(1+\tau)^{-(1+\varrho)} \d \tau
\lesssim (1+t)^{-(\varrho+p)}.
\end{split}
\end{align}
Then, plugging \eqref{partial t phi decay}, \eqref{E h,high decay} and \eqref{nabla Pf decay} into \eqref{high l0 1} and using \eqref{decay inequality}, one has
\begin{align}\label{N l0 high decay}
\mathcal{E}^h_{N,l_0}(t) \lesssim \left\{ \widetilde{\mathcal{E}}_{N,l_0+l_1}(0)+X^{\frac{3}{2}}(t)\right\} (1+t)^{-(\varrho+p)}
\end{align}
for any $0 \leq t \leq T$. Here we have used $1< \varrho < 3/2$ and $1/2 < p < 1$.

\text{{Step 3.}}  \  By employing \eqref{k-N decay} and \eqref{l0+l1}, one can obtain
\begin{align}\label{Pf decay}
\left\| \mathbf{P}f^\varepsilon \right\|^2 \lesssim  (1+t)^{-\varrho} \sup_{0 \leq \tau \leq t}\widetilde{\mathcal{E}}_{N,l_0+l_1}(\tau)
\lesssim \widetilde{\mathcal{E}}_{N,l_0+l_1}(0) (1+t)^{-\varrho}.
\end{align}
Noticing $\mathcal{E}_{N,l_0}(t) \approx \left\| \mathbf{P}f^\varepsilon\right\|^2+ \mathcal{E}^h_{N,l_0}(t)$, \eqref{Pf decay} together with \eqref{N l0 high decay} give
\begin{align}\label{N l0 decay}
\mathcal{E}_{N,l_0}(t) \lesssim \left\{ \widetilde{\mathcal{E}}_{N,l_0+l_1}(0)+X^{\frac{3}{2}}(t)\right\} (1+t)^{-\varrho}.
\end{align}

Now, recalling \eqref{X define}, the desired estimate \eqref{close priori estimate} follows from \eqref{l0+l1}, \eqref{N l0 high decay} and \eqref{N l0 decay}. This completes the proof of Lemma \ref{soft close priori estimate}.
\end{proof}
\medskip

\begin{proof}[{\bf Proof of Theorem \ref{mainth1}.}] \  It follows immediately from the a priori estimate \eqref{soft close priori estimate} that $$
X(t) \lesssim \widetilde{\mathcal{E}}_{N,l_0+l_1}(0)
$$
holds for any $0 \leq t \leq T$, as long as $\widetilde{\mathcal{E}}_{N,l_0+l_1}(0)$ is sufficiently small. The rest is to prove the local existence and uniqueness of solutions in terms of the energy norm $\widetilde{\mathcal{E}}_{N,l_0+l_1}(t)$ and the non-negativity of $F^\varepsilon=\mu+\varepsilon \mu^{1/2}f^\varepsilon$. One can use the iteration method with the iterating sequence $f_n^\varepsilon$ $(n \geq 0)$ on the system
\begin{align}
	\left\{\begin{array}{l}
\displaystyle \partial_{t} f_{n+1}^{\varepsilon}+\frac{1}{\varepsilon}v \cdot \nabla_{x} {f}_{n+1}^{\varepsilon}+\frac{1}{\varepsilon} \nabla_{x} \phi_{n+1}^{\varepsilon} \cdot v \mu^{1/2}q_1+\frac{1}{\varepsilon^{2}} L {f}_{n+1}^{\varepsilon}
\\[3mm]
\displaystyle \qquad\qquad\qquad\qquad\qquad=q_0\nabla_{x} \phi_{n}^{\varepsilon} \cdot \nabla_{v} {f}_{n+1}^{\varepsilon}-\frac{1}{2}q_0 \nabla_{x} \phi_{n}^{\varepsilon} \cdot v {f}_{n+1}^{\varepsilon}+\frac{1}{\varepsilon} \Gamma\left({f}_{n}^{\varepsilon}, {f}_{n}^{\varepsilon}\right),  \\ [2mm]
\displaystyle -\Delta_{x} \phi_{n+1}^{\varepsilon}=\int_{\mathbb{R}^3}f_{n+1}^{\varepsilon}\cdot q_1 \mu^{1/2}\d v ,
\quad \lim_{|x|\rightarrow \infty}\phi_{n+1}^\varepsilon=0,  \\ [3mm]
\displaystyle f_{n+1}^\varepsilon(0,x,v)=f^\varepsilon_0(x,v).
	\end{array}\right.
\end{align}
The details of proof are omitted for brevity; see \cite{GS2011, Guo2012JAMS} and  \cite{JL2019}. Therefore, the global existence of solutions follows with the help of the continuity argument, and the estimates \eqref{thm1 N l0 decay}, \eqref{thm1 N h l0 decay} and \eqref{thm1 widetilde N l0+l1 decay} hold by the definition of $X(t)$ in \eqref{X define}. This completes the proof of Theorem \ref{mainth1}.
\end{proof}
\medskip

\subsection{Proof of Global Existence for Hard Potentials}
\hspace*{\fill}

In this subsection, we prove Theorem \ref{mainth2} by closing the a priori estimates on $X(t)$ in the following lemma. We provide a simpler proof than that of Lemma \ref{soft close priori estimate}, but it is only applicable to the hard potential case.

\begin{lemma}\label{hard close priori estimate}
Let $0 < T \leq \infty $. Assume that there exists a suitable small constant $\delta_0 > 0$ such that the a priori assumption \eqref{priori assumption} holds. Then we have
\begin{align}\label{close priori estimate2}
X(t) \lesssim \widetilde{\mathcal{E}}_{N,l_2}(0),
\end{align}
for any $0 \leq t \leq T$.
\end{lemma}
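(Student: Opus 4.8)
The plan is to close the a priori estimates on $X(t)$ for the hard potential case by exploiting the stronger dissipation available when $\gamma+2s\geq 0$, which allows a much simpler argument than the soft potential case since no splitting of velocity space is needed. I would proceed in three steps, paralleling the structure of Lemma \ref{soft close priori estimate} but dispensing with the high-order energy functional $\mathcal{E}^h_{N,l}(t)$ and the associated decay bookkeeping.

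\textbf{Step 1: control of the potential terms.}\ First I would record, exactly as in \eqref{partial t phi}, that from the first equation of \eqref{macro equation 5} and \eqref{a equality} one has $\partial_t\phi^\varepsilon = \frac{1}{\varepsilon}\Delta^{-1}_x\nabla_x\cdot G^\varepsilon$, so that by the Sobolev and Riesz inequalities
\begin{align*}
\varepsilon^2\left\|\partial_t\phi^\varepsilon\right\|^2_{L^\infty}+\left\|\nabla_x\phi^\varepsilon\right\|^2_{L^\infty}
\lesssim \left\|G^\varepsilon\right\|\left\|\nabla_x G^\varepsilon\right\|+\left\|\nabla^2_x\phi^\varepsilon\right\|\left\|\nabla^3_x\phi^\varepsilon\right\|
\lesssim \mathcal{E}_{N,l_2}(t)\lesssim X(t)(1+t)^{-\varrho}.
\end{align*}
Since $\chi_\gamma=0$ for hard potentials, Proposition \ref{negative sobolev energy estimates total} with $l=l_2$ gives
$\frac{\d}{\d t}\widetilde{\mathcal{E}}_{N,l_2}(t)+\lambda\widetilde{\mathcal{D}}_{N,l_2}(t)\lesssim \varepsilon\left\|\partial_t\phi^\varepsilon\right\|_{L^\infty}\mathcal{E}_{N,l_2}(t)$, and the right-hand side is $\lesssim X^{3/2}(t)(1+t)^{-3\varrho/2}$ which, since $\varrho>1$, is integrable in time. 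By the Gronwall inequality and the smallness of $X(t)$ guaranteed by \eqref{priori assumption}, I obtain
$\widetilde{\mathcal{E}}_{N,l_2}(t)+\int_0^t\widetilde{\mathcal{D}}_{N,l_2}(\tau)\,\d\tau\lesssim\widetilde{\mathcal{E}}_{N,l_2}(0)$ for all $0\leq t\leq T$.

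\textbf{Step 2: time decay of $\mathcal{E}_{N,l_2}(t)$.}\ Next I would invoke Lemma \ref{k-N decay lemma} with $k=0$, which gives $\mathcal{E}^0_N(t)\lesssim (1+t)^{-\varrho}\sup_{0\leq\tau\leq t}\widetilde{\mathcal{E}}_{N,l_2}(\tau)$; combined with Step 1 this yields $\|\mathbf{P}f^\varepsilon\|^2\lesssim\mathcal{E}^0_N(t)\lesssim\widetilde{\mathcal{E}}_{N,l_2}(0)(1+t)^{-\varrho}$. Now apply Proposition \ref{energy estimates total} with $l=l_2$: $\frac{\d}{\d t}\mathcal{E}_{N,l_2}(t)+\lambda\mathcal{D}_{N,l_2}(t)\lesssim\varepsilon\|\partial_t\phi^\varepsilon\|_{L^\infty}\mathcal{E}_{N,l_2}(t)$. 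Since $\mathcal{E}_{N,l_2}(t)\lesssim\|\mathbf{P}f^\varepsilon\|^2+\mathcal{D}_{N,l_2}(t)$ (the only piece of $\mathcal{E}_{N,l_2}$ not dominated by $\mathcal{D}_{N,l_2}$ is the lowest-order macroscopic $\|\mathbf{P}f^\varepsilon\|^2$), this gives an ODE inequality of the form $\frac{\d}{\d t}\mathcal{E}_{N,l_2}(t)+\lambda\mathcal{E}_{N,l_2}(t)\lesssim\|\mathbf{P}f^\varepsilon\|^2+\varepsilon\|\partial_t\phi^\varepsilon\|_{L^\infty}\mathcal{E}_{N,l_2}(t)$; solving it and feeding in the decay of $\|\mathbf{P}f^\varepsilon\|^2$ and the time-integrability of $\varepsilon\|\partial_t\phi^\varepsilon\|_{L^\infty}$ from Step 1, I get $\mathcal{E}_{N,l_2}(t)\lesssim\widetilde{\mathcal{E}}_{N,l_2}(0)(1+t)^{-\varrho}$ for all $0\leq t\leq T$, so $(1+t)^\varrho\mathcal{E}_{N,l_2}(t)\lesssim\widetilde{\mathcal{E}}_{N,l_2}(0)$.

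\textbf{Step 3: conclusion.}\ Recalling the definition \eqref{X define} of $X(t)$ in the hard potential case, $X(t)=\sup_{0\leq\tau\leq t}\widetilde{\mathcal{E}}_{N,l_2}(\tau)+\sup_{0\leq\tau\leq t}(1+\tau)^\varrho\mathcal{E}_{N,l_2}(\tau)$, the two bounds from Steps 1 and 2 combine to give \eqref{close priori estimate2}. I expect the main obstacle to be a bookkeeping subtlety rather than a conceptual one: one must verify that the lowest-order macroscopic quantity $\|\mathbf{P}f^\varepsilon\|^2$ is genuinely the only obstruction to the coercivity $\mathcal{D}_{N,l_2}(t)\gtrsim\mathcal{E}_{N,l_2}(t)$ (so that the ODE argument in Step 2 truly closes), and that the constant $l_2\geq N+\max\{1,\gamma+2s\}$ with $N\geq 2$ is large enough for all the weighted nonlinear estimates (Lemmas \ref{hard Gamma}, \ref{hardnonlinearterm1}, \ref{hardnonlinearterm2}) and the decay estimate (Lemma \ref{0-N 1-N lemma}) to apply simultaneously—this is exactly where the requirement $N\geq 2$ for hard potentials, justified via \eqref{reason N geq 2}, is used. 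Once these consistency checks are in place, no further decay of derivatives is needed, which is precisely why the hard potential proof avoids the velocity-space splitting \eqref{E,EC} and the high-order functional $\mathcal{E}^h_{N,l_0}$ that were essential in the soft case.
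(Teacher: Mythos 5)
Your proposal is correct and follows essentially the same route as the paper: a uniform bound on $\widetilde{\mathcal{E}}_{N,l_2}$ from Proposition \ref{negative sobolev energy estimates total}, then decay of $\mathcal{E}_{N,l_2}$ from Proposition \ref{energy estimates total} combined with the hard-potential relation $\mathcal{E}_{N,l_2}(t)\lesssim\|\mathbf{P}f^\varepsilon\|^2+\mathcal{D}_{N,l_2}(t)$ and the decay of $\|\mathbf{P}f^\varepsilon\|^2$ from Lemma \ref{k-N decay lemma}, and finally the definition \eqref{X define} of $X(t)$. The only differences are cosmetic: the paper bounds $\varepsilon\|\partial_t\phi^\varepsilon\|_{L^\infty}\lesssim\mathcal{D}^{1/2}_{N,l_2}(t)$ and uses Cauchy--Schwarz (so no decay of this coefficient is needed), whereas you bound it by $\mathcal{E}^{1/2}_{N,l_2}(t)\lesssim\delta_0^{1/2}(1+t)^{-\varrho/2}$, which is not time-integrable since $\varrho<2$ but is uniformly small and can therefore be absorbed by the damping term $\lambda\mathcal{E}_{N,l_2}$ in your Step 2 (and the cubic remainder $X^{3/2}(t)$ left over in your Step 1 should be absorbed at the end using $X(t)\leq\delta_0$), after which your three steps close exactly as in the paper.
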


\begin{proof}
We divide the proof into two steps.

\text{{Step 1.}} \
Similar to \eqref{partial t phi}, due to $l=l_2$ for hard potentials, we obtain
\begin{align}\label{partial t phi1}
\begin{split}
\varepsilon^2 \left\| \partial_t \phi^\varepsilon\right\|^2_{L^\infty}
\lesssim \left\| G^\varepsilon \right\| \left\| \nabla_x G^\varepsilon\right\|
\lesssim \mathcal{D}_{N,l_2}(t).
\end{split}
\end{align}
By applying Proposition  \ref{negative sobolev energy estimates total}, thanks to $l=l_2$, one has from \eqref{a priori estimates3}, \eqref{partial t phi1} and the Cauchy--Schwarz inequality that
\begin{align}\nonumber
\frac{\d}{\d t} \widetilde{\mathcal{E}}_{N,l_2}(t) + \lambda  \widetilde{\mathcal{D}}_{N,l_2}(t) \lesssim   \varepsilon \left\| \partial_t \phi^\varepsilon \right\|_{L^\infty} \mathcal{E}_{N,l_2}(t)
\lesssim \mathcal{D}^{1/2}_{N,l_2}(t) \mathcal{E}_{N,l_2}(t)
\lesssim \frac{\lambda}{2} \mathcal{D}_{N,l_2}(t)
+\left\{\mathcal{E}_{N,l_2}(t)\right\}^2,
\end{align}
which, together with \eqref{X define} and the Gronwall inequality, as well as \eqref{priori assumption}, implies that
\begin{align}\label{l2 widetilde}
\widetilde{\mathcal{E}}_{N,l_2}(t)+\int_0^t \widetilde{\mathcal{D}}_{N,l_2}(\tau) \d \tau \lesssim \widetilde{\mathcal{E}}_{N,l_2}(0)e^{C\int_{0}^t \mathcal{E}_{N,l_2}(\tau) \d \tau}
\lesssim \widetilde{\mathcal{E}}_{N,l_2}(0)
\end{align}
for any $0 \leq t \leq T$.

\text{{Step 2.}} \
 It follows from \eqref{a priori estimates} with $l=l_2$ in Proposition \ref{energy estimates total}, \eqref{partial t phi1} and the Cauchy--Schwarz inequality that
\begin{align}\label{l2 1}
\frac{\d}{\d t} \mathcal{E}_{N,l_2}(t) + \lambda  \mathcal{D}_{N,l_2}(t) \lesssim
\left\{\mathcal{E}_{N,l_2}(t)\right\}^2.
\end{align}
From \eqref{energy functional} and \eqref{dissipation functional} for hard potentials, we have
\begin{align}\label{energy dissipation}
\mathcal{E}_{N,l_2}(t) \lesssim \left\| \mathbf{P}f^\varepsilon \right\|^2+ \mathcal{D}_{N,l_2}(t).
\end{align}
Combining \eqref{l2 1} with \eqref{energy dissipation}, one has
\begin{align}\nonumber
\frac{\d}{\d t} \mathcal{E}_{N,l_2}(t) + \lambda \mathcal{E}_{N,l_2}(t)
\lesssim \left\| \mathbf{P}f^\varepsilon \right\|^2
+\left\{\mathcal{E}_{N,l_2}(t)\right\}^2.
\end{align}
By using the a priori assumption \eqref{priori assumption} and solving the ODE inequality, we can deduce
\begin{align}\label{N l2 decay}
\mathcal{E}_{N,l_2}(t) \lesssim e^{-\lambda t}\mathcal{E}_{N,l_2}(0)
+\int_0^t e^{-\lambda(t-\tau)} \left\| \mathbf{P}f^\varepsilon \right\|^2 \d \tau.
\end{align}
Making use of \eqref{k-N decay}, \eqref{l2 widetilde} and \eqref{N l2 decay}, we obtain
\begin{align}\label{l2 decay}
\mathcal{E}_{N,l_2}(t) \lesssim \widetilde{\mathcal{E}}_{N,l_2}(0) (1+t)^{-\varrho},
\end{align}
where we used the following inequality
\begin{align}\nonumber
\int_0^t e^{-\lambda(t-\tau)} (1+\tau)^{-\varrho} \d \tau \lesssim (1+t)^{-\varrho}.
\end{align}

Now, recalling \eqref{X define}, the desired estimate \eqref{close priori estimate2} follows from \eqref{l2 widetilde} and \eqref{l2 decay}. This completes the proof of Lemma \ref{hard close priori estimate}.
\end{proof}
\medskip

\begin{proof}[{\bf Proof of Theorem \ref{mainth2}.}] \
The proof of Theorem \ref{mainth2} is similar to the proof of Theorem \ref{mainth1}. We omit the details for brevity.
\end{proof}
\medskip

\section{Limit to Two-fluid Incompressible NSFP System with Ohm's Law}\label{Limit section}

In this section, our goal is to derive the two-fluid incompressible NSFP system with Ohm's law from the VPB system \eqref{rVPB} as $\varepsilon \to  0$. For brevity, we only consider soft potentials. The proof about hard potentials is similar and we omit the details. The approach is similar to that one in \cite{JL2019}, but here  we  provide a full proof for completeness.

Firstly, we introduce the linearized Boltzmann collision operators $\mathcal{L}$ and $\mathscr{L}$ by
\begin{align}
\mathcal{L}h:=\;&-\mu^{-1/2}\left\{Q\left(\mu, \mu^{1/2}h\right)+Q\left(\mu^{1/2}h,\mu\right)\right\},\label{L usual define}\\
\mathscr{L}h:=\;&-2\mu^{-1/2}Q\left(\mu, \mu^{1/2}h\right)\label{scr L define}
\end{align}
for a scalar function $h$. The linearized Boltzmann collision operators $\mathcal{L}$ and $\mathscr{L}$ are also non-negative and self-adjoint. Furthermore, their null spaces are given by
$$
\mathcal{N}(\mathcal{L})=\mathrm{span}\left\{\mu^{1/2},v_{i}\mu^{1/2}(1\leq i\leq 3), |v|^2\mu^{1/2}\right\},\;\;\;\;\;\;
\mathcal{N}(\mathscr{L})=\mathrm{span}\left\{\mu^{1/2}\right\}.
$$
For a given scalar function $h$, there is macro-micro decomposition
\begin{equation}\label{h decomposition}
h=P_0h+\{I-P_0\}h,
\end{equation}
first introduced in \cite{Guo2004}.
Here, $P_0$ denotes the orthogonal projection from $L^2(\mathbb{R}_{v}^3) $ to $\mathcal{N}(\mathcal{L})$, that is,
\begin{equation}\label{P0h define}
P_0 h:=\left\{\widetilde{a}(t,x)+v \cdot \widetilde{b}(t,x)+\frac{|v|^2-3}{2}\widetilde{c}(t,x)\right\}\mu^{1/2},
\end{equation}
where
\begin{align}
\widetilde{a}(t,x):=\;&\langle \mu^{1/2}, h \rangle =\langle \mu^{1/2}, P_0 h \rangle, \nonumber \\
\widetilde{b}(t,x):=\;&\langle v\mu^{1/2}, h\rangle =\langle v\mu^{1/2}, P_0 h\rangle, \nonumber \\
\widetilde{c}(t,x):=\;&\frac{1}{3}\big\langle (|v|^2-3)\mu^{1/2},h \big\rangle=\frac{1}{3} \big\langle(|v|^2-3)\mu^{1/2},P_0 h \big\rangle. \nonumber
\end{align}

\subsection{Limits from the Energy Estimate}
\hspace*{\fill}

Based on \eqref{thm1 widetilde N l0+l1 decay} in Theorem \ref{mainth1},  the system (\ref{rVPB})
admits a global solution $f^\varepsilon \in L^\infty(\mathbb{R}^+; H^N_x L^2_v)$ and $\nabla_x\phi^\varepsilon \in L^\infty(\mathbb{R}^+; H^{N+1}_x)$, so there exists a positive constant $C$ which is independent of $\varepsilon$, such that
\begin{align}
&\sup_{t\geq 0} \left\{ \left\| f^{\varepsilon}(t) \right\|^2_{H^N_x L^2_v}+ \left\| \nabla_x \phi^{\varepsilon}(t) \right\|^2_{H^{N+1}_x}\right\} \leq  C,
\label{limit:1-1}\\
&\int_0^\infty \left\| \nabla_x \phi^{\varepsilon}(\tau) \right\|^2_{H^{N+1}_{x}}\d \tau\leq C,
\label{limit:1-4}\\
&\int_0^\infty \left\| \{\mathbf{I}-\mathbf{P}\}{f}^{\varepsilon}(\tau) \right\|^2_{H^N_x N^s_\gamma}\d \tau\leq C \varepsilon^2, \label{limit:1-2}\\
&\int_0^\infty \left\| \{\mathbf{I}-\mathbf{P}\}{f}^{\varepsilon}(\tau) \right\|^2_{H^N_{x,v}}\d \tau\leq C \varepsilon.
\label{limit:1-3}
\end{align}
Here, we have used the fact that for $l=l_0+l_1$,
$$
\left\| \{\mathbf{I}-\mathbf{P}\}{f}^{\varepsilon} \right\|^2_{H^N_{x,v}}
\lesssim \sum_{\substack{|\alpha|+|\beta| \leq N \\
|\alpha| \leq N-1}}\left\|w_l (\alpha, \beta) \partial_{\beta}^{\alpha}
\{\mathbf{I}-\mathbf{P}\} f^{\varepsilon}\right\|^2_{N^s_{\gamma}}
+\sum_{|\alpha|=N} \left\|w_l (\alpha, 0) \partial^{\alpha}
\{\mathbf{I}-\mathbf{P}\} f^{\varepsilon}\right\|^2_{N^s_{\gamma}}.
$$
From the energy bound \eqref{limit:1-1}, there exist $f\in L^\infty(\mathbb{R}^+; H^N_x L^2_v)$ and $\nabla_x\phi\in L^\infty(\mathbb{R}^+;H^{N+1}_x)$  such that
\begin{align}
f^{\varepsilon} &\to  f \quad \qquad\;\text{~~weakly}\!-\!* ~\text{in}~ L^\infty(\mathbb{R}^+; H^N_x L^2_v), \label{limit:3-1}\\
\nabla_x\phi^{\varepsilon}  &\to \nabla_x\phi\qquad \text{~~weakly}\!-\!* ~\text{in} ~L^\infty(\mathbb{R}^+;H^{N+1}_x) \label{limit:3-2}
\end{align}
as $\varepsilon \to 0$. We still employ the original notation of sequences to denote the subsequences for convenience,
although the limit may hold for some subsequences.
From the dissipation bound \eqref{limit:1-3}, we deduce that
\begin{align}
\label{limit:4}
&\{\mathbf{I}-\mathbf{P}\}f^{\varepsilon} \to  0 \;\;\;\text{~~strongly} {\text{~in}~L^2(\mathbb{R}^+;H^N_{x,v})}
\end{align}
as $\varepsilon \to 0$. We thereby deduce from the convergences \eqref{limit:3-1} and \eqref{limit:4} that
\begin{align}
\{\mathbf{I}-\mathbf{P}\}f=0.
\end{align}
This implies that there exist functions $\rho_{+}$, $\rho_{-}$, $u$, $\theta$ $\in L^\infty(\mathbb{R}^+;H^N_{x})$ such that
\begin{align}\label{limit:6}
\begin{split}
f(t,x,v)=\;&\rho_{+}(t,x)\frac{q_1+q_2}{2}\mu^{1/2} +\rho_{-}(t,x)\frac{q_2-q_1}{2}\mu^{1/2} \\
&+ u(t,x) \cdot v q_2 \mu^{1/2} + \theta (t,x)\big(\frac{|v|^2}{2}-\frac{3}{2}\big)q_2 \mu^{1/2},
\end{split}
\end{align}
where $q_1=[1,-1]$, $q_2=[1,1]$.

Next, we introduce the following fluid variables
\begin{align}
\begin{split}\label{limit:7}
&\rho^\varepsilon =\frac{1}{2}\Big\langle f^{\varepsilon}, q_2\mu^{1/2} \Big\rangle,\;\;\;\;
u^\varepsilon= \frac{1}{2} \Big \langle f^{\varepsilon}, q_2 v\mu^{1/2} \Big \rangle,\;\;\;\;
\theta^\varepsilon = \frac{1}{2} \Big\langle f^{\varepsilon}, q_2 \big(\frac{|v|^2}{3}-1 \big)\mu^{1/2}\Big\rangle, \\
&n^\varepsilon = \Big\langle f^{\varepsilon}, q_1\mu^{1/2} \Big\rangle, \;\;\;\;\;\;\;
j^\varepsilon = \frac{1}{\varepsilon} \Big\langle f^{\varepsilon}, q_1 v\mu^{1/2} \Big\rangle, \;\;\;
\omega^\varepsilon = \frac{1}{\varepsilon} \Big\langle f^{\varepsilon}, q_1\big(\frac{|v|^2}{3}-1\big)\mu^{1/2} \Big\rangle.
\end{split}
\end{align}
Multiplying the first equation of \eqref{rVPB} by $\frac{1}{2}q_2\mu^{1/2}$, $ \frac{1}{2}q_2 v\mu^{1/2}$, $\frac{1}{2}q_2(\frac{|v|^2}{3}-1)\mu^{1/2}$, $q_1 \mu^{1/2}$ respectively and integrating  over $\mathbb{R}^3_v$,  we derive the following local conservation laws
\begin{equation}
 \left\{
\begin{array}{ll}\label{limit:8-1:1}
\displaystyle
\partial_t \rho^\varepsilon+\frac{1}{\varepsilon}\nabla_x\cdot u^\varepsilon=0,~\\[2mm]
\displaystyle\partial_t u^\varepsilon+\frac{1}{\varepsilon}\nabla_x(\rho^\varepsilon+\theta^\varepsilon)+\frac{1}{\varepsilon}
\nabla_x\cdot \Big\langle \widehat{A}(v)\mu^{1/2}, \mathcal{L} \big(\frac{f^{\varepsilon} \cdot q_2}{2}\big)\Big\rangle =-\frac{1}{2}n^\varepsilon \nabla_x\phi^{\varepsilon},~\\[2mm]
\displaystyle\partial_t \theta^\varepsilon+\frac{2}{3\varepsilon}\nabla_x\cdot u^\varepsilon+\frac{2}{3\varepsilon}\nabla_x\cdot \Big\langle \widehat{B}(v)\mu^{1/2},
\mathcal{L} \big(\frac{f^{\varepsilon} \cdot q_2}{2}\big) \Big\rangle=-\frac{\varepsilon}{3}j^\varepsilon \cdot \nabla_{x}\phi^\varepsilon, \\[3mm]
\displaystyle\partial_t n^\varepsilon + \nabla_x \cdot j^\varepsilon =0,
\end{array}
\right.
\end{equation}
where
\begin{align}
A (v)=v\otimes v -\frac{|v|^2}{3}\mathbb{I}_{3\times 3},~\quad B (v)=v\Big(\frac{|v|^2}{2}-\frac{5}{2}\Big),
\end{align}
$\widehat{A} (v)$ is such that $\mathcal{L}\big( \widehat{A}(v) \mu^{1/2}\big)= A(v) \mu^{1/2}$ with $\widehat{A}(v) \mu^{1/2} \in \mathcal{N}^{\bot}(\mathcal{L})$,
$\widehat{B} (v)$ is such that $\mathcal{L}\big( \widehat{B}(v) \mu^{1/2}\big)$ $= B(v) \mu^{1/2}$ with $\widehat{B}(v) \mu^{1/2} \in \mathcal{N}^{\bot}(\mathcal{L})$ and $\mathcal{L}$ is the linearized Boltzmann collision operator defined in \eqref{L usual define}.
From the second equation of \eqref{rVPB}  and the definition of $n^\varepsilon$ in \eqref{limit:7}, we also find the equation of $\nabla_x\phi^{\varepsilon}$
\begin{align}
\label{limit:8-1:4}
&-\Delta_x\phi^{\varepsilon}=n^\varepsilon.
\end{align}

Furthermore, via the definitions of $\rho^\varepsilon$, $u^\varepsilon$, $\theta^\varepsilon$ and $n^\varepsilon$ in \eqref{limit:7} and the uniform energy bound \eqref{limit:1-1}, we obtain
\begin{align}\label{limit:9}
\sup_{t\geq 0}\Big\{\| \rho^\varepsilon(t)\|_{H^N_{x}}+ \| u^\varepsilon(t)\|_{H^N_{x}}+\| \theta^\varepsilon(t)\|_{H^N_{x}}
+\| n^\varepsilon(t)\|_{H^N_x}\Big\}\leq C.
\end{align}
From the convergence \eqref{limit:3-1} and the limit function $f(t,x,v)$ given in \eqref{limit:6}, we thereby deduce the following convergences
\begin{align}
&\rho^{\varepsilon}=\frac{1}{2}\Big\langle f^{\varepsilon}, q_2\mu^{1/2} \Big\rangle \to  \frac{1}{2}\Big\langle f, q_2\mu^{1/2} \Big\rangle
=\frac{1}{2}(\rho_{+}+\rho_{-}) := \rho,~\label{limit:10-1}\\
&u^{\varepsilon}=\frac{1}{2} \Big \langle f^{\varepsilon}, q_2 v\mu^{1/2} \Big \rangle \to \frac{1}{2} \Big \langle f, q_2 v\mu^{1/2} \Big \rangle=u,~ \label{limit:10-2}\\
&\theta^{\varepsilon} =\frac{1}{2} \Big\langle f^{\varepsilon}, q_2 \big(\frac{|v|^2}{3}-1 \big)\mu^{1/2}\Big\rangle \to \frac{1}{2} \Big\langle f, q_2 \big(\frac{|v|^2}{3}-1 \big)\mu^{1/2}\Big\rangle =\theta,~ \label{limit:10-3}\\
&n^\varepsilon=\Big\langle f^\varepsilon, q_1 \mu^{1/2} \Big\rangle \to \Big\langle f, q_1 \mu^{1/2} \Big\rangle=\rho_{+}-\rho_{-}
:= n \label{limit:10-4}
\end{align}
$\text{weakly}\!-\!*~ \text{in}~ L^\infty(\mathbb{R}^+,H^N_{x})$ as $\varepsilon \to 0$. It remains to find the limits of $j^\varepsilon$ and $w^\varepsilon$. Noticing that
\begin{align}\nonumber
q_1\big(\frac{|v|^2}{3}-1 \big)\mu^{1/2} \bot\; \mathcal{N}(L),\qquad q_1 v_i \mu^{1/2} \bot\; \mathcal{N}(L),i=1,2,3,
\end{align}
it follows from the dissipation bound \eqref{limit:1-2}, the definitions of $j^\varepsilon$ and $\omega^\varepsilon$ in \eqref{limit:7} and the  H{\"{o}}lder inequality that
\begin{align}
&\left\| j^\varepsilon \right\|_{L^2(\mathbb{R}^{+};H^N_x)}^2 + \left\| \omega^\varepsilon \right\|_{L^2(\mathbb{R}^{+};H^N_x)}^2  \nonumber\\
=\;& \frac{1}{\varepsilon^2}\int_{0}^\infty \left\{ \Big\| \big\langle f^{\varepsilon}(\tau), q_1 v\mu^{1/2} \big\rangle \Big\|_{H^N_x}^2
+\Big\| \Big\langle f^{\varepsilon}(\tau), q_1\big(\frac{|v|^2}{3}-1 \big)\mu^{1/2} \Big\rangle \Big\|_{H^N_x}^2 \right\} \d \tau \nonumber\\
=\;& \frac{1}{\varepsilon^2}\int_{0}^\infty \left\{\Big\| \big\langle \{\mathbf{I}-\mathbf{P}\}f^{\varepsilon}(\tau), q_1 v\mu^{1/2} \big\rangle \Big\|_{H^N_x}^2
+ \Big\| \Big\langle \{\mathbf{I}-\mathbf{P}\}f^{\varepsilon}(\tau), q_1\big(\frac{|v|^2}{3}-1 \big)\mu^{1/2} \Big\rangle \Big\|_{H^N_x}^2\right\} \d \tau \nonumber\\
\leq\;&  \frac{C}{\varepsilon^2}\int_{0}^\infty \left\| \{\mathbf{I}-\mathbf{P}\}f^{\varepsilon}(\tau) \right\|_{H^N_x N^s_\gamma} \d \tau \nonumber\\
\leq\;&  C. \label{limit:11}
\end{align}
Consequently, there exist functions $j$, $\omega$ $\in L^2(\mathbb{R}^{+}; H^N_x)$ such that
\begin{align}
(j^\varepsilon,\omega^\varepsilon) \to (j,\omega)  \;\;\;\text{~~weakly} {\text{~in}~L^2(\mathbb{R}^+;H^N_{x})} \label{limit:12}
\end{align}
 as $\varepsilon \to 0$.

\subsection{Convergences to the Limiting System}
\hspace*{\fill}

In this subsection, we deduce the two-fluid incompressible NSFP system with Ohm's law from the local conservation laws \eqref{limit:8-1:1}, \eqref{limit:8-1:4}, the convergences \eqref{limit:3-1}--\eqref{limit:4} \eqref{limit:10-1}--\eqref{limit:10-4} and \eqref{limit:12} obtained in the previous subsection.

\subsubsection{Incompressibility and Boussinesq relation.}
\hspace*{\fill}

From the first equation of \eqref{limit:8-1:1} and the energy uniform bound \eqref{limit:9}, it is easy to deduce
\begin{align}\nonumber
\nabla_x \cdot u^{\varepsilon }=-\varepsilon \partial_t \rho^{\varepsilon}\to 0  \;\;\; \text{~~in the sense of distributions}
\end{align}
as $\varepsilon \to 0$. Combining with the convergence \eqref{limit:10-2}, we get
\begin{align}\label{limit:13}
\nabla_x \cdot u = 0.
\end{align}
The second equation of \eqref{limit:8-1:1}  yields
\begin{align}\nonumber
&\nabla_x(\rho^\varepsilon +\theta^\varepsilon)=-\varepsilon \partial_t u^\varepsilon
-  \nabla_x\cdot \Big\langle\widehat{ {A}}(v)\mu^{1/2}, \mathcal{L} \big(\frac{f^{\varepsilon} \cdot q_2}{2}\big) \Big\rangle-\frac{\varepsilon}{2}n^\varepsilon \nabla_x\phi^{\varepsilon}.
\end{align}
The bound \eqref{limit:9} implies
$$
\varepsilon \partial_t u^\varepsilon \to 0 \;\;\; \text{~~in the sense of distributions}
$$
as $\varepsilon\to 0$.
With the aid of the self-adjointness of $\mathcal{L}$, we derive from the dissipation bound \eqref{limit:1-2} and the  H{\"{o}}lder inequality that
\begin{align}
\begin{split}\nonumber
\int_0^\infty
\!\Big\|\nabla_x\cdot \Big\langle\widehat{ {A}}(v)\mu^{1/2}, \mathcal{L} \big(\frac{f^{\varepsilon} \cdot q_2}{2}\big) \Big\rangle \Big\|_{H^{N-1}_x}^2\d \tau
\!=\;&\int_0^\infty
\Big\|\nabla_x\cdot \Big\langle\widehat{ {A}}(v)\mu^{1/2}, \mathcal{L} \big(\frac{\{\mathbf{I}-\mathbf{P}\}f^{\varepsilon} \cdot q_2}{2}\big) \Big\rangle \Big\|_{H^{N-1}_x}^2\d \tau \\
=\;&\int_0^\infty
\Big\|\nabla_x\cdot \Big\langle{ {A}}(v)\mu^{1/2}, \frac{\{\mathbf{I}-\mathbf{P}\}f^{\varepsilon} \cdot q_2}{2} \Big\rangle \Big\|_{H^{N-1}_x}^2\d \tau\\
\leq \;& C\int_0^\infty
\| \{\mathbf{I}-\mathbf{P}\}f^\varepsilon(\tau)\|_{H^N_{x}N^s_\gamma}^2\d \tau\\
\leq\;&C\varepsilon^2.
\end{split}
\end{align}
Therefore, we have
\begin{align}\nonumber
\nabla_x\cdot \Big\langle \widehat{A}(v)\mu^{1/2}, \mathcal{L} \big(\frac{f^{\varepsilon} \cdot q_2}{2}\big) \Big\rangle \to 0
\;\;\;\text{~~strongly} {\text{~in}~L^2(\mathbb{R}^+;H^{N-1}_{x})}
\end{align}
as $\varepsilon\to 0$.
Moreover, the uniform energy bounds \eqref{limit:1-1} and \eqref{limit:9} reveal that
\begin{align}\nonumber
\sup_{t \geq 0} \left\|\frac{\varepsilon}{2} n^\varepsilon \nabla_x\phi^{\varepsilon}(t)\right\|_{H^{N}_x}
\leq C \varepsilon ,
\end{align}
which means that
\begin{align}\nonumber
\frac{\varepsilon}{2} n^\varepsilon \nabla_x\phi^{\varepsilon}\to 0
\;\;\;\text{~~strongly} {\text{~in}~L^\infty(\mathbb{R}^+;H^{N}_{x})}
\end{align}
as $\varepsilon \to 0$.
In summary, we have
\begin{align}\label{limit:15}
\nabla_x(\rho^\varepsilon + \theta^\varepsilon) \to 0 \;\;\; \text{~~in the sense of distributions}
\end{align}
as $\varepsilon \to 0$.
Therefore, combining the convergences \eqref{limit:10-1}, \eqref{limit:10-3} and \eqref{limit:15}, we obtain
\begin{align}\nonumber
\nabla_x(\rho+\theta)=0.
\end{align}
From the integrability of functions, we can deduce the Boussinesq relation
\begin{align}\label{limit:16}
\rho+\theta=0.
\end{align}

\subsubsection{ Convergences of  $\frac{3}{5} \theta^\varepsilon-\frac{2}{5}\rho^\varepsilon$, $\mathcal{P}u^\varepsilon$, $n^\varepsilon$, $\nabla_x \phi^\varepsilon$.}
\hspace*{\fill}

First of all, we introduce the following Aubin--Lions--Simon Theorem, a fundamental result of compactness in the study of nonlinear evolution problems, which can be referred in \cite{BF13, Simon1987}.
\begin{lemma} [Aubin--Lions--Simon Theorem] \label{Aubin--Lions--Simon Theorem}
Let $B_0 \subset B_1 \subset B_2$ be three Banach spaces. We assume that the embedding of $B_1$ in $B_2$ is continuous and the embedding of $B_0$ in $B_1$ is compact. Let $p, r$ be such that $1 \leq p,r \leq \infty$. For $T>0$, we define
\begin{align}\nonumber
E_{p,r}=\Big\{ u \in L^p(0,T; B_0), \partial_t u \in L^r(0,T;B_2) \Big\}.
\end{align}
\begin{itemize}
\setlength{\leftskip}{-6mm}
\item[(1)] If $p < +\infty$, the embedding of $E_{p,r}$ in $L^p(0,T; B_1)$ is compact.
\item[(2)] If $p = +\infty$ and if $r > 1$, the embedding of $E_{p,r}$ in $C(0,T; B_1)$ is compact.
\end{itemize}
\end{lemma}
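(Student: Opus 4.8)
\textbf{Proof proposal for the Aubin--Lions--Simon Theorem (Lemma \ref{Aubin--Lions--Simon Theorem}).}

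The plan is to recall the classical proof due to Simon, which proceeds by reducing the statement to two ingredients: a quantitative interpolation inequality between the three spaces, and a characterization of relatively compact subsets of $L^p(0,T;B_1)$ in terms of equicontinuity of time translates. First I would establish the interpolation-type estimate: for every $\eta>0$ there exists $C_\eta>0$ such that $\|u\|_{B_1}\leq \eta\|u\|_{B_0}+C_\eta\|u\|_{B_2}$ for all $u\in B_0$. This follows from the compactness of the embedding $B_0\hookrightarrow B_1$ by a standard contradiction argument (if it failed, one would get a sequence bounded in $B_0$, hence precompact in $B_1$, whose $B_1$-norm stays bounded below while its $B_2$-norm tends to zero, contradicting the continuity of $B_1\hookrightarrow B_2$ on the limit).

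Next I would set up the compactness criterion. Let $\mathcal{F}\subset E_{p,r}$ be bounded, i.e. bounded in $L^p(0,T;B_0)$ with time derivatives bounded in $L^r(0,T;B_2)$. The goal is to show $\mathcal{F}$ is relatively compact in $L^p(0,T;B_1)$ when $p<\infty$, and in $C(0,T;B_1)$ when $p=\infty$, $r>1$. The key step is to control time translates: using $u(t+h)-u(t)=\int_t^{t+h}\partial_t u(\sigma)\,d\sigma$ together with the bound on $\partial_t u$ in $L^r(0,T;B_2)$, one gets $\|u(\cdot+h)-u(\cdot)\|_{L^p(0,T-h;B_2)}\lesssim h^{\theta}$ for a suitable $\theta>0$ depending on $r$ (for $p=\infty$ this uses $r>1$ so that the integral over an interval of length $h$ gains a positive power of $h$ by H\"older). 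Then I apply the interpolation inequality with $B_1$ in the middle: for each $\eta$,
\[
\|u(\cdot+h)-u(\cdot)\|_{L^p(0,T-h;B_1)}\leq \eta\,\|u(\cdot+h)-u(\cdot)\|_{L^p(0,T-h;B_0)}+C_\eta\,\|u(\cdot+h)-u(\cdot)\|_{L^p(0,T-h;B_2)},
\]
where the first term is uniformly small (times $2\sup_{\mathcal F}\|u\|_{L^p(0,T;B_0)}$) and the second tends to $0$ as $h\to 0$ uniformly over $\mathcal F$; choosing $\eta$ first and then $h$ small gives uniform smallness of time translates in the $B_1$-norm. Combined with the fact that for a.e.\ fixed $t$ the set $\{u(t):u\in\mathcal F\}$ is bounded in $B_0$ hence relatively compact in $B_1$ (after a mollification/averaging argument in time to make the pointwise evaluation legitimate), the Fr\'echet--Kolmogorov--Riesz compactness theorem in the Banach-space-valued setting (or the Arzel\`a--Ascoli theorem when $p=\infty$) yields relative compactness of $\mathcal F$ in $L^p(0,T;B_1)$, respectively $C(0,T;B_1)$.

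I expect the main obstacle to be the careful handling of the endpoint case $p=\infty$, $r>1$: one must upgrade $L^\infty$-in-time bounds to genuine equicontinuity in $C(0,T;B_1)$, which requires the $r>1$ hypothesis precisely to extract a modulus of continuity in time from $\partial_t u\in L^r$, and requires a time-averaging regularization to justify pointwise-in-time compactness in $B_1$ without losing uniformity. The rest is bookkeeping with the interpolation inequality and the vector-valued compactness criteria, all of which are standard; since this lemma is quoted verbatim from \cite{BF13, Simon1987}, in the paper itself I would simply cite those references rather than reproduce the argument, and use the statement as a black box in the strong-convergence steps of Section \ref{Limit section}.
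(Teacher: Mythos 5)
Your sketch is correct, and it is exactly the classical Simon argument (Ehrling-type interpolation $\|u\|_{B_1}\leq \eta\|u\|_{B_0}+C_\eta\|u\|_{B_2}$, time-translate estimates via $\partial_t u\in L^r(0,T;B_2)$, then the vector-valued Fr\'echet--Kolmogorov criterion for $p<\infty$ and Arzel\`a--Ascoli for $p=\infty$, $r>1$) that underlies the cited sources. The paper itself gives no proof of this lemma and simply quotes it from \cite{BF13, Simon1987}, so your final remark — cite and use it as a black box in Section \ref{Limit section} — is precisely what the paper does.
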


Now we consider the convergence of $\frac{3}{5}\theta^\varepsilon-\frac{2}{5}\rho^\varepsilon$. It follows from the first equation and the third equation of $\eqref{limit:8-1:1}$ that
\begin{align}\label{rho theta equation}
\partial_t\Big(\frac{3}{5}\theta^\varepsilon -\frac{2}{5}\rho^\varepsilon\Big)+\frac{2}{5\varepsilon}\nabla_x \cdot \Big\langle \widehat{B}(v)\mu^{1/2}, \mathcal{L}\big(\frac{f^{\varepsilon}\cdot q_2}{2}\big)\Big\rangle=-\frac{\varepsilon}{5}j^\varepsilon\cdot \nabla_x\phi^{\varepsilon}.
\end{align}
Thereby, by utilizing the equation \eqref{rho theta equation}, the energy bound \eqref{limit:1-1}, the dissipation bounds \eqref{limit:1-2} and \eqref{limit:11}, one has
\begin{align}
\begin{split}\label{partial rho theta 1}
	&\Big\|\partial_t\Big(\frac{3}{5}\theta^\varepsilon -\frac{2}{5}\rho^\varepsilon\Big)\Big\|_{L^2(0,T;H^{N-1}_x)}\\
	=\;&\Big\|-\frac{\varepsilon}{5}j^\varepsilon\cdot \nabla_x\phi^{\varepsilon}-\frac{2}{5\varepsilon} \nabla_x \cdot \Big\langle \widehat{B}(v)\mu^{1/2}, \mathcal{L}\big(\frac{f^{\varepsilon}\cdot q_2}{2}\big)\Big\rangle \Big\|_{L^2(0,T;H^{N-1}_x)}\\
	=\;&\Big\|-\frac{\varepsilon}{5}j^\varepsilon\cdot \nabla_x\phi^{\varepsilon}-\frac{2}{5\varepsilon} \nabla_x \cdot \Big\langle \widehat{B}(v)\mu^{1/2}, \mathcal{L}\big(\frac{\{\mathbf I-\mathbf P\} f^{\varepsilon}\cdot q_2}{2}\big)\Big\rangle \Big\|_{L^2(0,T;H^{N-1}_x)}\\
	=\;&\Big\|-\frac{\varepsilon}{5}j^\varepsilon\cdot \nabla_x\phi^{\varepsilon}-\frac{1}{5\varepsilon} \nabla_x \cdot \langle B(v)\mu^{1/2},\{\mathbf I-\mathbf P\} f^{\varepsilon}\cdot q_2 \rangle \Big\|_{L^2(0,T;H^{N-1}_x)}\\
	\leq \;& C \varepsilon \left\|j^\varepsilon \right\|_{L^2(\mathbb{R}^{+};H^{N}_x)} \left\|\nabla_x\phi^{\varepsilon} \right\|_{L^\infty(\mathbb{R}^{+};H_x^{N})}+\frac{C}{\varepsilon}\left\| \{\mathbf{I}-\mathbf{P}\}{f}^{\varepsilon}\right\|
_{L^2(\mathbb{R}^{+}; H^N_x N^s_\gamma)}\\
\leq\;& C
\end{split}
\end{align}
for any $T > 0$ and $0 < \varepsilon \leq 1$. On the other hand, we can derive from the energy bound \eqref{limit:9} that
\begin{align}\label{partial rho theta 2}
\Big\|\frac{3}{5}\theta^\varepsilon -\frac{2}{5}\rho^\varepsilon\Big\|_{L^\infty(0,T;H^{N}_x)} \leq C
\end{align}
for any $T > 0$ and $0 < \varepsilon \leq 1$. Notice the fact that
\begin{align}\label{embedding compact}
H^N  \hookrightarrow H^{N-1}_{loc} \hookrightarrow H^{N-1}_{loc},
\end{align}
where the embedding of $H^N$ in $H^{N-1}_{loc}$ is compact by the Rellich--Kondrachov Theorem (see Theorem 6.3 of \cite{AF2003}, for instance) and the embedding of $H^{N-1}_{loc}$ in $H^{N-1}_{loc}$ is naturally continuous. Then, from Aubin--Lions--Simon Theorem in Lemma \ref{Aubin--Lions--Simon Theorem}, the bounds \eqref{partial rho theta 1}, \eqref{partial rho theta 2} and the embedding \eqref{embedding compact}, we deduce that there exists a function $\widetilde{\theta} \in L^\infty(\mathbb{R}^+; H^N_x) \cap C(\mathbb{R}^+; H^{N-1}_{loc})$ such that
\begin{align}\nonumber
\frac{3}{5}\theta^\varepsilon-\frac{2}{5}\rho^\varepsilon \to \widetilde{\theta}
\;\;\;\text{~~strongly} {\text{~in}~C(0,T; H^{N-1}_{loc})}
\end{align}
for any $T > 0$ as $\varepsilon \to 0$. Combining with the convergences \eqref{limit:10-1} and \eqref{limit:10-3}, we deduce that $\widetilde{\theta}=\frac{3}{5}\theta-\frac{2}{5}\rho$. Moreover, it follows that $\widetilde{\theta}=\theta$ from the Boussinesq relation \eqref{limit:16} and $\theta=\frac{3}{5}\theta-\frac{2}{5}\rho+\frac{2}{5}(\rho+\theta)$. As a result, we obtain that
\begin{align}\nonumber
\frac{3}{5}\theta^\varepsilon-\frac{2}{5}\rho^\varepsilon \to \theta
\;\;\;\text{~~strongly} {\text{~in}~C(\mathbb{R}^+; H^{N-1}_{loc})}
\end{align}
as $\varepsilon \to 0$, where $\theta \in L^\infty(\mathbb{R}^+; H^N_x) \cap C(\mathbb{R}^+; H^{N-1}_{loc})$. Noticing that $\theta^\varepsilon=\frac{3}{5}\theta^\varepsilon-\frac{2}{5}\rho^\varepsilon+\frac{2}{5}(\rho^\varepsilon+\theta^\varepsilon)$ and combining with the convergence \eqref{limit:10-3}, one has
\begin{align}\nonumber
\rho^\varepsilon+\theta^\varepsilon \to 0
\;\;\; \text{weakly}\!-\!* \text{~in~} L^\infty(\mathbb{R}^+;H^N_{x}),\;
\text{~~strongly} \text{~in}~ C(\mathbb{R}^+;H^{N-1}_{loc})
\end{align}
as $\varepsilon \to 0$.

Next, we consider the convergence of $\mathcal{P}u^\varepsilon$, where $\mathcal{P}u^\varepsilon$ is the Leray projection operator defined by $\mathcal{P}= I-\nabla_x \Delta_x^{-1} \nabla_x \cdot$. By taking $\mathcal{P}$ on the second equation of \eqref{limit:8-1:1}, we have
\begin{align}\label{Pu equation}
&\partial_t \mathcal{P} u^\varepsilon +\frac{1}{\varepsilon}\mathcal{P}
\nabla_x\cdot \Big\langle \widehat{A}(v)\mu^{1/2}, \mathcal{L} \big(\frac{f^{\varepsilon} \cdot q_2}{2}\big)\Big\rangle =-\frac{1}{2}\mathcal{P}(n^\varepsilon \nabla_x\phi^{\varepsilon}).
\end{align}
Hence, it follows from the equation \eqref{Pu equation}, the uniform dissipation bounds \eqref{limit:1-4}, \eqref{limit:1-2}, the uniform energy bound \eqref{limit:9} and the H{\"{o}}lder inequality that
\begin{align}
\begin{split}\label{partial u 1}
&\left\|\partial_t \mathcal{P} u^\varepsilon \right\|_{L^2(0,T;H^{N-1}_x)}\\
=\;&\Big\|-\frac{1}{2}\mathcal{P}(n^\varepsilon \nabla_x\phi^{\varepsilon})-\frac{1}{\varepsilon}\mathcal{P} \nabla_x \cdot \Big\langle \widehat{A}(v)\mu^{1/2}, \mathcal{L} \big(\frac{f^{\varepsilon}\cdot q_2}{2}\big)\Big\rangle \Big\|_{L^2(0,T;H^{N-1}_x)}\\
\lesssim\:& \|-n^\varepsilon \nabla_x\phi^{\varepsilon}\|_{L^2(0,T;H^{N-1}_x)}
+\Big\| \frac{1}{\varepsilon} \nabla_x \cdot \Big\langle \widehat{A}(v)\mu^{1/2}, \mathcal{L}\big(\frac{\{\mathbf{I}-\mathbf{P}\}f^{\varepsilon}\cdot q_2}{2}\big)\Big\rangle \Big\|_{L^2(0,T;H^{N-1}_x)}\\
\leq\:& C \left\|n^\varepsilon \right\|_{L^\infty(\mathbb{R}^+; H^{N}_x)} \left\|\nabla_x\phi^{\varepsilon} \right\|_{L^2(\mathbb{R}^+; H^{N}_x)}
+\frac{C}{\varepsilon} \left\|\{\mathbf I-\mathbf P\} f^{\varepsilon}\right\|_{L^2(\mathbb{R}^+; H^{N}_{x} N^s_\gamma)}\\
\leq\;& C
\end{split}
\end{align}
for any $T > 0$ and $0 < \varepsilon \leq 1$. Furthermore, we can derive from the definition of $u^\varepsilon$ in \eqref{limit:7} and the uniform energy bound \eqref{limit:1-1} that for any $T > 0$ and $0 < \varepsilon \leq 1$,
\begin{align}\label{partial u 2}
\left\| \mathcal{P}u^\varepsilon \right\|_{L^\infty (0,T; H^N_x)} \leq C.
\end{align}
Then, from Aubin--Lions--Simon Theorem in Lemma \ref{Aubin--Lions--Simon Theorem}, the bounds \eqref{partial u 1}, \eqref{partial u 2} and the embedding \eqref{embedding compact}, we deduce that there exists a function $\widetilde{u} \in L^\infty(\mathbb{R}^+; H^N_x) \cap C(\mathbb{R}^+; H^{N-1}_{loc})$ such that
\begin{align}\nonumber
\mathcal{P}u^\varepsilon \to \widetilde{u}
 \;\;\;\text{~~strongly} {\text{~in}~ C(0,T; H^{N-1}_{loc})}
\end{align}
for any $T > 0$ as $\varepsilon \to 0$. Moreover, from the convergence \eqref{limit:10-2} and the incompressibility \eqref{limit:13}, we have
\begin{align}\nonumber
\widetilde{u}=\mathcal{P}u=u.
\end{align}
Consequently,
\begin{align}\nonumber
\mathcal{P}u^\varepsilon \to u
\;\;\; \text{weakly}\!-\!*~ \text{in~} L^\infty(\mathbb{R}^+;H^N_{x}),\;
\text{~~strongly} {\text{~in}~C(\mathbb{R}^+; H^{N-1}_{loc})}
\end{align}
as $\varepsilon \to 0$, where $u \in L^\infty(\mathbb{R}^+; H^N_x) \cap C(\mathbb{R}^+; H^{N-1}_{loc})$. Therefore, we have that
\begin{align}\nonumber
\mathcal{P}^{ \bot } u^\varepsilon \to 0
\;\;\; \text{weakly}\!-\!*~ \text{in~} L^\infty(\mathbb{R}^+;H^N_{x}),\;
\text{~~strongly} \text{~in}~ C(\mathbb{R}^+;H^{N-1}_{loc})
\end{align}
as $\varepsilon \to 0$. Here, $\mathcal{P}^{ \bot } := I-\mathcal{P}$.

Moreover, we consider the convergence of $n^\varepsilon$. From the fourth equation of \eqref{limit:8-1:1} and the uniform dissipation bound \eqref{limit:11}, one has
\begin{align}\label{partial n 1}
\left\| \partial_t n^\varepsilon \right\|_{L^2(0,T;H^{N-1}_x)}= \left\| \nabla_x \cdot j^\varepsilon \right\|_{L^2(0,T;H^{N-1}_x)}
\leq \left\| j^\varepsilon \right\|_{L^2(0,T;H^{N}_x)} \leq C
\end{align}
for any $T > 0$ and $0 < \varepsilon \leq 1$. Furthermore, from the bound \eqref{limit:9}, we deduce that for any $T > 0$ and $0 < \varepsilon \leq 1$,
\begin{align}\label{partial n 2}
\left\| n^\varepsilon \right\|_{L^\infty(0,T;H^N_x)} \leq C.
\end{align}
Then, from Aubin--Lions--Simon Theorem in Lemma \ref{Aubin--Lions--Simon Theorem}, the bounds \eqref{partial n 1}, \eqref{partial n 2} and the embedding \eqref{embedding compact}, we deduce that there exists a function $\widetilde{n} \in L^\infty(\mathbb{R}^+; H^N_x) \cap C(\mathbb{R}^+; H^{N-1}_{loc})$ such that
\begin{align}\nonumber
n^\varepsilon \to \widetilde{n}
\;\;\;\text{~~strongly} {\text{~in}~C(0,T; H^{N-1}_{loc})}
\end{align}
for any $T > 0$ as $\varepsilon \to 0$. Combining with the convergence \eqref{limit:10-4}, one has
\begin{align}\nonumber
n^\varepsilon \to n
\;\;\; \text{weakly}\!-\!*~ \text{in~} L^\infty(\mathbb{R}^+;H^N_{x}),\;
\text{~~strongly} {\text{~in}~C(\mathbb{R}^{+}; H^{N-1}_{loc})}
\end{align}
as $\varepsilon \to 0$, where $n \in L^\infty(\mathbb{R}^+; H^N_x) \cap C(\mathbb{R}^+; H^{N-1}_{loc})$.

Finally, we consider the convergence of $\nabla_x \phi^\varepsilon$. From the fourth equation of \eqref{limit:8-1:1}, \eqref{limit:8-1:4} and the uniform dissipation bound \eqref{limit:11}, we have
\begin{align}\label{partial nabla phi 1}
\left\| \partial_t \nabla_x \phi^\varepsilon \right\|_{L^2(0,T;H^{N}_x)} =  \left\| \nabla_x \Delta_x^{-1} \nabla_x \cdot j^\varepsilon \right\|_{L^2(0,T;H^{N}_x)} \leq \left\| j^\varepsilon \right\|_{L^2(0,T;H^{N}_x)} \leq C
\end{align}
for any $T > 0$ and $0 < \varepsilon \leq 1$. Furthermore, from the bound \eqref{limit:1-1}, we deduce that for any $T > 0$ and $0 < \varepsilon \leq 1$
\begin{align}\label{partial nabla phi 2}
\left\| \nabla_x \phi^\varepsilon \right\|_{L^\infty(0,T;H^{N+1}_x)} \leq C.
\end{align}
Then, from Aubin--Lions--Simon Theorem in Lemma \ref{Aubin--Lions--Simon Theorem} and the bounds \eqref{partial nabla phi 1},
\eqref{partial nabla phi 2}, we deduce that
\begin{align}\nonumber
\nabla_x \phi^\varepsilon \to \nabla_x \phi
\;\;\; \text{weakly}\!-\!*~ \text{in~} L^\infty(\mathbb{R}^+;H^{N+1}_{x}),\;
\text{~~strongly} {\text{~in}~C(\mathbb{R}^+; H^{N}_{loc})}
\end{align}
as $\varepsilon \to 0$, where $\nabla_x \phi \in L^\infty(\mathbb{R}^+; H^{N+1}_x) \cap C(\mathbb{R}^+; H^{N}_{loc})$.

In summary, we have deduced the following convergences:
\begin{align}\label{limit:17}
\!\!\!\Big( \frac{3}{5}\theta^\varepsilon-\frac{2}{5}\rho^\varepsilon, \mathcal{P}u^\varepsilon, n^\varepsilon\Big)
\to ( \theta, u, n)
\;\;\; \text{weakly}\!-\!*~ \text{in~} L^\infty(\mathbb{R}^+;H^{N}_{x}),\;
\text{~~strongly} {\text{~in}~C(\mathbb{R}^+; H^{N-1}_{loc})}
\end{align}
as $\varepsilon \to 0$, where $( \theta, u, n) \in L^\infty(\mathbb{R}^+; H^N_x) \cap C(\mathbb{R}^+; H^{N-1}_{loc})$ and
\begin{align}\label{limit:17-1}
\nabla_x \phi^\varepsilon &\to \nabla_x \phi
\;\;\; \text{weakly}\!-\!*~ \text{in~} L^\infty(\mathbb{R}^+;H^{N+1}_{x}),\;
 \text{~~strongly} \text{~in}~C(\mathbb{R}^+; H^{N}_{loc})
\end{align}
as $\varepsilon \to 0$, where $ \nabla_x \phi \in L^\infty(\mathbb{R}^+; H^{N+1}_x) \cap C(\mathbb{R}^+; H^{N}_{loc})$ and
\begin{align}\label{limit:18}
\left( \rho^\varepsilon + \theta^\varepsilon, \mathcal{P}^{\bot}u\right) \to (0,0)
\;\;\; \text{weakly}\!-\!*~ \text{in~} L^\infty(\mathbb{R}^+;H^N_{x}),\;
\text{~~strongly} \text{~in}~ C(\mathbb{R}^+;H^{N-1}_{loc})
\end{align}
as $\varepsilon \to 0$, and
\begin{align}\label{limit:19}
(j^\varepsilon,\omega^\varepsilon) \to (j,\omega)   \;\;\;\text{~~weakly} {\text{~in}~L^2(\mathbb{R}^+;H^N_{x})}
\end{align}
as $\varepsilon \to 0$, where $(j,\omega) \in L^2(\mathbb{R}^+; H^N_x)$.

\subsubsection{Equations of  $\rho$, $u$, $\theta$, $n$, $j$ and $\omega$}
\hspace*{\fill}

We first calculate the term
\begin{align}\nonumber
 \frac{1}{\varepsilon} \Big \langle\widehat{\Xi}(v) \mu^{1/2}, \mathcal{L}\big(\frac{f^{\varepsilon} \cdot q_2}{2}\big)\Big\rangle,
\end{align}
where $\Xi=A$ or $B$. Via multiplying the first equation of \eqref{rVPB}  by $q_2$ and direct calculations, we deduce that
\begin{align}
& \partial_t\left(f^{\varepsilon} \cdot q_2\right)+\frac{1}{\varepsilon} v \cdot \nabla_x\left(f^{\varepsilon} \cdot q_2\right)+\frac{2}{\varepsilon^2} \mathcal{L}\left(f^{\varepsilon} \cdot q_2\right) \nonumber\\
& \quad=  \nabla_x \phi^{\varepsilon} \cdot \nabla_v\left(f^{\varepsilon} \cdot q_1\right)
-\frac{1}{2} \nabla_x \phi^{\varepsilon} \cdot v \left(f^{\varepsilon} \cdot q_1\right)
+\frac{1}{\varepsilon} \mathcal{T}\left(f^{\varepsilon} \cdot q_2, f^{\varepsilon} \cdot q_2\right). \nonumber
\end{align}
Following the standard formal derivations of fluid dynamic limits of the Boltzmann equation (see \cite{BGL93} for instance), we obtain
\begin{align}
\label{A(v):weiguan:zuoyong}
\displaystyle\frac{1}{\varepsilon} \Big\langle \widehat{A}(v)\mu^{1/2}, \mathcal{L}\big(\frac{f^{\varepsilon} \cdot q_2}{2}\big) \Big\rangle
=\;& u^{\varepsilon}\otimes u^{\varepsilon}-\frac{|u^{\varepsilon}|^2}{3}\mathbb{I}_{3 \times 3}-\nu\Sigma(u^{\varepsilon})-R^\varepsilon_A,\\[2mm]
\displaystyle\frac{1}{\varepsilon}\Big\langle \widehat{B}(v)\mu^{1/2}, \mathcal{L}\big(\frac{f^{\varepsilon} \cdot q_2}{2}\big)\Big\rangle
=\;&\frac{5}{2}u^{\varepsilon}\theta^{\varepsilon}-\frac{5}{2}\kappa\nabla_x\theta^{\varepsilon}-R^\varepsilon_B, \label{B(v):weiguan:zuoyong}
\end{align}
where
\begin{align}
\nu := \;&\frac{1}{20}\Big\langle \widehat{A}(v) \mu^{1/2}, A(v)\mu^{1/2} \Big\rangle, \label{nu define}\\
\kappa := \;&\frac{1}{15}\Big\langle \widehat{B}(v) \mu^{1/2}, B(v)\mu^{1/2} \Big\rangle, \label{kappa define}\\
\Sigma(u^{\varepsilon}) := \;&\nabla_x u^{\varepsilon}+({\nabla_xu^{\varepsilon}})^{T}-\frac{2}{3}\nabla_x\cdot u^{\varepsilon}\mathbb{I}_{3 \times 3}, \nonumber\\
R^\varepsilon_\Xi := \;& \frac{\varepsilon}{2} \Big\langle \widehat{\Xi}(v)\mu^{1/2},
\partial_t \big(\frac{f^{\varepsilon} \cdot q_2}{2}\big) \Big\rangle
+\frac{1}{2}\Big\langle \widehat{\Xi}(v)\mu^{1/2}, v\cdot\nabla_x \big(\frac{\{\mathbf{I}-\mathbf{P}\}f^{\varepsilon} \cdot q_2}{2}\big) \Big\rangle \nonumber\\
\;&-\frac{\varepsilon}{2} \Big\langle \widehat{\Xi}(v)\mu^{1/2},  \nabla_x\phi^\varepsilon \cdot
\nabla_v \big(\frac{ \{\mathbf{I}-\mathbf{P}\} f^{\varepsilon} \cdot q_1}{2}\big) \Big\rangle
+\frac{\varepsilon}{2} \Big\langle \widehat{\Xi}(v)\mu^{1/2},  \frac{1}{2}\nabla_x\phi^\varepsilon \cdot
v \big(\frac{ \{\mathbf{I}-\mathbf{P}\} f^{\varepsilon} \cdot q_1}{2}\big) \Big\rangle \nonumber\\
\;&-\Big\langle \widehat{\Xi}(v)\mu^{1/2},
\mathcal{T}\Big( \{I-P_0\} \big( \frac{f^{\varepsilon} \cdot q_2}{2} \big) , \{I-P_0\} \big( \frac{f^{\varepsilon} \cdot q_2}{2} \big) \Big) \Big\rangle \nonumber\\
\;&-\Big\langle \widehat{\Xi}(v)\mu^{1/2},
\mathcal{T}\Big( \{I-P_0\} \big( \frac{f^{\varepsilon} \cdot q_2}{2} \big), P_0 \big(\frac{f^{\varepsilon} \cdot q_2}{2} \big) \Big) \Big\rangle \nonumber\\
\;&-\Big\langle \widehat{\Xi}(v)\mu^{1/2},
\mathcal{T}\Big(P_0 \big(\frac{f^{\varepsilon} \cdot q_2}{2} \big), \{I-P_0\} \big( \frac{f^{\varepsilon} \cdot q_2}{2} \big) \Big) \Big\rangle \nonumber
\end{align}
with $\Xi = A$ or $B$. Here, we have used the relation $P_0(f^\varepsilon \cdot q_2)= \mathbf{P}f^\varepsilon \cdot q_2$ and $P_0$ is defined in \eqref{P0h define}.

The relation \eqref{A(v):weiguan:zuoyong}, the decomposition $u^{\varepsilon}=\mathcal{P}u^{\varepsilon}+\mathcal{P}^{\bot} u^{\varepsilon}$ and the equation \eqref{Pu equation} yield that
\begin{equation}\label{jixian:u:zongjie}
\partial_t\mathcal{P}u^{\varepsilon}+\mathcal{P}\nabla_x \cdot (\mathcal{P}u^{\varepsilon}\otimes\mathcal{P}u^{\varepsilon})
-\nu\Delta_x\mathcal{P}u^{\varepsilon}=-\frac{1}{2}\mathcal{P}(n^\varepsilon \nabla_x\phi^\varepsilon)+R^{\varepsilon}_u,
\end{equation}
where $R^\varepsilon_u$ is given by
\begin{equation}\label{jixian:Ru:zongjie}
R^\varepsilon_u := \mathcal{P}\nabla_x\cdot R^\varepsilon_A-\mathcal{P}\nabla_x\cdot
(\mathcal{P}u^{\varepsilon}\otimes\mathcal{P}^\perp u^{\varepsilon}+\mathcal{P}^\bot u^{\varepsilon}\otimes\mathcal{P}u^{\varepsilon}+\mathcal{P}^\perp u^{\varepsilon}\otimes\mathcal{P}^\perp u^{\varepsilon}).
\end{equation}

The relation \eqref{B(v):weiguan:zuoyong}, the decomposition $u^{\varepsilon}=\mathcal{P}u^{\varepsilon}+\mathcal{P}^{\bot} u^{\varepsilon}$ and the equation \eqref{rho theta equation} yield that
\begin{equation}\label{jixian:theta:zongjie}
\partial_t\Big(\frac{3}{5}\theta^{\varepsilon}-\frac{2}{5}\rho^{\varepsilon}\Big)
+\nabla_x \cdot \Big[\mathcal{P} u^\varepsilon \Big(\frac{3}{5}\theta^\varepsilon -\frac{2}{5}\rho^\varepsilon\Big)\Big]
-\kappa \Delta_x \Big(\frac{3}{5}\theta^\varepsilon -\frac{2}{5}\rho^\varepsilon\Big)=R^\varepsilon_\theta,
\end{equation}
where
\begin{align}
\begin{split}\label{jixian:Rtheta:zongjie}
R^\varepsilon_\theta := \;&\frac{2}{5}\nabla_x \cdot R^\varepsilon_B
-\frac{2}{5} \nabla_x \cdot \big[\mathcal{P} u^\varepsilon(\rho^\varepsilon+\theta^\varepsilon)\big]
-\frac{2}{5}\nabla_x \cdot\big[\mathcal{P}^{\bot} u^\varepsilon(\rho^\varepsilon+\theta^\varepsilon) \big]\\
&-\nabla_x \cdot \Big[\mathcal{P}^{\bot} u^\varepsilon \Big(\frac{3}{5}\theta^\varepsilon -\frac{2}{5}\rho^\varepsilon \Big)\Big]
+\frac{2}{5}\kappa \Delta_x(\rho^\varepsilon+\theta^\varepsilon)
-\frac{\varepsilon}{5}j^\varepsilon\cdot \nabla_x \phi^{\varepsilon}.
\end{split}
\end{align}

Now, we take the limit from the equation \eqref{jixian:u:zongjie} to obtain the $u$-equation of \eqref{INSFP limit}. For any given $T > 0$, choose a vector-valued test function $\Omega^1(t,x) \in C^1(0,T;C_0^\infty(\mathbb{R}^3))$ with $\nabla_x \cdot \Omega^1=0$, $\Omega^1(0,x)=\Omega^1_0(x) \in C_0^\infty(\mathbb{R}^3)$ and $\Omega^1(t,x)=0$ for $t \geq T^\prime$, where $T^\prime < T$. Multiplying \eqref{jixian:u:zongjie} by  $\Omega^1(t,x)$ and integrating over $(t,x) \in [0,T] \times \mathbb{R}^3$, we have
\begin{align}
\begin{split}\nonumber
&\int_0^T\int_{\mathbb{R}^3}\partial_t\mathcal{P}u^{\varepsilon}\cdot \Omega^1(t,x) \d x\d t \\	
=\;&-\int_{\mathbb{R}^3}\mathcal{P}u^{\varepsilon}(0,x)\cdot \Omega^1(0,x) \d x
-\int_0^T\int_{\mathbb{R}^3}\mathcal{P}u^{\varepsilon}\cdot\partial_t \Omega^1(t,x) \d x \d t\\
=\;&-\int_{\mathbb{R}^3}\frac{1}{2}\mathcal{P} \Big\langle f^{\varepsilon}_{0},q_2 v \mu^{1/2} \Big\rangle \cdot\Omega^1_0(x)\d x
-\int_0^T\int_{\mathbb{R}^3}\mathcal{P}u^{\varepsilon}\cdot\partial_t \Omega^1(t,x) \d x\d t.
\end{split}
\end{align}
The initial conditions in Theorem \ref{mainth3} and the convergence \eqref{limit:17} give that
\begin{align*}
\int_{\mathbb{R}^3}\frac{1}{2}\mathcal{P}\Big\langle f^{\varepsilon}_{0}, q_2v \mu^{1/2}\Big\rangle\cdot \Omega^1_0(x)\d x
\to \int_{\mathbb{R}^3} \frac{1}{2} \mathcal{P} \Big\langle f_{0},q_2v\mu^{1/2} \Big\rangle\cdot \Omega^1_0(x)\d x
=\int_{\mathbb{R}^3} \mathcal{P} u_{0} \cdot \Omega^1_0(x)\d x
\end{align*}
and
\begin{align*}
\int_0^T\int_{\mathbb{R}^3}\mathcal{P}u^{\varepsilon}\cdot\partial_t \Omega^1(t,x) \d x\d t
\to \int_0^T\int_{\mathbb{R}^3}u\cdot\partial_t \Omega^1(t,x) \d x\d t
\end{align*}
as $\varepsilon \to 0$. That is, we deduce
\begin{align}\label{jixian:partial u 1}
\int_0^T\int_{\mathbb{R}^3}\partial_t\mathcal{P}u^{\varepsilon}\cdot \Omega^1(t,x) \d x\d t
\to-\int_{\mathbb{R}^3}\mathcal{P}u_{0}\cdot \Omega^1_0(x)\d x-\int_0^T\int_{\mathbb{R}^3}u\cdot\partial_t \Omega^1(t,x) \d x\d t
\end{align}
as $\varepsilon \to 0$.
With the aid of the bounds \eqref{limit:1-1}--\eqref{limit:1-2}, \eqref{limit:9} and the convergences \eqref{limit:17}--\eqref{limit:18}, we show the following convergences
\begin{equation}
 \left\{
\begin{array}{ll}\label{jixian:partial u 2}
\mathcal{P}\nabla_x\cdot(\mathcal{P}u^{\varepsilon}\otimes\mathcal{P}u^{\varepsilon}) \to
\mathcal{P}\nabla_x\cdot(u\otimes u) \quad\text{strongly in}~ \; C(\mathbb{R}^+;H^{N-2}_{loc}),\\[2mm]
\nu\Delta_x\mathcal{P}u^{\varepsilon}\to
\nu\Delta_x u \;\;\;\qquad\qquad\qquad\quad\quad \text{~~in the sense of distributions},\\[2mm]
\mathcal{P}(n^{\varepsilon}\nabla_x\phi^{\varepsilon})\to
\mathcal{P}(n\nabla_x\phi) \;\;\;\;\quad\quad\quad\quad\quad\;\text{strongly in}~\; C(\mathbb{R}^+;H_{loc}^{N-1}),\\[2mm]
R^\varepsilon_u \to 0 \;\quad\quad\quad\quad\quad\quad\quad\quad\quad\quad\quad\quad\quad \text{~~in the sense of distributions}
\end{array}
\right.
\end{equation}
as $\varepsilon \to 0$. Therefore, the convergences \eqref{jixian:partial u 1}, \eqref{jixian:partial u 2} and the incompressibility \eqref{limit:13} imply that $u \in L^\infty(\mathbb{R}^+;H^{N}_x) \cap C(\mathbb{R}^+; H^{N-1}_{loc})$  subjects to
\begin{equation}
\left\{
\begin{array}{ll}\label{jixian:u equation}
\partial_t u+\mathcal{P}\nabla_x\cdot(u\otimes u)-\nu\Delta_x u =-\frac{1}{2}\mathcal{P}( n \nabla_x\phi ),\\[2mm]
\nabla_x \cdot u=0,
\end{array}
\right.
\end{equation}
with initial data $u(0,x)=\mathcal{P}u_0(x)$.

Next, we take the limit from the equation \eqref{jixian:theta:zongjie} to gain the $\theta$-equation of \eqref{INSFP limit}. For any given $T > 0$, choose a test function $\Omega^2(t,x)  \in C^1(0,T;C_0^\infty(\mathbb{R}^3))$ with $\Omega^2(0,x)=\Omega^2_0(x) \in C_0^\infty(\mathbb{R}^3)$ and $\Omega^2(t,x)=0$ for $t \geq T^\prime$, where $T^\prime < T$. Then from the initial conditions in Theorem \ref{mainth3} and the convergence \eqref{limit:17}, we have
\begin{align}
\begin{split}\label{jixian:partial theta 1}
&\int_{0}^{T}\int_{\mathbb{R}^3}\partial_t\Big(\frac{3}{5}\theta^{\varepsilon}-\frac{2}{5}\rho^{\varepsilon}\Big) \Omega^2(t,x) \d x \d t \\
=\;&-\int_{\mathbb{R}^3}\frac{1}{2} \Big\langle f^{\varepsilon}_{0},q_2(\frac{|v|^2}{5}-1) \mu^{1/2} \Big\rangle \Omega^2_0(x)\d x
-\int_{0}^{T}\int_{\mathbb{R}^3} \Big(\frac{3}{5}\theta^{\varepsilon}-\frac{2}{5}\rho^{\varepsilon}\Big)\partial_t \Omega^2(t,x)  \d x \d t\\
\to\;&-\int_{\mathbb{R}^3}\frac{1}{2} \Big\langle f_{0},q_2(\frac{|v|^2}{5}-1) \mu^{1/2} \Big\rangle \Omega^2_0(x)\d x
-\int_{0}^{T}\int_{\mathbb{R}^3}  \theta(t,x)\partial_t \Omega^2(t,x)  \d x \d t\\
=\;&-\int_{\mathbb{R}^3}\Big(\frac{3}{5}\theta_0-\frac{2}{5}\rho_0\Big) \Omega^2_0(x)\d x
-\int_{0}^{T}\int_{\mathbb{R}^3}  \theta(t,x)\partial_t \Omega^2(t,x)  \d x \d t
\end{split}
\end{align}
as $\varepsilon \to 0$. Furthermore, with the help of the bounds \eqref{limit:1-1}--\eqref{limit:1-2}, \eqref{limit:9} and the convergences \eqref{limit:17}--\eqref{limit:18}, we derive the following convergences
\begin{equation}
 \left\{
\begin{array}{ll}\label{jixian:partial theta 2}
\nabla_x\cdot\Big[\mathcal{P}u^{\varepsilon}\Big(\frac{3}{5}\theta^{\varepsilon}-\frac{2}{5}\rho^{\varepsilon}\Big)\Big]
\to	\nabla_x\cdot(u\theta)\quad \,\,\text{strongly in}~\; C(\mathbb{R}^+;H_{loc}^{N-2}), \\[2mm]
\kappa\Delta_x\Big(\frac{3}{5}\theta^{\varepsilon}-\frac{2}{5}\rho^{\varepsilon}\Big)\to \kappa\Delta_x\theta
\;\;\;\;\,\,\qquad\qquad \text{~~in the sense of distributions},\\[2mm]
R^\varepsilon_\theta \to 0  \,\;\;\;\quad\quad\quad\quad\quad\quad\quad\quad\qquad\quad\quad\quad\text{in the sense of distributions}
\end{array}
\right.
\end{equation}
as $\varepsilon \to 0$.
Therefore, the convergences \eqref{jixian:partial theta 1}, \eqref{jixian:partial theta 2} and the incompressibility \eqref{limit:13} imply that that $\theta \in L^\infty(\mathbb{R}^+;H^{N}_x) \cap C(\mathbb{R}^+; H^{N-1}_{loc})$ obeys
\begin{align}\label{jixian:theta equation}
\partial_t\theta+ u \cdot \nabla_x\theta-\kappa \Delta_x\theta=0
\end{align}
with the initial data $\theta(0,x)=\frac{3}{5}\theta_{0}(x)-\frac{2}{5}\rho_{0}(x)$.

Next, we take the limit from the fourth equation in $\eqref{limit:8-1:1}$ to deduce the $n$-equation of \eqref{INSFP limit}.
For any $T>0$, choose a test function $\Omega^3(t,x)\in C^1(0,T;C_0^\infty(\mathbb{R}^3))$ with $\Omega^3(0,x)=\Omega^3_0(x)\in C_0^\infty(\mathbb{R}^3)$, and $\Omega^3(t,x)=0$ for $t\geq T^\prime$, where $T^\prime<T$.
Then from the initial conditions in Theorem \ref{mainth3} and the convergence \eqref{limit:17}, we obtain
\begin{align}
\begin{split}\label{jixian:partial n 1}
&\int_0^T\int_{\mathbb{R}^3}\partial_t n^{\varepsilon} \Omega^3(t,x)\d x\d t \\
=\;&-\int_{\mathbb{R}^3} \big\langle f^\varepsilon_0, q_1 \mu^{1/2} \big\rangle \Omega^3_0(x)\d x
-\int_0^T\int_{\mathbb{R}^3} n^{\varepsilon} \partial_t\Omega^3(t,x)\d x\d t \\
\to\;&-\int_{\mathbb{R}^3} \big\langle f_0, q_1 \mu^{1/2} \big\rangle \Omega^3_0(x)\d x
-\int_0^T\int_{\mathbb{R}^3} n(t,x) \partial_t\Omega^3(t,x)\d x\d t \\
=\;&-\int_{\mathbb{R}^3} n_{0} \Omega^3_0(x)\d x-\int_0^T\int_{\mathbb{R}^3}n(t,x)\partial_t\Omega^3(t,x)\d x\d t
\end{split}
\end{align}
as $\varepsilon \to 0$.
Moreover, the convergence \eqref{limit:19} implies that
\begin{align}\label{jixian:partial n 2}
\nabla_x \cdot j^\varepsilon \to \nabla_x \cdot j    \;\;\;\text{~~weakly} {\text{~in}~L^2(\mathbb{R}^+;H^{N-1}_{x})}
\end{align}
as $\varepsilon \to 0$.
Then, the convergences \eqref{jixian:partial n 1}, \eqref{jixian:partial n 2} and the fourth equation in \eqref{limit:8-1:1} imply that
\begin{align}\label{jixian:n equation}
\partial_{t} {n}+\nabla_{x} \cdot j=0.
\end{align}
Furthermore, the convergences \eqref{limit:17}, \eqref{limit:17-1} and the equation
\eqref{jixian:u equation} show that
\begin{align}
-\Delta_x \phi=n
\end{align}
with the initial date $n_0(x)=-\Delta_x \phi_0(x)$.

Finally, we derive the Ohm's law and energy equivalence relation in the second to the last line of \eqref{INSFP limit}.
To do so, we refer back to \eqref{scr L define} for the linearized Boltzmann collision operator $\mathscr{L}$.
Before proceeding, let's introduce some qualities and their properties.
For the qualities $\Phi(v)=v\mu^{1/2}$ and $\Psi(v)=(\frac{|v|^2}{2}-\frac{3}{2})\mu^{1/2}$, there exist unique $\widetilde{\Phi}(v)$ and $\widetilde{\Psi}(v)$ such that $\mathscr{L}\widetilde{\Phi}(v)=\Phi(v)$ with $\widetilde{\Phi}(v) \in \mathcal{N}^{\bot}(\mathscr{L})$ and $\mathscr{L}\widetilde{\Psi}(v)=\Psi(v)$ with $\widetilde{\Psi}(v) \in \mathcal{N}^{\bot}(\mathscr{L})$.
Furthermore, there exist two scalar functions $\alpha$, $\beta: \mathbb{R}^{+} \to \mathbb{R}$ (see \cite{AS2019}) such that
$$
\widetilde{\Phi}(v)=\alpha(|v|)\Phi(v) \;\;\text{and}\;\; \widetilde{\Psi}(v)=\beta(|v|)\Psi(v).
$$
Via multiplying the first equation of \eqref{rVPB} by $q_1$ and direct calculations, we obtain
\begin{align}
\begin{split}\label{j omega jisuan 1}
\frac{1}{\varepsilon} \mathscr{L} ( f^{\varepsilon} \cdot q_1)
=\;&-\varepsilon \partial_t (f^{\varepsilon} \cdot q_1) - v\cdot \nabla_x (f^{\varepsilon} \cdot q_1)
- 2 \nabla_x \phi^\varepsilon \cdot v \mu^{1/2}
+ \varepsilon \nabla_x \phi^\varepsilon \cdot \nabla_v (f^{\varepsilon}\cdot q_2) \\
&- \frac{\varepsilon}{2}\nabla_x \phi^\varepsilon \cdot v(f^{\varepsilon}\cdot q_2)
+ \mathcal{T}(f^{\varepsilon} \cdot q_2, f^{\varepsilon}\cdot q_1).
\end{split}
\end{align}
Taking the $L^2_v$ inner product with \eqref{j omega jisuan 1} by $\widetilde{\Phi}(v)$, $\frac{2}{3}\widetilde{\Psi}(v)$, we obtain
\begin{align} \label{j jisuan 2}
j^\varepsilon=\;& n^\varepsilon \mathcal{P} u^\varepsilon-\sigma(\frac{1}{2}\nabla_x n^\varepsilon +\nabla_x \phi^\varepsilon)+R^\varepsilon_{j},\\
\omega^\varepsilon=\;& n^\varepsilon \Big(\frac{3}{5}\theta^\varepsilon-\frac{2}{5}\rho^\varepsilon\Big)+R^\varepsilon_{\omega},
\label{omega jisuan 2}
\end{align}
where
\begin{align}\label{sigma define}
\sigma :=\;& \frac{2}{3} \Big\langle \widetilde{\Phi}(v), \Phi(v) \Big\rangle,\\
R^\varepsilon_{j} := \;& n^\varepsilon \mathcal{P}^\bot u^\varepsilon
-\varepsilon\Big\langle \partial_t(f^\varepsilon \cdot q_1), \widetilde{\Phi}(v) \Big\rangle
-\Big\langle v \cdot \nabla_x(\{\mathbf{I}-\mathbf{P}\}f^\varepsilon \cdot q_1), \widetilde{\Phi}(v) \Big\rangle \nonumber\\
\;&+\varepsilon\Big\langle \nabla_x \phi^\varepsilon \cdot \nabla_v (f^{\varepsilon}\cdot q_2), \widetilde{\Phi}(v) \Big\rangle
-\frac{\varepsilon}{2}\Big\langle\nabla_x \phi^\varepsilon \cdot v(f^{\varepsilon}\cdot q_2), \widetilde{\Phi}(v) \Big\rangle \nonumber\\
\;&+\Big\langle \mathcal{T}\big(\mathbf{P}f^{\varepsilon} \cdot q_2, \{\mathbf{I}-\mathbf{P}\}f^{\varepsilon}\cdot q_1 \big),
\widetilde{\Phi}(v) \Big\rangle
+\Big\langle \mathcal{T}\big(\{\mathbf{I}-\mathbf{P}\}f^{\varepsilon} \cdot q_2, \mathbf{P}f^{\varepsilon}\cdot q_1 \big),
\widetilde{\Phi}(v) \Big\rangle \nonumber\\
\;&+\Big\langle \mathcal{T}\big(\{\mathbf{I}-\mathbf{P}\}f^{\varepsilon} \cdot q_2, \{\mathbf{I}-\mathbf{P}\}f^{\varepsilon}\cdot q_1 \big),
\widetilde{\Phi}(v) \Big\rangle, \nonumber\\
R^\varepsilon_{\omega} := \;& n^\varepsilon (\rho^\varepsilon+\theta^\varepsilon)
-\frac{2\varepsilon}{3} \Big\langle \partial_t(f^\varepsilon \cdot q_1), \widetilde{\Psi}(v) \Big\rangle
-\frac{2}{3}\Big\langle v \cdot \nabla_x(\{\mathbf{I}-\mathbf{P}\}f^\varepsilon \cdot q_1), \widetilde{\Psi}(v) \Big\rangle \nonumber\\
\;&+\frac{2\varepsilon}{3} \Big\langle \nabla_x \phi^\varepsilon \cdot \nabla_v (f^{\varepsilon}\cdot q_2), \widetilde{\Psi}(v) \Big\rangle
-\frac{\varepsilon}{3} \Big\langle\nabla_x \phi^\varepsilon \cdot v(f^{\varepsilon}\cdot q_2), \widetilde{\Psi}(v) \Big\rangle \nonumber\\
\;&+\frac{2}{3}\Big\langle \mathcal{T}\big(\mathbf{P}f^{\varepsilon} \cdot q_2, \{\mathbf{I}-\mathbf{P}\}f^{\varepsilon}\cdot q_1 \big),
\widetilde{\Psi}(v) \Big\rangle
+\frac{2}{3}\Big\langle \mathcal{T}\big(\{\mathbf{I}-\mathbf{P}\}f^{\varepsilon} \cdot q_2, \mathbf{P}f^{\varepsilon}\cdot q_1 \big),
\widetilde{\Psi}(v) \Big\rangle \nonumber\\
\;&+\frac{2}{3}\Big\langle \mathcal{T}\big(\{\mathbf{I}-\mathbf{P}\}f^{\varepsilon} \cdot q_2, \{\mathbf{I}-\mathbf{P}\}f^{\varepsilon}\cdot q_1 \big), \widetilde{\Psi}(v) \Big\rangle. \nonumber
\end{align}
Here, we make use of the definition of $\mathbf{P}f^\varepsilon$, the fact $\mathcal{T}(\mu^{1/2}, \mu^{1/2})=0$,
$\big\langle \Phi(v), \widetilde{\Psi}(v) \big\rangle =0$ and the self-adjointness of $\mathscr{L}$.
Applying the bounds \eqref{limit:1-1}--\eqref{limit:1-2}, \eqref{limit:9} and the convergences \eqref{limit:17}--\eqref{limit:18}, we obtain the following convergences
\begin{equation}
 \left\{
\begin{array}{ll}\label{jixian:j omega 1}
n^\varepsilon \mathcal{P}u^{\varepsilon}
\to	nu \qquad\qquad\qquad\qquad\qquad\qquad\quad \,\;\text{strongly in}\; C(\mathbb{R}^+;H_{loc}^{N-1}), \\[2mm]
\sigma(\frac{1}{2}\nabla_x n^\varepsilon +\nabla_x \phi^\varepsilon) \to \sigma(\frac{1}{2}\nabla_x n +\nabla_x \phi)
\quad \quad \; \text{strongly in}~~\; C(\mathbb{R}^+;H_{loc}^{N-2}),\\[2mm]
R^\varepsilon_j \to 0  \;\;\;\qquad\qquad\qquad\qquad\quad\quad\qquad\quad\quad\;\,\;\, \text{in the sense of distributions},\\[2mm]
n^\varepsilon \Big(\frac{3}{5}\theta^\varepsilon-\frac{2}{5}\rho^\varepsilon\Big)
\to	n\theta \qquad\qquad\qquad\qquad\quad \,\;\;\text{strongly in}~~\; C(\mathbb{R}^+;H_{loc}^{N-1}), \\[2mm]
R^\varepsilon_\omega \to 0  \;\;\qquad\qquad\qquad\qquad\quad\quad\qquad\quad\quad\quad \text{in the sense of distributions}
\end{array}
\right.
\end{equation}
as $\varepsilon \to 0$. As a result, the convergences \eqref{limit:19}, \eqref{jixian:j omega 1} and the equation \eqref{j jisuan 2} imply that
\begin{align}
j=n u-\sigma\Big(\nabla_{x} \phi+\frac{1}{2} \nabla_{x} n\Big)
\end{align}
with the initial data
\begin{align*}
j(0,x)=\;&n_0(x) u_0(x)-\sigma\Big(\nabla_{x} \phi_0(x)+\frac{1}{2} \nabla_{x} n_0(x)\Big)\\
=\;&-\Delta_x \phi_0(x) \mathcal{P} u_0(x) - \sigma \Big(\nabla_{x} \phi_0(x)- \frac{1}{2} \nabla_{x} \Delta_x \phi_0(x) \Big).
\end{align*}
Moreover,  the convergences \eqref{limit:19}, \eqref{jixian:j omega 1} and the equation \eqref{omega jisuan 2} show that
\begin{align}
\omega=n\theta
\end{align}
with
the initial data
\begin{align*}
\omega(0,x)=n(0,x)\theta(0,x)=-\Delta_x \phi_0(x) \Big(\frac{3}{5}\theta_0(x)-\frac{2}{5}\rho_0(x)\Big).
\end{align*}

\subsubsection{Summarization}
\hspace*{\fill}

Collecting all above convergence results, we have shown that
\begin{align*}
&\left(\rho, u, \theta, n, j, \omega\right) \in L^\infty(\mathbb{R}^+; H^N_{x}) \cap  C(\mathbb{R}^+; H^{N-1}_{loc}),\\
&\nabla_x \phi(x) \in L^\infty(\mathbb{R}^+; H^{N+1}_{x}) \cap C(\mathbb{R}^+; H^{N}_{loc})
\end{align*}
satisfy the following two-fluid incompressible NSFP system with Ohm's law
\begin{equation*}
\left\{
\begin{array}{ll}
\displaystyle \partial_{t} u+u \cdot \nabla_{x} u-\nu \Delta_{x} u+\nabla_{x}P=-\frac{1}{2} n \nabla_{x} \phi,  \;\;\;\;&\nabla_x \cdot u=0,\\[2mm]
\displaystyle \partial_{t} \theta+u \cdot \nabla_{x} \theta-\kappa \Delta_{x} \theta=0, &\rho+\theta=0,\\[2mm]
\displaystyle \partial_{t} {n}+ u \cdot \nabla_x n - \frac{\sigma}{2} \Delta_x n + \sigma n=0, &-\Delta_{x} \phi=n,\\ [2mm]
\displaystyle j=n u-\sigma\Big(\nabla_{x} \phi+\frac{1}{2} \nabla_{x} n\Big), &\omega=n \theta,
\end{array} \right.
\end{equation*}
with the initial data
\begin{align}
\begin{split}\nonumber
u(0,x)=\mathcal{P}u_0(x),\;\; \theta (0,x)=\frac{3}{5}\theta_0(x)-\frac{2}{5}\rho_0(x),\;\; \nabla_x \phi(0,x)=\nabla_x \phi_0(x).
\end{split}
\end{align}

Actually, by mollifying the above system and combining the local existence theory, the standard energy method and the continuity argument,  we can show that $(\rho, u, \theta, n, \omega) \in C(\mathbb{R}^{+}; H_x^N) $, $\nabla_x \phi \in C(\mathbb{R}^{+}; H_x^{N+1})$ and $j \in C(\mathbb{R}^{+}; H_x^{N-1})$, cf. Chapter 3.2 in \cite{Majda-Burtozzi}, and then \eqref{jixian solution space} holds true.  Moreover, by approximating $\mathbb{R}^3_x$ by bounded
region \cite{CK2006} and the uniform bounds of functions and their limits proved above, the convergences in \eqref{theorem1.3 4} are thus verified.
This completes the proof of Theorem \ref{mainth3}.

\bigskip

\noindent\textbf{Declarations of competing interest}

The authors declare that they have no conflicts of interest.
\medskip

\noindent\textbf{Data availability}

No data was used for the research described in the article.
\medskip

\noindent\textbf{Acknowledgements}

The research is supported by NSFC under the grant number 12271179, NSFC key project under the grant number 11831003,
Basic and Applied Basic Research Project of Guangdong under the grant number 2022A1515012097.
\medskip

\end{document}